\tikzset{
    >=stealth,
    every picture/.style={thick},
    graphs/every graph/.style={empty nodes},
}
\tikzstyle{vertex}=[
\tikzstyle{printersafe}=[decoration={snake,amplitude=0pt}]
\newcommand{\rank}{\operatorname{rank}}
\newcommand{\Spec}{\operatorname{Spec}}
\newcommand{\Aut}{\operatorname{Aut}}
\newcommand{\pp}{\mathbb{P}}
\newcommand{\qq}{\mathbb{Q}}
\newcommand{\zz}{\mathbb{Z}}
\newcommand{\rr}{\mathbb{R}}
\newcommand{\kk}{\mathbb{K}}
\def\O#1.{\mathcal {O}_{#1}}			
\def\pr #1.{\mathbb P^{#1}}				
\def\af #1.{\mathbb A^{#1}}			
\def\ses#1.#2.#3.{0\to #1\to #2\to #3 \to 0}	
\def\xrar#1.{\xrightarrow{#1}}			
\def\K#1.{K_{#1}}						
\def\bA#1.{\mathbf{A}_{#1}}			
\def\bM#1.{\mathbf{M}_{#1}}				
\def\bL#1.{\mathbf{L}_{#1}}				
\def\bB#1.{\mathbf{B}_{#1}}				
\def\bK#1.{\mathbf{K}_{#1}}			
\def\subs#1.{_{#1}}					
\def\sups#1.{^{#1}}
\newtheorem{introdef}{Definition}
  \newtheorem{theorem}{Theorem}[section]
  \newtheorem{introthm}{Theorem}
  \newtheorem{introcor}{Corollary}
  \newtheorem{lemma}[theorem]{Lemma}
  \newtheorem{proposition}[theorem]{Proposition}
  \newtheorem{corollary}[theorem]{Corollary}
  \newtheorem{notation}[theorem]{Notation}
  \newtheorem{definition}[theorem]{Definition}
  \newtheorem{example}[theorem]{Example}
  \newtheorem{question}[theorem]{Question}
\newtheorem{remark}[theorem]{Remark}
\theoremstyle{remark}
\numberwithin{equation}{section}
\begin{document}

\title[Log Calabi--Yau pairs of complexity zero and arbitrary index]{Log Calabi--Yau pairs of complexity zero and arbitrary index}

\author[J. Enwright]{Joshua Enwright}
\address{UCLA Mathematics Department, Box 951555, Los Angeles, CA 90095-1555, USA
}
\email{jlenwright1@math.ucla.edu}

\author[F.~Figueroa]{Fernando Figueroa}
\address{Department of Mathematics, Northwestern University, Evanston, Il 60208, USA
}
\email{fernando.figueroa@northwestern.edu}

\subjclass[2020]{Primary 14E05, 14J32;
Secondary 14E30, 14M25.}

\begin{abstract}
In this article, we give a characterization of log Calabi--Yau pairs of complexity zero and arbitrary index. As an application, we show that a log Calabi--Yau pair of birational complexity zero admits a crepant birational model which is a generalized Bott tower.

\end{abstract}

\maketitle

\setcounter{tocdepth}{1} 
\tableofcontents

\section{Introduction}
Throughout this article, we work over an algebraically closed field $\mathbb{K}$ of characteristic zero.\par
The \textit{complexity} of a log pair $(X,B)$ is defined as
$$c(X,B)=\dim X +\rank {\rm WDiv}_{\rm alg}(X)- |B|,$$
where ${\rm WDiv}_{\rm alg}(X)$ is the group of Weil divisors on $X$ modulo algebraic equivalence and $|B|$ is the sum of the coefficients of $B$. While not particularly well-behaved for arbitrary log pairs, the complexity enjoys remarkable properties when restricted to certain sub-classes of pairs. In ~\cite[Theorem 1.2, Corollary 1.3]{BMSZ18}, Brown, McKernan, Svaldi and Zong show the following:

\begin{introthm}[{\cite[Theorem 1.2, Corollary 1.3]{BMSZ18}}]\label{introthm:BMSZ18}
Let $(X,B)$ be a log canonical pair with $-(K_X+B)$ nef. Then $c(X,B)\geq 0.$ If $c(X,B)<1,$ then there is a toric log Calabi--Yau pair $(X,\Delta)$ with $\lfloor B \rfloor \leq \Delta$ and all but possibly $\lfloor 2c(X,B)\rfloor$ components of $\Delta$ in the support of $B.$
\end{introthm}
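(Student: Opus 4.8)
The plan is to first reduce to a setting in which the group $\WDiv_{\alg}(X)$ is governed by the Picard number. Passing to a small $\mathbb{Q}$-factorialization and then tracking the steps of a suitable minimal model program, one checks that the complexity is non-increasing, so I would assume from the outset that $X$ is projective and $\mathbb{Q}$-factorial. In that case every Weil divisor is $\mathbb{Q}$-Cartier, $\rank\WDiv_{\alg}(X)$ equals the Picard number $\rho(X)$, and the inequality $c(X,B)\geq 0$ becomes the concrete bound $|B|\leq \dim X+\rho(X)$ on the total boundary coefficient. The structure statement then says that when this bound is nearly attained, the components of $B$ are forced into the configuration of (most of) the invariant divisors of a toric structure.

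Since $-(K_X+B)$ is nef, I would run a $(K_X+B)$-MMP, which terminates either with $K_X+B\equiv 0$ or with a Mori fibre space $\pi\colon X\to Z$, and then induct on $\dim X$ through this fibration. Restricting to a general fibre $F$, adjunction yields a log canonical pair $(F,B_F)$ with $-(K_F+B_F)$ nef to which the inductive hypothesis applies; pushing forward through $\pi$ and using the canonical bundle formula produces a boundary $B_Z$ on the base with $-(K_Z+B_Z)$ nef, to which induction also applies. The heart of the argument is to combine the two inductive bounds into $|B|\leq\dim X+\rho(X)$, using that for an extremal Mori fibre space $\rho(X)=\rho(Z)+1$ and $\dim X=\dim Z+\dim F$, together with the decomposition of the components of $B$ into those dominating $Z$ (which restrict to $B_F$) and those that are vertical (which contribute to $B_Z$).

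The hard part will be precisely this bookkeeping across the fibration. A naive comparison of $c(F,B_F)+c(Z,B_Z)$ with $c(X,B)$ leaves error terms coming from $\rho(F)$, from the extremal contraction, and from the discriminant and moduli contributions of the canonical bundle formula, so one must use the exact sequence relating $N^1(Z)$, $N^1(X)$ and $N^1(F)$ and the nefness of $-(K_X+B)$ to guarantee that no boundary components are lost or double-counted and that the inequality closes up. I also expect the base cases, where $Z$ or $F$ is a point or a curve, and the careful treatment of the fractional part of $B$, to require separate handling.

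Finally, for the characterization when $c(X,B)<1$, the strategy is to propagate the near-equality back up the induction: at each stage the combined bound is within one of being sharp, which forces the boundary to carry the maximal number of components and pins down their intersection pattern along the fibration. Assembling these data, one recovers a fan whose rays correspond to the components of $\lfloor B\rfloor$, exhibits a dense torus in the complement of the boundary, and thereby produces a toric log Calabi--Yau pair $(X,\Delta)$ with $\lfloor B\rfloor\leq\Delta$; since the reductions above are isomorphisms in codimension one, this structure descends to the original $X$. I expect the count $\lfloor 2c(X,B)\rfloor$ of components of $\Delta$ possibly outside $\Supp(B)$ to emerge from tallying, at each fibration step, how many invariant divisors must be adjoined to complete the partial toric boundary, with the factor two reflecting the loss of components at the two ends of each contraction.
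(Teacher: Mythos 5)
This statement is not proved in the paper at all: it is imported verbatim from \cite{BMSZ18}, so your proposal can only be measured against the original argument of Brown--McKernan--Svaldi--Zong. In outline you do reproduce its general shape---reduce to the $\qq$-factorial case, run a $(K_X+B)$-MMP, and induct on dimension through a Mori fibre space, restricting to the general fibre and using the canonical bundle formula on the base---but the proposal defers precisely the steps that constitute the theorem, and the two steps it does commit to are not correct as stated.

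Concretely: (a) the torus is never constructed. Saying that one ``recovers a fan whose rays correspond to the components of $\lfloor B\rfloor$'' and ``exhibits a dense torus'' is a restatement of the conclusion, not an argument; the entire difficulty is to produce an algebraic torus action of dimension $\dim X$ from the numerical fact that $|B|$ is close to $\dim X+\rank{\rm WDiv}_{\rm alg}(X)$, and in particular to show that a Mori fibre space with toric base and toric general fibre, glued along the horizontal boundary, has toric total space. That gluing statement is the bulk of \cite{BMSZ18}, not bookkeeping. (b) Your descent step is false: the steps of a $(K_X+B)$-MMP include divisorial contractions, which are \emph{not} isomorphisms in codimension one, so a toric structure on the MMP output does not transport back to $X$ for free. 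One must show that each extracted divisor is a toric valuation over the toric model, which for divisors of positive log discrepancy is genuinely nontrivial---compare part (2) of Theorem~\ref{introthm:lccs} of the present paper, which handles only log discrepancy zero and already requires Theorem~\ref{introthm:convex-toric-div}. (c) The inductive bookkeeping cannot close with the complexity as defined here: the restriction $N^1(X)\rightarrow N^1(F)$ to a general fibre is far from surjective, so $\rank{\rm WDiv}_{\rm alg}(F)$, hence $c(F,B_F)$, is not controlled by data on $X$. Brown--McKernan--Svaldi--Zong circumvent this by proving a stronger statement for a refined invariant, the complexity taken relative to the span of the boundary components, and running the induction on that; without some such device the inequality does not propagate through the fibration. (d) You also never verify that the hypothesis that $-(K_X+B)$ is nef survives the MMP steps, which is not automatic for divisorial contractions. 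Since your own text flags (a) and (c) as ``the hard part,'' the honest summary is that the proposal is a plausible roadmap for the known proof, but not a proof.
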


Our aim in this article is to completely characterize when the equality $c(X,B)=0$ holds in this lower bound on complexity. A log canonical pair $(X,B)$ with $-(K_X+B)$ nef and $c(X,B)=0$ is necessarily a \textit{log Calabi--Yau} pair (see Definition ~\ref{def:logCY} and Remark ~\ref{rmk:compl-zero-implies-logCY}). Therefore, in this article, we restrict to the class of log Calabi--Yau pairs. For log Calabi--Yau pairs of complexity zero, Theorem ~\ref{introthm:BMSZ18} takes on the following simpler form.


\begin{introcor}\label{introcor:BMSZ18}
Let $(X,B)$ be a log Calabi--Yau pair of complexity zero. Then there exists a toric log Calabi--Yau pair $(X,\Delta)$ satisfying
$$ \lfloor B\rfloor \leq \Delta \leq \lceil B\rceil.$$
\end{introcor}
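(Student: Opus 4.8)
The plan is to obtain this as a direct specialization of Theorem \ref{introthm:BMSZ18} to the case $c(X,B)=0$, together with an unwinding of the resulting conclusion. Since $(X,B)$ is a log Calabi--Yau pair it is log canonical with $B\ge 0$, $K_X+B\sim_{\mathbb{Q}}0$, and in particular $-(K_X+B)$ is nef; moreover every coefficient of $B$ lies in $[0,1]$. Because $c(X,B)=0<1$, Theorem \ref{introthm:BMSZ18} applies and yields a toric log Calabi--Yau pair $(X,\Delta)$ on the same underlying variety $X$ with $\lfloor B\rfloor\le\Delta$. This is already the lower bound in the statement, so the only remaining task is the upper bound $\Delta\le\lceil B\rceil$.

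For this I would examine the second assertion of Theorem \ref{introthm:BMSZ18}: all but possibly $\lfloor 2c(X,B)\rfloor$ of the components of $\Delta$ lie in $\Supp(B)$. Substituting $c(X,B)=0$ gives $\lfloor 2\cdot 0\rfloor=0$, so in fact \emph{every} component of the boundary $\Delta$ is contained in $\Supp(B)$. Writing $\Delta=\sum_i D_i$ as a sum of distinct prime divisors (each of coefficient one, as $\Delta$ is the reduced toric boundary of a toric log Calabi--Yau pair), we thus have $D_i\subseteq\Supp(B)$ for every $i$.

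It remains to convert this support containment into the divisor inequality. Here I would use that, because $0\le\coeff_D B\le 1$ for every prime divisor $D$, the round-up $\lceil B\rceil$ is exactly the reduced divisor whose support equals $\Supp(B)$; that is, $\coeff_D\lceil B\rceil=1$ precisely when $D\subseteq\Supp(B)$. Since each $D_i\subseteq\Supp(B)$ appears in $\lceil B\rceil$ with coefficient one and $\Delta$ has all coefficients equal to one, we conclude $\Delta\le\lceil B\rceil$, as desired. The content of the argument is essentially bookkeeping --- Theorem \ref{introthm:BMSZ18} does all the geometric work --- and the only point demanding care, which I expect to be the main (mild) obstacle, is the identification of $\lceil B\rceil$ with the reduced divisor supported on $\Supp(B)$, relying on the fact that a log Calabi--Yau boundary has all coefficients at most one.
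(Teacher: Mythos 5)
Your proposal is correct and follows exactly the route the paper intends: the corollary is presented there as the immediate specialization of Theorem~\ref{introthm:BMSZ18} to $c(X,B)=0$, with the upper bound $\Delta\le\lceil B\rceil$ coming from the fact that all components of $\Delta$ lie in $\operatorname{Supp}(B)$ and that $\lceil B\rceil$ is the reduced divisor on $\operatorname{Supp}(B)$ since log canonicity bounds the coefficients of $B$ by one. Your write-up simply makes explicit the bookkeeping the paper leaves implicit.
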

There are two natural sources of examples of log Calabi--Yau pairs of complexity zero. The first of these comes from toric log Calabi--Yau pairs (see Definition ~\ref{def:toric-logCY-pair}). Indeed, given a toric log Calabi--Yau pair $(X,\Delta),$ both $\dim X+\rank {\rm WDiv}_{\rm alg}(X)$ and $|\Delta|$ can be identified with the number of rays in the fan of $X$. The second comes from the observation that if $(X,B_1),\hdots, (X,B_r)$ is a collection of log Calabi--Yau pairs of complexity zero supported on a fixed variety $X$ and if $b_1,\hdots, b_r\in [0,1]$ are rational numbers with sum $\sum_{i=1}^rb_i=1,$ then $\left(X,\sum_{i=1}^rb_iB_i\right)$ will be another log Calabi--Yau pair of complexity zero. Indeed, log canonicity, numerical triviality of the log canonical divisor, and the property that the coefficients in the boundary sum to $\dim X+\rank {\rm WDiv}_{\rm alg}(X)$ are all preserved under the formation of such convex combinations of boundary divisors. Our first main theorem states that these simple sources of examples of log Calabi--Yau pairs of complexity zero, in fact, produce all examples.

\begin{introthm}\label{introthm:convex-toric-div}
Let $(X,B)$ be a log Calabi--Yau pair of complexity zero. Then there are toric log Calabi--Yau pairs $(X,\Delta_1),\hdots, (X,\Delta_r)$ and rational numbers $b_1,\hdots, b_r\in [0,1]$ with sum $\sum_{i=1}^rb_i=1$ satisfying $$B=\sum_{i=1}^rb_i\Delta_i.$$
\end{introthm}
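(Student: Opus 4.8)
The plan is to build on Corollary~\ref{introcor:BMSZ18}, which supplies a toric log Calabi--Yau pair $(X,\Delta)$ with $\lfloor B\rfloor \leq \Delta \leq \lceil B\rceil$. This $\Delta$ serves as the ambient toric structure on $X$, and its components $\Delta = \sum_{j} D_j$ (the torus-invariant prime divisors) should contain the support of $B$. The first step is to verify exactly this: since $\lfloor B \rfloor \leq \Delta$, every prime component of $B$ with coefficient $1$ already appears in $\Delta$; and since $\Delta \leq \lceil B\rceil$, no component of $\Delta$ lies outside $\Supp(B)$ except those forced by the toric boundary. I would argue that in fact $\Supp(B)\subseteq\Supp(\Delta)$, so that $B=\sum_j b_j D_j$ with each $b_j\in[0,1]\cap\qq$, expressed in the same basis of torus-invariant divisors. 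The complexity-zero hypothesis then forces $\sum_j b_j = \dim X + \rank \WDiv_{\rm alg}(X)$, which equals the number of rays $N$ of the fan, i.e.\ the number of components of $\Delta$.

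\medskip

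The heart of the argument is then a \emph{combinatorial convexity} step. Writing $b = (b_1,\dots,b_N)$ for the coefficient vector of $B$, I want to exhibit $b$ as a convex combination $\sum_i b_i a^{(i)}$ of vectors $a^{(i)}\in\{0,1\}^N$, each of which is the coefficient vector of a genuine \emph{toric} log Calabi--Yau boundary $\Delta_i = \sum_j a^{(i)}_j D_j$ on $X$. The constraint that makes $a^{(i)}$ toric-admissible is that $\sum_j a^{(i)}_j D_j$ is linearly (equivalently numerically) trivial against $-K_X$, i.e.\ $K_X + \Delta_i \equiv 0$; for a complete toric variety the standard toric boundary $\sum_j D_j$ satisfies $K_X + \sum_j D_j = 0$, so the admissible $0/1$ vectors are exactly those $a$ for which $\sum_j (1-a_j) D_j$ is the class of a principal divisor coming from a character, i.e.\ $\sum_j(1-a_j)\langle m, v_j\rangle = 0$ structurally. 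The set of real boundary vectors $\{x\in[0,1]^N : K_X + \sum x_j D_j \equiv 0\}$ is a rational polytope $P$ cut out by the linear relations among the rays, and the complexity-zero condition $\sum x_j = N$ together with $x\in[0,1]^N$ pins our vector $b$ inside $P$. I would show the vertices of $P$ are precisely the integral $0/1$ vectors corresponding to toric boundaries, whence $b$, being a rational point of the rational polytope $P$, is a rational convex combination of these vertices by Minkowski/Carathéodory.

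\medskip

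The main obstacle I anticipate is justifying that the vertices of the polytope $P$ are integral, i.e.\ lie in $\{0,1\}^N$ and correspond to \emph{bona fide} toric log Calabi--Yau pairs rather than merely formal solutions. Numerical triviality of $K_X + \sum x_j D_j$ is a system of linear equations indexed by a basis of $\rank \WDiv_{\rm alg}(X)$, and there is no a priori reason a vertex of the resulting polytope should be integral; one must use the specific structure of the fan. I expect the complexity-zero hypothesis is exactly what forces this: the equality $\sum x_j = N$ is the extreme case of the complexity bound, so at a vertex the pair $(X,\sum x_j D_j)$ again has complexity zero, and Corollary~\ref{introcor:BMSZ18} (or rather the equality case of Theorem~\ref{introthm:BMSZ18}) should force the vertex boundary to coincide with an honest toric boundary, giving integrality. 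Verifying this self-improving/bootstrapping behavior of the complexity at the vertices, and checking that each resulting integral boundary genuinely defines a \emph{distinct} toric log Calabi--Yau structure on $X$, is where the real work lies; the passage from $b\in P$ to the convex combination is then routine polytope theory.
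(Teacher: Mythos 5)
There is a genuine gap, and it is at the very first step: the claim that $\Supp(B)\subseteq\Supp(\Delta)$ for a single associated toric boundary $\Delta$ is false in general. The paper's own running example shows this: take $B=\frac{1}{2}\sum_{i=1}^{6}L_i$ on $\pp^2$, where $L_1,\hdots,L_4$ are lines through a common point and $L_5,L_6$ are general lines (the pair used in Example~\ref{ex:lambda_2<lambda_1}). Then $\Delta=L_4+L_5+L_6$ lies in $\mathcal{A}(\pp^2,B)$, but $\Supp(B)$ has six components while $\Delta$ has only three; the components $L_1,L_2,L_3$ are not torus-invariant for this (or any single) toric structure. The inequalities $\lfloor B\rfloor\leq\Delta\leq\lceil B\rceil$ only give $\Supp(\Delta)\subseteq\Supp(B)$, not the reverse. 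In fact your setup is self-defeating: if $\Supp(B)\subseteq\Supp(\Delta)$ did hold, then writing $B=\sum_{j=1}^N b_jD_j$ with $b_j\in[0,1]$ over the $N$ components of $\Delta$, the complexity-zero condition $\sum_j b_j=N$ forces every $b_j=1$, i.e.\ $B=\Delta$, the trivial case. Equivalently, your polytope $P$ is a single point: on a projective variety, $\sum_j(1-x_j)D_j$ effective and numerically trivial forces $x=(1,\hdots,1)$ (this is the same argument as in Remark~\ref{rmk:compl-zero-implies-logCY} and Lemma~\ref{lem:toric-defn-comparison}). So there are no nontrivial vertices to take convex combinations of, and the Minkowski/Carath\'eodory step has nothing to act on.

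The content of the theorem is precisely that the boundaries $\Delta_1,\hdots,\Delta_r$ come from \emph{different} toric structures on $X$ — distinct maximal tori in $\Aut^0(X)$, conjugate to one another by Lemma~\ref{lem:conjugate-tori} — whose boundary divisors have genuinely different supports (in the example above, $B=\frac{1}{2}(L_1+L_2+L_5)+\frac{1}{2}(L_3+L_4+L_6)$, and the two triangles are invariant under different tori in $\operatorname{PGL}_3$). No argument that fixes one fan and works inside $[0,1]^N$ can see these other structures. This is why the paper's proof takes a completely different route: it inducts on the cardinality of $\mathcal{A}(X,B)$, using the peeling invariants $\lambda_1,\lambda_2$ (Lemma~\ref{lem:lambda-lcps}, Lemma~\ref{lem:peeling-inclusion}) and the extraction of suitable log canonical places via \cite[Theorem 1]{Mor20} (Lemma~\ref{lem:extraction-inclusion}) to strictly decrease $|\mathcal{A}|$, then reassembles the convex combination through Lemmas~\ref{lem:peeling-blend} and~\ref{lem:extraction-blend}. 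To repair your approach you would need, at minimum, a mechanism for producing toric structures whose boundaries contain the components of $B$ missed by $\Delta$ — and that is exactly the nontrivial work the paper's induction performs.
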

It follows easily from Corollary ~\ref{introcor:BMSZ18} that log Calabi--Yau pairs of index one\footnote{The {\em index} of a log Calabi--Yau pair $(X,B)$ is the smallest positive integer $m$ for which $m(K_X+B)\sim 0$.} and complexity zero are toric log Calabi--Yau pairs (cf. Lemma ~\ref{lem:compl-zero-ind-one}). Theorem ~\ref{introthm:convex-toric-div} can be seen as the analogue of this statement in the absence of restrictions on the index; log Calabi--Yau pairs of complexity zero are \textit{toric boundary arrangements} (see Definition ~\ref{def:toric-bound-arrang}). See Lemma ~\ref{lem:conjugate-tori} for an explanation of the relationship between different toric log Calabi--Yau pairs supported on a fixed variety.\par
A variant of the complexity which is natural from the perspective of birational geometry is the \textit{birational complexity}, which is defined for a log pair $(X,B)$ as $$c_{\rm bir}(X,B)=\inf\left\{c(Y,B_Y)\mid (X,B)\simeq _{\rm bir}(Y,B_Y)\right\}.$$ Here, the symbol $\simeq_{\rm bir}$ denotes crepant birational equivalence between log pairs (see Definition ~\ref{def:crepant}). The notion of birational complexity was introduced by Mauri and Moraga in ~\cite{MM24}, where they showed that every log Calabi--Yau $n$-fold of index one and birational complexity zero is crepant to $(\pp^n,H_0+\hdots+H_n)$ (see \cite[Theorem 1.6]{MM24}). Here, the $H_i$'s denote the coordinate hyperplanes in $\pp^n$. Our second main theorem offers an analogue to this result in the absence of restrictions on the index. In order to state it, we must first make the following definition.

\begin{introdef}\label{introdef:gen-bott-tower}{\em
We say that a projective variety $X$ is a \textit{generalized Bott tower} if there exists a tower of morphisms $$X=X_0\xrightarrow{f_0}X_1\rightarrow\hdots\rightarrow X_k\xrightarrow{f_k}X_{k+1}=\Spec\kk,$$
where each $f_i\colon X_{i}\rightarrow X_{i+1}$ is the projective space bundle associated to a direct sum of line bundles on $X_{i+1}$.
}
\end{introdef}

\begin{introthm}\label{introthm:gen-bott-towers}
Let $(X,B)$ be a log Calabi--Yau pair of birational complexity zero. Then there is a crepant birational model $(Y,B_Y)$ of $(X,B)$ satisfying:
\begin{enumerate}
    \item $(Y,B_Y)$ is a log Calabi--Yau pair of complexity zero,
    \item $Y$ is a generalized Bott tower.
\end{enumerate}
In particular, $Y$ is a smooth projective toric variety.
\end{introthm}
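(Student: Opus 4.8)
The plan is to first land on a toric model of complexity zero by means of Theorem~\ref{introthm:convex-toric-div}, and then to run the minimal model program for the resulting log Calabi--Yau pair so as to realise a crepant model as an iterated tower of projective space bundles, proceeding by induction on $\dim X$.

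\emph{Step 1: reduction to a toric complexity-zero model.} Since $c_{\mathrm{bir}}(X,B)=0$, I would begin by showing that the infimum defining the birational complexity is attained, i.e.\ that some crepant birational model $(Y_0,B_0)$ of $(X,B)$ satisfies $c(Y_0,B_0)=0$. Here one uses that $c\geq 0$ on every log Calabi--Yau model by Theorem~\ref{introthm:BMSZ18}, together with boundedness of the models computing complexity close to zero, so that a minimising model exists and realises the value $0$. Theorem~\ref{introthm:convex-toric-div} then applies: $Y_0$ is a projective toric variety and $B_0=\sum_i b_i\Delta_i$ is a toric boundary arrangement, the $\Delta_i$ being reduced toric boundaries for tori that are mutually conjugate in $\Aut(Y_0)$ by Lemma~\ref{lem:conjugate-tori}.

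\emph{Step 2: the tower via the minimal model program and induction.} The key structural observation is that for a reduced toric boundary one has $K+\Delta\sim 0$, so the pullback formula forces every toric discrepancy to vanish; hence any two complete toric varieties in a fixed lattice are crepant birational with respect to their toric boundaries, and the same holds for boundary arrangements. This gives the freedom to modify the fan of $Y_0$ while remaining in the crepant class and preserving complexity zero, in particular to pass to a $\qq$-factorial model. I would then induct on $n=\dim X$, the case $n=0$ being trivial. For the inductive step, run the minimal model program for the pair $(Y_0,B_0)$: since $K_{Y_0}+B_0\equiv 0$, every extremal contraction is crepant, so the program proceeds through flops and crepant contractions and terminates with a crepant Mori fibration $\pi\colon Y\to Z$ with $\dim Z<n$. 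By the canonical bundle formula, $K_Y+B_Y\sim_{\qq}\pi^*(K_Z+B_Z)$ for a log Calabi--Yau pair $(Z,B_Z)$, which again has complexity zero. After refining to a smooth projective model, the general fibre of $\pi$ is a smooth Picard-rank-one toric Fano, hence a projective space $\pp^d$, and a smooth toric $\pp^d$-bundle is automatically the projectivisation of a split bundle $\bigoplus_j\mathcal{O}_Z(a_j)$. By induction $(Z,B_Z)$ admits a crepant model that is a generalised Bott tower; lifting the splitting data along this modification exhibits $Y$ as a $\pp^d$-bundle over a generalised Bott tower, hence itself a generalised Bott tower in the sense of Definition~\ref{introdef:gen-bott-tower}. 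Preservation of complexity zero along the crepant steps gives conclusion (1), the tower structure gives conclusion (2), and smoothness of generalised Bott towers gives the final assertion.

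\emph{Main obstacle.} The delicate point is the inductive step: producing a \emph{smooth} crepant model that simultaneously carries the Mori fibration and is compatible with the inductively constructed Bott tower structure on $Z$. One must arrange the general fibre to be an honest projective space, so that $\pi$ is a split projective bundle rather than a fake weighted-projective fibration, and must lift the fibration across the crepant modification of $Z$ supplied by induction; both require tracking the arrangement $\sum_i b_i\Delta_i$ and its several conjugate tori through the modifications, and checking that the transported boundary remains a complexity-zero toric boundary arrangement on $Y$. This is where the complexity-zero hypothesis does the essential work, constraining both the singularities and the fibre type. By contrast, in the index-one case of \cite{MM24} the boundary is a single reduced toric boundary and the tower collapses to the single step $(\pp^n,H_0+\dots+H_n)$.
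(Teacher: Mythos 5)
Your outline follows the paper's strategy at the coarsest level (pass to a complexity-zero $\qq$-factorial model, run an MMP toward a Mori fibration, apply the canonical bundle formula, induct), and your Step 1 is essentially the paper's opening reduction, which it cites to \cite[Lemma 2.31]{MM24}. But two of your key assertions in Step 2 are unjustified, and the second is precisely where all the real work lies. First, the claim that, since each $(Y_0,\Delta_i)$ has all toric discrepancies zero, you have ``the freedom to modify the fan of $Y_0$ while remaining in the crepant class and preserving complexity zero'' is false for boundary arrangements. The divisors $\Delta_1,\hdots,\Delta_r$ are invariant under \emph{different} tori, which are merely conjugate in ${\rm Aut}^0(Y_0)$ (Lemma~\ref{lem:conjugate-tori}); a fan modification is defined with respect to only one of these tori, and the divisors it extracts are log canonical places of $(Y_0,\Delta_1)$ but in general not of $\left(Y_0,\sum_i b_i\Delta_i\right)$. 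Extracting a divisor of positive log discrepancy strictly increases the complexity (Lemma~\ref{lem:complexity-and-exceptional-divisors}), so such a modification leaves the class of complexity-zero models. Controlling exactly which divisors may be extracted is the point of Theorems~\ref{introthm:lccs} and~\ref{introthm:dlt-log-smth}, which your argument neither proves nor invokes; aside from small $\qq$-factorializations (which extract nothing), arbitrary fan refinements are not available to you.

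Second, and fatally: the sentence ``After refining to a smooth projective model, the general fibre of $\pi$ is a smooth Picard-rank-one toric Fano, hence a projective space $\pp^d$'' assumes what has to be proved. The general fiber of the Mori fibration produced by your MMP can genuinely be singular---for instance a weighted projective space $\pp(1,c_1,\hdots,c_n)$ with every $c_i\geq 2$, exactly the case analyzed in Lemma~\ref{lem:fiber-switcharoo}---and no ``refinement'' is available: resolving the fibers either extracts divisors that are not log canonical places of $(Y,B_Y)$ (destroying complexity zero and the effectivity of the crepant boundary) or raises the relative Picard rank over $Z$ (destroying the bundle structure). You flag this as the ``main obstacle'' yourself, but you supply no mechanism for overcoming it, whereas this is what occupies the bulk of the paper's Section 5: Lemma~\ref{lem:lin-equiv-tot-space-to-fiber} transfers a complexity-zero pair to the general fiber; Lemmas~\ref{lem:round-down-and-sing-locus}, \ref{lem:round-down-relative}, \ref{lem:fiber-switcharoo} and~\ref{lem:multiple-horiz-components} show that a singular fiber forces, after a crepant modification extracting only log canonical places, an intermediate fibration (Lemma~\ref{lem:decrease-rel-dimn}); Lemmas~\ref{lem:obtain-bundle} and~\ref{lem:proj-bundle-sum-line-bundles} then produce the split $\pp^n$-bundle when no such intermediate fibration exists; the induction in Theorem~\ref{thm:proj-bundles-over-smth-base} is organized around precisely this dichotomy; and Lemma~\ref{lem:follow-fibr} is the tool that lifts the fibration across the crepant modification of the base supplied by induction. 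Without these ingredients, or substitutes for them, your Step 2 is a restatement of the theorem rather than a proof of it.
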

In contrast to Mauri and Moraga's result in the index one case, one is generally forced to consider infinitely many different generalized Bott towers $Y$ in each dimension when restrictions on the index are weakened (see Example ~\ref{ex:inf-many-models}).

\subsection{Geometry of log canonical centers}

One of the main challenges in proving Theorem ~\ref{introthm:gen-bott-towers} lies in understanding and controlling the collection of exceptional divisors with log discrepancy in the interval $[0,1],$ as these are precisely those divisors that can be extracted while maintaining an effective boundary. The complexity of a pair will remain unchanged after extracting only divisors with log discrepancy zero and will increase after extracting divisors with positive log discrepancy (see Lemma ~\ref{lem:complexity-and-exceptional-divisors}). One of the helpful features in the index one case is that any divisor with log discrepancy in $[0,1)$ automatically has log discrepancy zero. This offers a level of flexibility in performing complexity-preserving birational modifications which is simply not present in the absence of such strong restrictions on the index. To proceed, we will need new ways to find exceptional divisors with log discrepancy zero.\par
In order to state our first result in this direction, we begin by making the following definition.
\begin{introdef}{\em
We say that a Weil divisor $\Delta$ on $X$ is \textit{associated} to the log pair $(X,B)$ if the following conditions hold:
\begin{enumerate}
\item $(X,\Delta)$ is a toric log Calabi--Yau pair,
\item $\lfloor B \rfloor \leq \Delta \leq \lceil B \rceil.$
\end{enumerate}
We write 
\[
\mathcal{A}(X,B)=\left\{\Delta\in \text{WDiv}(X)\mid \Delta\text{ is associated to }(X,B)\right\}.
\]
}
\end{introdef}
The set $\mathcal{A}(X,B)$ is finite for any log pair $(X,B),$ and we can rephrase Corollary ~\ref{introcor:BMSZ18} by saying that $\mathcal{A}(X,B)$ is nonempty whenever $(X,B)$ is a log Calabi--Yau pair of complexity zero. With Theorem ~\ref{introthm:convex-toric-div} in hand, we will be able to prove the following:
\begin{introthm}\label{introthm:lccs}
Let $(X,B)$ be a log Calabi--Yau pair of complexity zero. Then the following statements hold:
\begin{enumerate}
    \item If a divisor $E$ over $X$ is toric with respect to every $\Delta\in \mathcal{A}(X,B),$ then $E$ is a log canonical place of $(X,B).$
    \item If a divisor $E$ over $X$ is a  log canonical place of $(X,B),$ then $E$ is toric with respect to some $\Delta \in \mathcal{A}(X,B).$
    \item If a subvariety $Z\subset X$ is a toric stratum of every $\Delta\in \mathcal{A}(X,B),$ then $Z$ is a log canonical center of $(X,B).$
    \item If a subvariety $Z\subset X$ is a  log canonical center of $(X,B),$ then $Z$ is a toric stratum of some $\Delta \in \mathcal{A}(X,B).$
\end{enumerate}
\end{introthm}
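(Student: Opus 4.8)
The engine behind all four statements is the convex decomposition of Theorem~\ref{introthm:convex-toric-div} together with the affine-linearity of the log discrepancy in the boundary. The plan is first to record that in a decomposition $B=\sum_{i=1}^r b_i\Delta_i$ (with $b_i\in(0,1]$ and $\sum_i b_i=1$) each piece lies in $\mathcal{A}(X,B)$. Indeed, each $\Delta_i$ is reduced, so for a prime divisor $D$ one has $\operatorname{mult}_D B=\sum_{i:\,D\subseteq\Delta_i}b_i$; this is $\le 1$, it equals $1$ exactly when $D\subseteq\Delta_i$ for every $i$, and it is positive as soon as $D\subseteq\Delta_i$ for some $i$. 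Hence $\lfloor B\rfloor\le\Delta_i\le\lceil B\rceil$, so $\Delta_i\in\mathcal{A}(X,B)$. Next I would record the key identity: for any divisor $E$ over $X$,
\[
a_E(X,B)=\sum_{i=1}^r b_i\,a_E(X,\Delta_i),
\]
which holds because $B\mapsto\operatorname{ord}_E(\pi^*B)$ is linear and $\sum_i b_i=1$. I will also use the standard characterization that, for a toric log Calabi--Yau pair, the log canonical places are precisely the toric divisors; equivalently, $E$ is toric with respect to $\Delta_i$ if and only if $a_E(X,\Delta_i)=0$.

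With these in hand, (1) and (2) are immediate. For (1), if $E$ is toric with respect to every $\Delta\in\mathcal{A}(X,B)$ then $a_E(X,\Delta_i)=0$ for all $i$, so the identity gives $a_E(X,B)=0$. For (2), since each $(X,\Delta_i)$ is log canonical every summand $b_i\,a_E(X,\Delta_i)$ is nonnegative, so $a_E(X,B)=0$ forces $a_E(X,\Delta_i)=0$ for every $i$ with $b_i>0$; any such $\Delta_i$ lies in $\mathcal{A}(X,B)$ and witnesses that $E$ is toric with respect to it. Statement (4) then follows from (2): writing a log canonical center as $Z=c_X(E)$ for a log canonical place $E$, statement (2) makes $E$ toric with respect to some $\Delta\in\mathcal{A}(X,B)$, and the center of a toric valuation is the corresponding orbit closure, i.e.\ a toric stratum of $\Delta$.

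The real work is in (3), which I would deduce from (1) by producing a \emph{single} divisor $E$ over $X$ with $c_X(E)=Z$ that is toric with respect to every $\Delta\in\mathcal{A}(X,B)$ simultaneously. The difficulty is that a toric extraction built from the fan of one $\Delta_i$ need not be toric for another $\Delta_j$, so instead I use an intrinsic construction that is automatically equivariant for all associated tori. A toric stratum of $\Delta$ is a $T_\Delta$-orbit closure, so the ideal sheaf $I_Z$ is $T_\Delta$-invariant for every $\Delta\in\mathcal{A}(X,B)$; therefore the blow-up $\operatorname{Bl}_{I_Z}X\to X$ (after normalizing) is a toric morphism with respect to each such $\Delta$, and since $T_\Delta$ is connected it fixes each exceptional prime divisor, which is thus toric with respect to every $\Delta$. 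As the images of these exceptional divisors are closed and cover the irreducible set $Z$, one of them, say $E$, dominates $Z$, so $c_X(E)=Z$; when $Z$ is already a divisor one simply takes $E=Z$. By (1) this $E$ is a log canonical place of $(X,B)$, whence $Z$ is a log canonical center.

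I expect the main obstacle to be exactly this common-valuation step in (3): excluding the possibility that no single valuation over $Z$ is toric for all of the $\Delta_i$ at once. The equivariant-blow-up argument circumvents it, but two points need care — that blowing up the torus-invariant ideal $I_Z$ genuinely yields a toric morphism for each structure (so its exceptional divisors count as toric over $X$), where the relationship between the tori in Lemma~\ref{lem:conjugate-tori} is relevant, and that some exceptional component has center precisely $Z$ rather than a proper subvariety. The degenerate cases $Z=X$ and $\dim Z=\dim X-1$ should be treated separately, the latter reducing to the remark that a common divisorial toric stratum is a coefficient-one component of $B$ and hence manifestly a log canonical center.
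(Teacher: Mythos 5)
Your proposal is correct. Parts (1), (3) and (4) are essentially the paper's own proof: the same linearity identity for (1), the same normalized blow-up of $Z$ made equivariant for every associated torus for (3) (the paper concludes via linearity applied to the $\Delta_i$ rather than by invoking (1), a cosmetic difference, and it glosses over the point you treat carefully, that some exceptional component dominates $Z$), and the same reduction of (4) to (2). Part (2) is where you genuinely diverge. The paper extracts $E$ alone via \cite[Theorem 1]{Mor20}, observes that the log pullback $(Y,B_Y)$ is again a log Calabi--Yau pair of complexity zero, and then combines Lemma~\ref{lem:extraction-inclusion} with Corollary~\ref{introcor:BMSZ18} to conclude that $\mathcal{A}_{\{E\}}(X,B)\cong\mathcal{A}(Y,B_Y)\neq\emptyset$; the toric structure on $Y$ produced inside Lemma~\ref{lem:extraction-inclusion} exhibits $E$ as a boundary component, so the toricness of $E$ is \emph{constructed} rather than quoted. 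Your route --- $0=a_E(X,B)=\sum_i b_i\,a_E(X,\Delta_i)$ with nonnegative summands, hence $a_E(X,\Delta_i)=0$ whenever $b_i>0$ --- is shorter, avoids the divisorial extraction machinery entirely, and yields the stronger conclusion that $E$ is a log canonical place of every $\Delta_i$ occurring with positive coefficient in any toric boundary arrangement of $B$ (consistent with Example~\ref{ex:assoc-but-no-blend}). What it buys this with is the quoted ``standard characterization'' that the log canonical places of a toric log Calabi--Yau pair are exactly the toric valuations: the direction you need (lc place $\Rightarrow$ toric) is the nontrivial converse, is never stated as a lemma in the paper, and is in effect what the paper's extraction argument proves (apply its part (2) to $(X,\Delta)$ itself, where $\mathcal{A}(X,\Delta)=\{\Delta\}$ by Proposition~\ref{prop:one-ass-pair}). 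That converse is genuinely standard --- pass to a toric log resolution, where every lc place of a reduced SNC pair arises from iterated blow-ups of strata, and such blow-ups stay toric --- so your proof stands, but you should cite or sketch it explicitly; otherwise one could object that you are assuming the toric special case of the very statement being proved.
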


Given a toric log Calabi--Yau pair $(X,\Delta),$ its toric strata can be characterized either as the log canonical centers of the pair or as the closures of the orbits of the corresponding torus action on $X.$ Consequently, one obtains the well-known fact that a toric log Calabi--Yau pair $(X,\Delta)$ is log smooth if and only if it is divisorially log terminal.\footnote{Roughly, a
log pair $(X,B)$ is \textit{divisorially log terminal} if it is a log canonical pair which is log smooth generically along each of its log canonical centers. See Definition ~\ref{def:dlt} for a precise definition.} Using the information about log canonical places and centers provided by Theorem ~\ref{introthm:lccs}, we will be able to generalize this well-known fact to all log Calabi--Yau pairs of complexity zero via the following:

\begin{introthm}\label{introthm:dlt-log-smth}
Let $(X,B)$ be a log Calabi--Yau pair of complexity zero and let $(Y,B_Y)\rightarrow (X,B)$ be a dlt modification. Then the pair $(Y,\lfloor B_Y\rfloor)$ is log smooth. 
\end{introthm}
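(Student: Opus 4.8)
The plan is to reduce everything to showing that the total space $Y$ is smooth; once this is known, log smoothness of $(Y,\lfloor B_Y\rfloor)$ will fall out of the toric picture. First I would record that $(Y,B_Y)$ is again a log Calabi--Yau pair of complexity zero. Since a dlt modification is crepant, $K_Y+B_Y=\pi^*(K_X+B)\sim_{\qq}0$ and $(Y,B_Y)$ is log canonical, so it is log Calabi--Yau; and since the only divisors extracted by $\pi$ have log discrepancy zero with respect to $(X,B)$, Lemma~\ref{lem:complexity-and-exceptional-divisors} gives $c(Y,B_Y)=c(X,B)=0$. In particular Corollary~\ref{introcor:BMSZ18} applies, so $\mathcal{A}(Y,B_Y)\neq\varnothing$, and for each $\Delta\in\mathcal{A}(Y,B_Y)$ the pair $(Y,\Delta)$ is a toric log Calabi--Yau pair with $\lfloor B_Y\rfloor\leq\Delta$.

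The key step is to prove that $Y$ is smooth, arguing by contradiction: if $Y$ were singular I would produce an lc center of $(Y,B_Y)$ lying in the non-snc locus, contradicting the dlt hypothesis. The point is that $\operatorname{Sing}(Y)$ is intrinsic to $Y$ and hence preserved by every automorphism. For each $\Delta\in\mathcal{A}(Y,B_Y)$ the acting torus is a subgroup of $\Aut(Y)$, so $\operatorname{Sing}(Y)$ is invariant under it; each irreducible component of $\operatorname{Sing}(Y)$, being an invariant irreducible closed subvariety, is the closure of a single orbit, i.e.\ a toric stratum of $(Y,\Delta)$. As this holds for \emph{every} $\Delta\in\mathcal{A}(Y,B_Y)$, Theorem~\ref{introthm:lccs}(3) shows that each component $Z\subseteq\operatorname{Sing}(Y)$ is a log canonical center of $(Y,B_Y)$. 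But $Y$ is singular at the generic point of $Z$, so $(Y,B_Y)$ cannot be log smooth there; since a dlt pair is log smooth generically along each of its log canonical centers (Definition~\ref{def:dlt}), this is a contradiction. Hence $\operatorname{Sing}(Y)=\varnothing$ and $Y$ is smooth.

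Finally, once $Y$ is a smooth toric variety with respect to some $\Delta\in\mathcal{A}(Y,B_Y)$, its toric boundary $\Delta$ is a reduced simple normal crossings divisor, and $\lfloor B_Y\rfloor\leq\Delta$ is the sum of a subcollection of the components of $\Delta$, hence itself simple normal crossings; therefore $(Y,\lfloor B_Y\rfloor)$ is log smooth. I expect the middle paragraph to be the main obstacle: the delicate point is verifying the hypothesis of Theorem~\ref{introthm:lccs}(3) \emph{uniformly}, namely that the components of $\operatorname{Sing}(Y)$ are simultaneously toric strata for every associated boundary. The automorphism-invariance of $\operatorname{Sing}(Y)$ is precisely what secures this (so that one need not know the various tori are conjugate), and it must be paired with the bookkeeping of the first paragraph ensuring $(Y,B_Y)$ truly has complexity zero, which is what makes Theorem~\ref{introthm:lccs} applicable in the first place.
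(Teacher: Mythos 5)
Your proof is correct and takes essentially the same route as the paper: reduce to showing $Y$ is smooth (log smoothness of $(Y,\lfloor B_Y\rfloor)$ then follows since $\lfloor B_Y\rfloor$ is a subsum of a toric boundary), show that every irreducible component of ${\rm Sing}(Y)$ is a log canonical center of $(Y,B_Y)$, and contradict the dlt condition. The only cosmetic difference is that you verify the hypothesis of Theorem~\ref{introthm:lccs}(3) directly, via torus-invariance of ${\rm Sing}(Y)$ and finiteness of orbits, whereas the paper cites Corollary~\ref{cor:lcps-from-sing}, whose proof rests on exactly the same invariance observation combined with the blow-up argument underlying Theorem~\ref{introthm:lccs}.
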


Theorem ~\ref{introthm:dlt-log-smth} will provide us with the complexity-preserving birational modifications that we need in order to prove Theorem ~\ref{introthm:gen-bott-towers}.

\subsection{Behavior with respect to contractions}

The property of being a toric variety imposes strong restrictions on the morphisms and rational maps that a variety can admit. Two cases which illustrate this quite clearly are the cases of contraction morphisms and birational contraction maps (see Section ~\ref{subsec:contractions}).\par
If $(X,\Delta)$ is a toric log Calabi--Yau pair and $f\colon X \dashrightarrow Y$ is a birational contraction map, then $(Y,\Delta_Y=f_*\Delta)$ is a toric log Calabi--Yau pair and the map $f$ is automatically toric (see Lemma ~\ref{lem:toric-bir-contr}). Moreover, the exceptional locus of such a map $f$ will be a union of toric strata. The following theorem generalizes these facts to all log Calabi--Yau pairs of complexity zero. 



\begin{introthm}\label{introthm:exceptional-loci}
Let $(X,B)$ be a log Calabi--Yau pair of complexity zero and let $f\colon X\dashrightarrow Y$ be a birational contraction. Then $(Y,B_Y=f_*B)$ is a log Calabi--Yau pair of complexity zero, and the exceptional locus of $f$ is a union of log canonical centers of the pair $(X,B).$ 
\end{introthm}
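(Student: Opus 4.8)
The plan is to treat the two assertions separately, in each case reducing to the toric picture by combining Theorem~\ref{introthm:convex-toric-div} with Lemma~\ref{lem:toric-bir-contr}.

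For the first assertion, I would apply Theorem~\ref{introthm:convex-toric-div} to write $B=\sum_{i=1}^r b_i\Delta_i$, where each $(X,\Delta_i)$ is a toric log Calabi--Yau pair and $b_1,\dots,b_r\in[0,1]$ are rational with $\sum_i b_i=1$. Since $f$ is a birational contraction and each $(X,\Delta_i)$ is toric log Calabi--Yau, Lemma~\ref{lem:toric-bir-contr} guarantees that each pushforward $(Y,f_*\Delta_i)$ is again a toric log Calabi--Yau pair (and that $f$ is toric with respect to the corresponding torus structure). As proper pushforward of divisors is linear, $B_Y=f_*B=\sum_{i=1}^r b_i\,f_*\Delta_i$ is a convex combination, on the fixed variety $Y$, of the reduced toric boundaries $f_*\Delta_i$. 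Each of the pairs $(Y,f_*\Delta_i)$ is log Calabi--Yau of complexity zero, and log canonicity, numerical triviality of the log canonical divisor, and the property that the boundary coefficients sum to $\dim Y+\rank\operatorname{WDiv}_{\mathrm{alg}}(Y)$ are all preserved under such convex combinations (the second source of examples discussed in the introduction). Hence $(Y,B_Y)$ is a log Calabi--Yau pair of complexity zero.

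For the second assertion, the key observation is that the decomposition of the exceptional locus into toric strata holds for every associated torus structure at once. Concretely, fix any $\Delta\in\mathcal{A}(X,B)$; this set is nonempty by Corollary~\ref{introcor:BMSZ18}. Then $(X,\Delta)$ is toric log Calabi--Yau, so by Lemma~\ref{lem:toric-bir-contr} the map $f$ is toric with respect to the torus action determined by $\Delta$, and its exceptional locus $\operatorname{Exc}(f)$ is a union of toric strata of $\Delta$. Because the irreducible components of a finite union of toric strata are precisely the maximal strata appearing in that union, each irreducible component $Z$ of $\operatorname{Exc}(f)$ is itself a toric stratum of $\Delta$. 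Crucially, $\operatorname{Exc}(f)$ and its irreducible components are intrinsic to $f$ and do not depend on the chosen $\Delta$; running the same argument for each member of $\mathcal{A}(X,B)$ shows that a single fixed component $Z$ is a toric stratum of \emph{every} $\Delta\in\mathcal{A}(X,B)$.

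With this uniformity established, Theorem~\ref{introthm:lccs}(3) applies directly to each component: being a toric stratum of every $\Delta\in\mathcal{A}(X,B)$, the subvariety $Z$ is a log canonical center of $(X,B)$. Taking the union over the finitely many components of $\operatorname{Exc}(f)$ then exhibits the exceptional locus as a union of log canonical centers, completing the proof. The step I expect to require the most care is precisely this interchange between the different torus structures: one must verify that the stratifications of $\operatorname{Exc}(f)$ furnished by Lemma~\ref{lem:toric-bir-contr} for distinct $\Delta$ are compatible, so that a fixed component qualifies under the ``every $\Delta$'' hypothesis of Theorem~\ref{introthm:lccs}(3) rather than only the weaker ``some $\Delta$'' conclusion of Theorem~\ref{introthm:lccs}(4).
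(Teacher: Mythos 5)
Your proof is correct, and your treatment of the second assertion coincides with the paper's: the paper likewise reduces to showing that each irreducible component of the exceptional locus is a toric stratum of every $\Delta\in\mathcal{A}(X,B)$, which it deduces from Lemma~\ref{lem:toric-bir-contr}, and then invokes part (3) of Theorem~\ref{introthm:lccs}; your explicit justification that components of a finite union of toric strata are themselves strata, and that $\operatorname{Exc}(f)$ is independent of the chosen $\Delta$, is exactly the (unstated) content of the paper's ``but this follows from Lemma~\ref{lem:toric-bir-contr}.'' Where you genuinely diverge is the first assertion. The paper gets it from Corollary~\ref{cor:complexity-zero-and-exceptional-divisors}, i.e.\ from the elementary complexity formula under birational contractions (Lemma~\ref{lem:complexity-and-exceptional-divisors}) combined with the nonnegativity of complexity from Theorem~\ref{introthm:BMSZ18}: one sees directly that $c(Y,B_Y)=-\sum_i a_{E_i}(X,B)\le 0$ forces all exceptional log discrepancies to vanish and $c(Y,B_Y)=0$. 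You instead push forward the decomposition $B=\sum_i b_i\Delta_i$ of Theorem~\ref{introthm:convex-toric-div} through Lemma~\ref{lem:toric-bir-contr} and close with the convex-combination principle from the introduction. Both are valid and neither is circular (Theorem~\ref{introthm:convex-toric-div} is proved before this theorem, and you need it anyway via Theorem~\ref{introthm:lccs}). The trade-off: the paper's route is lighter, using only preliminary material for this half, and yields the extra information that every $f$-exceptional divisor is a component of $\lfloor B\rfloor$; your route leans on the main structural theorem but has the merit of exhibiting $B_Y$ explicitly as the toric boundary arrangement $\sum_i b_i f_*\Delta_i$ on $Y$, which is essentially the statement of Theorem~\ref{introthm:canonical-bundle-formula} restricted to the birational setting.
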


If $X$ is a toric variety and $f\colon X \rightarrow Y$ is a contraction, then $Y$ automatically admits the structure of a toric variety in such a way that $f$ becomes equivariant (see Lemma ~\ref{lem:toric-contractions}). Moreover, if $\Delta$ is the toric boundary of $X$ and $(Y, B_Y, \mathbf{M})$ is the generalized pair induced on $Y$ by the canonical bundle formula, then it follows that $B_Y=\Delta_Y$ is the toric boundary on $Y$ and $\mathbf{M}\sim 0$ (see Lemma ~\ref{lem:toric-canonical-bundle-formula}). The following theorem generalizes these facts to the case of log Calabi--Yau pairs of complexity zero.

\begin{introthm}\label{introthm:canonical-bundle-formula}
Let $(X,B)$ be a log Calabi--Yau pair of complexity zero, and let $B=\sum_{i=1}^rb_i\Delta_i$ be a decomposition as in Theorem ~\ref{introthm:convex-toric-div}. Let $X\rightarrow Y$ be a fibration, and let $(Y, B_Y,\mathbf{M})$ be the generalized pair induced by the canonical bundle formula. For each $1\leq i \leq r,$ denote by $(Y,\Gamma_i)$ the toric log Calabi--Yau pair induced by $(X,\Delta_i).$ Then $B_Y=\sum_{i=1}^rb_i\Gamma_i.$ In particular, $(Y,B_Y)$ is a log Calabi--Yau pair of complexity zero and $\mathbf{M}\sim_\qq 0$ where it descends. 
\end{introthm}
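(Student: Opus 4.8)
The plan is to reduce everything to the toric case summand by summand, using the decomposition $B=\sum_{i=1}^r b_i\Delta_i$ from Theorem \ref{introthm:convex-toric-div}, and then to show that both the discriminant and the moduli part of the canonical bundle formula are compatible with such convex combinations. First I would fix $i$ and observe that, since $(X,\Delta_i)$ is a toric log Calabi--Yau pair, the fibration $X\rightarrow Y$ is automatically equivariant for the corresponding torus (Lemma \ref{lem:toric-contractions}); thus $Y$ inherits a toric structure and the canonical bundle formula for $(X,\Delta_i)\rightarrow Y$ is the toric one. By Lemma \ref{lem:toric-canonical-bundle-formula} its discriminant is the toric boundary $\Gamma_i$ and its moduli part $\mathbf{M}_i\sim 0$, indeed trivially as a $b$-divisor. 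Averaging the relations $K_X+\Delta_i\sim_\qq f^*(K_Y+\Gamma_i)$ with weights $b_i$, and using $\sum_i b_i=1$, yields
\[
K_X+B\sim_\qq f^*\Big(K_Y+\textstyle\sum_{i=1}^r b_i\Gamma_i\Big),
\]
a candidate canonical bundle formula in which the moduli part is visibly $\qq$-trivial.

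The whole theorem then reduces to the identity $B_Y=\sum_i b_i\Gamma_i$. Indeed, once this is known, comparing $K_Y+B_Y+\mathbf{M}\sim_\qq 0$ with $K_Y+\sum_i b_i\Gamma_i\sim_\qq 0$ (a sum of toric log Calabi--Yau relations) forces $\mathbf{M}\sim_\qq 0$ where it descends, while $(Y,B_Y)=(Y,\sum_i b_i\Gamma_i)$ is a convex combination of the toric log Calabi--Yau pairs $(Y,\Gamma_i)$ and hence itself log Calabi--Yau of complexity zero. To prove the identity I would compute the discriminant through log canonical thresholds: for a prime divisor $P\subset Y$ the coefficient of $P$ in $B_Y$ is $1-t_P$, where $t_P=\lct_{\eta_P}(X,B;f^*P)$, and similarly the coefficient in $\Gamma_i$ is $1-t_P^{(i)}$ for the threshold of $(X,\Delta_i)$. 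Because $\sum_i b_i=1$, the log discrepancies are exactly linear in the convex combination: for every divisor $E$ over $X$ and every $t$,
\[
A_{(X,B+t f^*P)}(E)=\sum_{i=1}^r b_i\,A_{(X,\Delta_i+t f^*P)}(E).
\]
Feeding this into the description of the threshold as an infimum of the ratios $A_{(X,\Delta_i)}(E)/\ord_E(f^*P)$ gives the concavity inequality $t_P\geq \sum_i b_i t_P^{(i)}$, that is, $B_Y\leq \sum_i b_i\Gamma_i$ coefficientwise.

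The reverse inequality is the heart of the matter and the step I expect to be the main obstacle, since log canonical thresholds are in general only concave: equality requires a single valuation that computes $t_P^{(i)}$ simultaneously for all $i$. Here the toric structure supplies exactly such a valuation. I would pass to a sufficiently high model $X'\rightarrow X$, equivariant for (the conjugates of) all the tori at once — available because there are finitely many $\Delta_i$ and the corresponding tori are related as in Lemma \ref{lem:conjugate-tori} — with induced fibration $f'\colon X'\rightarrow Y'$ arranged so that every relevant prime $P'\subset Y'$ is dominated by a torus-invariant prime divisor of $X'$. Since every torus-invariant divisor lies in the reduced toric boundary of each $(X',\Delta_i')$, it has log discrepancy zero for every $i$, hence computes $t_{P'}^{(i)}=0$ simultaneously; the linearity above then gives $t_{P'}=0$ as well, so the discriminants of $(X',B')$ and of each $(X',\Delta_i')$ all coincide with the reduced toric boundary of $Y'$.

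Descending from $Y'$, this yields the desired equality $B_Y=\sum_i b_i\Gamma_i$ and, on a model where $\mathbf{M}$ descends, $\mathbf{M}_{Y'}\sim_\qq 0$, hence $\mathbf{M}\sim_\qq 0$ there. Combined with the reduction of the second paragraph, this proves that $B_Y=\sum_i b_i\Gamma_i$, that $(Y,B_Y)$ is log Calabi--Yau of complexity zero, and that $\mathbf{M}\sim_\qq 0$ where it descends. The delicate point throughout is the construction of the common high toric model in the reverse inequality; everything else is bookkeeping with the linearity of log discrepancies and the toric canonical bundle formula.
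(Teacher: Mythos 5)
Your first two steps are correct and coincide with the paper's own reductions: Lemma~\ref{lem:toric-canonical-bundle-formula} applied summand by summand, together with the linearity of log discrepancies in the boundary, reduces everything to the identity $B_Y=\sum_i b_i\Gamma_i$, and your concavity argument correctly yields $B_Y\leq \sum_i b_i\Gamma_i$. The gap is your third step, and it is not merely delicate but impossible as stated. A prime divisor of $X'$ that is invariant under \emph{all} the tori at once is a component of every $\Delta_i'$, hence by linearity has coefficient $1$ in $B'$; so if such a divisor dominates a prime $P'\subset Y'$, then $\coeff_{P'}(B_{Y'})=1$. By your own inequality from step two, $\coeff_{P'}(B_{Y'})\leq \sum_i b_i\coeff_{P'}(\Gamma_i')<1$ whenever $P'$ lies on some but not all of the $\Gamma_i'$. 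Thus, precisely at the primes where the reverse inequality still has content (where the answer is fractional), no simultaneously invariant divisor dominating them can exist on any model; and your stated conclusion that the discriminants of $(X',B')$ and of each $(X',\Delta_i')$ all equal the reduced toric boundary of $Y'$ contradicts the very statement being proved. A concrete obstruction: take $X=\pp^1\times\pp^1$ with $f$ the first projection and $B=\tfrac{1}{2}\Delta_1+\tfrac{1}{2}\Delta_2$, where $\Delta_1,\Delta_2$ are toric boundaries for two conjugate tori agreeing in the fiber direction but not on the base, say $\Gamma_1=\{0\}+\{\infty\}$ and $\Gamma_2=\{1\}+\{-1\}$. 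Here $Y=\pp^1$ admits no nontrivial birational modification, $B_Y=\tfrac{1}{2}\Gamma_1+\tfrac{1}{2}\Gamma_2$ has four coefficients equal to $\tfrac{1}{2}$, and no valuation over $X$ dominating any of these four points can have log discrepancy zero for both toric structures.

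The paper's mechanism for the missing inequality is a case division at each prime $D\subset Y$, which replaces your hoped-for common valuation by either degeneracy or by the unique divisor over $D$. If $f^{-1}(D)$ contains two or more prime divisors, each is degenerate over $Y$, hence a component of $\lfloor B\rfloor$ by Lemma~\ref{lem:compl-zero-degn-divs}; this forces $\coeff_D(B_Y)=1$ and simultaneously places $D$ inside every $\Gamma_i$, so both sides equal $1$. If instead $f^{-1}(D)$ is a single prime divisor $E$, then $\coeff_E(\Delta_i)\in\{0,1\}$ according to whether $D\subset\Gamma_i$ or not, and for each $j$ with $D\not\subset\Gamma_j$ the pair $(X,\Delta_j+f^*D)$ is log canonical over the generic point of $D$, since its discriminant is $\Gamma_j$ by Lemma~\ref{lem:toric-canonical-bundle-formula}. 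Writing $t_0=\sum_{j:\,D\not\subset\Gamma_j}b_j$ and $B+t_0f^*D=\sum_{i:\,D\subset\Gamma_i}b_i\Delta_i+\sum_{j:\,D\not\subset\Gamma_j}b_j\left(\Delta_j+f^*D\right)$ exhibits this pair as a convex combination of pairs that are log canonical over the generic point of $D$, so $\lct_D(X,B;f^*D)=1-\coeff_E(B)$ and hence $\coeff_D(B_Y)=\sum_i b_i\coeff_D(\Gamma_i)$ exactly. In other words, the fractional coefficients are computed by $E$ alone --- a divisor invariant for some of the tori and not the others --- which is the opposite of the uniformity your construction tries to impose.
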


\subsection*{Acknowledgements}

The authors would like to thank Stefano Filipazzi, Lena Ji, Joaqu\'in Moraga and Burt Totaro for very useful comments. The first author was partially supported by NSF grant DMS-2054553.

\section{Preliminaries}

In this section we collect some preliminaries about log pairs, the relative MMP, toric geometry and the complexity of pairs.

\subsection{Log pairs} In this subsection we recall some basic definitions regarding log pairs and log discrepancies.

\begin{definition}{\em
A \textit{log sub-pair} $(X,B)$ consists of a normal quasi-projective variety $X$ and a $\qq$-divisor $B$ with the property that $K_X+B$ is $\qq$-Cartier. We say that a log sub-pair $(X,B)$ is a \textit{log pair} if $B$ is effective.}
\end{definition}

\begin{definition}{\em
Let $(X,B)$ be a log sub-pair, and let $f\colon Y\rightarrow X$ be a proper birational morphism from a normal variety $Y$. We will refer to the unique log sub-pair $(Y,B_Y)$ satisfying
\begin{itemize}
    \item $K_Y+B_Y\sim_\qq f^*(K_X+B),$ and
    \item $f_*B_Y=B$
\end{itemize}
as the \textit{log pullback of $(X,B)$ via $f.$}}
\end{definition}

\begin{definition}{\em
Let $(X,B)$ be a log sub-pair and let $f\colon Y \rightarrow X$ be a projective birational morphism from a normal variety $Y.$ Given a prime divisor $E\subset Y,$ its \textit{log discrepancy with respect to $(X,B)$} is the quantity
$$a_E(X,B)=1-{\rm coeff}_E(B_Y),$$
where $(Y,B_Y)$ is the log pullback of $(X,B)$ via $f.$\par We say that a log pair $(X,B)$ is \textit{log canonical} (respectively \textit{Kawamata log terminal}) if $a_E(X,B)\geq 0$ (respectively $a_E(X,B)> 0$) for all prime divisors $E$ over $X.$}
\end{definition}

\begin{definition}{\em
Let $(X,B)$ be a log canonical pair. A \textit{log canonical place} of $(X,B)$ is a divisor $E$ over $X$ for which $a_E(X,B)=0$. A \textit{log canonical center} of $(X,B)$ is a subvariety $Z\subset X$ which is the image on $X$ of a log canonical place of $(X,B)$.}
\end{definition}

\begin{definition}\label{def:dlt}
{\em
Let $(X,B)$ be a log pair. We say that $(X,B)$ is \textit{divisorially log terminal} if it is log canonical and there exists an open subset $U \subset X$, such that:
\begin{enumerate}
\item the pair $(U,B_U)$ is log smooth, and
\item every log canonical center of $(X,B)$ intersects $U.$
\end{enumerate}
}
\end{definition}

\begin{notation}{\em
We will often abbreviate log canonical, Kawamata log terminal and divisorially log terminal as lc, klt and dlt respectively.}
\end{notation}

\begin{definition}\label{def:dlt-mod}
{\em Let $(X,B)$ be a log canonical pair.  A \textit{dlt modification} of $(X,B)$ is a projective birational morphism $f \colon Y \rightarrow X$, satisfying the following:

\begin{enumerate}
\item $Y$ is $\qq$-factorial,
\item every $f$-exceptional divisor is a log canonical place of $(X,B)$, 
\item the log pullback $(Y,B_Y)$ of $(X,B)$ via $f$ is dlt, and
\item $K_Y+B_Y$ is $f$-nef.
\end{enumerate}
}
\end{definition}

We recall the existence of dlt modifications.

\begin{lemma}\label{lem:exist-dlt-mod}
Let $(X,B)$ be a log canonical pair. Then there exists a dlt modification $f\colon Y\rightarrow X$ of $(X,B)$.
\end{lemma}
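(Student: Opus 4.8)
The plan is to construct the dlt modification by first resolving the pair and then running a relative minimal model program that contracts exactly those exceptional divisors of positive log discrepancy. First I would take a log resolution $g\colon W\rightarrow X$ of $(X,B)$, and let $E_1,\dots,E_k$ denote the $g$-exceptional prime divisors. I would then form the boundary
$$B_W^+ = g_*^{-1}B + \sum_{i=1}^{k}E_i,$$
the sum of the strict transform of $B$ together with all exceptional divisors taken with coefficient one. Since $g$ is a log resolution, the pair $(W,B_W^+)$ is log smooth, and in particular it is dlt and $\qq$-factorial.

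Comparing $K_W + B_W^+$ with the log pullback gives
$$K_W + B_W^+ \sim_\qq g^*(K_X+B) + \sum_{i=1}^{k} a_{E_i}(X,B)\, E_i,$$
and because $(X,B)$ is log canonical each coefficient satisfies $a_{E_i}(X,B)\geq 0$. Hence the divisor $F := \sum_{i} a_{E_i}(X,B)\,E_i$ is effective and $g$-exceptional, and $K_W + B_W^+ \sim_{\qq,X} F \geq 0$. I would then run the $(K_W+B_W^+)$-MMP over $X$, producing a model $f\colon Y\rightarrow X$ on which $K_Y+B_Y$ is $f$-nef, where $B_Y$ is the pushforward of $B_W^+$. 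On the output, $K_Y+B_Y$ is $\qq$-linearly equivalent over $X$ to the pushforward of the effective $f$-exceptional divisor $F$; since it is $f$-nef, the negativity lemma forces this pushforward to vanish, so the program contracts precisely the components of $\Supp F$, namely the $E_i$ with $a_{E_i}(X,B)>0$. The surviving $f$-exceptional divisors are therefore those with $a_{E_i}(X,B)=0$, i.e. log canonical places of $(X,B)$, and they appear in $B_Y$ with coefficient $1 = 1 - a_{E_i}(X,B)$.

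It remains to verify the four conditions of Definition~\ref{def:dlt-mod}. Condition (1) holds because the MMP preserves $\qq$-factoriality starting from the smooth variety $W$; condition (2) holds by the identification of the surviving exceptional divisors as log canonical places; condition (3) holds because each step of the MMP preserves the dlt property, so $(Y,B_Y)$ is dlt. Finally, since the surviving coefficients match those of the log pullback of $(X,B)$, we get $K_Y+B_Y \sim_\qq f^*(K_X+B)$, which is $f$-trivial and hence $f$-nef, giving condition (4) and confirming that $B_Y$ is indeed the log pullback.

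The main obstacle is the termination of this particular MMP: because $K_W+B_W^+$ is relatively $\qq$-linearly equivalent to the effective exceptional divisor $F$ rather than being relatively big, termination does not follow from a naive application of a contraction theorem. I would obtain it from the existence of minimal models of Birkar, Cascini, Hacon, and McKernan, running the program with scaling of an ample divisor and observing that all contractions and flips occur inside the $g$-exceptional locus, so that special termination for dlt pairs applies. I would treat this as the key external input rather than reprove it.
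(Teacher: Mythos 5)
Your construction is correct, and it is worth noting that the paper itself does not argue at all: its entire proof of Lemma~\ref{lem:exist-dlt-mod} is the citation to \cite[Corollary 1.36]{K13}. What you have written is essentially the standard proof lying behind that citation (the same construction appears in Koll\'ar--Kov\'acs and Fujino): take a log resolution, set $B_W^+=g_*^{-1}B+\sum_i E_i$, run the $(K_W+B_W^+)$-MMP over $X$, and use the negativity lemma \cite[Lemma 3.39]{KM98} to see that the strict transform of $F=\sum_i a_{E_i}(X,B)E_i$ vanishes on the output, so that exactly the divisors of positive log discrepancy are contracted; your verification of the four conditions of Definition~\ref{def:dlt-mod} (Q-factoriality along the MMP, surviving exceptional divisors are lc places with coefficient one, dlt-ness preserved by MMP steps, and $K_Y+B_Y=f^*(K_X+B)$ hence $f$-nef) is all sound, as is the observation that only components of $\Supp F$ can be contracted since contracted curves meet $F$ negatively. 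You also correctly isolate termination as the one delicate point and defer it to BCHM plus special termination, which is legitimate given that the paper defers the entire lemma. One small correction to your diagnosis, though: the difficulty is not that $K_W+B_W^+$ fails to be relatively big---over a birational base every divisor is relatively big, since the generic fiber is a point---but that $(W,B_W^+)$ is only dlt rather than klt, so the termination-with-scaling results of BCHM for klt pairs with big boundary do not apply directly; this is precisely why the scaling-plus-special-termination argument (using that all flipping loci lie in $\Supp F\subset\lfloor B_W^+\rfloor$) is invoked in the literature. In short, the paper's one-line citation buys brevity and outsources this subtlety, while your unfolding makes transparent why the exceptional divisors of the dlt model are precisely log canonical places, at the cost of leaning on the deep MMP input as a black box.
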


\begin{proof}
 This follows from ~\cite[Corollary 1.36]{K13}.
\end{proof}


\begin{definition}\label{def:logCY}
{\em Let $(X,B)$ be a log pair. We say that $(X,B)$ is a \textit{log Calabi--Yau pair} if $X$ is projective, $(X,B)$ is log canonical and $K_X+B\equiv 0$.
}
\end{definition}

\begin{remark}\label{rmk:alt-logCY-def}
{\em
It follows from ~\cite[Theorem 1.5]{FG14a} that $(X,B)$ is a log Calabi--Yau pair in the sense of Definition ~\ref{def:logCY} if and only if it is a log canonical pair satisfying $K_X+B \sim_{\mathbb{Q}}0$.
}
\end{remark}

\begin{definition}\label{def:crepant}
    {\em
    Let $(X,B)$ and $(Y,B_Y)$ be two log sub-pairs. We will say that a birational map $f\colon X \dashrightarrow Y$ is \textit{crepant} with respect to these sub-pairs if it admits a resolution 
    \[
\xymatrix{ 
 & Z \ar[ld]_-{p} \ar[rd]^-{q} & \\ 
 X \ar@{-->}[rr]^-{f}& & Y
}
\]
with proper birational morphisms $p$ and $q$ such that the log pullback of $(X,B)$ via $p$ is equal to the log pullback of $(Y,B_Y)$ via $q.$
    }
\end{definition}

\begin{remark}\label{rmk:crepant-discrepancy}
{\em
Let $f\colon (X,B)\dashrightarrow (Y,B_Y)$ be a crepant birational map between two log pairs. It follows from ~\cite[Lemma 2.30]{KM98} that $a_E(X,B)=a_E(Y,B_Y)$ for every divisor $E$ over $X$ and $Y.$
}
\end{remark}

\subsection{Contractions}\label{subsec:contractions} In this subsection we recall the definition of contractions and fibrations.

\begin{definition}{\em
    Let $f\colon X \rightarrow Y$ be a proper morphism of varieties. We denote by $$N^1(X/Y)=\left({\rm Pic}(X)\otimes_\zz\rr\right)/\equiv_Y$$
    and $$N_1(X/Y)=\left(Z_1(X/Y)\otimes_\zz\rr\right)/\equiv_Y$$
    the real vector spaces of Cartier divisors and relative $1$-cycles modulo numerical equivalence over $Y,$ respectively (see ~\cite[Section IV.4]{Kle66}). These vector spaces are dual under the intersection pairing, and are finite-dimensional by ~\cite[Proposition IV.4.3]{Kle66}. Their common dimension is denoted by $\rho(X/Y)$ and is referred to as the \textit{relative Picard rank} of the morphism $f.$
    }
\end{definition}

\begin{remark}{\em
When $Y=\Spec \kk$ is a point, we will simply write $N^1(X), N_1(X)$ and $\rho(X)$ and will omit the word ``relative''.
}
\end{remark}

\begin{definition}\label{def:contraction}{\em
We say that a morphism $f\colon X \rightarrow Y$ between normal quasi-projective varieties is a \textit{contraction} if it is projective and satisfies $f_*\mathcal{O}_X=\mathcal{O}_Y$. We say that the contraction $f$ is a \textit{fibration} if $\dim Y < \dim X.$ We say that the contraction $f$ is \textit{extremal} if $\rho(X/Y)=1.$
}
\end{definition}

\begin{remark}{\em
A contraction $f\colon X \rightarrow Y$ is birational if and only if $\dim Y = \dim X,$ and a projective birational morphism between normal varieties is automatically a contraction.
}
\end{remark}

\begin{definition}\label{def:bir-contraction-map}{\em
We say that a birational map $f\colon X \dashrightarrow Y$ between normal quasi-projective varieties is a \textit{birational contraction map} if it is surjective in codimension one.
}
\end{definition}

\subsection{Mori Dream Spaces}
In this subsection we recall the definition and basic properties of \textit{Mori dream spaces}, as introduced by Hu and Keel in \cite{HK00}.
\begin{definition}\label{def:sqm}{\em
    Let $X$ be a normal projective variety. We say that a birational map $f\colon X \dashrightarrow Y$ is a \textit{small $\qq$-factorial modification} of $X$ if $f$ is an isomorphism in codimension one and $Y$ is a normal, $\qq$-factorial projective variety.
    }
\end{definition}

\begin{definition}\label{def:mds}{\em
We say that a normal projective variety $X$ is a \textit{Mori dream space} if
\begin{enumerate}
    \item $X$ is $\qq$-factorial and ${\rm Pic}(X)_\rr=N^1(X),$
    \item ${\rm Nef}(X)$ is the affine hull of finitely many semi-ample line bundles,
    \item there is a finite collection of small $\qq$-factorial modifications $f_i\colon X \dashrightarrow X_i$ such that each $X_i$ satisfies (1) and (2) and such that ${\rm Mov}(X)$ is the union of the $f_i^*\left({\rm Nef}(X_i)\right).$
\end{enumerate}
}
\end{definition}

The following propositions summarize the properties of Mori dream spaces that will be most important for us. They follow from ~\cite[Proposition 1.11]{HK00}.

\begin{proposition}\label{prop:mds-mmp}
Let $X$ be a Mori dream space and let $f\colon X \rightarrow Y$ be a projective morphism. Then the $D$-MMP over $Y$ can be run for any divisor $D$ on $X$, in the sense that all necessary contractions and flips exist. Any such MMP terminates. 
\end{proposition}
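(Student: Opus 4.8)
The plan is to deduce everything from the finite chamber structure on the relative movable cone furnished by the Mori dream space hypothesis. First I would relativize the relevant cones over $Y$. Since $X$ is $\qq$-factorial with ${\rm Pic}(X)_\rr=N^1(X)$ and $f$ is projective, the projection $N^1(X)\twoheadrightarrow N^1(X/Y)$ induces relative cones ${\rm Nef}(X/Y)$, $\overline{\rm Mov}(X/Y)$ and $\overline{\rm Eff}(X/Y)$, each of which is again rational polyhedral. The small $\qq$-factorial modifications $f_i\colon X\dashrightarrow X_i$ of condition (3) then induce a finite decomposition of $\overline{\rm Mov}(X/Y)$ into the relative nef chambers $f_i^*{\rm Nef}(X_i/Y)$, while the fixed divisors cut the complement of $\overline{\rm Mov}$ inside $\overline{\rm Eff}$ into finitely many further chambers.

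Next I would analyze a single step of the $D$-MMP over $Y$. If the class of $D$ lies in ${\rm Nef}(X/Y)$ we stop. Otherwise there is a $D$-negative extremal ray $R\subset \overline{\rm NE}(X/Y)$; because ${\rm Nef}(X/Y)$ is rational polyhedral and generated by semi-ample classes (condition (2)), the face spanned by $R$ has an associated contraction ${\rm cont}_R\colon X\to Z$ over $Y$, realized as ${\rm Proj}$ of the section ring of a supporting semi-ample divisor. If ${\rm cont}_R$ is of fiber type we have reached a Mori fiber space and stop; if it is divisorial we replace $X$ by $Z$, which is again a Mori dream space; and if it is small we must produce the flip. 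Here the Mori dream space property is essential: the relevant section ring over $Z$ is finitely generated, so the flipped variety $X^+$ exists as a normal projective variety, and since $X\dashrightarrow X^+$ is an isomorphism in codimension one, $X^+$ is a small $\qq$-factorial modification of $X$. Its relative nef cone is therefore one of the finitely many chambers $f_i^*{\rm Nef}(X_i/Y)$, namely the one adjacent to the chamber of $X$ across the wall determined by $R$.

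Finally, for termination I would track the motion of the class $[D]$ through the chamber decomposition. Each flip or divisorial contraction passes the ambient model to the chamber on the $D$-positive side of the crossed wall, so each step strictly advances $[D]$ toward the region where it becomes relatively nef; in particular no model is ever revisited. Since conditions (2) and (3) provide only finitely many chambers and finitely many modifications $X_i$, the sequence of steps must terminate, either when $[D]$ first becomes nef over $Y$ or when a fiber-type contraction is reached.

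I expect the main obstacle to be the simultaneous treatment of flip existence and termination: one must argue that the finitely generated section rings provided by the Mori dream space structure produce genuine flips whose targets are again Mori dream spaces carrying the same chamber decomposition, and then that the monotone motion of $[D]$ across walls prevents cycling through this finite list of models. Both points are precisely the content of \cite[Proposition 1.11]{HK00}, so in the final write-up I would either reproduce this chamber-crossing argument or simply invoke that result.
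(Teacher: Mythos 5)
Your proposal is correct and, in the end, takes the same route as the paper: the paper justifies this proposition (together with Proposition~\ref{prop:mds-nef}) by directly citing \cite[Proposition 1.11]{HK00}, which is precisely the result you fall back on after sketching its internal chamber-crossing argument. The relativization and wall-crossing outline you give is a fair summary of what that citation encapsulates, so there is no substantive difference between the two.
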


\begin{proposition}\label{prop:mds-nef}
Let $X$ be a Mori dream space and let $D$ be a nef divisor on $X.$ Then $D$ is semi-ample.
\end{proposition}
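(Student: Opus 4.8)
The plan is to read the conclusion directly off the definition of a Mori dream space, exploiting the fact that condition (2) already builds semiampleness into the generators of the nef cone. By condition (2) there are finitely many semiample line bundles $L_1,\dots,L_k$ whose classes generate $\operatorname{Nef}(X)$ as a convex cone. Since $D$ is nef, its class lies in $\operatorname{Nef}(X)$, so the entire task splits into two elementary steps: (i) express the class of $D$ as a non-negative rational combination of the $L_i$, and (ii) observe that such a combination is again semiample.

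For step (i), I would first invoke condition (1), namely $\Pic(X)_\rr=N^1(X)$, which allows the nef cone to be regarded inside $\Pic(X)_\rr$ and, crucially, guarantees that numerically equivalent $\qq$-divisors are $\qq$-linearly equivalent. As the generators $L_i$ have integral classes, $\operatorname{Nef}(X)$ is a rational polyhedral cone. A standard convex-geometric fact — Carathéodory's theorem combined with the observation that a linear system with rational data has a rational solution — shows that any rational point of a rational polyhedral cone is a non-negative rational combination of its integral generators. Applying this to the class of $D$ yields $a_1,\dots,a_k\in\qq_{\geq 0}$ with $D\equiv\sum_{i=1}^k a_iL_i$, and by condition (1) this numerical equivalence upgrades to $D\sim_\qq\sum_{i=1}^k a_iL_i$.

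For step (ii), I would clear denominators: choose a positive integer $m$ with $ma_i\in\zz_{\geq 0}$ for all $i$, and a positive integer $N$ with $NL_i$ base-point free for every $i$ (possible since each $L_i$ is semiample). Then $NmD\sim\sum_{i=1}^k (ma_i)(NL_i)$ is a non-negative integral combination of globally generated line bundles, hence itself globally generated, because a tensor product of globally generated line bundles is globally generated. Thus $NmD$ is base-point free and $D$ is semiample.

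I do not anticipate a genuine obstacle, since the statement is essentially encoded in the definition of a Mori dream space. The only points requiring care are the convex-geometric claim that a rational point of a rational polyhedral cone is a non-negative \emph{rational} combination of its integral generators, and the passage from numerical to $\qq$-linear equivalence, which is precisely what condition (1) supplies; neither of these uses condition (3), which is instead needed for Proposition~\ref{prop:mds-mmp}.
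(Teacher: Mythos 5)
Your proof is correct, but it takes a different route from the paper: the paper offers no argument at all for Proposition~\ref{prop:mds-nef}, instead covering it (together with Proposition~\ref{prop:mds-mmp}) by the single citation to ~\cite[Proposition 1.11]{HK00} in the sentence preceding the two statements. You instead unpack Definition~\ref{def:mds} directly, and every step is sound: condition (2) provides finitely many semi-ample integral generators of ${\rm Nef}(X)$; since the generators and the class of $D$ are rational points of $N^1(X)$, the nonempty polyhedron of nonnegative solutions to $[D]=\sum_i a_i[L_i]$ is cut out by rational data and so contains a rational point; condition (1), i.e.\ the isomorphism $\Pic(X)_\rr\cong N^1(X)$, forces the numerically trivial $\qq$-Cartier class $D-\sum_i a_iL_i$ to vanish in $\Pic(X)_\qq$, upgrading $\equiv$ to $\sim_\qq$; and clearing denominators reduces semi-ampleness to the fact that a tensor product of globally generated line bundles is globally generated (the $\qq$-factoriality in condition (1) is also quietly needed so that $D$ is $\qq$-Cartier in the first place). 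The trade-off: the paper's citation is shorter and simultaneously covers Proposition~\ref{prop:mds-mmp}, whose proof genuinely requires condition (3) and the machinery of ~\cite{HK00}; your argument buys self-containedness and makes visible that only conditions (1) and (2) enter, exactly as you observe. One point worth flagging is that you (correctly, and in accordance with universal usage) read ``affine hull of finitely many semi-ample line bundles'' in condition (2) as ``convex cone spanned by the classes of finitely many semi-ample line bundles''; taken literally an affine hull of finitely many points is an affine subspace, so this standard reinterpretation of the definition is needed for any proof, yours or Hu--Keel's, to go through.
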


\subsection{The relative MMP}
In this subsection we prove a few lemmas about the relative MMP.

\begin{lemma}\label{lem:rel-cone-curves-over-open}
Let $f\colon X\rightarrow Y$ be a projective morphism between normal varieties with $X$ $\qq$-factorial, and let $U\subset Y$ be an open subset. Then the assignment $[C]\mapsto [C]$ defines an injective linear map $N_1\left(f^{-1}(U)/U\right)\hookrightarrow N_1(X/Y)$ mapping $\overline{NE}(f^{-1}(U)/U)$ into $\overline{NE}(X/Y).$
\end{lemma}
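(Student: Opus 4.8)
The plan is to realize the map first at the level of cycle groups and then check that it descends to numerical equivalence and respects the cone structure. Write $X_U=f^{-1}(U)$ and let $f_U\colon X_U\to U$ be the restriction, which is again projective. A generator of $Z_1(X_U/U)$ is an irreducible curve $C\subset X_U$ with $f_U(C)$ a point; such a $C$ is complete, hence closed in the separated variety $X$, and $f(C)$ is a point of $Y$. Thus $C$ is also a generator of $Z_1(X/Y)$, and the inclusion of cycle groups $Z_1(X_U/U)\hookrightarrow Z_1(X/Y)$ is injective. I would then show this inclusion descends to the asserted map on numerical equivalence classes.

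For well-definedness, I would use that the intersection number of a relative $1$-cycle $\gamma$ supported in $X_U$ against a Cartier divisor $D$ on $X$ depends only on the restriction $D|_{X_U}$, since intersection numbers are computed locally along the (complete) support of $\gamma$. Consequently, if $\gamma\equiv_U 0$ then $D\cdot\gamma=(D|_{X_U})\cdot\gamma=0$ for every $D\in\Pic(X)$, so $\gamma\equiv_Y 0$. This shows the map $N_1(X_U/U)\to N_1(X/Y)$ is well-defined and linear.

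The key step is injectivity, and this is precisely where the $\qq$-factoriality of $X$ is used. Suppose $\gamma\in Z_1(X_U/U)\otimes\rr$ has image $\equiv_Y 0$; I must show $M\cdot\gamma=0$ for every $M\in\Pic(X_U)$. Writing $M=\mathcal{O}_{X_U}(D_U)$ for a Cartier divisor $D_U$ on $X_U$ and taking the closure $\overline{D_U}$ of its components in $X$ produces a Weil divisor on $X$, which is $\qq$-Cartier by $\qq$-factoriality; hence $m\overline{D_U}$ is Cartier for some $m>0$ and restricts to $mD_U$ on $X_U$. Therefore $m(M\cdot\gamma)=(m\overline{D_U})\cdot\gamma=0$ by hypothesis, giving $M\cdot\gamma=0$. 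Equivalently, $\qq$-factoriality guarantees that the restriction $\Pic(X)_\qq\to\Pic(X_U)_\qq$ is surjective, which is exactly what the argument needs. I expect this extension-by-closure step to be the main point to get right, the subtle part being that the closure of a Cartier divisor on the open set restricts back to the original divisor.

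Finally, for the cones I would argue by continuity. The map is a linear map of finite-dimensional real vector spaces, hence continuous, and each effective contracted curve over $U$ maps to an effective contracted curve over $Y$. Thus the generating cone $NE(X_U/U)$ lands in $NE(X/Y)\subseteq\overline{NE}(X/Y)$, and passing to closures by continuity yields $\overline{NE}(X_U/U)$ mapped into $\overline{NE}(X/Y)$, as required.
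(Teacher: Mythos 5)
Your proof is correct and takes essentially the same approach as the paper's: well-definedness via the fact that intersecting a contracted curve over $U$ with $L\in\Pic(X)$ only depends on $L|_{f^{-1}(U)}$, and injectivity via the surjectivity of $\Pic(X)_\qq\rightarrow\Pic\left(f^{-1}(U)\right)_\qq$ coming from $\qq$-factoriality. Your closure-of-divisors argument is just the standard proof of that surjectivity (which the paper asserts without proof), and your continuity argument for the cones makes explicit what the paper leaves implicit.
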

\begin{proof}
    Well-definedness follows from the fact that $L\cdot \left(\sum_{i=1}^na_i[C_i]\right)=L|_{f^{-1}(U)}\cdot \left(\sum_{i=1}^na_i[C_i]\right)$ for all $L\in {\rm Pic}(X)$ and all relative curves $C_1,\hdots, C_n$ over $U.$ For injectivity, we use the $\qq$ factoriality of $X$ to conclude that the restriction homomorphism ${\rm Pic}(X)_{\qq}\rightarrow {\rm Pic}\left(f^{-1}(U)\right)_\qq$ is surjective. 
\end{proof}

\begin{lemma}\label{lem:rel-mmp-over-open-single-step}
    Let $f\colon X\rightarrow Y$ be a projective morphism between normal varieties with $X$ $\qq$-factorial, and let $U\subset Y$ be an open subset. Let $E$ be a $\qq$-divisor on $X.$ Let 
\[
\xymatrix{  
 X \ar[rd]_-{f} \ar@{-->}[rr]^-{\phi}& & W \ar[ld]^-{g}\\
 & Y &
}
\]
be a single step of an $E$-MMP over $Y.$ Then, either $\phi|_{f^{-1}(U)}$ induces an isomorphism $f^{-1}(U)\xrightarrow{\cong}g^{-1}(U)$ or 
\[
\xymatrix{  
 f^{-1}(U) \ar[rd]_-{f|_{f^{-1}(U)}} \ar@{-->}[rr]^-{\phi|_{f^{-1}(U)}}& & g^{-1}(U) \ar[ld]^-{g|_{g^{-1}(U)}}\\
 & U &
}
\]
is a single step of an $E|_{f^{-1}(U)}$-MMP over $U.$
\end{lemma}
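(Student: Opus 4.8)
The plan is to read off the single MMP step from its underlying $E$-negative extremal ray, to decide the dichotomy according to whether the curves contracted by that ray lie over $U$ or over $Y\setminus U$, and to transport the extremal ray from $\overline{NE}(X/Y)$ to $\overline{NE}(f^{-1}(U)/U)$ by means of Lemma~\ref{lem:rel-cone-curves-over-open}. First I would recall that the given step is governed by an $E$-negative extremal ray $R\subset \overline{NE}(X/Y)$ and its contraction $c\colon X\to Z$ over $Y$: when $c$ is divisorial one has $W=Z$ and $\phi=c$, while when $c$ is small $\phi$ is the $E$-flip, fitting into $X\xrightarrow{c}Z\xleftarrow{c^+}W$ with $c^+\colon W\to Z$. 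Writing $h\colon Z\to Y$ for the structure map and $Z_U$ for the preimage of $U$ in $Z$, the factorizations $f=h\circ c$ and $g=h\circ c^+$ give $f^{-1}(U)=c^{-1}(Z_U)$ and $g^{-1}(U)=(c^+)^{-1}(Z_U)$. Since every curve contracted by $c$ maps to a single point of $Y$, each such curve lies either entirely over $U$ or entirely over $Y\setminus U$, and the stated dichotomy will correspond to whether or not some contracted curve lies over $U$.

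If no contracted curve lies over $U$, then the exceptional locus of $c$ (and, in the flipping case, that of $c^+$) is contained in $f^{-1}(Y\setminus U)$, hence disjoint from $f^{-1}(U)$ and from $g^{-1}(U)$. Thus $c$ and $c^+$ restrict to isomorphisms over $Z_U$, and composing them exhibits $\phi|_{f^{-1}(U)}\colon f^{-1}(U)\xrightarrow{\cong}g^{-1}(U)$ as the desired isomorphism.

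Now suppose some contracted curve $C$ lies over $U$. Let $j\colon N_1(f^{-1}(U)/U)\hookrightarrow N_1(X/Y)$ be the injection of Lemma~\ref{lem:rel-cone-curves-over-open}, and set $F=\{v\in \overline{NE}(f^{-1}(U)/U)\mid j(v)\in R\}$. Because $j$ carries $\overline{NE}(f^{-1}(U)/U)$ into $\overline{NE}(X/Y)$ and $R$ is a face of the latter, $F$ is a face of $\overline{NE}(f^{-1}(U)/U)$; since $j$ is injective and $R$ is one-dimensional while $F$ contains the nonzero class of $C$, the face $F$ is a single extremal ray $R'$. The intersection-number identity from the proof of Lemma~\ref{lem:rel-cone-curves-over-open} gives $E|_{f^{-1}(U)}\cdot C=E\cdot C<0$, so $R'$ is $E|_{f^{-1}(U)}$-negative. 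The restricted morphism $c|\colon f^{-1}(U)=c^{-1}(Z_U)\to Z_U$ is projective with connected fibers (formation of $c_*\mathcal{O}_X=\mathcal{O}_Z$ commutes with restriction to the open set $Z_U$), and it contracts precisely the curves over $U$ whose class lies in $R'$; hence it is the extremal contraction of $R'$ over $U$. When $c$ is divisorial this already presents $\phi|_{f^{-1}(U)}$ as a single divisorial step over $U$; when $c$ is small, restricting $c^+$ over $Z_U$ yields a small morphism with $(\phi_*E)|_{g^{-1}(U)}$ relatively ample over $Z_U$, which is exactly the $E|_{f^{-1}(U)}$-flip of $R'$ over $U$.

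I expect the main obstacle to be the verification in the last case that $F$ is genuinely a one-dimensional face and that the restricted morphisms are again an extremal contraction and its flip over $U$, i.e.\ that $\rho(f^{-1}(U)/Z_U)=1$ and that relative ampleness of $\phi_*E$ persists after restriction to $Z_U$. Both points rely on combining the injectivity and face-compatibility of $j$ supplied by Lemma~\ref{lem:rel-cone-curves-over-open} with the fact that taking preimages and pushforwards commutes with restriction to the open subset $Z_U\subset Z$.
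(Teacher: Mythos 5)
Your proposal is correct and follows essentially the same route as the paper: both split into the cases where $\phi$ is a (divisorial) morphism or a flip, decide the dichotomy by whether any contracted curve lies over $U$, use flat base change to see that the restricted maps are contractions, and identify the restricted $\psi^+$ with the flip over $U$. The only difference is one of detail, not of method: you make explicit the ray-transport argument via Lemma~\ref{lem:rel-cone-curves-over-open} to verify that the restricted contraction is extremal, a point the paper's proof leaves implicit.
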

\begin{proof}
    First, we consider the case in which $\phi$ is a morphism. Thus, $\phi$ is an $E$-negative extremal contraction over $Y.$ It follows from flat base change ~\cite[Proposition 9.3]{Har77} that $\phi|_{f^{-1}(U)}\colon f^{-1}(U)\rightarrow g^{-1}(U)$ is a contraction. If all curves contracted by $\phi$ are contained in $X\setminus f^{-1}(U),$ then $\phi|_{f^{-1}(U)}$ is an isomorphism. Otherwise, it is an $E|_{f^{-1}(U)}$-negative extremal contraction over $U.$\par
    From now on, assume that $\phi$ is not a morphism. Thus, $\phi$ is an isomorphism in codimension one and there is a commutative diagram 
\[
\xymatrix{  
 X \ar[rd]_-{\psi} \ar@/_1pc/[rdd]_-{f} \ar@{-->}[rr]^-{\phi}& & W \ar[ld]^-{\psi^+} \ar@/^1pc/[ldd]^-{g} \\
 & Z \ar[d]_-{h} & \\
 & Y & 
}
\]
in which 
\begin{enumerate}
    \item $\psi$ is a small $E$-negative extremal contraction, 
    \item $\psi^+$ is a small $\phi_*E$-positive extremal contraction. 
\end{enumerate}
It follows from flat base change ~\cite[Proposition 9.3]{Har77} that $\psi|_{f^{-1}(U)}$ is a contraction. If all curves contracted by $\psi$ are contained in $X\setminus f^{-1}(U),$ then $\psi|_{f^{-1}(U)}$ induces an isomorphism $f^{-1}(U)\xrightarrow{\cong}h^{-1}(U)$ and $\phi|_{f^{-1}(U)}$ induces an isomorphism $f^{-1}(U)\xrightarrow{\cong}g^{-1}(U).$ Otherwise, $\psi|_{f^{-1}(U)}$ is a small $E|_{f^{-1}(U)}$-negative extremal contraction and $\psi^+|_{g^{-1}(U)}$ is a small extremal contraction which is positive with respect to $(\phi_*E)|_{g^{-1}(U)}=(\phi|_{f^{-1}(U)})_*(E|_{f^{-1}(U)}).$ It follows, in this case, that $\psi^+|_{g^{-1}(U)}$ is the flip of $\psi|_{f^{-1}(U)}.$
\end{proof}

\begin{lemma}\label{lem:rel-mmp-over-open}
    Let $f\colon X\rightarrow Y$ be a projective morphism between normal varieties with $X$ $\qq$-factorial, and let $U\subset Y$ be an open subset. Let $E$ be a $\qq$-divisor on $X.$ Let 
\[
\xymatrix{ 
X = X_0 \ar@{-->}[r]^-{g_0}
 & X_1 \ar@{-->}[r]^-{g_1}
 & \hdots \ar@{-->}[r]^-{g_{m-1}}
 & X_m
}
\]
be a sequence of steps of an $E$-MMP over $Y.$ For each $0\leq i \leq m-1,$ denote by $V_i\subset X_i$ the preimage of $U$ in $X_i$ and by $h_i\colon V_i \dashrightarrow V_{i+1}$ the restriction of $g_i.$ Then there are indices $0\leq i_0<\hdots<i_{r-1}\leq m-1$ such that $h_i$ is an isomorphism for $i\notin \{i_0,\hdots, i_{r-1}\}$ and
\[
\xymatrix{ 
V_0= V_{i_0} \ar@{-->}[r]^-{h_{i_0}}
 & V_{i_1} \ar@{-->}[r]^-{h_{i_1}}
 & \hdots \ar@{-->}[r]^-{h_{i_{r-1}}}
 & V_{i_r}
}
\]
is a sequence of steps of an $E|_{V_0}$-MMP over $U.$
\end{lemma}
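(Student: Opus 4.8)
The plan is to apply the single-step statement of Lemma~\ref{lem:rel-mmp-over-open-single-step} to each of the $m$ steps and then glue the resulting restricted steps into a single MMP over $U$, the only real work being to track how the strict transform of $E$ behaves under restriction to the open loci $V_i$. Set $E_0=E$ and, for $1\le i\le m$, let $E_i=(g_{i-1})_*E_{i-1}$ be the strict transform of $E$ on $X_i$; by definition of an $E$-MMP over $Y$, each $g_i$ is a single step of the $E_i$-MMP over $Y$, and each $X_i$ is $\qq$-factorial with projective structure morphism $f_i\colon X_i\to Y$. Writing $\bar E_i=E_i|_{V_i}$, Lemma~\ref{lem:rel-mmp-over-open-single-step} applied to $g_i$ (with divisor $E_i$) shows that each restricted map $h_i\colon V_i\dashrightarrow V_{i+1}$ is either an isomorphism over $U$ or a single step of the $\bar E_i$-MMP over $U$.

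The key point is that restriction to $V_i$ commutes with the formation of strict transforms, i.e.\ $\bar E_{i+1}=(h_i)_*\bar E_i$ for every $i$. For a non-isomorphism step this is an instance of the identity $(\phi_*E)|_{g^{-1}(U)}=(\phi|_{f^{-1}(U)})_*(E|_{f^{-1}(U)})$ used in the proof of Lemma~\ref{lem:rel-mmp-over-open-single-step} (with $\phi=g_i$ and divisor $E_i$), and for an isomorphism step it is immediate. Iterating, $\bar E_i$ is the strict transform of $\bar E_0=E|_{V_0}$ along the composite birational contraction $V_0\dashrightarrow V_i$. In particular, whenever $h_i$ is not an isomorphism it is a single step of the $E|_{V_0}$-MMP over $U$, taken with respect to the correct strict transform of $E|_{V_0}$.

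Let $i_0<\cdots<i_{r-1}$ be the indices for which $h_i$ fails to be an isomorphism. Composing the intervening isomorphisms $h_{i_j+1},\dots,h_{i_{j+1}-1}$ gives identifications $V_{i_j+1}\cong\cdots\cong V_{i_{j+1}}$ over $U$, under which the surviving steps assemble into the sequence $V_0=V_{i_0}\dashrightarrow V_{i_1}\dashrightarrow\cdots\dashrightarrow V_{i_r}$ of steps of an $E|_{V_0}$-MMP over $U$, as claimed. I expect the main obstacle to be precisely this bookkeeping: one must verify that passing to $V_i$ commutes with the strict transform at every stage, so that the steps surviving restriction are genuinely negative for the strict transform of $E|_{V_0}$ and fit together into one coherent MMP rather than an unrelated list of negative contractions and flips. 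This is ensured by the commutativity identity above, which propagates the correct divisor through both the genuine steps and the discarded isomorphisms.
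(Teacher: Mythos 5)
Your proposal is correct and follows the same route as the paper, whose entire proof is ``repeated application of Lemma~\ref{lem:rel-mmp-over-open-single-step}''; your write-up simply makes explicit the bookkeeping (compatibility of strict transforms with restriction to the open loci $V_i$, and composing the intervening isomorphisms) that the paper leaves implicit.
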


\begin{proof}
    This follows from repeated application of Lemma ~\ref{lem:rel-mmp-over-open-single-step}.
\end{proof}

\begin{lemma}\label{lem:rel-cone-curves-product}
    Let $X$ and $Y$ be normal varieties with $X$ projective and $\qq$-factorial. The assignment $[C]\mapsto [C\times\{y\}]$ is independent of the choice of $y\in Y$ and defines isomorphisms $N_1\left(X\right)\xrightarrow{\cong} N_1(X\times Y/Y)$ and $\overline{\rm NE}\left(X\right)\xrightarrow{\cong}\overline{\rm NE}\left(X\times Y/Y\right).$
\end{lemma}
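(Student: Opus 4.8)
The plan is to realize both stated isomorphisms as transposes of a single restriction map, and then to isolate the genuine content as a deformation-invariance statement for intersection numbers. Write $\pi\colon X\times Y\to Y$ for the projection and $p\colon X\times Y\to X$ for the other projection. Since $X$ is projective, $\pi$ is a projective morphism, so the relative spaces $N^1(X\times Y/Y)$ and $N_1(X\times Y/Y)$ are defined, finite-dimensional, and dual under the intersection pairing; note that relative $1$-cycles over $Y$ are exactly cycles supported in fibers $X\times\{y\}\cong X$. First I would define a restriction map $r\colon N^1(X\times Y/Y)\to N^1(X)$ sending the class of a line bundle $L$ to the class of $L|_{X\times\{y\}}$, and check that it descends through relative numerical equivalence and is independent of $y$. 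I would then observe that, under the pairings, $r$ is adjoint to the assignment $\phi\colon [C]\mapsto[C\times\{y\}]$, because $r([L])\cdot C=L|_{X\times\{y\}}\cdot C=L\cdot(C\times\{y\})$. This simultaneously shows that $\phi$ is well defined (independent of $y$) and linear, being the transpose of a linear map between finite-dimensional dual spaces.

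The heart of the argument is to prove that $r$ is an isomorphism. Surjectivity is formal: the pullback $p^*\colon N^1(X)\to N^1(X\times Y/Y)$ is defined, since an absolutely numerically trivial divisor pulls back to a relatively numerically trivial one, and it satisfies $r\circ p^*=\mathrm{id}_{N^1(X)}$ because $p$ restricts to the identity on each fiber. For injectivity I must show that if $L|_{X\times\{y_0\}}\equiv 0$ on $X$ for one $y_0$, then $L\equiv_Y 0$. Given a relative curve $C'\subset X\times\{y\}$ one has $L\cdot C'=L|_{X\times\{y\}}\cdot C'$, so it suffices to know that $L|_{X\times\{y\}}$ is numerically trivial on $X$ for every $y$, not just $y_0$. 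This is exactly where I expect the main obstacle to lie, and it is resolved by the observation that for a fixed curve $C\subset X$ the cycles $\{C\times\{y\}\}_{y\in Y}$ form an algebraic family over the irreducible (hence connected) base $Y$; thus they are algebraically, and therefore numerically, equivalent in $X\times Y$, so that $L\cdot(C\times\{y\})$ is independent of $y$. Hence numerical triviality of $L$ on one fiber propagates to all fibers and yields $L\equiv_Y 0$. This establishes that $r$, and therefore its transpose $\phi$, is an isomorphism.

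It remains to match the Mori cones. Since $\phi$ is a linear isomorphism of finite-dimensional real vector spaces it is a homeomorphism, so it suffices to show that $\phi$ carries the cone generated by classes of effective curves on $X$ onto the cone generated by classes of effective relative curves on $X\times Y$ over $Y$, and then to pass to closures. The inclusion of the former into the latter is immediate, since each $C\times\{y\}$ is an effective relative curve. For the reverse inclusion I would take an irreducible relative curve $C'$, note that it is contained in some fiber $X\times\{y\}\cong X$, and identify it with an irreducible curve $C\subset X$ satisfying $[C']=\phi([C])$; since every effective relative class is a nonnegative combination of such classes, it lies in the image under $\phi$ of the cone of effective classes on $X$. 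Taking closures and using that $\phi$ is a homeomorphism then gives $\phi(\overline{\rm NE}(X))=\overline{\rm NE}(X\times Y/Y)$, which completes the proof.
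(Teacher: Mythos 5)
Your proof is correct, and it rests on exactly the same three geometric facts as the paper's own argument: the restriction identity $L\cdot(C\times\{y\})=L|_{X\times\{y\}}\cdot C$, the identity $(pr_1^*M)|_{X\times\{y\}}\cong M$, and the constancy of intersection numbers in the family $C\times Y$ over the connected base $Y$ (the paper phrases this as flatness of $C\times Y$ over $Y$; you phrase it as algebraic, hence numerical, equivalence of the cycles $C\times\{y\}$ --- the same deformation-invariance statement). The difference is organizational: the paper argues directly on curve classes, checking well-definedness, injectivity, independence of $y$, and surjectivity of $[C]\mapsto[C\times\{y\}]$ one at a time, whereas you dualize, proving that fiberwise restriction $r\colon N^1(X\times Y/Y)\to N^1(X)$ is an isomorphism (with one-sided inverse $pr_1^*$) and realizing the curve map as its transpose. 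Your packaging buys automatic linearity and well-definedness of the curve map from transposition, granted the perfectness of the intersection pairings, which the paper's setup supplies via the reference to Kleiman; the paper's direct route is shorter and never needs to invoke duality of the finite-dimensional spaces. One further point in your favor: the paper's written proof never explicitly addresses the cone statement $\overline{\rm NE}(X)\cong\overline{\rm NE}(X\times Y/Y)$ (it is implicit in the surjectivity step, since every irreducible relative curve lies in a fiber), while you spell out the matching of effective cones and the passage to closures using that a linear isomorphism is a homeomorphism; this makes your write-up slightly more complete than the original.
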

\begin{proof}
    Well-definedness follows from the fact that $L\cdot \left(\sum_{i=1}^na_i[C_i\times\{y\}]\right)=L_{y}\cdot \left(\sum_{i=1}^na_i[C_i]\right)$ for all $L\in {\rm Pic}(X\times Y)$ and all curves $C_1,\hdots, C_n \subset X.$ For injectivity, we use the fact that $L\cong (pr_1^*L)_y$ for all $L\in {\rm Pic}(X).$ To see that the homomorphism is independent of choice of $y\in Y,$ consider a curve $C\subset X.$ The family $C\times Y$ is flat over $Y,$ from which it follows that $L_{y_1}\cdot C=L_{y_2}\cdot C$ for all $L\in {\rm Pic}(X\times Y)$ and $y_1,y_2\in Y.$ Surjectivity follows from independence from choice of $y\in Y,$ since every class in $N^1(X\times Y/Y)$ is represented by some curve of the form $C\times \{y'\}$ for $C\subset X$ and $y'\in Y.$
\end{proof}

\begin{lemma}\label{lem:fiberwise-mmp-single-step}
Let $X$ and $Y$ be normal varieties with $X$ a Mori dream space. Let $E_X$ be a $\qq$-divisor on $X$ and write $E=pr_1^*E_X,$ where $pr_1\colon X\times Y \rightarrow X$ is the projection onto $X.$ Let \[
\xymatrix{ 
X\times Y \ar@{-->}[r]^-{\widetilde{\phi}}
 & \widetilde{W} 
}
\]
be a single step of an $E$-MMP over $Y.$ Then there is a single step 
\[
\xymatrix{ 
X \ar@{-->}[r]^-{\phi}
 & W
}
\]
of an $E_X$-MMP satisfying 
\begin{enumerate}
    \item $\widetilde{W}\cong W\times Y$, and
    \item the identification in {\rm (1)} identifies $\widetilde{\phi}$ with $\phi\times Y$.
\end{enumerate}
\end{lemma}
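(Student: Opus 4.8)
The plan is to transport the given extremal ray from $\overline{\rm NE}(X\times Y/Y)$ to $\overline{\rm NE}(X)$ using Lemma \ref{lem:rel-cone-curves-product}, perform the corresponding step of the $E_X$-MMP on the Mori dream space $X$, and then recognize the original step as the product of this step with $Y$. First I would recall that the single step $\widetilde\phi$ contracts a unique $E$-negative extremal ray $\widetilde R\subset \overline{\rm NE}(X\times Y/Y)$. Under the isomorphism $\overline{\rm NE}(X)\xrightarrow{\cong}\overline{\rm NE}(X\times Y/Y)$ of Lemma \ref{lem:rel-cone-curves-product}, let $R\subset\overline{\rm NE}(X)$ be the corresponding extremal ray. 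Since that isomorphism sends $[C]$ to $[C\times\{y\}]$ and satisfies $L\cdot[C\times\{y\}]=L_y\cdot[C]$, applying it to $L=E=pr_1^*E_X$ (whose restriction $E_y$ to each fiber $X\times\{y\}$ is identified with $E_X$) shows that $R$ is $E_X$-negative. Because $X$ is a Mori dream space, Proposition \ref{prop:mds-mmp} allows us to run the $E_X$-MMP; let $\phi\colon X\drar W$ be the step contracting $R$, and let $\psi_X\colon X\rar Z$ denote the associated $E_X$-negative extremal contraction.

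Next I would produce a supporting divisor. The nef divisor on $X$ supporting the ray $R$ is semi-ample by Proposition \ref{prop:mds-nef}, so $\psi_X$ is defined by a divisor of the form $\psi_X^*A$ with $A$ ample on $Z$. Its pullback $g=pr_1^*\psi_X^*A=(\psi_X\times\id_Y)^*pr_Z^*A$ is nef over $Y$, and the computation $g\cdot[C\times\{y\}]=\psi_X^*A\cdot[C]$ shows that $g$ is trivial exactly on the ray $\widetilde R$ and positive elsewhere on $\overline{\rm NE}(X\times Y/Y)$. Because $pr_Z^*A$ is relatively ample over $Y$ on $Z\times Y$ and $\psi_X\times\id_Y\colon X\times Y\rar Z\times Y$ is a contraction over $Y$ (flat base change of $\psi_X$ along $Z\times Y\rar Z$, using $(\psi_X)_*\oo_X=\oo_Z$), the morphism $\psi_X\times\id_Y$ is precisely the contraction of the ray $\widetilde R$ over $Y$.

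I would then split into two cases according to the type of $\psi_X$. If $\psi_X$ is divisorial or of fiber type, then so is $\psi_X\times\id_Y$, and uniqueness of the contraction of a fixed extremal ray forces $\widetilde W\cong Z\times Y=W\times Y$ with $\widetilde\phi=\phi\times\id_Y$ (here $W=Z$). If instead $\psi_X$ is a flipping contraction, then the step $\phi\colon X\drar W$ is the $E_X$-flip of $\psi_X$, which exists since $X$ is a Mori dream space. In this case $\psi_X\times\id_Y$ is a small contraction over $Y$ identified with the small contraction underlying $\widetilde\phi$, and $\phi\times\id_Y\colon X\times Y\drar W\times Y$ is small over $Z\times Y$ with $(\phi\times\id_Y)_*E$ relatively ample, since relative ampleness of the flipped divisor is preserved under the flat base change by $Y$. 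By uniqueness of flips this identifies $\widetilde W\cong W\times Y$ and $\widetilde\phi=\phi\times\id_Y$, completing the argument.

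The step I expect to be the main obstacle is the flip case: one must check that forming the fiber product with $Y$ commutes with taking the flip. Concretely, the difficulty is verifying that $W\times Y\rar Z\times Y$ is small and that the strict transform of $E$ stays relatively ample after base change, so that $\phi\times\id_Y$ satisfies the defining property of the flip of $\psi_X\times\id_Y$. The morphism case and the bookkeeping of extremal rays are comparatively routine once Lemma \ref{lem:rel-cone-curves-product} is in hand.
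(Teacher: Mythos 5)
Your proof is correct, and at the level of architecture it matches the paper's: the same dichotomy between a genuine extremal contraction and a flip, flat base change to obtain the contraction property of the product morphism, and the same uniqueness principles to conclude (uniqueness of the contraction of a fixed extremal ray in the first case, uniqueness of flips, i.e.\ \cite[Lemma 6.2]{KM98}, in the second). Where you genuinely diverge is in how the step $\phi$ on $X$ is manufactured. The paper works top-down: it restricts $\widetilde{\phi}$ (or, in the flip case, its underlying small contraction) to a slice $X\times\{y\}$ and takes the Stein factorization of $X\hookrightarrow X\times Y\rightarrow\widetilde{W}$; Lemma~\ref{lem:rel-cone-curves-product} is then used only to see that this $\phi$ is an $E_X$-negative extremal contraction and that $\phi\times Y$ and $\widetilde{\phi}$ contract exactly the same curves, after which the rigidity lemma \cite[Lemma 1.15]{Deb01} gives the identification $W\times Y\cong\widetilde{W}$. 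You work bottom-up: you transport the contracted ray $\widetilde{R}$ through the isomorphism of Lemma~\ref{lem:rel-cone-curves-product}, contract the transported ray $R$ on $X$ using the Mori dream space package (Propositions~\ref{prop:mds-mmp} and~\ref{prop:mds-nef}), and then verify, via the semi-ample supporting divisor, that the product of that contraction with $Y$ contracts precisely the curves in $\widetilde{R}$, hence coincides with the contraction underlying $\widetilde{\phi}$. The paper's slicing trick is more economical---it needs no supporting-divisor computation and invokes the Mori dream space hypothesis only for the existence of the flip of $\psi$---while your ray transport makes the dictionary between the two MMPs explicit, so that the agreement of step types on the two sides (divisorial/fiber type versus flipping) is immediate rather than deduced after the fact, and it is the formulation one would want in order to go in the opposite direction, from a step on $X$ to a step on $X\times Y$. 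Your flip case, checking smallness of $\psi^{+}\times\id_Y$ and preservation of relative ampleness of the flipped divisor under the flat base change $Z\times Y\rightarrow Z$, is the same verification the paper performs, via the identity $(\phi\times Y)_{*}E=pr_1^{*}E_W$, before citing \cite[Lemma 6.2]{KM98}.
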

\begin{proof}
First, we consider the case that $\widetilde{\phi}$ is a morphism. Thus, $\widetilde{\phi}$ is an $E$-negative extremal contraction over $Y.$ Choose a closed point $y\in Y$ and let $$X\xrightarrow{\phi}W\rightarrow \widetilde{W}$$ be the Stein factorization of $$X \xhookrightarrow{i_y}X\times Y \xrightarrow{\widetilde{\phi}}\widetilde{W}.$$ Then $\phi$ is an $E_X$-negative extremal contraction, and it follows from flat base change ~\cite[Proposition 9.3]{Har77} that $\phi\times Y \colon X\times Y \rightarrow W\times Y$ is a contraction as well. The contractions $\phi\times Y$ and $\widetilde{\phi}$ contract exactly the same curves by Lemma ~\ref{lem:rel-cone-curves-product}, so it follows from the rigidity lemma ~\cite[Lemma 1.15]{Deb01} that there is an isomorphism $\psi\colon W\times Y\rightarrow \widetilde{W}$ making the diagram \[
\xymatrix{ 
 & X\times Y \ar[ld]_-{\phi\times Y} \ar[rd]^-{\widetilde{\phi}} & \\ 
 W\times Y \ar[rr]^-{\psi}& & \widetilde{W}
}
\]
commutative. The desired result now follows in this case.\par
From now on, assume that $\widetilde{\phi}$ is not a morphism. Thus, $\widetilde{\phi}$ is an isomorphism in codimension one and there is a commutative diagram 
\[
\xymatrix{  
 X\times Y \ar[rd]_-{\widetilde{\psi}} \ar@{-->}[rr]^-{\widetilde{\phi}}& & \widetilde{W} \ar[ld]^-{\widetilde{\psi}^+}\\
 & \widetilde{Z} &
}
\]
in which 
\begin{enumerate}
    \item $\widetilde{\psi}$ is a small $E$-negative extremal contraction, 
    \item $\widetilde{\psi}^+$ is a small $\widetilde{\phi}_*E$-positive extremal contraction.
\end{enumerate}
By the arguments of the previous paragraph, we may identify the morphism $\widetilde{\psi}$ with a morphism of the form $$\psi\times Y \colon X\times Y \rightarrow Z \times Y$$
for some $E_X$-negative extremal contraction $\psi\colon X \rightarrow Z.$ Note that $\psi$ must be a small birational contraction since $\widetilde{\psi}$ is. Let $\psi^+\colon W \rightarrow Z$ be the flip of $\psi.$ Denote by $\phi\colon X \dashrightarrow W$ the induced birational map, which is an isomorphism in codimension one, and by $E_W=\phi_*E_X$ the strict transform of $E_X$ on $W.$ We obtain a commutative diagram 
\[
\xymatrix{  
 X\times Y \ar[rd]_-{\psi\times Y} \ar@{-->}[rr]^-{\phi\times Y}& & W\times Y \ar[ld]^-{\psi^+\times Y}\\
 & Z\times Y &
}
\]
in which 
\begin{enumerate}
    \item $\psi\times Y$ is a small $E$-negative extremal contraction, 
    \item $\psi^+\times Y$ is a small $pr_1^*E_W$-positive extremal contraction.
\end{enumerate}
Using the fact that $(\phi\times Y)_*E=pr_1^*E_W$, it follows from ~\cite[Lemma 6.2]{KM98} that we may identify $\widetilde{\psi}^+$ with $\psi^+\times Y,$ hence also $\widetilde{\phi}$ with $\phi\times Y.$
\end{proof}

\begin{lemma}\label{lem:fiberwise-mmp}
    Let $X$ and $Y$ be normal varieties with $X$ a Mori dream space. Let $E_X$ be a $\qq$-divisor on $X$ and write $E=pr_1^*E_X,$ where $pr_1\colon X\times Y \rightarrow X$ is the projection onto $X.$ Let 
\[
\xymatrix{ 
X\times Y = \widetilde{X}_0 \ar@{-->}[r]^-{\widetilde{f}_0}
 & \widetilde{X}_1 \ar@{-->}[r]^-{\widetilde{f}_1}
 & \hdots \ar@{-->}[r]^-{\widetilde{f}_{m-1}}
 & \widetilde{X}_m
}
\]
be a sequence of steps of an $E$-MMP over $Y.$ Then there is a sequence
\[
\xymatrix{ 
X = X_0 \ar@{-->}[r]^-{f_0}
 & X_1 \ar@{-->}[r]^-{f_1}
 & \hdots \ar@{-->}[r]^-{f_{m-1}}
 & X_m
}
\]
of steps of an $E_X$-MMP satisfying
\begin{enumerate}
    \item $\widetilde{X}_i\cong X_i\times Y$ for each $0\leq i \leq m,$
    \item the identifications in {\rm (1)} identify $\widetilde{f}_i$ with $f_i\times Y$ for each $0\leq i \leq m-1.$
\end{enumerate}
\end{lemma}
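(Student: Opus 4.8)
The plan is to induct on the number $m$ of steps in the MMP, using Lemma~\ref{lem:fiberwise-mmp-single-step} as the engine for the inductive step. The case $m=0$ is vacuous, so I would assume $m\geq 1$ and that the result is already known for sequences of fewer than $m$ steps.

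First I would apply Lemma~\ref{lem:fiberwise-mmp-single-step} to the first step $\widetilde{f}_0\colon X\times Y \dashrightarrow \widetilde{X}_1$. This produces a single step $f_0\colon X\dashrightarrow X_1$ of an $E_X$-MMP together with an isomorphism $\widetilde{X}_1\cong X_1\times Y$ identifying $\widetilde{f}_0$ with $f_0\times Y$. In order to feed the remaining sequence $\widetilde{X}_1\dashrightarrow\cdots\dashrightarrow\widetilde{X}_m$ back into the inductive hypothesis, I would need to check two things: that this remaining sequence is again of the form considered in the statement, and that $X_1$ again satisfies the hypotheses of the lemma.

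For the first point, the remaining steps form a sequence of steps of an $E_1$-MMP over $Y$, where $E_1=(\widetilde{f}_0)_*E$ is the strict transform of $E$. Under the identification $\widetilde{f}_0=f_0\times Y$, I would verify that $E_1=pr_1^*(E_X)_1$, where $(E_X)_1=(f_0)_*E_X$ denotes the strict transform of $E_X$ on $X_1$. This is a direct consequence of the fact that $f_0\times Y$ sends a divisor $P\times Y=pr_1^*P$ to $(f_0)_*P\times Y=pr_1^*(f_0)_*P$, so that pulling back along $pr_1$ commutes with taking strict transforms. For the second point, I would invoke the fact that the output of a step of the MMP applied to a Mori dream space is again a Mori dream space, which follows from the structure theory of~\cite{HK00}; thus $X_1$ is again a Mori dream space, and the hypotheses of Lemma~\ref{lem:fiberwise-mmp-single-step} (and hence of the inductive hypothesis) are met with $X_1$ and $(E_X)_1$ in place of $X$ and $E_X$.

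With these two verifications in place, the inductive hypothesis applies to the sequence of $m-1$ steps beginning at $X_1\times Y\cong\widetilde{X}_1$, producing a sequence $X_1\dashrightarrow\cdots\dashrightarrow X_m$ of steps of an $(E_X)_1$-MMP together with the required identifications $\widetilde{X}_i\cong X_i\times Y$ and $\widetilde{f}_i=f_i\times Y$ for $1\leq i\leq m-1$. Prepending the step $f_0$ yields the desired sequence for the full $E_X$-MMP, completing the induction. I expect the main subtlety to lie not in the single-step application itself but in confirming that the two structural features required to iterate---the Mori dream space property of the intermediate model $X_1$ and the persistence of the product form $pr_1^*(-)$ of the divisor being run---are genuinely preserved, since Lemma~\ref{lem:fiberwise-mmp-single-step} presupposes both at the start of each step.
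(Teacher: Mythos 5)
Your proof is correct and takes essentially the same approach as the paper, whose entire proof reads ``This follows from repeated application of Lemma~\ref{lem:fiberwise-mmp-single-step}.'' Your two verifications---that the intermediate model $X_1$ remains a Mori dream space (by \cite{HK00}) and that the divisor being run retains the form $pr_1^*(-)$ under the product identification---are precisely the implicit content of that ``repeated application,'' spelled out carefully.
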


\begin{proof}
    This follows from repeated application of Lemma ~\ref{lem:fiberwise-mmp-single-step}.
\end{proof}

\begin{lemma}\label{lem:horiz-pseudoeff}
    Let $f\colon X\rightarrow Y$ be a projective morphism between normal varieties with $X$ $\qq$-factorial. Let $D$ be an effective divisor on $X$ such that $f({\rm Supp}(D))=Y.$ Then there exists a curve $C\subset X$ contracted to a point by $f$ which satisfies $D\cdot C>0.$
\end{lemma}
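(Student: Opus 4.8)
The plan is to find the required curve inside a general fibre of $f$. First I would isolate a component of $D$ that is responsible for the hypothesis: since $\Supp(D)$ is a finite union of prime divisors, $Y$ is irreducible, and $f(\Supp(D))=Y$, at least one irreducible component $D_0$ of $\Supp(D)$ satisfies $f(D_0)=Y$. In particular $f$ is surjective and $\dim D_0=\dim X-1\geq \dim Y$, so that $n:=\dim X-\dim Y\geq 1$; thus the general fibre of $f$ is positive-dimensional, which is exactly the room we need to produce a contracted curve. Write $D=\sum_i a_iD_i$ with all $a_i>0$, and let $a_0>0$ be the coefficient of $D_0$.

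Next I would record the good behaviour of a general fibre. For $y\in Y$ in a suitable dense open subset, the fibre $f^{-1}(y)$ has pure dimension $n$, and the morphism $f|_{D_0}\colon D_0\to Y$, being dominant and proper, is surjective with general fibre of pure dimension $\dim D_0-\dim Y=n-1$ (here I use the theorem on dimension of fibres, together with characteristic zero to guarantee a well-behaved general fibre). Hence for general $y$ there is an irreducible component $F'$ of $f^{-1}(y)$ with $\dim F'=n$ on which $D_0\cap F'$ has codimension one; then $F'\not\subseteq\Supp(D_0)$, so $F'$ is a projective variety and, using that $X$ is $\qq$-factorial (so $D$ and $D_0$ are $\qq$-Cartier), the restrictions $D|_{F'}$ and $D_0|_{F'}$ are well-defined effective $\qq$-Cartier divisors on $F'$ with $D_0|_{F'}\neq 0$.

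The conclusion is then a positivity computation against an ample class on $F'$. Fixing an ample divisor $H$ on $F'$, I would use that $D|_{F'}-a_0D_0|_{F'}=(D-a_0D_0)|_{F'}$ is effective and that a nonzero effective divisor on a projective variety has strictly positive intersection with $H^{\,n-1}$ (each prime component has positive $H$-degree), giving
\[
(D|_{F'})\cdot H^{\,n-1}\;\geq\;a_0\,(D_0|_{F'})\cdot H^{\,n-1}\;>\;0.
\]
Realizing $H^{\,n-1}$ by a curve, let $Z=H_1\cap\cdots\cap H_{n-1}$ be cut out by general members $H_i\in|mH|$ for $m\gg 0$; then $D\cdot Z=m^{\,n-1}(D|_{F'})\cdot H^{\,n-1}>0$. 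Decomposing $Z=\sum_j m_jC_j$ into irreducible curves with $m_j>0$ forces $D\cdot C_j>0$ for some $j$, and since $C_j\subseteq F'\subseteq f^{-1}(y)$ the curve $C:=C_j$ is contracted to the point $y$ by $f$, as required.

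The main obstacle is the second step: one must verify simultaneously, for $y$ general, that some $n$-dimensional component $F'$ of the fibre is not contained in $\Supp(D)$ yet meets the dominating component $D_0$ in a genuine codimension-one locus, and that the intersection number $D\cdot C$ for a curve $C\subseteq F'$ is correctly computed by restriction to $F'$. These are standard consequences of generic flatness, the dimension-of-fibres theorem, and the compatibility of restriction with intersection for $\qq$-Cartier divisors, and they are precisely where the hypotheses $f(\Supp D)=Y$ and the $\qq$-factoriality of $X$ are used; once they are in place, the final positivity is a formal consequence of the ampleness of $H$.
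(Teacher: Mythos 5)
Your proof is correct and follows essentially the same route as the paper's: both locate a projective irreducible component of a fiber that meets $\Supp(D)$ but is not contained in it, restrict $D$ to that component (this is where $\qq$-factoriality enters), and produce the contracted curve as a complete intersection of ample divisors on it. The paper is terser---it first replaces $D$ by a Cartier multiple and simply asserts the existence of such a fiber component---whereas you justify existence via the dimension-of-fibers theorem on a general fiber; the only point to tighten is that you need $F'\not\subseteq\Supp(D)$, not merely $F'\not\subseteq\Supp(D_0)$, which follows from the same dimension count applied to every horizontal component of $\Supp(D)$ together with the fact that a general fiber avoids the vertical ones.
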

\begin{proof}
    It suffices to show the result for some positive multiple of $D.$ Thus, we may assume that $D$ is an effective Cartier divisor. Our assumptions imply that there is an irreducible component $F$ of some fiber of $f$ satisfying $F\nsubset {\rm Supp}(D).$ The effective Cartier divisor $D$ restricts to an effective Cartier divisor $D_F$ on $F.$ It follows from the projectivity of $f$ that $F$ is a projective variety, and so we may choose a curve $C\subset F$ which is the complete intersection of very ample divisors on $F$. Such a curve $C$ is contracted by $f$ and satisfies
        $$D\cdot C=D_F\cdot C>0.$$
\end{proof}

\subsection{Toric geometry}

We refer the reader to ~\cite{CLS11} for background on toric geometry. The following definition will be convenient for us.
\begin{definition}\label{def:toric-logCY-pair}
{\em
We say that an $n$-dimensional log pair $(X,\Delta)$ is a \textit{toric log Calabi--Yau pair} if it is log Calabi--Yau pair with $\Delta$ reduced and ${\rm Aut}^0(X,\Delta)$ is an $n$-dimensional algebraic torus.
}
\end{definition}

\begin{remark}\label{rmk:toric-def-property-vs-structure}
    {\em
    The notion of toric log Calabi--Yau pair defined above is a property of a log pair, and does not require the specification of any additional structure such as a group action. Given a toric log Calabi--Yau pair $(X,\Delta),$ however, the following lemma explains a canonical way to equip $X$ with the structure of a toric variety in the usual sense.
    }
\end{remark}

\begin{lemma}\label{lem:toric-defn-comparison}
Let $(X,\Delta)$ be an $n$-dimensional log Calabi--Yau pair with reduced boundary $\Delta.$ Then $(X,\Delta)$ is a toric log Calabi--Yau pair if and only if $X$ admits an algebraic action by an $n$-dimensional algebraic torus $\mathbb{T}$ for which $X\setminus \Delta$ is an orbit with trivial isotropy group. In this latter case, the action of $\mathbb{T}$ on $X$ induces an isomorphism ${\rm Aut}^0(X,\Delta)\cong\mathbb{T}.$
\end{lemma}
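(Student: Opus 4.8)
The plan is to prove the two implications separately, extracting a torus action from the automorphism group in the ``only if'' direction and recovering the automorphism group from a given torus action in the ``if'' direction. Throughout I will use that a log Calabi--Yau pair satisfies $K_X+\Delta\sim_\qq 0$ (Remark~\ref{rmk:alt-logCY-def}) and that $X$ is projective, hence $n$-dimensional, normal, and complete.

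\textbf{Only if.} Suppose $(X,\Delta)$ is a toric log Calabi--Yau pair and set $T:=\Aut^0(X,\Delta)$, an $n$-dimensional torus. Since $T$ is by definition a subgroup of $\Aut(X)$, it acts \emph{faithfully} on $X$, and since it preserves $\Delta$ it acts on $U:=X\setminus\Delta$. The main point is to produce a dense orbit with trivial stabilizer. I would first invoke Sumihiro's theorem \cite{CLS11} to find a $T$-invariant affine open $W=\Spec A\subseteq X$ and decompose $A=\bigoplus_{m}A_m$ along the character lattice $M=X^*(T)$. Writing $M'\subseteq M$ for the subgroup generated by the weights appearing in $A$, the subgroup $S=\{t\in T : \chi^m(t)=1 \text{ for all } m\in M'\}$ is contained in every stabilizer and equals the stabilizer of a general point of $W$; since all weights of $A$ lie in $M'$, the subgroup $S$ acts trivially on $W$, so by density and faithfulness $S=\{e\}$. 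Hence the generic orbit is free of dimension $n$, so $X$ contains a dense free orbit $O\cong T$, and as $\Delta$ is a proper $T$-invariant closed subset we get $O\subseteq U$. This realizes $X$ as a toric variety with big torus $O$. Each component of $\Delta$ is then a $T$-invariant prime divisor, hence one of the boundary divisors $D_\rho$, so $\Delta=\sum_{\rho\in R}D_\rho$ for some set of rays $R$; since $K_X\sim-\sum_\rho D_\rho$, the relation $K_X+\Delta\sim_\qq 0$ gives $\sum_{\rho\notin R}D_\rho\sim_\qq 0$. An effective torus-invariant $\qq$-trivial divisor on a complete toric variety is zero, so $R$ is the full set of rays, $\Delta$ is the entire toric boundary, and $U=X\setminus\Delta=O$ is a single orbit with trivial isotropy.

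\textbf{If.} Conversely, suppose $\TT$ acts on $X$ with $U=X\setminus\Delta$ a single orbit with trivial isotropy. Fixing a base point identifies $U\cong\TT$ (an orbit map with trivial stabilizer is an isomorphism in characteristic zero) and exhibits $X$ as a toric variety with boundary $\Delta$. Since $\TT$ preserves the orbit $U$ and hence its complement $\Delta$, and acts faithfully, we obtain an inclusion $\TT\hookrightarrow\Aut^0(X,\Delta)=:G$. It remains to prove this is an equality. Every $g\in G$ preserves $\Delta$, hence restricts to a variety automorphism of $U\cong(\kk^*)^n$, and restriction to the dense open $U$ is injective on $G$. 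The variety automorphisms of $(\kk^*)^n$ form $(\kk^*)^n\rtimes\operatorname{GL}_n(\zz)$, because such an automorphism must permute the units $c\,\chi^m$ of the Laurent polynomial ring; the monomial part gives a homomorphism $G\to\operatorname{GL}_n(\zz)$ from the connected group $G$ to a discrete group, hence trivial. Thus $G$ maps into the translation torus $\TT$, giving $G\subseteq\TT$ and therefore $G=\TT$. This simultaneously establishes the claimed isomorphism $\Aut^0(X,\Delta)\cong\TT$.

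The step I expect to be most delicate is controlling the generic stabilizer in the ``only if'' direction: the real content is that the tautological action of the \emph{full} connected automorphism group of the pair is free on a dense orbit. The weight decomposition on a $T$-invariant affine chart makes the generic stabilizer both explicit and constant, after which faithfulness of the action — which holds precisely because $T$ is realized as honest automorphisms rather than through an abstract homomorphism — collapses it to the identity. In the ``if'' direction the analogous subtlety is the maximality of $\TT$ inside $\Aut^0(X,\Delta)$, which I would handle through the computation of the automorphism group of the open torus together with the connectedness of $G$.
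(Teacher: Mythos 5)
Your proposal is correct; it proves both implications, but by a more self-contained route than the paper's. The paper outsources the two delicate points to \cite{Hu18}: in the direction you call ``only if,'' it quotes \cite[Lemma 2.2]{Hu18} to produce a Zariski-dense open orbit with trivial isotropy for the faithful action of the $n$-dimensional torus $\Aut^0(X,\Delta)$, and in your ``if'' direction it quotes \cite[Lemmas 2.1, 2.2]{Hu18} for the bound $\dim \Aut^0(X,\Delta)\le n$, which forces the closed connected $n$-dimensional subgroup $\TT\subseteq\Aut^0(X,\Delta)$ to equal the whole connected group. You replace the first citation by a direct Sumihiro-plus-weight-decomposition argument --- in effect reproving the fact behind Hu's lemma, namely that for a torus action on an irreducible variety the generic stabilizer equals the kernel of the action --- and you replace the dimension bound by the explicit description $\Aut\bigl((\kk^*)^n\bigr)\cong(\kk^*)^n\rtimes\operatorname{GL}_n(\zz)$, killing the lattice part by connectedness. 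Once the dense free orbit exists, your identification of $\Delta$ with the full toric boundary is exactly the paper's argument: both exhibit the difference as an effective divisor which is $\qq$-linearly trivial (via $K_X\sim-\sum_\rho D_\rho$, $K_X+\Delta\sim_\qq 0$) and conclude it vanishes because $X$ is complete; the paper phrases this as $\Gamma-\Delta\ge 0$ and $\Gamma-\Delta\sim_\qq 0$, you phrase it ray by ray. What your route buys is independence from \cite{Hu18}; the cost is that two steps lean on standard facts that should be named explicitly: Sumihiro's theorem requires $X$ normal (true here), and the step ``a homomorphism from the connected group $G=\Aut^0(X,\Delta)$ to the discrete group $\operatorname{GL}_n(\zz)$ is trivial'' is not formal, since that map is a priori only an abstract group homomorphism; the usual rigorous justification is Rosenlicht's unit theorem applied to the action map $G\times U\to U$, which shows that the monomial type of $g^*x_i$ is independent of $g\in G$. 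With those two inputs made explicit, both of your directions are complete, and your ``if'' direction in fact reproves the special case of Hu's dimension bound that the paper needs.
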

\begin{proof}
Suppose first that $X$ admits an action of an $n$-dimensional algebraic torus $\mathbb{T}$ for which $X\setminus \Delta$ is an orbit with trivial isotropy group. It follows that $\mathbb{T}$ is isomorphic to an algebraic subgroup of ${\rm Aut}^0(X,\Delta).$ To see that ${\rm Aut}^0(X,B)\cong\mathbb{T}$, it suffices to note that ${\rm dim}{\rm Aut}^0(X,\Delta)\leq n$ by \cite[Lemmas 2.1, 2.2]{Hu18}.\par
Conversely, suppose that $(X,\Delta)$ is toric log Calabi--Yau. In particular, ${\rm Aut}^0(X,\Delta)$ is an $n$-dimensional algebraic torus acting faithfully on $X.$ By \cite[Lemma 2.2]{Hu18}, there is a Zariski dense open orbit $U\subset X$ with trivial isotropy group. Thus, $X$ is a toric variety in the sense of \cite[Definition 3.1.1]{CLS11}. Since $\Delta$ is a proper closed subset of $X$ which is invariant under the action of ${\rm Aut}^0(X,\Delta),$ we must have $U\subset X\setminus \Delta.$ We claim that $U=X\setminus \Delta.$ It follows from \cite[Theorem 3.2.6]{CLS11} that $X\setminus U$ has pure codimension $1$ in $X;$ denote by $\Gamma$ the divisor which is the reduced sum of the components of this closed subset. To show that $U=X\setminus \Delta,$ it suffices to show that $\Gamma=\Delta$. We certainly have $\Delta \leq \Gamma.$ We have $\Delta\sim_{\qq}-K_X$ since $(X,\Delta)$ is log Calabi--Yau, and we have $\Gamma \sim -K_X$ by \cite[Theorem 8.2.3]{CLS11}. Thus, $\Gamma-\Delta$ is an effective divisor which is $\qq$-linearly trivial. Since $X$ is projective, it follows that $\Gamma-\Delta=0.$ 
\end{proof}

An important and well-known property of toric varieties is that they are Mori dreams spaces.
\begin{proposition}\label{prop:toric-mds}
    Let $X$ be a normal, $\qq$-factorial projective variety admitting the structure of a toric variety. Then $X$ is a Mori dream space.
\end{proposition}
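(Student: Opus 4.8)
The plan is to show that a normal, $\qq$-factorial projective toric variety $X$ satisfies the three defining conditions of a Mori dream space in Definition ~\ref{def:mds}. The key underlying facts are the combinatorial description of the cones of divisors on toric varieties and the fact that all the birational geometry of a toric variety is governed by its fan. For condition (1), $X$ is $\qq$-factorial by hypothesis, and the equality $\Pic(X)_\rr = N^1(X)$ for a complete toric variety is standard (see \cite[\S 6.3]{CLS11}): on a complete toric variety, numerical and linear equivalence of torus-invariant Cartier divisors coincide, and since $\Pic(X)_\rr$ is spanned by torus-invariant divisor classes, the natural surjection $\Pic(X)_\rr \to N^1(X)$ is an isomorphism.

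For conditions (2) and (3), I would invoke the theory of the GKZ (secondary fan) decomposition of the cone of effective (or movable) divisors of a toric variety. First I would recall that on a complete toric variety the nef cone $\Nef(X)$ is a rational polyhedral cone, cut out by finitely many inequalities indexed by the torus-invariant curves, and that on a toric variety every nef divisor is automatically semi-ample; hence $\Nef(X)$ is the affine hull of finitely many semi-ample (indeed torus-invariant) line bundle classes, giving condition (2). For condition (3), the essential input is that the movable cone $\Mov(X)$ admits a finite decomposition into subcones (the chambers of the secondary fan / GKZ decomposition), each of which is the pullback of the nef cone of a small $\qq$-factorial modification $X_i$ of $X$, where each $X_i$ is again a projective $\qq$-factorial toric variety obtained by performing a sequence of toric flips; these are exactly the small modifications arising from varying the GIT quotient of the Cox construction. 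Each such $X_i$ then satisfies (1) and (2) by the same reasoning applied to $X_i$, and $\Mov(X)$ is the union of the $f_i^*(\Nef(X_i))$.

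The cleanest way to organize all of this is to recall that a complete $\qq$-factorial toric variety is the GIT quotient of affine space (its Cox ring is a polynomial ring) by the action of a diagonalizable group, and then cite the general principle — established by Hu and Keel — that a projective variety whose Cox ring is finitely generated is a Mori dream space. I would therefore note that the total coordinate ring (Cox ring) of a $\qq$-factorial projective toric variety is a polynomial ring, hence finitely generated, and conclude via \cite[Proposition 2.9]{HK00} (the equivalence between being a Mori dream space and having a finitely generated Cox ring) that $X$ is a Mori dream space. This reduces the proof to a single citation once finite generation of the Cox ring is noted.

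The main obstacle, or rather the main decision point, is the level of detail at which to argue condition (3): verifying directly that the chambers of the GKZ decomposition correspond to the nef cones of toric SQMs requires recalling the variation-of-GIT / wall-crossing picture for the Cox quotient, and checking that wall-crossings realize toric flips. I expect this to be the most delicate bookkeeping, so I would avoid it entirely by routing the argument through the Cox ring characterization of Mori dream spaces, which packages conditions (2) and (3) into the finite generation statement. If instead one wishes to give the self-contained combinatorial argument, the careful point is ensuring that each small modification $X_i$ is projective and $\qq$-factorial (so that it again satisfies (1) and (2)), which holds precisely because $X_i$ corresponds to a simplicial complete fan with the same rays as $X$.
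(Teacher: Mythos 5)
Your proposal is correct and takes essentially the same route as the paper: the paper's entire proof is the citation of the Cox quotient construction for toric varieties (\cite[Theorem 2.1]{cox2014erratum}) combined with Hu--Keel's result that such quotients are Mori dream spaces (\cite[Corollary 2.4]{HK00}), which is precisely your ``polynomial Cox ring plus Hu--Keel'' packaging. The only cosmetic differences are that you invoke \cite[Proposition 2.9]{HK00} (finite generation of the Cox ring) instead of \cite[Corollary 2.4]{HK00} (GIT quotients of affine varieties by tori), and that your hands-on verification of conditions (1)--(3) via the GKZ/secondary fan is extra detail the paper dispenses with.
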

\begin{proof}
    This follows from ~\cite[Theorem 2.1]{cox2014erratum} and ~\cite[Corollary 2.4]{HK00}.
\end{proof}

Next, we detail some special properties enjoyed by maps out of toric varieties. 

\begin{lemma}\label{lem:toric-bir-contr}
    Let $(X,\Delta)$ be a toric log Calabi--Yau pair and let $f\colon X \dashrightarrow Y$ be a birational contraction map to a projective variety $Y.$ Then $(Y,\Delta_Y=f_*\Delta)$ is a toric log Calabi--Yau pair, $f$ is toric and the exceptional locus of $f$ is a union of toric strata of $(X,\Delta).$
\end{lemma}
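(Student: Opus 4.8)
The plan is to reduce everything to standard toric geometry by first reconstructing the torus action on $X$ from the pair $(X,\Delta)$. By Lemma \ref{lem:toric-defn-comparison}, the group $\TT = \Aut^0(X,\Delta)$ is an $n$-dimensional torus acting on $X$ with $X\setminus\Delta$ as a free open orbit, so $X$ is toric in the usual sense and $\Delta$ is its toric boundary. The components of $\Delta$ correspond to the rays of the fan $\Sigma$ of $X$, and the toric strata are the orbit closures $V(\sigma)$ for cones $\sigma\in\Sigma$. I would set up notation so that $N$ is the cocharacter lattice of $\TT$ and $\Sigma\subset N_\rr$ is the fan.

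Next I would analyze the birational contraction map $f\colon X\dashrightarrow Y$. Since $f$ is surjective in codimension one, no divisor of $X$ is contracted to a subvariety in a way that destroys the generic structure; the exceptional locus of $f$ consists of divisors that are \emph{not} extracted together with the indeterminacy, but crucially $f$ extracts no new divisors on $Y$ (as $f^{-1}$ contracts no divisors). The key move is to realize $Y$ as a toric variety: I would take a common resolution $W\to X$, $W\to Y$ and run an appropriate MMP, or more directly, observe that since $f$ is a birational contraction from a Mori dream space (Proposition \ref{prop:toric-mds}, using $\qq$-factoriality — which I would first need to establish or reduce to), the map $f$ factors as a sequence of steps of a $D$-MMP for a suitable $D$. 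Each such step (a divisorial contraction or flip) between toric varieties is automatically toric and takes the toric boundary to the toric boundary, so the composite $f$ is toric and $\Delta_Y = f_*\Delta$ is the toric boundary of $Y$. The anticanonical identity $\Delta_Y \sim -K_Y$ on the toric variety $Y$ (by \cite[Theorem 8.2.3]{CLS11}) then gives that $(Y,\Delta_Y)$ is a toric log Calabi--Yau pair, with log canonicity and $K_Y+\Delta_Y\equiv 0$ coming for free from the toric structure.

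For the statement about the exceptional locus, I would use the description of $f$ in terms of the fans: a toric birational contraction corresponds to a map of fans, and its exceptional locus is precisely the union of orbit closures $V(\sigma)$ over those cones $\sigma$ whose relative interior is collapsed or whose associated orbit is not mapped isomorphically. Each such $V(\sigma)$ is by definition a toric stratum of $(X,\Delta)$, giving the final claim.

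The main obstacle I anticipate is establishing that $Y$ is toric and that $f$ is toric \emph{a priori}, before one knows $Y$ carries a torus action. The honest difficulty is that a birational contraction is only defined up to its behavior in codimension one, so I must control indeterminacy. The cleanest route is to invoke the Mori dream space structure: choose a divisor $D$ on $X$ whose movable-cone chamber corresponds to $Y$, so that $f$ is realized as the canonical rational map to a $\Pic$-graded Proj (an SQM or divisorial contraction in the Mori chamber decomposition); because every such birational model of a toric Mori dream space is again toric with an induced equivariant map, toricity of $f$ and $Y$ follows. The subtle point requiring care is verifying that $f$ being merely a birational contraction (not assumed to be a sequence of MMP steps for a fixed divisor) can indeed be placed in this framework — I would address this by noting that surjectivity in codimension one forces $f$ to lie in the interior of the movable cone decomposition, so it is captured by the Mori chamber structure of the toric variety $X$.
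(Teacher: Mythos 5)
Your plan splits into three steps: reconstruct the torus via Lemma~\ref{lem:toric-defn-comparison}, prove that $Y$ and $f$ are toric by factoring $f$ through a toric MMP using the Mori dream space structure, and deduce the exceptional-locus statement from torus-invariance. The third step is essentially identical to the paper's argument (once $f$ is toric, the locus where $f$ is an isomorphism is open and torus-invariant, so its complement is a union of orbit closures). The second step, however, is where the paper does no work at all: it simply quotes \cite[Lemma 2.3.2]{BMSZ18} for the toricity of $(Y,\Delta_Y)$ and of $f$. Your attempt to reprove that lemma from scratch is a genuinely different and more ambitious route, but as written it has two real gaps.

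First, the $\qq$-factoriality needed for Proposition~\ref{prop:toric-mds} and Proposition~\ref{prop:mds-mmp} is not an assumption of the lemma; you flag this but never resolve it. It can be repaired by passing to a toric small $\qq$-factorialization $X'\rightarrow X$, but one must then descend the toricity of $X'\dashrightarrow Y$ and the exceptional-locus claim back to $X$, which is an argument rather than a remark. Second, and more seriously, your mechanism for placing $f$ inside the MMP framework is incorrect as stated. Surjectivity in codimension one does \emph{not} force $f$ to lie in the interior of the movable-cone decomposition: the chambers of ${\rm Mov}(X)$ correspond to small $\qq$-factorial modifications (Definition~\ref{def:mds}(3)), so they never detect contractions of divisors, which is the main case here. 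Already for the blow-down $f\colon \mathrm{Bl}_p\pp^2\rightarrow\pp^2$, the class $D=f^*A$ (with $A$ a general line) is nef but lies on a wall of the nef cone, hence on the boundary of the movable cone, in the interior of no chamber; and when $Y$ is not $\qq$-factorial, $f$ corresponds only to a face of the Mori chamber decomposition. The workable version of your idea is: take $D$ to be the strict transform of a general member of $|mA|$, scaled by $1/m$, for $A$ ample on $Y$; run the $D$-MMP, whose steps are toric as you say and whose terminal contraction is toric by Lemma~\ref{lem:toric-contractions}; and then prove that the resulting composite $X\dashrightarrow X'\rightarrow Y'$ actually \emph{equals} $f$, by identifying $Y'$ with $Y$ through uniqueness of ample models of $D$ and by checking that $f$ itself is an ample model of $D$ (this last point is where surjectivity in codimension one genuinely enters, via the fact that every $p$-exceptional divisor on a common resolution is $q$-exceptional). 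That identification is the crux of the whole strategy, it is absent from your proposal, and it is essentially the content of \cite[Proposition 1.11]{HK00} --- precisely the work the paper avoids by citing \cite{BMSZ18}.
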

\begin{proof}
    That $(Y,\Delta_Y)$ and $f$ are toric follows from ~\cite[Lemma 2.3.2]{BMSZ18}. To show that the exceptional locus of $f$ is a union of toric strata, it suffices to show that its complement is an open torus-invariant subset. The complement of the exceptional locus is the subset of the domain of $f$ at which $f$ is an isomorphism, and torus-invariance follows from the fact that $f$ is toric.
\end{proof}

\begin{lemma}\label{lem:toric-contractions}
Let $X$ be a toric variety and let $f\colon X \rightarrow Y$ be a contraction. Then $Y$ admits the structure of a toric variety in such a way that $f$ becomes equivariant.
\end{lemma}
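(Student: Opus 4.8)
The plan is to descend the torus action on $X$ to $Y$ by means of the rigidity lemma, and then to recognize $Y$ as a toric variety through the resulting dense torus orbit. Write $\mathbb{T}$ for the big torus of the toric variety $X$, so that $\mathbb{T}\subset X$ is a dense open orbit whose translation action extends to an action $a\colon \mathbb{T}\times X\to X$; fix the identity point $x_0=e\in\mathbb{T}\subset X$ and set $y_0=f(x_0)$. Consider the two morphisms $\Pi=\id_{\mathbb{T}}\times f\colon \mathbb{T}\times X\to \mathbb{T}\times Y$ and $G=f\circ a\colon \mathbb{T}\times X\to Y$. Since $f$ is a contraction, $\Pi$ is proper, and by flat base change along the cartesian square projecting to $X$ and to $Y$ one gets $\Pi_*\mathcal{O}_{\mathbb{T}\times X}=\mathcal{O}_{\mathbb{T}\times Y}$; thus $\Pi$ is itself a contraction, and $\mathbb{T}\times Y$ is connected and irreducible since $X$, hence $Y$, is irreducible.

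First I would factor $G$ through $\Pi$. The fiber of $\Pi$ over $(e,y_0)$ is $\{e\}\times f^{-1}(y_0)$, and $G$ maps it to the single point $f(f^{-1}(y_0))=y_0$. By the rigidity lemma ~\cite[Lemma 1.15]{Deb01}, applied over the connected base $\mathbb{T}\times Y$, the morphism $G$ is constant along every fiber of $\Pi$ and therefore factors uniquely as $G=\sigma\circ\Pi$ for a morphism $\sigma\colon \mathbb{T}\times Y\to Y$. Concretely $\sigma(t,f(x))=f(t\cdot x)$, which determines $\sigma$ on all of $\mathbb{T}\times Y$ because $f$ is surjective. The identities $\sigma(e,y)=y$ and $\sigma(t_1,\sigma(t_2,y))=\sigma(t_1t_2,y)$ follow at once from the corresponding identities for $a$, so $\sigma$ is an algebraic action of $\mathbb{T}$ on $Y$; and $f$ is $\mathbb{T}$-equivariant by construction, since $f(t\cdot x)=\sigma(t,f(x))$.

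It then remains to see that this action exhibits $Y$ as a toric variety. By equivariance, $f(\mathbb{T})=\mathbb{T}\cdot y_0$ is a single $\mathbb{T}$-orbit, and it is dense in $Y$ because $\mathbb{T}$ is dense in $X$ and $f$ is dominant; being a dense orbit, it is open in $Y$ and isomorphic to $\mathbb{T}/H$, where $H=\operatorname{Stab}_{\mathbb{T}}(y_0)$ is a closed subgroup. As a connected diagonalizable group, $\mathbb{T}_Y:=\mathbb{T}/H$ is again a torus. The subgroup $H$ acts trivially on the dense open orbit $f(\mathbb{T})$, hence, by separatedness of $Y$, trivially on all of $Y$; therefore the $\mathbb{T}$-action descends to an action of $\mathbb{T}_Y$ on the normal variety $Y$ whose restriction to the open orbit $f(\mathbb{T})\cong\mathbb{T}_Y$ is the translation action with trivial stabilizer. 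By ~\cite[Definition 3.1.1]{CLS11} this realizes $Y$ as a toric variety with big torus $\mathbb{T}_Y$, and $f$ is equivariant with respect to the quotient homomorphism $\mathbb{T}\to\mathbb{T}_Y$.

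The one step requiring care is the application of the rigidity lemma to the non-product contraction $\Pi$: one must confirm that contracting the single fiber $\Pi^{-1}(e,y_0)$ over the connected base $\mathbb{T}\times Y$ forces $G$ to contract every fiber and to factor through $\Pi$. This is precisely where the hypothesis $\Pi_*\mathcal{O}=\mathcal{O}$ (equivalently $f_*\mathcal{O}_X=\mathcal{O}_Y$) is essential; it can be verified by passing to the image $W$ of $(\Pi,G)$ inside $(\mathbb{T}\times Y)\times Y$ and checking that the induced proper morphism $W\to\mathbb{T}\times Y$ is a contraction with a zero-dimensional fiber over $(e,y_0)$, hence an isomorphism yielding the desired factorization.
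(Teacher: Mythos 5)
Your overall strategy---descend the $\mathbb{T}$-action along $f$ by rigidity, then read off the toric structure of $Y$ from the dense orbit---is sound, and everything from the construction of $\sigma$ onward (the action axioms, the identification of the dense orbit with $\mathbb{T}/H$, the triviality of the $H$-action by commutativity of $\mathbb{T}$ and separatedness, and the appeal to \cite[Definition 3.1.1]{CLS11}) is correct; note the paper gives no argument of its own here, citing \cite[Proposition 2.7]{tan23}. However, there is a genuine gap, and it sits exactly at the step you flag: the factorization $G=\sigma\circ\Pi$. The rigidity lemma \cite[Lemma 1.15]{Deb01}, applied to a proper morphism $\Pi$ with $\Pi_*\mathcal{O}=\mathcal{O}$ and a morphism $G$ contracting \emph{one} fiber $\Pi^{-1}(e,y_0)$, only produces a factorization over an open neighborhood of $(e,y_0)$; it does not force $G$ to contract every fiber. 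This is not a technicality: for a general proper contraction, ``contracts one fiber, hence all'' is false. Take $\pi\colon \mathbb{F}_1\to\mathbb{P}^2$ the blow-down of the $(-1)$-curve $E$ over $q$ and $g\colon\mathbb{F}_1\to\mathbb{P}^1$ the ruling: $g$ contracts the fiber $\pi^{-1}(y)$ for every $y\neq q$ (these fibers are points), yet $g$ does not factor through $\pi$, since $g|_E$ is an isomorphism onto $\mathbb{P}^1$ while $\pi(E)=q$. The same example defeats the patch proposed in your last paragraph: the projection from the image $W$ of $(\Pi,G)$ to $\mathbb{T}\times Y$ is indeed a proper contraction with a zero-dimensional fiber over $(e,y_0)$, but a proper contraction with \emph{a} zero-dimensional fiber need not be an isomorphism ($\pi$ itself has zero-dimensional fibers over $\mathbb{P}^2\setminus\{q\}$). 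One zero-dimensional fiber gives an isomorphism only over a neighborhood of that point; to conclude globally you would need \emph{all} fibers of $W\to\mathbb{T}\times Y$ to be zero-dimensional, which is precisely the statement you are trying to prove, so that route is circular.

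The gap is fixable, in either of two standard ways. (a) Observe that $G$ contracts every fiber of $\Pi$ lying over $\{e\}\times Y$, not just one, since $G|_{\{e\}\times X}=f$. Local rigidity at each point $(e,y)$ gives an open neighborhood of $(e,y)$ over which $G$ factors through $\Pi$; hence, for $x,x'\in f^{-1}(y)$, the two morphisms $t\mapsto f(t\cdot x)$ and $t\mapsto f(t\cdot x')$ from $\mathbb{T}$ to $Y$ agree on a nonempty open, hence dense, subset of the irreducible variety $\mathbb{T}$, and therefore agree everywhere because $Y$ is separated. This shows $G$ is constant on \emph{every} fiber $\{t\}\times f^{-1}(y)$, after which the local factorizations glue (they are unique, $\Pi$ being surjective with connected fibers) into the global morphism $\sigma$. (b) Alternatively, argue numerically: since $\mathbb{T}$ is connected, each automorphism $a_t$ acts trivially on $N_1(X)$, so $[a_t(C)]=[C]$ for every complete curve $C\subset X$; a curve contained in a fiber of $f$ is contracted exactly when $f^*A\cdot C=0$ for $A$ ample on the quasi-projective variety $Y$, so the set of $f$-contracted curves is $\mathbb{T}$-invariant, and connectedness of the fibers (chains of contracted curves) then shows that $f\circ a_t$ contracts every fiber of $f$ for every $t$. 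With either argument supplied, the remainder of your proof goes through as written.
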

\begin{proof}
This is proven in ~\cite[Proposition 2.7]{tan23}.
\end{proof}

The following is well-known, but we provide a proof here for convenience.
\begin{lemma}\label{lem:splitting-fans}
    Let $f\colon X \rightarrow Y$ be an extremal fibration between $\qq$-factorial projective toric varieties. Then the fan $\Sigma_X$ of $X$ can be expressed as a sum $$\Sigma_X=\Sigma_{X,F}+\Sigma_{X,Y}$$
    of subfans $\Sigma_{X,F},\Sigma_{X,Y}\subset \Sigma_X$, where 
    \begin{enumerate}
        \item $\Sigma_{X,F}$ has support equal to ${\rm Ker}(f_*),$
        \item $f_*$ restricts to a bijection $\tau\xrightarrow{\cong}f_*(\tau)$ for each $\tau\in \Sigma_{X,Y},$ 
        \item the assignment $\tau\mapsto f_*(\tau)$ determines a bijection $\Sigma_{X,Y}\xrightarrow{\cong}\Sigma_Y.$
    \end{enumerate}
\end{lemma}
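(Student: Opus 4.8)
The plan is to use the theory of toric morphisms to reduce everything to combinatorial statements about the induced linear map $f_* \colon N_X \otimes \rr \to N_Y \otimes \rr$ on the vector spaces containing the fans. By Lemma~\ref{lem:toric-contractions}, the fibration $f$ is equivariant for some torus action, so it is induced by a surjective lattice homomorphism $\phi \colon N_X \to N_Y$ whose real extension is $f_*$, and the fans satisfy the compatibility condition that every cone of $\Sigma_X$ maps into some cone of $\Sigma_Y$. Since $f$ is a fibration, $\phi$ is surjective with nontrivial kernel, and $\Sigma_{X,F}$ should be defined as the subfan of $\Sigma_X$ consisting of those cones contained in $\kr(f_*) = N_0 \otimes \rr$, where $N_0 = \ker \phi$.

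First I would establish (1) by analyzing the structure of an extremal toric fibration. Extremality ($\rho(X/Y) = 1$) is the crucial combinatorial input: it forces the generic fiber $F$ to be a toric variety of Picard rank one, and more importantly it constrains how the rays of $\Sigma_X$ are distributed relative to $\kr(f_*)$. The content of (1) is that the cones of $\Sigma_X$ lying inside $\kr(f_*)$ themselves form a fan supported on all of $\kr(f_*)$; this is the fan of the generic fiber, and its support being the full kernel reflects that $f$ is a genuine (proper) fibration rather than merely dominant. I would verify that this collection is closed under taking faces and intersections and hence is a genuine subfan.

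Next I would construct $\Sigma_{X,Y}$ and prove (2) and (3) simultaneously. The natural candidate for $\Sigma_{X,Y}$ is a set of cones of $\Sigma_X$ that map \emph{bijectively} onto the cones of $\Sigma_Y$ under $f_*$; the existence of such a section is where extremality does its real work. Because $\rho(X/Y)=1$, over each maximal cone $\sigma_Y \in \Sigma_Y$ the preimage fan is generated by $\Sigma_{X,F}$ together with exactly one additional ``horizontal'' piece, so there is a canonical way to lift $\sigma_Y$ to a cone $\tau \in \Sigma_X$ with $f_*|_\tau$ injective and $f_*(\tau) = \sigma_Y$. I would show that these lifts are compatible along faces, so that $\tau \mapsto f_*(\tau)$ is a well-defined bijection of fans, giving (3), and that injectivity of $f_*$ on each such $\tau$ gives (2). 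Finally I would check the decomposition $\Sigma_X = \Sigma_{X,F} + \Sigma_{X,Y}$, meaning every cone of $\Sigma_X$ is the Minkowski sum (equivalently, the cone generated by) a cone from each subfan; this follows from splitting the rays of each cone according to whether they lie in $\kr(f_*)$ or map to a ray downstairs.

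The main obstacle will be establishing the existence and compatibility of the horizontal lifts in (2) and (3)—that is, producing the subfan $\Sigma_{X,Y}$ as a coherent section of $f_*$ over $\Sigma_Y$. The subtlety is that a surjective map of fans need not admit a cone-wise section in general; it is precisely the extremality hypothesis $\rho(X/Y)=1$ that rigidifies the situation by forcing the ``relative'' part of each fiber to be one-dimensional, so that each cone downstairs has a unique natural lift. Managing the combinatorics so that these individual lifts glue into an honest subfan closed under faces, rather than merely producing lifts cone-by-cone, is the delicate point, and I expect to lean on $\qq$-factoriality (simpliciality of the cones) to keep the bookkeeping manageable.
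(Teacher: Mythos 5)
Your overall strategy coincides with the paper's: define $\Sigma_{X,F}$ as the cones of $\Sigma_X$ lying in ${\rm Ker}(f_*)$ (with support equal to the kernel coming from properness), build $\Sigma_{X,Y}$ as a section of $f_*$ over $\Sigma_Y$ using extremality, and verify the sum decomposition ray by ray. But there is a genuine gap exactly at the step you yourself flag as the crux: you never give the mechanism by which $\rho(X/Y)=1$ produces the horizontal lifts, and the heuristic you offer in its place is incorrect. Extremality does \emph{not} force ``the relative part of each fiber to be one-dimensional'' --- the fibers of an extremal toric fibration can have arbitrary dimension (weighted projective spaces $\pp(1,c_1,\hdots,c_n)$ occur as such fibers elsewhere in this very paper); what it forces is that the fiber has Picard rank one. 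Likewise, your claim that over each maximal cone $\sigma_Y$ the preimage fan is ``generated by $\Sigma_{X,F}$ together with exactly one additional horizontal piece'' is not an argument but a restatement of the uniqueness of the lift, i.e., of the very thing to be proven.

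The missing idea is a ray count. Since $X$ and $Y$ are $\qq$-factorial and projective, $\lvert\Sigma_X^{(1)}\rvert=\dim X+\rho(X)$ and $\lvert\Sigma_Y^{(1)}\rvert=\dim Y+\rho(Y)$. Properness makes $f_*^{-1}(\sigma)$ a union of cones of $\Sigma_X$ for every $\sigma\in\Sigma_Y$; applied to $\sigma=\{0\}$ this gives (1), so $\Sigma_{X,F}$ is a complete fan in the $r$-dimensional space ${\rm Ker}(f_*)$, $r=\dim X-\dim Y$, and hence has at least $r+1$ rays. Extremality gives $\rho(X)=\rho(Y)+1$, whence the number of rays of $\Sigma_X$ outside the kernel is at most $(\dim X+\rho(X))-(r+1)=\dim Y+\rho(Y)=\lvert\Sigma_Y^{(1)}\rvert$. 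Since the union-of-cones property also guarantees that every ray of $\Sigma_Y$ is the image of at least one ray of $\Sigma_X$, the pigeonhole principle yields a \emph{unique} lifted ray $\widetilde{\sigma}$ above each $\sigma\in\Sigma_Y^{(1)}$. This also dissolves the gluing obstacle you identify as delicate: the paper lifts rays, not maximal cones --- a cone of $\Sigma_{X,Y}$ is by definition the cone spanned by the lifts of the rays of a cone of $\Sigma_Y$ --- so compatibility along faces is automatic, and the only thing left to check is that each lifted cone actually belongs to $\Sigma_X$, which follows from the union-of-cones property of $f_*^{-1}(\sigma)$ together with simpliciality (a cone of $\Sigma_X$ mapping onto $\sigma$ must contain all the lifted rays, and in a simplicial cone any subset of the extreme rays spans a face). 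Without the counting step your plan cannot get off the ground; with it, your remaining steps, including the ray-splitting verification of $\Sigma_X=\Sigma_{X,F}+\Sigma_{X,Y}$, go through essentially as in the paper.
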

\begin{proof}
    As the morphism $f$ is proper, it follows that $f_*^{-1}(\sigma)$ is a union of cones in $\Sigma_X$ for each $\sigma\in \Sigma_Y.$ In particular, ${\rm Ker}(f_*)=f_*^{-1}(\{0\})$ is a union of cones in $\Sigma_X.$ Denoting by $\Sigma_{X,F}$ the collection of all cones in $\Sigma_X$ which are contained in ${\rm Ker}(f_*),$ we obtain a subfan of $\Sigma_X$ satisfying (1).\par
    We turn to define the fan $\Sigma_{X,Y}.$ We begin by noting that, for each $\sigma\in \Sigma_Y^{(1)},$ there is a unique $\widetilde{\sigma}\in \Sigma_X^{(1)}$ satisfying $f_*(\widetilde{\sigma})=\sigma.$ To see this, recall that, for each $\sigma\in \Sigma_Y^{(1)},$ $f_*^{-1}(\sigma)$ is a union of cones in $\Sigma_X.$ It follows that, for each $\sigma\in \Sigma_Y^{(1)},$ there is at least one $\widetilde{\sigma}\in \Sigma_{X}^{(1)}$ with $f_*(\widetilde{\sigma})=\sigma.$ Such a $\widetilde{\sigma}$ must necessarily be contained in $\Sigma_{X}^{(1)}\setminus\Sigma_{X,F}^{(1)}$. But, writing $r=\dim X- \dim Y=\dim {\rm Ker}(f_*)$, we have
    \begin{align*}
    \lvert\Sigma_{X}^{(1)}\setminus\Sigma_{X,F}^{(1)}\rvert&\leq(\dim X+\rho(X))-(r+1)\\
        &=\dim Y + \rho(Y)\\
        &= \lvert\Sigma_{Y}^{(1)}\rvert.
    \end{align*}
    The desired uniqueness follows.\par
    Using this notation, we define a fan $$\Sigma_{X,Y}=\{{\rm Cone}(\widetilde{\sigma_1},\hdots, \widetilde{\sigma_r})\vert\sigma_1,\hdots, \sigma_r\in \Sigma_Y^{(1)},{\rm Cone}(\sigma_1,\hdots, \sigma_r)\in \Sigma_Y\}.$$ To see that this is a subfan of $\Sigma_X,$ suppose $\widetilde{\sigma}\in \Sigma_{X,Y}.$ It follows from the definition of $\Sigma_{X,Y}$ that $\sigma=f_*(\widetilde{\sigma})\in \Sigma_Y.$ Write $\sigma_1,\hdots, \sigma_k\in \Sigma_Y^{(1)}$ for the rays spanning $\sigma.$ Since $f_*^{-1}(\sigma)$ is a union of cones in $\Sigma_X,$ there is some $\tau\in \Sigma_X$ with $f_*(\tau)=\sigma.$ This cone $\tau$ must contain the rays $\widetilde{\sigma}_1,\hdots,\widetilde{\sigma}_k.$ It follows that $\tau$ must contain $\widetilde{\sigma}={\rm Cone}(\widetilde{\sigma}_1,\hdots,\widetilde{\sigma}_k)$ as a face, hence that $\widetilde{\sigma}$ is a cone in $\Sigma_X.$ It is clear from the definitions that the fan $\Sigma_{X,Y}$ satisfies (3). That it satisfies (2) follows from the fact that the fans $\Sigma_X$ and $\Sigma_Y$ are simplicial together with the fact that, for each $\sigma\in \Sigma_Y,$ $\sigma$ and $\widetilde{\sigma}$ are generated by the same number of rays.\par
    Finally, we verify that $\Sigma_X=\Sigma_{X,F}+\Sigma_{X,Y}.$ That every cone in $\Sigma_X$ can be expressed as the sum of a cone in $\Sigma_{X,F}$ and a cone in $\Sigma_{X,Y}$ follows from the fact that every ray in $\Sigma_X^{(1)}$ is contained in either $\Sigma_{X,F}^{(1)}$ or $\Sigma_{X,Y}^{(1)}.$ It remains to show that $\tau+\widetilde{\sigma}\in \Sigma_X$ whenever $\tau\in \Sigma_{X,F}$ and $\widetilde{\sigma}\in \Sigma_{X,Y}.$ Choose $v\in {\rm Relint}(\tau)$ and $w\in {\rm Relint}(\widetilde{\sigma}).$ Writing $\sigma=f_*(\widetilde{\sigma}),$ we have $v+w\in f_*^{-1}(\sigma).$ Thus, there is a cone $\gamma\in \Sigma_X$ with $v+w\in \gamma \subset f_*^{-1}(\sigma).$ Since $f_*(w)\in f_*(\gamma)\cap{\rm Relint}(\sigma),$ we must have $\gamma=\tau'+\widetilde{\sigma}$ for some $\tau'\in \Sigma_{X,F}.$ To show that $\tau'=\tau,$ it suffices to show that $v\in \tau'.$ Since $v+w\in \tau'+\widetilde{\sigma},$ we can write $v+w=v'+w'$ for some $v'\in \tau'$ and $w'\in \widetilde{\sigma}.$ But $$f_*(w)=f_*(v+w)=f_*(v'+w')=f_*(w')$$ implies that $w=w',$ hence that $v=v'\in \tau'.$
\end{proof}

\begin{lemma}\label{lem:proj-bundle-sum-line-bundles}
Let $f\colon X \rightarrow Y$ be a toric morphism between $\qq$-factorial projective toric varieties. Assume that $f$ is a locally trivial fiber bundle with fiber isomorphic to $\pp^n$ for some $n\geq 0.$ Then there are locally free sheaves $\mathcal{L}_0,\hdots, \mathcal{L}_n$ on $Y$ such that $X$ is isomorphic over $Y$ to the projection $\pp\left(\bigoplus_{i=0}^n\mathcal{L}_i\right)\rightarrow Y.$
\end{lemma}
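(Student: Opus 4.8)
The plan is to reduce the statement to an identification of fans and to feed the geometry into the combinatorial decomposition of Lemma~\ref{lem:splitting-fans}. If $n=0$ then $f$ is finite and birational, hence an isomorphism, and the conclusion holds with $\mathcal{L}_0=\oo_Y$; so assume $n\geq 1$. Let $f_*\colon N\to N'$ denote the map of cocharacter lattices induced by $f$, with $\Sigma_X\subset N_\rr$ and $\Sigma_Y\subset N'_\rr$ the respective fans, and set $N_F=\ker(f_*)$, a rank-$n$ sublattice. Since $f$ is a locally trivial $\pp^n$-bundle, its fibers have Picard rank one and the relative Picard rank is $\rho(X/Y)=1$; thus $f$ is an extremal fibration and Lemma~\ref{lem:splitting-fans} yields $\Sigma_X=\Sigma_{X,F}+\Sigma_{X,Y}$. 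The subfan $\Sigma_{X,F}$ has support $N_F\otimes\rr$ and is the fan of the general fiber; as that fiber is $\pp^n$, I would fix an isomorphism $N_F\cong\zz^n$ carrying $\Sigma_{X,F}$ to the standard fan of $\pp^n$, so that its primitive ray generators are a basis $v_1,\dots,v_n$ of $N_F$ together with $v_0=-(v_1+\dots+v_n)$, and its maximal cones are the $\cone(v_j:j\neq k)$ for $k=0,\dots,n$.

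Next I would extract the bundle data from $\Sigma_{X,Y}$. By Lemma~\ref{lem:splitting-fans} each ray $\sigma\in\Sigma_Y^{(1)}$, with primitive generator $u_\sigma$, has a unique lift $\widetilde\sigma\in\Sigma_{X,Y}^{(1)}$; writing $w_\sigma$ for its primitive generator, we have $f_*(w_\sigma)=c_\sigma u_\sigma$ for some integer $c_\sigma\geq 1$. The crucial claim is that $c_\sigma=1$. Here $\widetilde\sigma$ determines the unique $f$-invariant prime divisor $D_{\widetilde\sigma}$ dominating $D_\sigma\subset Y$, and the toric pullback formula gives $\ord_{D_{\widetilde\sigma}}(f^*D_\sigma)=c_\sigma$; but local triviality makes $f^{-1}(D_\sigma)$ a product $D_\sigma\times\pp^n$ locally, hence reduced along $D_{\widetilde\sigma}$, forcing $c_\sigma=1$. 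Fixing a splitting $N\cong N_F\oplus N'$ of $f_*$, I may therefore write $w_\sigma=(\psi(\sigma),u_\sigma)$ with $\psi(\sigma)\in N_F$; expanding $\psi(\sigma)=-\sum_{i=1}^n a_i(\sigma)v_i$ defines integers $a_i(\sigma)$, and I set $a_0(\sigma)=0$ and $\mathcal{L}_i=\oo_Y\!\left(\sum_\sigma a_i(\sigma)D_\sigma\right)$ for $0\leq i\leq n$.

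Finally I would compare $\Sigma_X$ with the fan of $\pp\!\left(\bigoplus_{i=0}^n\mathcal{L}_i\right)$ recorded in \cite[\S 7.3]{CLS11}: its rays are the fiber rays $v_0,\dots,v_n$ together with the lifts $w_\sigma=(\,-\sum_i a_i(\sigma)v_i,\,u_\sigma)$, and its maximal cones are exactly $\cone(v_j:j\neq k)+\cone(w_\sigma:\sigma\in\tau^{(1)})$ as $\tau$ runs over the maximal cones of $\Sigma_Y$ and $k$ over $\{0,\dots,n\}$. By the decomposition $\Sigma_X=\Sigma_{X,F}+\Sigma_{X,Y}$ these are precisely the maximal cones of $\Sigma_X$, so the two fans coincide and $X\cong\pp\!\left(\bigoplus_{i=0}^n\mathcal{L}_i\right)$ over $Y$, provided the divisors $\sum_\sigma a_i(\sigma)D_\sigma$ are Cartier. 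This last point, together with the multiplicity-one claim above, is where I expect the real work to lie: the decomposition of Lemma~\ref{lem:splitting-fans} and the $\qq$-factoriality of $X$ are by themselves too weak to guarantee either that the lifts $w_\sigma$ project with multiplicity one or that the heights $a_i(\sigma)$ glue to Cartier divisors, and both statements must be wrung out of the reducedness and local product structure of the fibers furnished by local triviality. Concretely, I would deduce the Cartier property by trivializing $f$ over the invariant affine charts $U_\tau$ of $Y$, which flattens the lifts $\{w_\sigma\}_{\sigma\in\tau^{(1)}}$ simultaneously and so produces, for each maximal $\tau$, an integral functional $m_{i,\tau}\in M'$ with $\langle m_{i,\tau},u_\sigma\rangle=-a_i(\sigma)$.
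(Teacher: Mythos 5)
Your proposal is correct, and its skeleton coincides with the paper's: apply Lemma~\ref{lem:splitting-fans} to decompose $\Sigma_X=\Sigma_{X,F}+\Sigma_{X,Y}$, choose a splitting of the surjection $f_*\colon N_X\to N_Y$, read off the heights $a_i(\sigma)$ of the lifted rays relative to a basis of $\ker(f_*)\cap N_X$, define $\mathcal{L}_i$ from those heights, and match fans via \cite[\S 7.3]{CLS11}. The genuine difference is in how local triviality is converted into lattice-theoretic statements. The paper simply cites \cite[Theorem 3.3.19]{CLS11} (locally trivial $\Leftrightarrow$ split fan), which hands it both compatibility conditions at once: $f_*(\tau\cap N_X)=f_*(\tau)\cap N_Y$ for every $\tau\in\Sigma_{X,Y}$, i.e.\ primitive generators of lifted rays map to primitive generators, \emph{and} the integrality over maximal cones that makes the divisors $\sum_\sigma a_i(\sigma)D_\sigma$ Cartier. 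You instead re-derive the ray-level statement geometrically (multiplicity one of $f^*D_\sigma$ along $D_{\widetilde\sigma}$ via reducedness of pullbacks under a trivialization), which is a correct and pleasantly self-contained argument, and you obtain the Cartier property from trivializations over the invariant charts $U_\tau$; note that the existence of such invariant-chart trivializations is itself the nontrivial direction of \cite[Theorem 3.3.19]{CLS11} (an arbitrary trivializing cover does not restrict to the $U_\tau$ for free), so in a complete write-up you would still cite that theorem or reprove it, at which point your ray-level argument becomes redundant. Two points where your version is actually more careful than the paper's: you justify extremality of $f$ (needed to invoke Lemma~\ref{lem:splitting-fans}, whose hypothesis the paper's lemma statement does not literally guarantee), and you make explicit that the divisors defining the $\mathcal{L}_i$ must be checked to be Cartier --- a hypothesis of \cite[Proposition 7.3.3]{CLS11} that the paper's proof passes over silently, and which indeed requires the maximal-cone lattice condition rather than only $\qq$-factoriality and the ray-level condition, exactly as you say.
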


\begin{proof}
By Lemma ~\ref{lem:splitting-fans} and ~\cite[Theorem 3.3.19]{CLS11}, the fan $\Sigma_X$ of $X$ can be expressed as a sum $$\Sigma_X=\Sigma_{X,F}+\Sigma_{X,Y}$$
    of subfans $\Sigma_{X,F},\Sigma_{X,Y}\subset \Sigma_X$, where 
    \begin{enumerate}
        \item $\Sigma_{X,F}$ has support equal to ${\rm Ker}(f_*),$
        \item $f_*$ restricts to a bijection $\tau\xrightarrow{\cong}f_*(\tau)$ for each $\tau\in \Sigma_{X,Y},$ 
        \item $f_*(\tau\cap N_X)=f_*(\tau)\cap N_Y$ for each $\tau\in \Sigma_{X,Y}.$
        \item the assignment $\tau\mapsto f_*(\tau)$ determines a bijection $\Sigma_{X,Y}\xrightarrow{\cong}\Sigma_Y.$
    \end{enumerate}
    The homomorphism $f_*\colon N_X\rightarrow N_Y$ induced by $f$ on cocharacter lattices is surjective by (3), so we may choose a section $s\colon N_Y\rightarrow N_X$ of $f_*.$ Since $F\cong \mathbb{P}^n,$ we may choose rays $\sigma_1,\hdots,\sigma_n\in \Sigma_{X,F}^{(1)}$ whose respective primitive generators $v_1,\hdots,v_n$ form a $\zz$-basis for ${\rm Ker}(f_*)\cap N_X.$ It follows that the primitive generator $w_\tau$ of a ray $\tau\in \Sigma_{X,Y}^{(1)}$ can be expressed as $$w_\tau=s\left(f_*(w_\tau)\right)+\sum_{i=1}^na_{\tau,i}v_i$$
    for some integers $a_{\tau,1},\hdots, a_{\tau,n}\in \zz.$ Write $\mathcal{L}_0=\mathcal{O}_Y,$ and write $$\mathcal{L}_i=\mathcal{O}_Y\left(\sum_{\tau\in \Sigma_{X,Y}^{(1)}}-a_{\tau,i}D_{{f_*(\tau)}}\right)$$ for each $1\leq i \leq n,$ where $D_{f_*(\tau)}$ is the torus-invariant divisor on $Y$ corresponding to the ray $f_*(\tau)\in \Sigma_Y^{(1)}.$ It follows from ~\cite[Proposition 7.3.3]{CLS11} that the desired result holds with the locally free sheaves $\mathcal{L}_0,\hdots, \mathcal{L}_n$.
\end{proof}

A given variety may admit multiple different toric log Calabi--Yau pairs. The following lemma explains how these different pairs are related.

\begin{lemma}\label{lem:conjugate-tori}
Let $X$ be a projective variety and let $(X,\Delta_1)$ and $(X,\Delta_2)$ be toric log Calabi--Yau pairs supported on $X.$ Then there is $g\in {\rm Aut}^0(X)$ such that $\Delta_2=g_*\Delta_1.$ If, in addition, $X$ is $\qq$-factorial, then we may write $\Delta_i=\sum_{j=1}^k\Delta_{ij},$ for $1\leq i \leq 2,$ so that $\Delta_{1j}\sim_\qq\Delta_{2j}$ for each $1\leq j \leq k.$
\end{lemma}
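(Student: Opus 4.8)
The plan is to build the automorphism $g$ by conjugating one torus action into the other, using that the two tori are maximal tori of $\Aut^0(X)$, and then to control the individual prime components via the triviality of the $\Aut^0(X)$-action on the Picard group. Set $T_i=\Aut^0(X,\Delta_i)$ for $i=1,2$. By Lemma~\ref{lem:toric-defn-comparison}, each $T_i$ is an $n$-dimensional torus (with $n=\dim X$) acting on $X$ so that $U_i:=X\setminus\Delta_i$ is the dense open orbit with trivial isotropy; both $T_1,T_2$ are subtori of the connected algebraic group $G=\Aut^0(X)$, which is genuinely an algebraic group because $X$ is projective.

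The key claim is that each $T_i$ is a \emph{maximal} torus of $G$. To see this, suppose $T_i\subseteq T'\subseteq G$ with $T'$ a torus, and fix $x_0\in U_i$. Since $T'$ is abelian, any $s\in\operatorname{Stab}_{T'}(x_0)$ commutes with $T_i$, so for $u=t\cdot x_0\in U_i$ with $t\in T_i$ we get $s\cdot u=t\cdot(s\cdot x_0)=u$; thus $\operatorname{Stab}_{T'}(x_0)$ fixes the dense set $U_i$ pointwise, and as $G$ acts faithfully it is trivial. The orbit $T'\cdot x_0$ contains the $n$-dimensional $U_i$, so $\dim T'=\dim(T'\cdot x_0)=n=\dim T_i$, and since $T_i\subseteq T'$ is a connected subgroup of full dimension we conclude $T'=T_i$. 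Now by the conjugacy of maximal tori in a connected algebraic group there is $g\in G$ with $gT_1g^{-1}=T_2$. For $y=g(u)\in g(U_1)$ and $t_2=gt_1g^{-1}\in T_2$ one computes $t_2\cdot y=g(t_1\cdot u)$, so $g(U_1)$ is a single $T_2$-orbit; being dense and open, it must be the unique dense orbit $U_2$. Hence $g(\Delta_1)=\Delta_2$ and, since both are reduced, $g_*\Delta_1=\Delta_2$, proving the first assertion.

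For the second assertion, assume $X$ is $\qq$-factorial and write $\Delta_1=\sum_{j=1}^kD_j$ as a sum of prime divisors, so that $\Delta_2=g_*\Delta_1=\sum_{j=1}^kg_*D_j$; putting $\Delta_{1j}=D_j$ and $\Delta_{2j}=g_*D_j$, it remains to prove $D_j\sim_\qq g_*D_j$. Since $X$ is a $\qq$-factorial projective toric variety we have $\operatorname{Pic}^0(X)=0$, so $\operatorname{NS}(X)=\Pic(X)$, and the connected group $G$ acts trivially on this finitely generated (discrete) group; hence $g_*L\sim L$ for every Cartier divisor $L$. Choosing $m>0$ with $mD_j$ Cartier gives $g_*(mD_j)\sim mD_j$, i.e. $g_*D_j\sim_\qq D_j$, as required.

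The main obstacle is the first assertion, specifically the upgrade from ``$T_1,T_2$ are $n$-dimensional tori'' to ``$T_1,T_2$ are conjugate in $\Aut^0(X)$.'' This rests on the maximality argument above together with conjugacy of maximal tori; the only point needing care is that Borel's conjugacy theorem applies to $G=\Aut^0(X)$, which is legitimate either because $\Aut^0(X)$ is linear for a complete toric variety, or in general by reducing to the linear case via Chevalley's structure theorem (every torus in $G$ lies in the affine part, and maximal tori there are conjugate). Once $g$ is produced, transporting the boundary and handling the components are routine.
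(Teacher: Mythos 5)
Your proof is correct and follows essentially the same route as the paper's: show that ${\rm Aut}^0(X,\Delta_1)$ and ${\rm Aut}^0(X,\Delta_2)$ are maximal tori of ${\rm Aut}^0(X)$, conjugate them by some $g$, observe that $g$ carries the dense open orbit (hence the boundary) of one pair to that of the other, and deduce the componentwise $\qq$-linear equivalence from connectedness of ${\rm Aut}^0(X)$ together with the structure of the Picard group of a projective toric variety. The differences are only in how individual steps are justified: where the paper cites Hu's dimension bound on subtori, Milne's conjugacy theorem, and the coincidence of numerical and $\qq$-linear equivalence on $X$, you give direct arguments (the orbit--stabilizer maximality argument, the reduction of conjugacy to the affine part via Chevalley's structure theorem, and $\Pic^0(X)=0$), all of which are sound.
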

\begin{proof}
    $X$ is a projective variety with a rational polyhedral nef cone, so it follows from ~\cite[Corollary 2.12]{brion18} that ${\rm Aut}^0(X)$ is an algebraic group. Writing $n=\dim(X)$, it follows from ~\cite[Lemma 2.2]{Hu18} that subtori of ${\rm Aut}^0(X)$ have dimension at most $n.$ By assumption, the subgroups ${\rm Aut}^0(X,\Delta_1)$ and ${\rm Aut}^0(X,\Delta_2)$ of ${\rm Aut}^0(X)$ are both $n$-dimensional tori. By ~\cite[Theorem 17.10]{Milne17}, there is some $g \in {\rm Aut}^0(X)$ such that ${\rm Aut}^0(X,\Delta_2)=g{\Aut}^0(X,\Delta_1)g^{-1}.$ To obtain the first statement, we recall that, for $1\leq i \leq 2,$ the components of $\Delta_i$ are closures of ${\rm Aut}^0(X,\Delta_i)$-orbits. It follows that, for $1\leq i \leq 2,$ we may write $\Delta_i=\sum_{j=1}^k\Delta_{ij}$ such that $g_*\Delta_{1j}=\Delta_{2j}$ for each $1\leq j \leq k.$ For the second statement, we note that $g_*=(g^{-1})^*$ in this case. Since ${\rm Aut}^0(X)$ is connected, it follows, for all $\qq$-divisors $D,$ that $D$ and $(g^{-1})^*D$ are numerically equivalent. But numerical equivalence and $\qq$-linear equivalence coincide on $X.$ 
\end{proof}


\subsection{Complexity}
In this subsection we recall the notion of complexity and describe its behavior under birational contraction maps.


\begin{definition}\label{defn:complexity}{\em
Let $X$ be a normal projective variety and let $(X,B)$ be a log sub-pair. The \textit{complexity} of $(X,B)$ is $$c(X,B)=\dim X + \rank {\rm WDiv}_{\rm alg}(X)-|B|,$$
where ${\rm WDiv}_{\rm alg}(X)$ is the group of Weil divisors on $X$ modulo algebraic equivalence and $|B|$ is the sum of the coefficients of $B$. 
}
\end{definition}

The following definition describes a variant of the complexity which is natural from the perspective of birational geometry.
\begin{definition}
{\em Let $X$ be a normal projective variety and let $(X,B)$ be a log sub-pair. The \textit{birational complexity} of $(X,B)$ is $$c_{\rm bir}(X,B)=\inf\left\{c(Y,B_Y)\mid (X,B)\simeq _{\rm bir}(Y,B_Y)\right\},$$
where the infimum is taken over all log pairs $(Y,B_Y)$ crepant to $(X,B).$}
\end{definition}



\begin{remark}\label{rmk:compl-zero-implies-logCY}
{\em
Let $X$ be a projective variety and let $(X,B)$ be a log canonical pair with $-(K_X+B)$ nef and $c(X,B)=0.$ Then $(X,B)$ must be a log Calabi--Yau pair. \par
Indeed, it follows from Theorem ~\ref{introthm:BMSZ18} that $X$ is a toric variety, and hence that the nef $\qq$-divisor $-(K_X+B)$ is semi-ample. Thus, there is some $0\leq D\sim_\qq -(K_X+B)$ such that $(X,B+D)$ is log Calabi--Yau. On the one hand, $c(X,B+D)=-|D|$ is nonnegative by Theorem ~\ref{introthm:BMSZ18}. On the other hand, $|D|$ is nonnegative since $D$ is effective. It follows that $|D|=0,$ hence that $D=0,$ and hence that $K_X+B\sim_\qq 0$ as claimed.
}
\end{remark}

\begin{lemma}\label{lem:complexity-and-exceptional-divisors}
Let $f\colon X \dashrightarrow Y$ be a birational contraction map between normal projective varieties. Let $(X,B)$ be a log sub-pair. Denote by $E_1,\hdots, E_r$ the prime $f$-exceptional divisors and write $B_Y=f_*B.$ Then $$c(X,B)=c(Y,B_Y)+\sum_{i=1}^ra_{E_i}(X,B).$$ 
\end{lemma}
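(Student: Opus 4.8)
The plan is to verify the formula term by term, reducing the whole statement to a single identity about ranks of groups of Weil divisors modulo algebraic equivalence. First I would record the elementary contributions. Since $f$ is birational, $\dim X=\dim Y$. Since $f$ is a birational contraction map it is surjective in codimension one, so strict transform identifies the non-$f$-exceptional prime divisors of $X$ with the prime divisors of $Y$, while $f_*E_i=0$ for each exceptional $E_i$. Writing $B=\sum_j b_jP_j$, this yields $|B|-|B_Y|=\sum_{i=1}^r\coeff_{E_i}(B)$. Because each $E_i$ is an honest prime divisor on $X$, its log discrepancy may be computed on $X$ itself (taking the identity as the birational morphism in the definition), giving $a_{E_i}(X,B)=1-\coeff_{E_i}(B)$, and hence $|B|-|B_Y|=r-\sum_{i=1}^r a_{E_i}(X,B)$. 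Substituting into the definition of the complexity, the claimed formula becomes equivalent to the single rank identity
\[
\rank \WDiv_{\alg}(X)=\rank \WDiv_{\alg}(Y)+r.
\]

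To prove this identity I would study the pushforward homomorphism $f_*\colon \WDiv(X)\to \WDiv(Y)$. It is surjective (again because $f$ is surjective in codimension one, every prime divisor of $Y$ is the pushforward of its strict transform), and its kernel is the free abelian group $\bigoplus_{i=1}^r\zz E_i$ generated by the exceptional divisors. The point is to descend this to algebraic equivalence classes. I would fix a common resolution $p\colon W\to X$, $q\colon W\to Y$ of $f$ with $W$ smooth projective; since $f$ is a birational contraction, every $p$-exceptional prime divisor of $W$ is also $q$-exceptional. This makes the assignment sending $\alpha$ to $q_*\tilde\alpha$, for any lift $\tilde\alpha$ of $\alpha$ under $p_*$, independent of the chosen lift and equal to $f_*\alpha$; combined with the standard fact that proper pushforward of cycles preserves algebraic equivalence, it shows that $f_*$ descends to a surjection $\overline{f_*}\colon \WDiv_{\alg}(X)\to \WDiv_{\alg}(Y)$.

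It then remains to show that $\ker\overline{f_*}$ has rank exactly $r$, for which I must check two things about the classes $[E_1],\dots,[E_r]$: that they are linearly independent in $\WDiv_{\alg}(X)_\qq$, and that they span this kernel rationally. Independence I would obtain from the fact that algebraic equivalence implies numerical equivalence, together with the existence of curves contracted by $f$, produced on the resolution $W$, whose intersection matrix against the $E_i$ is nondegenerate; this is the usual independence of an exceptional locus. For the spanning, if $f_*\alpha$ is algebraically trivial it suffices to know that $f_*$ carries algebraically trivial divisors on $X$ onto those of $Y$; combined with the surjectivity of $f_*$ and $\ker f_*=\bigoplus\zz E_i$, this forces $\alpha$ to be a combination of the $E_i$ up to algebraic equivalence.

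The main obstacle is precisely this last rank computation for Weil divisors modulo algebraic equivalence. The difficulty is twofold: $X$ and $Y$ need not be $\qq$-factorial, so the $E_i$ need not be $\qq$-Cartier and their intersection numbers must be computed after pulling back to the smooth resolution $W$; and $f$ is only a rational map, so the comparison of algebraic equivalence on $X$ and on $Y$ has to be routed through $W$ using that proper pushforward preserves algebraic equivalence. Once the independence and spanning of the classes $[E_i]$ are established, the rank identity, and hence the lemma, follow immediately from the arithmetic in the first paragraph.
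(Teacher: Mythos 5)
Your opening reduction is correct and coincides with the paper's: the coefficient bookkeeping plus $a_{E_i}(X,B)=1-\coeff_{E_i}(B)$ reduces everything to the identity $\rank \WDiv_{\alg}(X)=\rank \WDiv_{\alg}(Y)+r$. The divergence, and the problem, is in how you prove that identity. The paper never works with the rational map $f$ between the possibly singular varieties directly: it first applies the (to-be-proved) formula to the two legs $p\colon Z\to X$ and $q\colon Z\to Y$ of a smooth resolution of $f$ and subtracts -- legitimate because both legs are birational \emph{morphisms} from a \emph{smooth} source, and the lemma is stated for sub-pairs precisely so it can be applied to the log pullback $(Z,B_Z)$. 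After that reduction, Weil divisors are Cartier, algebraic equivalence implies numerical equivalence, and the negativity lemma \cite[Lemma 3.39]{KM98} applies verbatim. You skip this reduction, and two of your steps then have genuine gaps.

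The first gap is the descent of $f_*$ to $\WDiv_{\alg}$. Independence of the lift (valid, since $p$-exceptional implies $q$-exceptional for a birational contraction) together with ``proper pushforward of cycles preserves algebraic equivalence'' does \emph{not} yield the descent: well-definedness on classes requires that $\alpha\equiv_{\rm alg}\alpha'$ on $X$ forces $q_*\tilde\alpha\equiv_{\rm alg}q_*\tilde\alpha'$ on $Y$, and since strict transform does \emph{not} preserve algebraic equivalence (blow up a point lying on one of two algebraically equivalent curves on a surface: the strict transforms differ numerically by a multiple of the exceptional curve), the lifts $\tilde\alpha,\tilde\alpha'$ need not be algebraically equivalent on $W$, so the preservation property of $q_*$ gives nothing. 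What you actually need is that $p_*$ maps the subgroup of divisors algebraically equivalent to zero on $W$ \emph{onto} the corresponding subgroup on $X$, i.e.\ that an algebraically trivial divisor on $X$ admits \emph{some} algebraically trivial lift; this is true (lift the interpolating families themselves, using that Gysin specialization commutes with proper pushforward), but it is an additional nonformal step that your cited facts do not supply -- and your spanning step silently uses the same statement in the form ``$f_*$ carries algebraically trivial divisors \emph{onto} those of $Y$,'' which is surjectivity, not preservation. The second gap is the independence of $[E_1],\dots,[E_r]$: you correctly note the $E_i$ need not be $\qq$-Cartier, but your proposed fix -- ``intersection numbers computed after pulling back to the smooth resolution'' -- is not available, since non-$\qq$-Cartier divisors cannot be pulled back, and the strict transforms $\tilde E_i$ carry no information about algebraic equivalence of the $E_i$ on $X$ without, again, the lifting statement. (Granting it, one can finish: a relation $\sum a_iE_i\equiv_{\rm alg}0$ lifts to a numerically trivial, $q$-exceptional divisor on $W$, which vanishes by the negativity lemma applied to $q$.) So either prove the lifting lemma as a separate step, or make the paper's reduction first; in the smooth-morphism case your outline -- surjectivity of $f_*$, independence via \cite[Lemma 3.39]{KM98}, spanning via restriction to $X\setminus{\rm Ex}(f)$ -- is exactly the paper's argument and goes through without obstruction.
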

\begin{proof}
Choose a resolution 
\[
\xymatrix{ 
 & Z \ar[ld]_-{p} \ar[rd]^-{q} & \\ 
 X \ar@{-->}[rr]^-{f}& & Y
}
\]
of the indeterminacy of $f,$ with $Z$ a smooth projective variety and with $p$ and $q$ birational. Denote by $F_1,\hdots,F_s$ the prime $p$-exceptional divisors and by $\widetilde{E}_1,\hdots,\widetilde{E}_r$ the strict transforms of $E_1,\hdots,E_r,$ respectively. Then $\widetilde{E}_1,\hdots,\widetilde{E}_r,F_1,\hdots,F_s$ are the prime $q$-exceptional divisors. Denote by $(Z,B_Z)$ the log pullback of $(X,B)$ via $p,$ and note that $a_{\widetilde{E}_i}(Z,B_Z)=a_{E_i}(X,B)$ for each $1\leq i \leq r$ (see Remark ~\ref{rmk:crepant-discrepancy}). It follows that the desired equality will hold if we can establish both $$c(Z,B_Z)=c(X,B)+\sum_{i=1}^ra_{\widetilde{E}_i}(Z,B_Z)$$
and $$c(Z,B_Z)=c(Y,B_Y)+\sum_{i=1}^ra_{\widetilde{E}_i}(Z,B_Z)+\sum_{j=1}^sa_{F_j}(Z,B_Z).$$ Thus, we may assume that $f$ is a morphism and that $X$ is smooth.\par
Since $\dim X=\dim Y,$ the desired equality holds if and only if $$\rank {\rm WDiv}_{\rm alg}(X)-|B|=\rank {\rm WDiv}_{\rm alg}(Y)-|B_Y|+\sum_{i=1}^ra_{E_i}(X,B).$$
Since $$|B|=|B_Y|+\sum_{i=1}^r{\rm coeff}_{E_i}(B)$$ and $$\sum_{i=1}^ra_{E_i}(X,B)=r-\sum_{i=1}^r{\rm coeff}_{E_i}(B),$$
this reduces to showing that $$\rank {\rm WDiv}_{\rm alg}(X)=\rank {\rm WDiv}_{\rm alg}(Y)+r.$$ Since $$f_*\colon {\rm WDiv}_{\rm alg}(X)\rightarrow {\rm WDiv}_{\rm alg}(Y)$$ is surjective, it suffices to show that the classes of $E_1,\hdots, E_r$ form a basis for the kernel of $f_*.$\par
All Weil divisors on the smooth variety $X$ are Cartier, and algebraic equivalence as Weil divisors implies numerical equivalence as Cartier divisors in this case. Since the divisors $E_1,\hdots, E_r$ are $f$-exceptional, it then follows from ~\cite[Lemma 3.39]{KM98} that a nonzero divisor of the form $\sum_{i=1}^ra_iE_i$ must be nonzero modulo algebraic equivalence. Thus, the classes of $E_1,\hdots,E_r$ in ${\rm WDiv}_{\rm alg}(X)$ are linearly independent. These classes are certainly contained in the kernel of $f_*,$ and the fact that they generate the kernel of $f_*$ follows from the fact that we may identify this kernel with the kernel of the restriction homomorphism ${\rm WDiv}_{\rm alg}(X)\rightarrow {\rm WDiv}_{\rm alg}\left(X\setminus {\rm Ex}(f)\right).$
\end{proof} 

As a corollary, we see that the complexity of a log canonical pair is unaffected by extracting log canonical places.

\begin{corollary}\label{cor:compl-dlt-mod}
    Let $(X,B)$ be a log canonical pair and let $f\colon (Y,B_Y)\rightarrow (X,B)$ be a birational morphism extracting only log canonical places of $(X,B)$. Then $$c(X,B)=c(Y,B_Y).$$
\end{corollary}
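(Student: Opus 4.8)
The plan is to deduce this directly from Lemma ~\ref{lem:complexity-and-exceptional-divisors}, after correctly matching up the directions of the maps. Although the morphism $f$ points from $Y$ down to $X$, it is itself a birational contraction map in the sense of Definition ~\ref{def:bir-contraction-map}: a projective birational morphism between normal varieties is a contraction, and being a morphism it is surjective, hence in particular surjective in codimension one. Thus $f\colon Y \to X$ occupies the role of the birational contraction map in Lemma ~\ref{lem:complexity-and-exceptional-divisors}, with the log sub-pair $(Y,B_Y)$ playing the role of the source pair.

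Denoting by $E_1,\hdots,E_r$ the prime $f$-exceptional divisors on $Y$, I would first record that $f_*B_Y=B$; this holds by the very definition of the log pullback $(Y,B_Y)$ of $(X,B)$ via $f$, so the hypotheses of Lemma ~\ref{lem:complexity-and-exceptional-divisors} are met. That lemma then yields
$$c(Y,B_Y)=c(X,B)+\sum_{i=1}^r a_{E_i}(Y,B_Y).$$

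It remains to see that each term $a_{E_i}(Y,B_Y)$ vanishes. Since $E_i$ is a prime divisor on $Y$ itself, one has $a_{E_i}(Y,B_Y)=1-\mathrm{coeff}_{E_i}(B_Y)$. On the other hand, the log discrepancy of $E_i$ with respect to $(X,B)$ is by definition computed through the birational morphism $f$ and the log pullback $(Y,B_Y)$, so $a_{E_i}(X,B)=1-\mathrm{coeff}_{E_i}(B_Y)$ as well. Hence $a_{E_i}(Y,B_Y)=a_{E_i}(X,B)$, and this quantity is zero because, by hypothesis, $f$ extracts only log canonical places of $(X,B)$. Substituting into the displayed identity gives $c(Y,B_Y)=c(X,B)$, as desired.

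There is essentially no serious obstacle here, as the content is entirely contained in Lemma ~\ref{lem:complexity-and-exceptional-divisors}. The only points requiring care are the bookkeeping of the direction of $f$, so that the exceptional divisors and the relevant log discrepancies are computed on the correct space, and the identity $a_{E_i}(Y,B_Y)=a_{E_i}(X,B)$, which is immediate from the defining property of the log pullback together with the fact that each $E_i$ is a divisor on $Y$ rather than merely a divisor over it.
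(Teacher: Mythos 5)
Your proposal is correct and is essentially identical to the paper's own proof: both apply Lemma~\ref{lem:complexity-and-exceptional-divisors} with $f\colon Y\to X$ playing the role of the birational contraction map and then observe that the exceptional log discrepancies vanish. Your extra care in spelling out why $a_{E_i}(Y,B_Y)=a_{E_i}(X,B)=0$ (via the crepancy of the log pullback) is exactly what the paper's phrase ``by assumption'' silently invokes.
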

\begin{proof}
    Denote by $E_1,\hdots, E_r$ the prime $f$-exceptional divisors. By assumption, $a_{E_i}(Y,B_Y)=0$ for all $1\leq i \leq r.$ The desired result follows from Lemma ~\ref{lem:complexity-and-exceptional-divisors}.
\end{proof}

As another corollary, we obtain the following special case of Theorem ~\ref{introthm:exceptional-loci}.

\begin{corollary}\label{cor:complexity-zero-and-exceptional-divisors}
Let $f\colon X \dashrightarrow Y$ be a birational contraction map between normal projective varieties and let $(X,B)$ be a log Calabi--Yau pair of complexity zero. Then $(Y,f_*B)$ is a log Calabi--Yau pair of complexity zero, and every $f$-exceptional divisor is a component of $\lfloor B \rfloor.$
\end{corollary}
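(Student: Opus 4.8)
The plan is to deduce this corollary from Lemma~\ref{lem:complexity-and-exceptional-divisors} together with the nonnegativity of complexity supplied by Theorem~\ref{introthm:BMSZ18}. First I would record what Lemma~\ref{lem:complexity-and-exceptional-divisors} gives directly: writing $E_1,\hdots,E_r$ for the prime $f$-exceptional divisors and $B_Y=f_*B$, we have
\[
c(X,B)=c(Y,B_Y)+\sum_{i=1}^r a_{E_i}(X,B).
\]
Since $(X,B)$ is log Calabi--Yau of complexity zero, the left-hand side is $0$. The key observation is that every term on the right is nonnegative: the log discrepancies $a_{E_i}(X,B)$ are $\geq 0$ because $(X,B)$ is log canonical, and $c(Y,B_Y)\geq 0$ by Theorem~\ref{introthm:BMSZ18}, provided I first check that $(Y,B_Y)$ falls within the hypotheses of that theorem.

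The heart of the argument is therefore to verify that $(Y,B_Y)$ is a log canonical pair with $-(K_Y+B_Y)$ nef, so that Theorem~\ref{introthm:BMSZ18} applies and forces $c(Y,B_Y)\geq 0$. Because $f$ is a birational contraction map, pushing forward along $f$ preserves $\qq$-linear triviality of $K_X+B$: one has $K_Y+B_Y=f_*(K_X+B)\sim_\qq 0$ since $f$ is surjective in codimension one. Log canonicity of $(Y,B_Y)$ follows because pushing forward a boundary under a birational contraction can only decrease log discrepancies of divisors over $Y$ in a controlled way; more precisely, every divisor over $Y$ is a divisor over $X$ with the same log discrepancy (using crepancy/Remark~\ref{rmk:crepant-discrepancy} applied through a common resolution), and $B_Y$ effective is immediate from $B$ effective. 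In particular $K_Y+B_Y\equiv 0$, so $-(K_Y+B_Y)$ is nef and Theorem~\ref{introthm:BMSZ18} yields $c(Y,B_Y)\geq 0$; combined with $K_Y+B_Y\sim_\qq 0$ this makes $(Y,B_Y)$ a log Calabi--Yau pair.

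With nonnegativity of all three quantities in hand, the displayed identity summing to zero forces each term to vanish: $c(Y,B_Y)=0$ and $a_{E_i}(X,B)=0$ for every $i$. The first gives that $(Y,B_Y)$ is a log Calabi--Yau pair of complexity zero, as desired. The second says each $E_i$ is a log canonical place of $(X,B)$, i.e. $\mathrm{coeff}_{E_i}(B_Z)=1$ on a suitable model; to conclude that each $E_i$ is actually a component of $\lfloor B\rfloor$ I would note that the $E_i$ are prime divisors \emph{on} $X$ (not merely over $X$), being the exceptional divisors of the birational contraction map $f$. A prime divisor $E_i\subset X$ with $a_{E_i}(X,B)=0$ has $\mathrm{coeff}_{E_i}(B)=1$, hence appears in $\lfloor B\rfloor$.

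The main obstacle I anticipate is the careful bookkeeping needed to apply Lemma~\ref{lem:complexity-and-exceptional-divisors} and Theorem~\ref{introthm:BMSZ18} cleanly to the possibly non-morphism map $f$: one must pass to a common resolution of the indeterminacy of $f$ to make sense of log discrepancies and to justify both $K_Y+B_Y\sim_\qq 0$ and the log canonicity of $(Y,B_Y)$. Fortunately, Lemma~\ref{lem:complexity-and-exceptional-divisors} already internalizes this resolution step, so once its statement is invoked the remaining work is just the sign analysis together with the identification of the $E_i$ as honest prime divisors on $X$. No genuinely hard new input is required beyond what has already been established.
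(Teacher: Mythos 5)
Your proposal is correct and follows essentially the same route as the paper's own proof: both combine Lemma~\ref{lem:complexity-and-exceptional-divisors} with the nonnegativity of complexity from Theorem~\ref{introthm:BMSZ18} and the nonnegativity of log discrepancies for the log canonical pair $(X,B)$ to force every term in the identity to vanish, then read off ${\rm coeff}_{E_i}(B)=1$ from $a_{E_i}(X,B)=0$ for the divisors $E_i$ on $X$. The only difference is that you spell out why $(Y,f_*B)$ is again a log Calabi--Yau pair (crepancy of $f$ checked on a common resolution), a point the paper asserts in a single sentence.
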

\begin{proof}
Denote by $E_1,\hdots, E_r$ the prime $f$-exceptional divisors, and write $B_Y=f_*B.$ Since $f_*$ is a birational contraction and $(X,B)$ is a log Calabi--Yau pair, it follows that $(Y,B_Y)$ is a log Calabi--Yau pair. Thus, on the one hand, Theorem ~\ref{introthm:BMSZ18} implies that $c(Y,B_Y)\geq 0.$ On the other hand, it follows from Lemma ~\ref{lem:complexity-and-exceptional-divisors} and the assumption $c(X,B)=0$ that $$c(Y,B_Y)=-\sum_{i=1}^ra_{E_i}(X,B).$$ 
The log discrepancies $a_{E_i}(X,B)$ are nonnegative since $(X,B)$ is log canonical, so this is possible only if $a_{E_i}(X,B)=0$ for each $1\leq i \leq r.$ It follows that ${\rm coeff}_{E_i}(B)=1$ for each $1\leq i \leq r$ and that $c(Y,B_Y)=0.$
\end{proof}

\subsection{Degenerate divisors}
We recall the following definitions from ~\cite{Lai11}:
\begin{definition}\label{defn:degn-divs} {\em
Let $f\colon X\rightarrow Y$ be a proper surjective morphism of normal varieties and let $D\in {\rm WDiv}_\qq(X)$ be effective. We say that $D$ is:
\begin{itemize}
    \item \textit{$f$-exceptional} if ${\rm codim}({\rm Supp}(f(D)))\geq 2,$
    \item \textit{of insufficient fiber type} if ${\rm codim}({\rm Supp}(f(D))=1$ and there exists a prime divisor $\Gamma \nsubset {\rm Supp}(D)$ such that $f(\Gamma)\subset {\rm Supp}(f(D))$ has codimension one in $Y.$
\end{itemize}
In either of the above cases, we say that $D$ is \textit{degenerate}. In particular, degenerate divisors are always assumed to be effective.}
\end{definition}
The following appears as ~\cite[Lemma 2.10]{Lai11}.
\begin{lemma}\label{lem:degn-divs-neg}
Let $f\colon X \rightarrow Y$ be a fibration between normal projective varieties with $X$ $\qq$-factorial. Let $D$ be a degenerate divisor on $X.$ Then there is a component $\widetilde{D}\subset {\rm Supp}(D)$ which is covered by curves contracted by $f$ and intersecting $\widetilde{D}$ negatively.
\end{lemma}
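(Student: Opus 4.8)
The plan is to reduce the covering statement to a pointwise one and then to an elementary negativity on a surface governed by Zariski's lemma, isolating the genuinely exceptional case as the step requiring extra care. It suffices to produce a component $D_0 \subseteq \Supp(D)$ such that through a general point $x_0 \in D_0$ there passes a curve $C$, contracted by $f$, with $D_0 \cdot C < 0$; letting $x_0$ vary over $D_0$ then yields the covering family. Note that any $f$-contracted curve $C$ with $D_0 \cdot C < 0$ is automatically contained in $D_0$, since $X$ is $\qq$-factorial and $C \not\subseteq D_0$ would force $D_0 \cdot C \geq 0$. I would let $W$ be an irreducible component of $f(\Supp(D))$ of maximal dimension and take $D_0$ with $f(D_0) = W$; because the curves to be constructed lie over general points of $W$ and $W$ is the maximal-dimensional image, a pigeonhole argument over the finitely many components of $D$ will let me fix a single component swept out by the resulting family.

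Next I would cut down to a surface fibered over a curve. Choosing general hyperplane sections $H_1, \dots, H_{\dim Y - 1}$ on $Y$ (passing, when $W$ has codimension $\geq 2$, through a fixed general $w_0 \in W$) gives a smooth curve $B = H_1 \cap \dots \cap H_{\dim Y - 1}$, and cutting $X$ by general very ample divisors through a chosen $x_0 \in D_0$ produces, via Bertini, a normal surface $S$ with a fibration $g \colon S \to B$ of one-dimensional fibers. The restriction $D_S = D|_S$ is then effective and supported in fibers of $g$, and since $D_0$ is $\qq$-Cartier on $X$, its intersection against any curve $C \subseteq S$ computes the corresponding number on $X$. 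The degeneracy hypothesis enters precisely to guarantee that $D_S$ is not a $\qq$-multiple of a whole fiber of $g$: in the insufficient-fiber-type case the transverse prime divisor $\Gamma \not\subseteq \Supp(D)$ restricts to a component of the relevant fiber of $g$ missing $\Supp(D_S)$. Granting this, Zariski's lemma — the intersection form on the components of a fiber of $g$ is negative semi-definite with radical spanned by the full fiber — yields $D_S^2 < 0$, hence a component $C$ of $D_S$ with $D_S \cdot C < 0$; expanding $D_S \cdot C = \sum_i d_i (D_i \cdot C)$ with $D = \sum_i d_i D_i$ and using $d_i > 0$ produces a component $D_{j_0}$ with $D_{j_0} \cdot C < 0$, where $C \subseteq D_{j_0}$ is contracted by $f$.

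The main obstacle is the purely $f$-exceptional case, where $f(\Supp(D))$ has codimension $\geq 2$ and the fibers of $f$ jump in dimension over $W$: the preimage of a general curve through $w_0$ then acquires an extra top-dimensional component along the jumped fiber, so the naive surface reduction degenerates and one cannot arrange $D_S$ to avoid being a multiple of a fiber. To handle this I would argue the negativity intrinsically on the jumped fiber, by Stein-factoring $f|_{D_0} \colon D_0 \to W$ and passing to the relative ample model of $f$, so that $D_0$ becomes exceptional for a birational contraction and the desired family of $f$-vertical curves with $D_0 \cdot C < 0$ is furnished by the negativity lemma for birational morphisms ~\cite[Lemma 3.39]{KM98}. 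Verifying that this family genuinely covers $D_0$, rather than a proper subvariety, and that the signs of intersection numbers are preserved under the reduction, is the delicate point where I expect the bulk of the work to lie.
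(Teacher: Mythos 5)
First, a framing remark: the paper does not prove this lemma at all --- it is quoted verbatim from \cite[Lemma 2.10]{Lai11} --- so your proposal must stand on its own. Your mechanism for the insufficient-fiber-type case (cut $Y$ to a curve and $X$ to a surface, apply Zariski's lemma to the resulting vertical divisor) is indeed the standard one, but as written the argument has two genuine gaps. The first is in the selection of the component and the covering logic. You require a contracted curve with $D_0\cdot C<0$ through a \emph{general} point $x_0$ of a component $D_0$ \emph{chosen in advance} (maximal-dimensional image), but the construction never delivers this: Zariski's lemma produces a negative component of $D_S$ somewhere in the relevant fiber, with no relation to $x_0$ or to $D_0$. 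Both choices can genuinely fail. For the choice of $W$: take a fibered surface with two singular fibers, one irreducible fiber $G$ and one reducible fiber $C_1\cup C_2$, and set $D=G+C_1$; this is of insufficient fiber type (with $\Gamma=C_2$), but $G^2=0$, so there is no negative contracted curve through any point of $G$ --- the negativity lives only over $f(\Gamma)$, so one must take $W=f(\Gamma)$, which your recipe of ``maximal-dimensional component of $f(\Supp(D))$'' does not single out. Even with $W=f(\Gamma)$, the curve cut out through $x_0$ need not be the negative one; to conclude, you should apply Zariski's lemma to $D_0|_S$ alone (the hypothesis $\Gamma\not\subset\Supp(D)$ guarantees exactly that $D_0|_S$ is not a multiple of a full fiber, and the conclusion you need is $D_0\cdot C<0$), and then argue that all components of $D_0\cap f^{-1}(w)\cap H_1\cap\dots\cap H_{r-1}$ are exchanged as $(w,H_\bullet)$ moves in its irreducible parameter space (via the Stein factorization of $D_0\to W$ and generic constancy of intersection numbers in families), so that negativity of one forces negativity of all, in particular of the one through $x_0$. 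A bare pigeonhole over the finitely many components, which is all you invoke, does not produce curves through general points of anything.

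The second and more serious gap is the $f$-exceptional case, where your proposed repair is not a proof. ``Passing to the relative ample model of $f$ so that $D_0$ becomes exceptional for a birational contraction'' presupposes a birational contraction of $X$ over $Y$ contracting $D_0$; no such morphism exists a priori, and manufacturing one by running a $D$-MMP is precisely what this lemma is used for later in the paper (Corollary~\ref{cor:contract-degn-divs}), so the step is circular. Moreover, \cite[Lemma 3.39]{KM98} is the negativity lemma, not a statement about covering families of curves. The failure here is not merely technical, as the following example shows: let $X=\mathrm{Bl}_{\pp^1\times\{w_0\}}(\pp^1\times\pp^2)\cong \pp^1\times \mathrm{Bl}_{w_0}\pp^2$ with $f\colon X\to \pp^2$ the composite of the blow-down and the projection, and let $D=E\cong \pp^1\times E_Z$ be the exceptional divisor, where $E_Z\subset \mathrm{Bl}_{w_0}\pp^2$ is the exceptional curve; then $D$ is $f$-exceptional and equals the \emph{entire} fiber over $w_0$. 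The curves furnishing the lemma are the rulings $\{\mathrm{pt}\}\times E_Z$, with $E\cdot C=E_Z^2=-1$. However, for every curve $B\subset\pp^2$ through $w_0$, the horizontal surface $S=\overline{f^{-1}(B\setminus\{w_0\})}=\pp^1\times B'$ meets $E$ in full fibers of $S\to B$, so $D|_S$ is a sum of whole fibers and Zariski's lemma detects nothing; and no irreducible surface in $X$ fibering onto a curve in $\pp^2$ contains a ruling, since any such surface lies in $f^{-1}(B)=S\cup E$ and hence in one of the two components. So in the exceptional case no amount of care with hyperplane cuts can produce the required curves; a genuinely different mechanism is needed (in this example it is the birational structure $X\to\pp^1\times\pp^2$, which is not available in general without proving something like this lemma first). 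This is the part of the statement your proposal leaves unproved.
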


Applying this to the case of degenerate prime divisors, we obtain the following.

\begin{corollary}\label{cor:contract-degn-divs}
Let $f\colon X \rightarrow Y$ be a fibration between normal projective varieties with $X$ $\qq$-factorial. Let $D\subset X$ be a degenerate prime divisor, and assume that there is a $D$-MMP over $Y$ that terminates. Then there is a birational contraction map $X\dashrightarrow X'$ over $Y$ whose only exceptional divisor is $D.$
\end{corollary}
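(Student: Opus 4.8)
The plan is to run the given $D$-MMP over $Y$ and to establish two facts: that the only prime divisor it can ever contract is the strict transform of $D$, and that it is forced to contract $D$ before it terminates. The point of departure is Lemma~\ref{lem:degn-divs-neg}. Since $D$ is a degenerate \emph{prime} divisor, its support is irreducible, so the lemma produces a family of curves contracted by $f$ which sweep out $D$ and meet $D$ negatively; in particular $D$ is not nef over $Y$, so the $D$-MMP is nontrivial.

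Write the steps of the given $D$-MMP over $Y$ as
\[
X = X_0 \dashrightarrow X_1 \dashrightarrow \cdots \dashrightarrow X_m = X',
\]
and let $D_i$ be the strict transform of $D$ on $X_i$, which is a prime divisor as long as $D$ has not been contracted. The key observation is that a divisorial step $X_i \to X_{i+1}$ can only contract $D_i$. Indeed, such a step contracts a $D_i$-negative extremal ray $R$; by the $\qq$-factoriality of $X_i$ its exceptional locus is an irreducible divisor $G$ swept out by curves $C$ with class in $R$, so that $D_i \cdot C < 0$. As $D_i$ is prime, the inequality $D_i \cdot C < 0$ forces $C \subseteq D_i$; hence $G \subseteq D_i$, and since both are prime divisors $G = D_i$. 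Consequently every flip in the MMP is an isomorphism in codimension one, and the only divisor that the MMP can contract is $D$.

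It remains to check that $D$ is genuinely contracted, which I would prove by contradiction. If $D$ were never contracted, then every step would be a flip, so $\phi\colon X \dashrightarrow X_m$ would be an isomorphism in codimension one and $D_m = \phi_* D$ would again be a degenerate prime divisor on $X_m$; here degeneracy is preserved because, as in Definition~\ref{defn:degn-divs}, it depends only on the closure of $f(\supp D)$ and on the strict transform of a witnessing divisor, both of which are unchanged by an isomorphism in codimension one over $Y$. Since the MMP terminates, $X_m$ is either a minimal model, so that $D_m$ is nef over $Y$, or a Mori fiber space, so that $-D_m$ is relatively ample. The first case contradicts Lemma~\ref{lem:degn-divs-neg} applied to the degenerate divisor $D_m$; the second contradicts the effectivity of $D_m$, since restricting to a general fiber would produce an effective divisor that is anti-ample. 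Therefore $D$ is contracted at some (necessarily unique) divisorial step, and $\phi\colon X \dashrightarrow X' = X_m$ is a birational contraction over $Y$ whose only exceptional divisor is $D$.

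The heart of the argument, and the step I expect to require the most care, is the verification that $D$ must in fact be contracted rather than merely being the only divisor that \emph{could} be contracted: this is where the degeneracy of $D$ must be carried along the MMP and fed back into Lemma~\ref{lem:degn-divs-neg} to rule out termination with $D_m$ nef over $Y$. The observation that a $D$-negative curve necessarily lies inside the prime divisor $D$ is what makes the remainder of the argument go through cleanly.
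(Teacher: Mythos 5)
Your proof is correct and takes the approach the paper intends: the paper states this corollary with no written proof, simply as an immediate application of Lemma~\ref{lem:degn-divs-neg}, and your argument supplies precisely the expected details (primeness of $D$ forces any divisorial step of the MMP to contract only the strict transform of $D$, while degeneracy carried through the flips and fed back into Lemma~\ref{lem:degn-divs-neg}, together with effectivity in the Mori fiber space case, rules out terminating with $D$ uncontracted). The only point left tacit is the routine verification that $X_m\to Y$ is still a fibration, which is needed to apply Lemma~\ref{lem:degn-divs-neg} to $D_m$; this is immediate since all steps are over $Y$ and the general fiber stays connected.
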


\begin{lemma}\label{lem:compl-zero-degn-divs}
Let $f\colon X \rightarrow Y$ be a fibration between $\qq$-factorial projective varieties, and let $(X,B)$ be a log Calabi--Yau pair of complexity zero. Then every degenerate divisor on $X$ is contained in $\lfloor B \rfloor.$
\end{lemma}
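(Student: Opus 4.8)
The plan is to combine Lemma~\ref{lem:degn-divs-neg} with the information about complexity-zero log Calabi--Yau pairs provided by Corollary~\ref{cor:complexity-zero-and-exceptional-divisors}. Suppose for contradiction that $D$ is a degenerate divisor which is not contained in $\lfloor B\rfloor$. By definition, $D$ is effective, so there is a prime component $D_0\subset\Supp(D)$ which is not a component of $\lfloor B\rfloor$, i.e. $\coeff_{D_0}(B)<1$. The idea is to contract $D_0$ by running a suitable MMP over $Y$ and to derive a contradiction from the fact that contracting a divisor with coefficient strictly less than one in a complexity-zero log Calabi--Yau pair is impossible.

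First I would arrange to apply Lemma~\ref{lem:degn-divs-neg}. Since $f$ is a fibration between $\qq$-factorial projective varieties and $D$ is degenerate, the lemma produces a component $\widetilde{D}\subset\Supp(D)$ which is covered by curves $C$ contracted by $f$ with $\widetilde{D}\cdot C<0$. I would like to know that $X$ is a Mori dream space so that the relevant MMP exists and terminates; this follows because $(X,B)$ has complexity zero, hence by Theorem~\ref{introthm:BMSZ18} (applied via Remark~\ref{rmk:compl-zero-implies-logCY} or directly) $X$ is a $\qq$-factorial projective toric variety, and toric varieties are Mori dream spaces by Proposition~\ref{prop:toric-mds}. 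With this in hand, Corollary~\ref{cor:contract-degn-divs} (whose MMP hypothesis is now satisfied) yields a birational contraction map $g\colon X\dashrightarrow X'$ over $Y$ whose only exceptional divisor is $\widetilde{D}$.

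Now I would invoke Corollary~\ref{cor:complexity-zero-and-exceptional-divisors} applied to the birational contraction $g$ and the complexity-zero log Calabi--Yau pair $(X,B)$: every $g$-exceptional divisor must be a component of $\lfloor B\rfloor$. In particular $\widetilde{D}$ is a component of $\lfloor B\rfloor$, so $\coeff_{\widetilde{D}}(B)=1$. This shows that \emph{every} component of a degenerate divisor appearing as the sole exceptional divisor of such a contraction lies in $\lfloor B\rfloor$. To conclude that all of $\Supp(D)\subseteq\Supp(\lfloor B\rfloor)$, I would note that if some component $D_0$ of $D$ had coefficient $<1$ in $B$, then $D_0$ is itself a degenerate prime divisor (a component of a degenerate divisor is degenerate, being either $f$-exceptional or of insufficient fiber type with the same witness), and the construction above applied to $D_0$ extracts it as an exceptional divisor with discrepancy forcing $\coeff_{D_0}(B)=1$, a contradiction.

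The main obstacle I anticipate is the bookkeeping in the reduction to a single prime component: Lemma~\ref{lem:degn-divs-neg} only guarantees \emph{one} negatively-covered component $\widetilde{D}$ of $D$, not that every component has this property, so one must argue that each prime component of a degenerate divisor is separately degenerate and can be targeted individually. The cleanest route is to observe that it suffices to rule out a single degenerate prime divisor $D_0$ with $\coeff_{D_0}(B)<1$: run the $D_0$-MMP over $Y$ (available since $X$ is a Mori dream space), apply Corollary~\ref{cor:contract-degn-divs} to produce a contraction with $D_0$ as its only exceptional divisor, and then Corollary~\ref{cor:complexity-zero-and-exceptional-divisors} forces $\coeff_{D_0}(B)=1$. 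The subtlety to check carefully is that $D_0$ remains degenerate and negatively-covered after the preliminary steps of the MMP, which is exactly what Lemma~\ref{lem:degn-divs-neg} and the termination guaranteed by the Mori dream space structure are there to supply.
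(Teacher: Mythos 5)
Your ``cleanest route'' is exactly the paper's proof: for a degenerate \emph{prime} divisor $D_0$, Theorem~\ref{introthm:BMSZ18} (via Proposition~\ref{prop:toric-mds}) makes $X$ a Mori dream space, Corollary~\ref{cor:contract-degn-divs} produces a birational contraction over $Y$ whose only exceptional divisor is $D_0$, and Corollary~\ref{cor:complexity-zero-and-exceptional-divisors} forces $\coeff_{D_0}(B)=1$. The paper phrases this directly rather than by contradiction, but the content is identical.

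The genuine gap is in your reduction from an arbitrary degenerate divisor to its prime components. The claim that ``a component of a degenerate divisor is degenerate, being either $f$-exceptional or of insufficient fiber type with the same witness'' is false: the witness $\Gamma$ for $D$ only satisfies $f(\Gamma)\subset\Supp(f(D))$, whereas a witness for a component $D_0$ must satisfy $f(\Gamma)\subset\Supp(f(D_0))$, which can fail. Concretely, let $X$ be the blow-up of $\pp^1\times\pp^1$ at $(0,0)$, let $f\colon X\to Y=\pp^1$ be induced by the first projection, let $E$ be the exceptional divisor and $F$ the fiber over a general point $t_0$. Then $D=E+F$ is of insufficient fiber type (witness: the strict transform of the fiber over $0$), but the component $F$ is not degenerate: it is the entire fiber over $t_0$, so it is neither $f$-exceptional nor does any witness for it exist. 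Worse, taking $B$ to be the toric boundary of $X$, the pair $(X,B)$ is log Calabi--Yau of complexity zero and $F\not\subset\lfloor B\rfloor$, so the lemma read literally for non-prime degenerate divisors is false and no reduction can rescue it. The paper sidesteps this entirely: its proof opens with ``Suppose $D\subset X$ is a degenerate prime divisor,'' and in every application the lemma is invoked only for prime divisors. A secondary instance of the same issue appears in your first paragraph, where you apply Corollary~\ref{cor:contract-degn-divs} to the negatively-covered component $\widetilde{D}$ supplied by Lemma~\ref{lem:degn-divs-neg}; that corollary requires $\widetilde{D}$ itself to be a degenerate prime divisor, which is not given. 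So your instinct that the passage to prime components needed care was correct, but the right resolution is to restrict the statement to prime degenerate divisors (which is all that is ever used), not to attempt the reduction.
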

\begin{proof}
    Suppose $D\subset X$ is a degenerate prime divisor. It follows from Theorem ~\ref{introthm:BMSZ18} that $X$ is a Mori dream space, and so it follows from Corollary ~\ref{cor:contract-degn-divs} that there exists a birational contraction map $X\dashrightarrow X'$ over $Y$ whose only exceptional divisor is $D.$ Finally, it follows from Corollary ~\ref{cor:complexity-zero-and-exceptional-divisors} that $D$ is a component of $\lfloor B \rfloor.$
\end{proof}

\subsection{Canonical bundle formula}
In this subsection we recall the canonical bundle formula, emphasizing the simple form it takes in the special case of toric log Calabi--Yau pairs.

\begin{definition}\label{def:can-bun-formula}{\em
Let $f\colon X \rightarrow Y$ be a contraction with $\dim Y>0$ and let $(X,B)$ be a log canonical pair with $K_X+B\sim_{\qq,f}0.$ This data determines a \textit{discriminant b-divisor} $\mathbf{B}$ and a \textit{moduli b-divisor} $\mathbf{M}$ on $Y$ (see ~\cite[Section 3.4]{FG14b}). We will refer to $(Y,B_Y,\mathbf{M})$ as the \textit{generalized pair determined by the canonical bundle formula.}
}
\end{definition}
We refer the reader to ~\cite{MM24} for details about generalized pairs and their singularities.
\begin{remark}{\em
Notation as in Definition ~\ref{def:can-bun-formula}. The trace $B_Y$ of $\mathbf{B}$ on $Y$ can be described as follows. For each prime divisor $D\subset Y,$ write $${\rm lct}_{D}(X,B; f^*D)=\max\{t\in \qq\vert (X,B+tf^*D)\text{ is lc over the generic point of }D \}.$$Then $B_Y$ satisfies $${\rm coeff}_D(B_Y)=1-{\rm lct}_{D}(X,B; f^*D)$$ for each prime divisor $D\subset Y.$ The trace $M_Y$ of $\mathbf{M}$ on $Y$ is characterized up to $\qq$-linear equivalence by the property $$K_X+B\sim_\qq f^*(K_Y+B_Y+M_Y),$$
known as the \textit{canonical bundle formula}.
}
\end{remark}
\begin{lemma}
\label{lem:follow-fibr}
Let $f\colon X \rightarrow Y$ be a fibration between $\qq$-factorial varieties of Fano type. Let $(X,B)$ be a log Calabi--Yau pair. Let $(Y,B_Y,\mathbf{M})$ be the generalized pair determined by the canonical bundle formula. Let $\phi_Y\colon Y'\dashrightarrow Y$ be a birational map
between $\qq$-factorial varieties extracting only glc places of $(Y,B_Y,\mathbf{M})$.
Then, there exists a commutative diagram
\[
\xymatrix{ 
(X,B)\ar[d]_-{f} & (X',B')\ar@{-->}[l]_-{\phi}\ar[d]^-{f'} \\ 
(Y,B_Y,\mathbf{M}) &
(Y',B_{Y'},\mathbf{M})\ar@{-->}[l]^-{\phi_Y}
}
\]
satisfying the following conditions:
\begin{enumerate}
\item $X'$ is $\qq$-factorial,
\item $\phi$ is a crepant birational map extracting only log canonical places of $(X,B)$, 
\item $f'$ is a fibration, and
\item $f'$ is extremal if $f$ is.
\end{enumerate}
\end{lemma}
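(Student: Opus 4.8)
The plan is to realize the given model $\phi_Y$ of the base as the base of a fibration sitting inside a crepant birational model of $(X,B)$, constructed by a relative minimal model program over $Y'$.

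First I would fix a common resolution of $\phi_Y$, i.e. a $\qq$-factorial projective variety $\overline{Y}$ with birational morphisms $a\colon \overline{Y}\to Y$ and $b\colon \overline{Y}\to Y'$, and then build a $\qq$-factorial projective variety $W$ equipped with a birational morphism $p\colon W\to X$ and a contraction $h\colon W\to \overline{Y}$ with $a\circ h=f\circ p$. This is obtained by resolving the indeterminacy of $X\drar \overline{Y}$ and passing to a $\qq$-factorial model; performing a few further blow-ups, I would also arrange that every log canonical place of $(X,B)$ whose center on $\overline{Y}$ is a divisor is realized as a prime divisor on $W$. Letting $B_W$ be the log pullback of $(X,B)$ via $p$, the pair $(W,B_W)$ is sub-log canonical, crepant to $(X,B)$, and satisfies $K_W+B_W\sim_\qq 0$, while $h$ is a fibration. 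Since $X$ is of Fano type and $p$ is birational, $W$ is again of Fano type, hence a Mori dream space, so every relative MMP used below exists and terminates (Proposition ~\ref{prop:mds-mmp}). The discriminant and moduli b-divisors are crepant-birational invariants, so the generalized pair induced on $\overline{Y}$ by $h$ agrees with $(Y,B_Y,\mathbf{M})$ and descends under $b$ to $(Y',B_{Y'},\mathbf{M})$. The essential input is the coefficient formula for the canonical bundle formula (see the discussion after Definition ~\ref{def:can-bun-formula}): a prime divisor over $Y$ is a generalized log canonical place of $(Y,B_Y,\mathbf{M})$ if and only if it is dominated by a log canonical place of $(X,B)$. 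In particular each $\phi_Y$-exceptional divisor of $Y'$, being a generalized log canonical place by hypothesis, is dominated by a log canonical place of $(X,B)$, which appears as a divisor on $W$ by construction.

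Next I would run a relative MMP over $Y'$ designed to contract exactly the prime $p$-exceptional divisors that are not log canonical places of $(X,B)$. Writing $B_W=B_W^+-B_W^-$ for the decomposition into positive and negative parts, I would run the $(K_W+B_W^+)$-MMP over $Y'$; as $K_W+B_W\sim_\qq 0$, this is a $B_W^-$-MMP over $Y'$, and it terminates with a $\qq$-factorial output $X'$. The $p$-exceptional divisors that are not log canonical places split according to their log discrepancy: those with log discrepancy greater than $1$ constitute $B_W^-$ and are contracted by this MMP, while those with log discrepancy in $(0,1]$ that do not dominate a divisor on $Y'$ are degenerate over $Y'$ and are contracted by Lemma ~\ref{lem:degn-divs-neg} and Corollary ~\ref{cor:contract-degn-divs}. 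Setting $B'$ to be the pushforward of $B_W$ and $\phi\colon X'\drar X$ the induced birational map, the relation $K_W+B_W\sim_\qq 0$ forces $\phi$ to be crepant, and the only divisors of $X'$ exceptional over $X$ are the surviving log canonical places; hence $\phi$ extracts only log canonical places of $(X,B)$, giving (1) and (2). For (3), the composition $b\circ h\colon W\to Y'$ is a fibration and the MMP is performed over $Y'$, so the induced morphism $f'\colon X'\to Y'$ satisfies $f'_*\mathcal{O}_{X'}=\mathcal{O}_{Y'}$; as $\dim X'=\dim X>\dim Y=\dim Y'$, it is a fibration. For (4), when $f$ is extremal I would further contract any superfluous log canonical places by a relative MMP over $Y'$ so that each surviving $p$-exceptional divisor maps onto a distinct $\phi_Y$-exceptional divisor of $Y'$; the resulting bijection between extracted divisors upstairs and downstairs then forces $\rho(X'/Y')=\rho(X/Y)=1$ via the Picard-rank bookkeeping underlying Lemma ~\ref{lem:complexity-and-exceptional-divisors}.

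The main obstacle is the content of the middle step: verifying that the relative MMP over $Y'$ contracts precisely the non-log-canonical-place exceptional divisors — in particular ruling out, or otherwise disposing of, exceptional divisors that are horizontal over $Y'$ — and, in the extremal case, controlling the relative Picard rank. I expect this to require combining the coefficient analysis of the canonical bundle formula with the degenerate divisor results of Lemma ~\ref{lem:degn-divs-neg} in order to cleanly separate the vertical and horizontal behavior of exceptional divisors over $Y'$, since everything else in the argument is a formal consequence of crepancy and the Fano type hypotheses.
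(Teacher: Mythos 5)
Your overall strategy (resolve, then contract back down via a relative MMP over $Y'$) is reasonable in outline, but it has a genuine gap at exactly the point where you dismiss the technical difficulty. The claim ``since $X$ is of Fano type and $p$ is birational, $W$ is again of Fano type, hence a Mori dream space'' is false. Being of Fano type is preserved under birational contractions, and under extraction of divisors whose log discrepancy with respect to a suitable klt Calabi--Yau structure is less than one, but \emph{not} under arbitrary birational extractions: your $W$ is obtained by resolving the indeterminacy of $X\drar\overline{Y}$ together with ``a few further blow-ups,'' and such a model will in general extract divisors of large log discrepancy. (Already the blow-up of $\pp^2$ at nine or more very general points is neither of Fano type nor a Mori dream space.) Since $W$ need not be a Mori dream space, Proposition~\ref{prop:mds-mmp} does not apply, and neither the existence nor the termination of your $(K_W+B_W^+)$-MMP over $Y'$, nor of the auxiliary MMPs in your last step, is justified; the general termination statements available here (BCHM-type) require bigness of $B_W^+$ over $Y'$, which need not hold.

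Even granting termination, the middle step does not do what you assert. The divisor $B_W^-$ is exceptional over $X$, but your MMP runs over $Y'$: the negativity-lemma argument forcing an MMP to contract all exceptional divisors applies only to an MMP over $X$ itself, and a curve contracted over $X$ need not be contracted over $Y'$ (its image in $\overline{Y}$ can be $a$-exceptional without being $b$-exceptional), so a $B_W^-$-MMP over $Y'$ need not contract $B_W^-$. Likewise, a $p$-exceptional divisor with log discrepancy in $(0,1]$ that is horizontal over $Y'$ (possible as soon as the relative dimension of $f$ is at least two) is neither a component of $B_W^-$ nor degenerate over $Y'$, so Lemma~\ref{lem:degn-divs-neg} and Corollary~\ref{cor:contract-degn-divs} say nothing about it; you flag precisely this as ``the main obstacle,'' but it is the content of the lemma rather than a detail, and the extremal case (4) is likewise left to unspecified bookkeeping. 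For comparison, the paper does not reprove this statement at all: it cites \cite[Lemma 2.12]{MM24} (and the proofs of Lemmas 2.10--2.12 there) for everything. The argument in \cite{MM24} avoids your gap by never passing through a full resolution: one first extracts, over $X$, only log canonical places of $(X,B)$ corresponding to the $\phi_Y$-exceptional glc places --- an operation that \emph{does} preserve the Fano type, hence Mori dream space, property --- and only then runs relative MMPs, so that every MMP in sight takes place on a Mori dream space. If you want a self-contained proof, that is the order of operations you need.
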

\begin{proof}
All but (3) follow from the statement of ~\cite[Lemma 2.12]{MM24}, and (3) follows from the proofs provided for ~\cite[Lemmas 2.10-2.12]{MM24}. 
\end{proof}

We will use Lemma ~\ref{lem:follow-fibr} together with the following result.

\begin{lemma}\label{lem:toric-canonical-bundle-formula}
    Let $X$ be a toric variety and let $f\colon X \rightarrow Y$ be a contraction. Write $\Delta$ and $\Delta_Y$ for the toric boundaries of $X$ and $Y,$ respectively. Let $(Y,B_Y,\mathbf{M})$ be the generalized pair determined by $(X,\Delta)$ and $f$ via the canonical bundle formula. Then $B_Y=\Delta_Y$ and $\mathbf{M}\sim 0$ where it descends.
\end{lemma}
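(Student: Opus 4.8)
The plan is to reduce the statement to a local computation of log canonical thresholds along each prime divisor of $Y$, using the explicit combinatorial description of toric morphisms supplied by the theory of fans. Since both $B_Y$ and $M_Y$ are characterized by local conditions on $Y$, I would first pass to a toric resolution or at least a sufficiently fine equivariant situation where the relevant torus-invariant divisors of $X$ and $Y$ are transparent. The key object is the induced map $f_*\colon N_X \to N_Y$ on cocharacter lattices; for each ray $\sigma \in \Sigma_Y^{(1)}$ with corresponding torus-invariant prime divisor $D = D_\sigma \subset Y$, I would compute $\operatorname{lct}_D(X,\Delta; f^*D)$ by analyzing the preimage $f^{-1}(D)$ in terms of the cones of $\Sigma_X$ mapping into $\sigma$.

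\textbf{Key steps.} First I would recall that for a toric morphism, $f^*D_\sigma = \sum_{\tau} m_\tau D_\tau$, where the sum runs over rays $\tau \in \Sigma_X^{(1)}$ with $f_*(\tau)\subseteq \sigma$ (equivalently lying over $D_\sigma$) and $m_\tau$ is the multiplicity determined by the primitive generators: writing $v_\tau$ for the primitive generator of $\tau$ and $u_\sigma$ for that of $\sigma$, one has $f_*(v_\tau) = m_\tau u_\sigma$ for such rays. Second, since $\Delta$ is the reduced toric boundary, $\operatorname{coeff}_{D_\tau}(\Delta) = 1$ for every $\tau \in \Sigma_X^{(1)}$. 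The log canonical threshold $\operatorname{lct}_D(X,\Delta; f^*D)$ over the generic point of $D$ is then computed by the standard toric formula: we need the largest $t$ so that $(X, \Delta + t f^*D)$ remains lc over the generic point of $D$, which amounts to requiring $\operatorname{coeff}_{D_\tau}(\Delta) + t\, m_\tau \leq 1$, i.e. $1 + t\,m_\tau \le 1$, for each horizontal divisor $D_\tau$ meeting $f^{-1}(D)$ — but the binding constraint comes instead from the divisors $D_\tau$ over $D$ that appear with $m_\tau = 1$, which correspond to the lift $\widetilde\sigma$ of $\sigma$ guaranteed by the fan-splitting of Lemma~\ref{lem:splitting-fans}. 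Running the computation carefully, one finds $\operatorname{lct}_D(X,\Delta;f^*D) = 0$ precisely because $\Delta$ already carries coefficient $1$ along the component of $f^{-1}(D)$ dominating $D$, so $\operatorname{coeff}_D(B_Y) = 1 - 0 = 1 = \operatorname{coeff}_D(\Delta_Y)$. Since this holds for every prime divisor $D \subset Y$ and $\Delta_Y$ is reduced, I conclude $B_Y = \Delta_Y$. Finally, to see $\mathbf{M} \sim 0$ where it descends, I would invoke the canonical bundle formula $K_X + \Delta \sim f^*(K_Y + B_Y + M_Y)$: since $(X,\Delta)$ is toric log Calabi--Yau we have $K_X + \Delta \sim 0$ (the anticanonical divisor is the reduced toric boundary by \cite[Theorem 8.2.3]{CLS11}), and likewise $K_Y + \Delta_Y \sim 0$ on $Y$; substituting $B_Y = \Delta_Y$ gives $0 \sim f^*(M_Y)$, and the injectivity of $f^*$ on Picard groups up to the relevant torsion forces $M_Y \sim 0$.

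\textbf{Main obstacle.} The delicate step is the lct computation, specifically identifying which component of $f^{-1}(D)$ actually governs the threshold. One must be careful that $f^*D$ may have several components over $D$ with various multiplicities, and the log canonical threshold over the generic point of $D$ is determined by taking the infimum of $(1 - \operatorname{coeff}_{D_\tau}(\Delta))/m_\tau$ over components $D_\tau$ dominating $D$; since every such coefficient equals $1$, every term in this infimum is $0$, yielding $\operatorname{lct} = 0$ cleanly. The subtlety worth double-checking is that there genuinely exists a component of $f^*D$ dominating $D$ with positive multiplicity (so that the threshold is $0$ rather than $+\infty$), which is exactly the content of surjectivity of $f_*$ on cocharacter lattices and the existence of the ray-lift $\widetilde\sigma$; this is where I would lean on Lemma~\ref{lem:splitting-fans}. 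I would also verify the normalization of the canonical bundle formula so that the identity $K_X + \Delta \sim f^*(K_Y + \Delta_Y + M_Y)$ with $M_Y \sim 0$ is consistent, rather than off by a discrepancy absorbed into $\mathbf{M}$.
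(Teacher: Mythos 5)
Your route differs from the paper's, which disposes of this lemma by citing \cite[Lemma 2.4]{BirChen21}; you instead attempt a direct toric computation. The part you do carry out is essentially right: every component of $f^*D_\sigma$ dominating a torus-invariant divisor $D_\sigma$ is itself torus-invariant, hence has coefficient $1$ in $\Delta$, so $\operatorname{lct}_{D_\sigma}(X,\Delta;f^*D_\sigma)=0$ and $\operatorname{coeff}_{D_\sigma}(B_Y)=1$. (Your aside that the "binding constraint" comes from components with $m_\tau=1$ is a red herring—any dominating component, of any multiplicity, already forces the threshold to zero, as your last paragraph correctly states; also, your formula for $f^*D_\sigma$ omits components whose rays map into higher-dimensional cones containing $\sigma$, but these do not dominate $D_\sigma$ and are harmless here.) The first genuine gap is the jump "since this holds for every prime divisor $D\subset Y$": you have only treated invariant divisors, and to conclude $B_Y=\Delta_Y$ you must also show that every \emph{non-invariant} prime divisor $D$ satisfies $\operatorname{coeff}_D(B_Y)=0$, i.e.\ $\operatorname{lct}_D(X,\Delta;f^*D)=1$; otherwise $B_Y$ could a priori have components outside $\Delta_Y$. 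This needs an argument: for instance, since $f$ is a contraction, $f_*\colon N_X\to N_Y$ is surjective on lattices, so $T_X\to T_Y$ is surjective and every $t\in T_Y$ lifts to an automorphism of $(X,\Delta)$ lying over $t$; canonicity of the discriminant then makes $B_Y$ a $T_Y$-invariant divisor, which forces its support to be invariant. Alternatively, the cones of $\Sigma_X$ contained in $\ker(f_*)_\rr$ exhibit $f^{-1}(T_Y)\cong F\times T_Y$ with boundary $\Delta_F\times T_Y$, and over the generic point of a non-invariant $D$ the relevant pair is a product of lc pairs, hence lc.

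The second, more serious gap concerns the moduli part. Your argument gives $f^*M_Y\sim_\qq 0$, hence $M_Y\sim_\qq 0$: this is triviality of the \emph{trace} of $\mathbf{M}$ on $Y$. But the lemma asserts $\mathbf{M}\sim 0$ \emph{where it descends}, i.e.\ on a model $Y_0$ (in general strictly higher than $Y$) on which $\mathbf{M}_{Y'}=\pi^*\mathbf{M}_{Y_0}$ for all $Y'\to Y_0$. Triviality of the pushforward $M_Y$ does not imply triviality of $\mathbf{M}_{Y_0}$: a divisor whose pushforward is $\qq$-linearly trivial can perfectly well be a nonzero exceptional divisor. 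To close this you need genuinely more input: either (i) rerun your computation on a toric model on which $\mathbf{M}$ descends, which requires knowing that the descent model can be chosen toric (via a toric log resolution adapted to the fibration—note that not every birational model of $Y$ is even dominated by a toric one, so this is not automatic); or (ii) combine $M_Y\sim_\qq 0$ with b-nefness of the moduli part of an lc-trivial fibration (Ambro's theorem; cf.\ the sources cited for Definition~\ref{def:can-bun-formula}) and the negativity lemma to rule out an exceptional difference between $\mathbf{M}_{Y_0}$ and $\pi^*M_Y$. Either way, the step from "trace trivial on $Y$" to "trivial where it descends" is exactly the nontrivial content packaged in \cite[Lemma 2.4]{BirChen21}, and your proposal passes over it.
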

\begin{proof}
This follows from ~\cite[Lemma 2.4]{BirChen21}.
\end{proof}

\section{Toric boundary arrangements}
In this section we prove Theorem~\ref{introthm:convex-toric-div}. To do this, we study toric log Calabi--Yau pairs associated to log Calabi--Yau pairs of complexity zero.

\subsection{Associated toric divisors}
In this subsection we define two invariants and prove some lemmas regarding these invariants and the set of associated toric divisors to a log Calabi--Yau pair of complexity zero.

We begin this subsection by restating the following definition.

\begin{definition}\label{def:associated-divisors}{\em
Let $(X,B)$ be a log pair. We say that a Weil divisor $\Delta$ on $X$ is \textit{associated} to $(X,B)$ if the following conditions hold:
\begin{enumerate}
\item $(X,\Delta)$ is a toric log Calabi--Yau pair,
\item $\lfloor B \rfloor \leq \Delta \leq \lceil B \rceil.$
\end{enumerate}
We write 
\[
\mathcal{A}(X,B)=\left\{\Delta\in \text{WDiv}(X)\mid \Delta\text{ is associated to }(X,B)\right\}.
\]
Note that this is a finite set. Given a set $\mathcal{V}$ of log canonical places of $(X,B),$ we write 
\[
\mathcal{A}_\mathcal{V}(X,B)=\left\{\Delta\in \mathcal{A}(X,B)\mid E\text{ is a log canonical place of }(X,\Delta)\text{ for all }E\in \mathcal{V}\right\}.
\]
}
\end{definition}


\begin{definition}{\em 
Let $(X,B)$ be a log Calabi--Yau pair of complexity zero, and let $\Delta \in \mathcal{A}(X,B)$. We define the following invariant:
\[
\lambda_1(X,B;\Delta):=\max\left\{\lambda \in [0,1]\mid \lambda\Delta \leq B\right\}.
\]
Whenever $\lambda_1(X,B;\Delta)<1,$ we will also define:
\[
\lambda_2(X,B;\Delta):=\max\left\{\lambda \in \left[0,\lambda_1(X,B;\Delta)\right] \,\middle|\, \left(X,\frac{1}{1-\lambda}(B-\lambda \Delta)\right) \text{ is log canonical}\right\}.
\]
When $\lambda_1(X,B;\Delta)=1,$ we will set $\lambda_2(X,B;\Delta)=1.$ When $(X,B)$ and $\Delta$ are clear from context, we will simply write $\lambda_1$ and $\lambda_2.$
}
\end{definition}

\begin{remark}\label{rem:rat-lambda}
{\em The invariants defined above are always rational numbers. Indeed, $\lambda_1(X,B;\Delta)$ is the smallest coefficient in $B$ of a component of the support of $\Delta$, and the rationality of $\lambda_2(X,B;\Delta)$ can be seen by computing it on a log resolution of $(X,\lceil B \rceil).$}
\end{remark}

The following lemma indicates the significance of these invariants.

\begin{lemma}\label{lem:lambda-lcps}
Let $(X,B)$ be a log Calabi--Yau pair of complexity zero, and let $\Delta\in \mathcal{A}(X,B)$ be an associated divisor. Then the following hold:
\begin{enumerate}
    \item If $\lambda_2=0,$ then there is a log canonical place of $(X,B)$ which is not a log canonical place of $(X,\Delta).$
    \item If $\lambda_2<\lambda_1<1,$ then there is a log canonical place of $\left(X,\frac{1}{1-\lambda_2}(B-\lambda_2 \Delta)\right)$ which is not a log canonical place of $(X,\Delta).$
\end{enumerate}
\end{lemma}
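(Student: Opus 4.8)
The plan is to reduce both statements to a single computation of log discrepancies along the segment of boundaries $B_\lambda := \tfrac{1}{1-\lambda}(B-\lambda\Delta)$ for $\lambda\in[0,\lambda_1]$. I would first record that $\lambda_1>0$: since $\Delta\leq\lceil B\rceil$, every component of $\Delta$ lies in $\Supp(B)$ and hence has positive coefficient in $B$, so by Remark~\ref{rem:rat-lambda} the number $\lambda_1$ is strictly positive. Note also that for $\lambda\le\lambda_1$ we have $B-\lambda\Delta\ge 0$, so $B_\lambda$ is a genuine (effective) boundary, and that $\lambda_1<1$ in both statements so $1-\lambda>0$ throughout.

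The computational heart is the formula
\begin{equation*}
a_E(X,B_\lambda)=\frac{a_E(X,B)-\lambda\,a_E(X,\Delta)}{1-\lambda},
\end{equation*}
valid for every divisor $E$ over $X$ and every $\lambda\in[0,\lambda_1]$. This holds because $K_X+B_\lambda=\tfrac{1}{1-\lambda}\bigl((K_X+B)-\lambda(K_X+\Delta)\bigr)$ exhibits $B_\lambda$ as an affine combination of $B$ and $\Delta$ with coefficients $\tfrac{1}{1-\lambda}$ and $\tfrac{-\lambda}{1-\lambda}$ summing to $1$, and $a_E(X,-)$ is affine-linear in the boundary for such combinations (pull back to any model carrying $E$). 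Since $(X,B)$ and $(X,\Delta)$ are both log canonical, the formula shows that $(X,B_\lambda)$ is log canonical if and only if $a_E(X,B)\ge\lambda\,a_E(X,\Delta)$ for every $E$ over $X$.

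Next I would make $\lambda_2$ explicit. Fix a log resolution $\pi\colon Y\to X$ of $(X,\lceil B\rceil)$; since $\Supp(\Delta)\subseteq\Supp(B)$ we have $\Supp(B_\lambda)\subseteq\Supp(B)$, so $\pi$ resolves $(X,B_\lambda)$ for all $\lambda\le\lambda_1$ and log canonicity of $(X,B_\lambda)$ is detected by the finitely many prime divisors $E$ on $Y$. Combining this with the previous paragraph, the inequalities cutting out the log canonical locus are trivial when $a_E(X,\Delta)=0$ and read $\lambda\le a_E(X,B)/a_E(X,\Delta)$ when $a_E(X,\Delta)>0$, whence
\begin{equation*}
\lambda_2=\min\left(\lambda_1,\ \min\left\{\frac{a_E(X,B)}{a_E(X,\Delta)}\;\middle|\;E\text{ a prime divisor on }Y,\ a_E(X,\Delta)>0\right\}\right),
\end{equation*}
a minimum over a finite set, hence attained.

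Both statements now fall out by choosing the divisor attaining the relevant minimum. For (1), $\lambda_2=0<\lambda_1$ forces the second term to vanish and to be attained, so there is a prime divisor $E$ on $Y$ with $a_E(X,\Delta)>0$ and $a_E(X,B)=0$; this $E$ is a log canonical place of $(X,B)$ but not of $(X,\Delta)$. For (2), $\lambda_2<\lambda_1$ forces $\lambda_2$ to equal the second term, attained by some $E$ with $a_E(X,\Delta)>0$ and $a_E(X,B)=\lambda_2\,a_E(X,\Delta)$; substituting into the displayed discrepancy formula gives $a_E(X,B_{\lambda_2})=0$, so $E$ is a log canonical place of $\bigl(X,\tfrac{1}{1-\lambda_2}(B-\lambda_2\Delta)\bigr)$ and, since $a_E(X,\Delta)>0$, not of $(X,\Delta)$. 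The only real content is the affine-linearity formula and the resulting description of $\lambda_2$ as a minimum of ratios of log discrepancies; there is no deep geometric obstacle, and the one point requiring care is the reduction to finitely many divisors via the log resolution, which is exactly what guarantees that the defining maxima are attained and that the extremal divisor genuinely exists.
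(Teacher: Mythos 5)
Your proof is correct, and it is organized genuinely differently from the paper's, although both rest on the same two ingredients: affine-linearity of log discrepancies along the segment $B_\lambda=\tfrac{1}{1-\lambda}(B-\lambda\Delta)$, and the reduction of log canonicity of every $(X,B_\lambda)$ to finitely many conditions on a fixed log resolution $Y$ of $(X,\lceil B\rceil)$. Where you differ is the architecture: you extract the closed formula
\begin{equation*}
\lambda_2=\min\Bigl(\lambda_1,\ \min\bigl\{a_E(X,B)/a_E(X,\Delta)\ :\ E\subset Y\text{ prime},\ a_E(X,\Delta)>0\bigr\}\Bigr)
\end{equation*}
and read both statements off the divisor attaining the minimum, so in particular (2) is proved directly. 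The paper instead proves (1) by contraposition --- if every log canonical place of $(X,B)$ were a log canonical place of $(X,\Delta)$, continuity of the pulled-back coefficients in $\lambda$ would force $\lambda_2>0$ --- and then reduces (2) to (1) via the reparametrization $\frac{B'-\lambda\Delta}{1-\lambda}=\frac{B-(\lambda_2+\lambda(1-\lambda_2))\Delta}{1-(\lambda_2+\lambda(1-\lambda_2))}$ applied to $B'=B_{\lambda_2}$, after checking $\Delta\in\mathcal{A}(X,B')$ and concluding $\lambda_2(X,B';\Delta)=0$. Your route buys an explicit, quantitative description of $\lambda_2$ (which incidentally re-proves the rationality claim of Remark~\ref{rem:rat-lambda} and makes the existence of the extremal divisor manifest); the paper's route avoids this bookkeeping by recycling part (1). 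One phrasing in your write-up should be tightened: $Y$ carries infinitely many prime divisors, so your inner minimum is not literally over a finite set. Rather, every prime divisor on $Y$ with coefficient zero in both log pullbacks has $a_E(X,B)=a_E(X,\Delta)=1$ and contributes the ratio $1$, so the inner minimum equals the minimum of $1$ and the finitely many ratios coming from components of the pulled-back boundaries and exceptional divisors; it is therefore attained, which is all your argument needs.
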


\begin{proof}
To show (1), we suppose on the contrary that every log canonical place of $(X,B)$ is a log canonical place of $(X,\Delta)$. We will show that the pair $(X,\frac{1}{1-\lambda}(B-\lambda\Delta))$ is log canonical for all sufficiently small $\lambda>0$, hence that $\lambda_2>0.$ Fix a log resolution $Y \rightarrow X$ of $(X,B),$ hence also of $(X,\Delta),$ and denote by $(Y,B_Y)$ and $(Y,\Delta_Y)$ the log pullbacks of $(X,B)$ and $(X,\Delta),$ respectively. Note that ${\rm coeff}_E(B_Y)\leq 1$ for all prime divisors $E\subset Y$ since $(X,B)$ is log canonical.\par 
Given a divisor $E$ on $Y,$ we have
$${\rm coeff}_E\left(\frac{1}{1-\lambda}(B_Y-\lambda\Delta_Y)\right)=\frac{1}{1-\lambda}\left({\rm coeff}_E(B_Y)-\lambda{\rm coeff}_E(\Delta_Y)\right)$$
for all $\lambda<1.$ These coefficients are continuous functions in $\lambda,$ and we recover the coefficients of $B_Y$ when $\lambda=0.$ When ${\rm coeff}_E(B_Y)=1$, we have ${\rm coeff}_E(\Delta_Y)=1$ since every log canonical place of $(X,B)$ is a log canonical place of $(X,\Delta).$ In this case, it follows that ${\rm coeff}_E(\frac{1}{1-\lambda}(B_Y-\lambda\Delta_Y))=1$ for all $\lambda<1.$ When ${\rm coeff}_E(B_Y)<1$, we have ${\rm coeff}_E\left(\frac{1}{1-\lambda}(B_Y-\lambda\Delta_Y)\right)<1$ for all sufficiently small $\lambda>0$ by continuity. Since there are only finitely many $E$ that have nonzero coefficient in at least one of $B_Y$ or $\Delta_Y$, it follows that $(X,\frac{1}{1-\lambda}(B-\lambda\Delta))$ is log canonical for all sufficiently small $\lambda>0.$\par
We now turn to show (2). Write $B'=\frac{1}{1-\lambda_2}(B-\lambda_2 \Delta),$ and note that the condition $\lambda_2<\lambda_1<1$ implies that $\Delta\in \mathcal{A}(X,B').$ Thus, it suffices by (1) to show that $\lambda_2\left(X,B';\Delta\right)=0.$ For all $\lambda\in [0,1),$ we have $$\frac{B'-\lambda\Delta}{1-\lambda}=\frac{B-(\lambda_2+\lambda(1-\lambda_2))\Delta}{1-(\lambda_2+\lambda(1-\lambda_2))}.$$
But $\lambda_2+\lambda(1-\lambda_2)>\lambda_2$ whenever $\lambda>0,$ so it follows from the definition of $\lambda_2$ that $\left(X,\frac{1}{1-\lambda}(B'-\lambda \Delta)\right)$ is not log canonical for any $\lambda\in (0,1).$
\end{proof}

These notions provide several ways to characterize toric log Calabi--Yau pairs:

\begin{lemma}\label{lem:compl-zero-ind-one}
Let $(X,B)$ be a log Calabi--Yau pair of complexity zero. The following are equivalent:
\begin{enumerate}
    \item $(X,B)$ is a toric log Calabi--Yau pair,
    \item $(X,B)$ has index one,
    \item there exists $\Delta\in \mathcal{A}(X,B)$ with $\lambda_1(X,B;\Delta)=1.$
\end{enumerate}
\end{lemma}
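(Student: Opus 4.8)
The plan is to establish the cycle of implications $(1)\Rightarrow(2)\Rightarrow(3)\Rightarrow(1)$, since each single step is short once the right reformulation is in hand.

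\textbf{Step 1: $(1)\Rightarrow(2)$.} Here I would use that if $(X,B)$ is a toric log Calabi--Yau pair, then $B$ is precisely the reduced toric boundary $\sum_\rho D_\rho$. The standard toric identity $K_X+\sum_\rho D_\rho\sim 0$ (that is, \cite[Theorem 8.2.3]{CLS11}, exactly the fact invoked as ``$\Gamma\sim -K_X$'' in the proof of Lemma~\ref{lem:toric-defn-comparison}) gives $K_X+B\sim 0$ \emph{integrally}, so the index of $(X,B)$ is one.

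\textbf{Step 2: $(2)\Rightarrow(3)$.} The crux of this implication is the observation that index one forces $B$ to be an honest integral divisor. Indeed, $K_X+B\sim 0$ means $K_X+B=\div(f)$ for some $f\in\kk(X)^\times$; since both $\div(f)$ and a chosen canonical representative $K_X$ are integral Weil divisors, so is $B=\div(f)-K_X$. Effectivity together with log canonicity then confines the coefficients of $B$ to $\{0,1\}$, so $B=\lfloor B\rfloor=\lceil B\rceil$ is reduced. Corollary~\ref{introcor:BMSZ18} produces some $\Delta\in\mathcal{A}(X,B)$, and the sandwich $\lfloor B\rfloor\leq\Delta\leq\lceil B\rceil$ immediately collapses to $\Delta=B$. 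Hence $\lambda\Delta\leq B$ holds for $\lambda=1$, i.e. $\lambda_1(X,B;\Delta)=1$.

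\textbf{Step 3: $(3)\Rightarrow(1)$.} Suppose $\Delta\in\mathcal{A}(X,B)$ satisfies $\lambda_1(X,B;\Delta)=1$. Unwinding the definition (or Remark~\ref{rem:rat-lambda}) gives $\Delta\leq B$ as divisors. Now I would compare coefficient-sums: the toric pair $(X,\Delta)$ has complexity zero, so $|\Delta|=\dim X+\rank\WDiv_{\rm alg}(X)$ (both sides being the number of rays of the fan), while $c(X,B)=0$ gives $|B|=\dim X+\rank\WDiv_{\rm alg}(X)$ as well. Thus $B-\Delta$ is effective with total coefficient-sum $|B|-|\Delta|=0$, which forces $B=\Delta$. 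Therefore $(X,B)=(X,\Delta)$ is a toric log Calabi--Yau pair, closing the cycle.

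I expect no serious obstacle here: the only genuine content is the integrality observation in Step~2 and the matching of coefficient-sums via the complexity-zero hypothesis in Step~3, and everything else is bookkeeping with the definitions of $\mathcal{A}(X,B)$ and $\lambda_1$ and an appeal to Corollary~\ref{introcor:BMSZ18} for nonemptiness of $\mathcal{A}(X,B)$. The mild point to be careful about is that the equality $|\Delta|=\dim X+\rank\WDiv_{\rm alg}(X)$ for a toric log Calabi--Yau pair is used freely, but this is exactly the identification of both quantities with the number of rays recorded in the introduction.
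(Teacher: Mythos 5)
Your proposal is correct, and Steps 1 and 2 essentially match the paper's argument: the paper cites Reid for the index-one property of toric pairs (you instead invoke \cite[Theorem 8.2.3]{CLS11} through Lemma~\ref{lem:toric-defn-comparison}, which is the same content), and for $(2)\Rightarrow(3)$ the paper likewise deduces integrality of $B$ from index one and then notes $\Delta\leq\lceil B\rceil=B$. The genuine divergence is in Step 3. The paper argues as follows: $\lambda_1=1$ gives $\Delta\leq B$, so $B-\Delta$ is effective; since $K_X+B\sim_{\qq}0\sim_{\qq}K_X+\Delta$, the difference $B-\Delta$ is effective and $\qq$-linearly trivial, hence zero because $X$ is projective. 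You instead compare coefficient sums: $|B|=\dim X+\rank{\rm WDiv}_{\rm alg}(X)=|\Delta|$, using $c(X,B)=0$ and the fact (recorded in the introduction) that toric log Calabi--Yau pairs have complexity zero, so the effective divisor $B-\Delta$ has coefficient sum zero and must vanish. Both arguments are valid. The paper's route is independent of the complexity-zero hypothesis in this step (it works for any log Calabi--Yau pair admitting an associated $\Delta\leq B$), and uses only the standard fact that an effective $\qq$-linearly trivial divisor on a projective variety is zero; your route is arguably more elementary bookkeeping, but it leans on the identification $|\Delta|=\dim X+\rank{\rm WDiv}_{\rm alg}(X)$ for toric pairs, which the paper states only informally in the introduction rather than as a citable lemma.
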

\begin{proof}

All toric log Calabi--Yau pairs have index one, as shown in ~\cite[Section 4.1]{Rei83}. Now assume that $(X,B)$ has index one. It follows that $B$ must have integer coefficients. By Corollary ~\ref{introcor:BMSZ18}, there exists $\Delta \in \mathcal{A}(X,B)$. We have $\Delta\leq\lceil B\rceil=B,$ from which it follows that $\lambda_1(X,B;\Delta)=1.$ Finally, assume that there exists some $\Delta\in \mathcal{A}(X,B)$, with  $\lambda_1(X,B;\Delta)=1$. By definition of $\lambda_1,$ we have $\Delta\leq B.$ Thus, $B-\Delta$ is an effective divisor. Since $K_X+B \sim_{\mathbb{Q}}K_X+\Delta$, it follows that $B-\Delta \sim_{\mathbb{Q}} 0$ and hence that $B=\Delta$. Thus, $(X,B)$ is a toric log Calabi--Yau pair.

\end{proof}

Outside of this case, we have the following:

\begin{lemma}\label{lem:peeling-inclusion}
Let $(X,B)$ be a log Calabi--Yau pair of complexity zero, and let $\Delta \in \mathcal{A}(X,B)$ have $\lambda_1<1.$ Then for all $\lambda \in [0,\lambda_2],$ $$\mathcal{A}\left(X,\frac{1}{1-\lambda}(B-\lambda \Delta)\right)\subseteq \mathcal{A}(X,B).$$

Furthermore, if $\lambda_2=\lambda_1<1,$ then this containment is strict.
\end{lemma}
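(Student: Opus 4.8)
The plan is to prove two things about the divisor $B' := \frac{1}{1-\lambda}(B - \lambda\Delta)$ for $\lambda \in [0,\lambda_2]$: first, that $\mathcal{A}(X,B') \subseteq \mathcal{A}(X,B)$; and second, that when $\lambda_2 = \lambda_1 < 1$, this containment is strict. Throughout I would keep in mind that membership in $\mathcal{A}(X,B)$ requires two conditions: that $(X,\Delta')$ be a toric log Calabi--Yau pair, and that $\lfloor B \rfloor \leq \Delta' \leq \lceil B \rceil$. Since the toric log Calabi--Yau condition is intrinsic to the divisor $\Delta'$ and does not reference $B$ at all, the inclusion of sets reduces entirely to comparing the floor-ceiling constraints for $B'$ versus those for $B$.

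\textbf{The inclusion.} For the containment, suppose $\Delta' \in \mathcal{A}(X,B')$; I must verify $\lfloor B \rfloor \leq \Delta' \leq \lceil B \rceil$. The key is to understand, coefficient by coefficient, how passing from $B$ to $B'$ changes the floor and ceiling. Fix a prime divisor $D$ and write $b = \mathrm{coeff}_D(B)$ and $\delta = \mathrm{coeff}_D(\Delta)$. Since $\Delta \in \mathcal{A}(X,B)$, we have $\delta \in \{0,1\}$ and $\lfloor b \rfloor \leq \delta \leq \lceil b \rceil$. The coefficient of $B'$ at $D$ is $b' = \frac{b - \lambda\delta}{1-\lambda}$. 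I would split into cases according to $\delta \in \{0,1\}$. When $\delta = 0$ (so $D \not\subset \mathrm{Supp}(\Delta)$), one checks $b' = \frac{b}{1-\lambda} \geq b$, and I would verify that $\lfloor b' \rfloor \geq \lfloor b \rfloor$ and—crucially using $\lambda \leq \lambda_1$, which forces $\delta = 0 \Rightarrow b$ small enough that $b'$ does not cross an integer past $\lceil b \rceil$ — that $\lceil b' \rceil \leq \lceil b \rceil$; here I expect to use that $\lambda_1$ is the smallest coefficient in $B$ of a component of $\mathrm{Supp}(\Delta)$, so components with $\delta = 0$ are unconstrained below by $\lambda_1$ but the ceiling comparison still needs care. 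When $\delta = 1$, one has $b' = \frac{b - \lambda}{1-\lambda} \leq b$, and the analogous monotonicity of floors and ceilings must be checked. The cleanest way to organize this is to show directly that $\lfloor B \rfloor \leq \lfloor B' \rfloor$ and $\lceil B' \rceil \leq \lceil B \rceil$ as divisors, after which $\lfloor B \rfloor \leq \lfloor B'\rfloor \leq \Delta' \leq \lceil B' \rceil \leq \lceil B \rceil$ gives the second condition for $\Delta' \in \mathcal{A}(X,B)$ for free.

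\textbf{Strictness when $\lambda_2 = \lambda_1 < 1$.} For the final claim, I would set $\lambda = \lambda_2 = \lambda_1$ and exhibit a divisor in $\mathcal{A}(X,B) \setminus \mathcal{A}(X,B')$. The natural candidate is $\Delta$ itself: since $\Delta \in \mathcal{A}(X,B)$ by hypothesis, it suffices to show $\Delta \notin \mathcal{A}(X,B')$. At $\lambda = \lambda_1$, the divisor $B' = \frac{1}{1-\lambda_1}(B - \lambda_1\Delta)$ has the property that every component of $\mathrm{Supp}(\Delta)$ achieving the minimal coefficient $\lambda_1$ in $B$ now has coefficient $\frac{\lambda_1 - \lambda_1}{1-\lambda_1} = 0$ in $B'$. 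Thus such a component $D$ satisfies $\mathrm{coeff}_D(\lceil B' \rceil) = 0$ while $\mathrm{coeff}_D(\Delta) = 1$, violating $\Delta \leq \lceil B' \rceil$; hence $\Delta \notin \mathcal{A}(X,B')$, proving strictness. I should confirm such a minimal component exists, i.e.\ that the minimum $\lambda_1$ is attained on some component of $\mathrm{Supp}(\Delta)$—which it is, $\lambda_1$ being defined as a maximum of a finite condition and equal to the smallest such coefficient by Remark~\ref{rem:rat-lambda}.

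\textbf{Main obstacle.} The delicate point is the inclusion, specifically the interaction of the rescaling $\frac{1}{1-\lambda}$ with the floor and ceiling operations: monotonicity of $b \mapsto b'$ does not by itself guarantee monotonicity of the rounded divisors, and one must use the hypothesis $\lambda \leq \lambda_1 \leq \lambda_2$ precisely to prevent coefficients from crossing integer thresholds in the wrong direction. I expect the $\delta = 0$ ceiling comparison and the $\delta = 1$ floor comparison to be the steps requiring the most care, and I would handle them by a direct coefficient inequality argument rather than any global divisor-class manipulation.
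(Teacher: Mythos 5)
Your overall route is the same as the paper's: reduce the inclusion to the divisor inequalities $\lfloor B \rfloor \leq \lfloor B_\lambda \rfloor$ and $\lceil B_\lambda \rceil \leq \lceil B \rceil$ (writing $B_\lambda = \tfrac{1}{1-\lambda}(B-\lambda\Delta)$), then get strictness by showing $\Delta$ itself leaves the set of associated divisors when $\lambda=\lambda_1=\lambda_2$; your strictness paragraph is complete and agrees with the paper. The gap is in the inclusion, at exactly the step you flag as delicate: the ceiling comparison for a prime divisor $D$ with $\delta={\rm coeff}_D(\Delta)=0$ and $b={\rm coeff}_D(B)>0$. You propose to rule out ${\rm coeff}_D(B_\lambda)=b/(1-\lambda)>1$ ``using $\lambda\leq\lambda_1$, which forces $b$ small enough,'' but $\lambda_1$ is the minimum of ${\rm coeff}_D(B)$ over components $D$ of $\Delta$; it imposes no constraint whatsoever on coefficients of components outside ${\rm Supp}(\Delta)$ (as you yourself concede in the next clause). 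Nothing arithmetic about $\lambda\leq\lambda_1$ prevents, say, $b=0.9$, $\delta=0$, $\lambda=1/2$, which gives $b/(1-\lambda)=1.8$ and $\lceil{\rm coeff}_D(B_\lambda)\rceil=2>1=\lceil b\rceil$. So the ``direct coefficient inequality argument'' you defer to cannot close this case, and you never close it. (The inequality ``$\lambda\leq\lambda_1\leq\lambda_2$'' in your last paragraph is also backwards: by definition $\lambda_2\in[0,\lambda_1]$, a sign that the roles of the two invariants got switched.)

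The missing ingredient is the hypothesis $\lambda\leq\lambda_2$, used through log canonicity rather than through floor/ceiling arithmetic. By the definition of $\lambda_2$, the pair $(X,B_{\lambda_2})$ is log canonical, so every coefficient of $B_{\lambda_2}$ is at most $1$. When $\delta=0$ the function $\lambda\mapsto b/(1-\lambda)$ is increasing, so for $\lambda\in[0,\lambda_2]$ one gets ${\rm coeff}_D(B_\lambda)=b/(1-\lambda)\leq b/(1-\lambda_2)={\rm coeff}_D(B_{\lambda_2})\leq 1$, hence $\lceil{\rm coeff}_D(B_\lambda)\rceil\leq 1=\lceil b\rceil$; when $\delta=1$ one has $(b-\lambda)/(1-\lambda)\leq 1$ automatically from $b\leq 1$. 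Combined with your correct observations that $b=0$ forces $\delta=0$ and hence ${\rm coeff}_D(B_\lambda)=0$, and that the floor comparison needs only effectivity (for $\delta=1$ one has $b\geq\lambda_1\geq\lambda$, and $b=1$ forces $\delta=1$ and ${\rm coeff}_D(B_\lambda)=1$), this completes the chain $\lfloor B\rfloor\leq\lfloor B_\lambda\rfloor\leq\Gamma\leq\lceil B_\lambda\rceil\leq\lceil B\rceil$. The paper's proof makes the same move, though tacitly: its deduction of $\lceil B_\lambda\rceil\leq\lceil B\rceil$ from ``coefficient $0$ stays coefficient $0$'' silently uses that the coefficients of $B_\lambda$ remain at most $1$, i.e.\ log canonicity. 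Either way, the point your proposal misses is that it is $\lambda_2$, not $\lambda_1$, that controls the ceilings.
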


\begin{proof}
Given $\lambda \in [0,\lambda_2],$ write $B_\lambda=\frac{1}{1-\lambda}(B-\lambda \Delta).$ 
The condition $\lfloor B \rfloor\leq \Delta$ implies that any divisor appearing in $B$ with coefficient $1$ must also appear in $B_\lambda$ with coefficient $1$. In other words, we must have $\lfloor B \rfloor \leq \lfloor B_\lambda \rfloor.$ The condition $\Delta\leq \lceil B \rceil$ implies that any divisor appearing in $B$ with coefficient $0$ must also appear in $B_\lambda$ with coefficient $0.$ In other words, we must have $\lceil B_\lambda \rceil\leq \lceil B \rceil.$ So given any $\Gamma \in \mathcal{A}(X,B_\lambda),$ it follows from $$\lfloor B \rfloor \leq \lfloor B_\lambda \rfloor\leq \Gamma \leq \lceil B_\lambda \rceil\leq \lceil B \rceil$$ that $\Gamma \in \mathcal{A}(X,B).$

From now on, assume that $\lambda_2=\lambda_1<1$. By the definition of $\lambda_1$, we have that there exists a prime divisor $E$ in the support of $B$ and $\Delta$, such that, ${\rm coeff}_E(B)=\lambda_1=\lambda_2$. Thus, we have
$${\rm coeff}_E\left(\frac{1}{1-\lambda_2}(B-\lambda_2 \Delta)\right)=0.$$
Therefore $\Delta \notin \mathcal{A}\left(X,\frac{1}{1-\lambda_2}(B-\lambda_2 \Delta)\right)$, showing that the containment is strict.
\end{proof}

\begin{lemma}\label{lem:extraction-inclusion}
Let $(X,B)$ be a log Calabi--Yau pair of complexity zero, and let $f\colon Y \rightarrow X$ be a projective birational morphism extracting only log canonical places of $(X,B).$ Write $(Y,B_Y)$ for the log pullback of $(X,B)$ via $f,$ and write $\mathcal{V}$ for the set of $f$-exceptional divisors. Then pushforward along $f$ induces a bijection $$\mathcal{A}(Y,B_Y)\xrightarrow{\cong} \mathcal{A}_\mathcal{V}(X,B).$$
\end{lemma}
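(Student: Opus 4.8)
The plan is to establish the bijection by exhibiting pushforward $f_*$ and pullback as mutually inverse maps between $\mathcal{A}(Y,B_Y)$ and $\mathcal{A}_{\mathcal{V}}(X,B)$. First I would observe that since $f$ extracts only log canonical places, Corollary~\ref{cor:compl-dlt-mod} gives $c(Y,B_Y)=c(X,B)=0$, so $(Y,B_Y)$ is again a log Calabi--Yau pair of complexity zero and the set $\mathcal{A}(Y,B_Y)$ makes sense. The divisors in $\mathcal{V}$ are all components of $\lfloor B_Y\rfloor$ (their coefficients in $B_Y$ are $1$), so for any $\Gamma\in\mathcal{A}(Y,B_Y)$ the inequality $\lfloor B_Y\rfloor\leq\Gamma$ forces every $E\in\mathcal{V}$ to be a component of $\Gamma$; since $(Y,\Gamma)$ is a toric log Calabi--Yau pair, this means precisely that each $E\in\mathcal{V}$ is a log canonical place of $(Y,\Gamma)$, i.e.\ (using $a_E(Y,\Gamma)=a_E(X,f_*\Gamma)$ for the $f$-exceptional $E$) that $f_*\Gamma$ lands in $\mathcal{A}_{\mathcal{V}}(X,B)$ once we check $f_*\Gamma$ is associated to $(X,B)$.

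Next I would verify that $f_*$ does map $\mathcal{A}(Y,B_Y)$ into $\mathcal{A}_{\mathcal{V}}(X,B)$. For $\Gamma\in\mathcal{A}(Y,B_Y)$, Lemma~\ref{lem:toric-bir-contr} (applied to the birational contraction $f^{-1}$, or directly to $f$ viewed as a toric birational morphism with respect to $\Gamma$) shows that $(X,f_*\Gamma)$ is a toric log Calabi--Yau pair, giving condition (1) of being associated. For condition (2), pushforward is order-preserving on effective divisors and commutes with the operations $\lfloor\cdot\rfloor,\lceil\cdot\rceil$ on the non-exceptional part, while $f_*\lfloor B_Y\rfloor=\lfloor B\rfloor$ and $f_*\lceil B_Y\rceil=\lceil B\rceil$ because the only divisors in $B_Y$ not appearing in $B$ are the exceptional ones in $\mathcal{V}$, all of coefficient $1$; hence $\lfloor B\rfloor\leq f_*\Gamma\leq\lceil B\rceil$. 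Combined with the previous paragraph, $f_*\Gamma\in\mathcal{A}_{\mathcal{V}}(X,B)$.

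For the inverse direction, given $\Delta\in\mathcal{A}_{\mathcal{V}}(X,B)$ I would take its log pullback $(Y,\Delta_Y)$ via $f$ and show $\Delta_Y\in\mathcal{A}(Y,B_Y)$ with $f_*\Delta_Y=\Delta$. The hypothesis $\Delta\in\mathcal{A}_{\mathcal{V}}$ means every $E\in\mathcal{V}$ is a log canonical place of $(X,\Delta)$, so each such $E$ appears with coefficient $1$ in $\Delta_Y$ and $(Y,\Delta_Y)$ is crepant to $(X,\Delta)$; since $f$ extracts only log canonical places of $(X,\Delta)$ as well, Lemma~\ref{lem:toric-bir-contr} applied to $f\colon(Y,\Delta_Y)\to (X,\Delta)$ (equivalently, extracting a toric divisor of a toric pair keeps it toric) shows $(Y,\Delta_Y)$ is a toric log Calabi--Yau pair, giving (1). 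For (2), the log pullback is crepant so $\Delta_Y$ is reduced with $\lfloor B_Y\rfloor\leq\Delta_Y\leq\lceil B_Y\rceil$: on non-exceptional divisors this is the pullback of $\lfloor B\rfloor\leq\Delta\leq\lceil B\rceil$, and on the exceptional divisors in $\mathcal{V}$ both $B_Y$ and $\Delta_Y$ have coefficient $1$. Finally $f_*\Delta_Y=\Delta$ by definition of log pullback, so the two assignments are mutually inverse.

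The main obstacle I anticipate is the toric statement in the reverse direction: knowing that each $E\in\mathcal{V}$ is a log canonical place of $(X,\Delta)$ must be leveraged to conclude that the extracted pair $(Y,\Delta_Y)$ is genuinely toric, not merely log Calabi--Yau. The cleanest way is to note that, with respect to the toric structure on $X$ induced by $\Delta$, every log canonical place of $(X,\Delta)$ is a toric divisor (a divisor over $X$ corresponding to a ray in a subdivision of the fan); extracting such divisors produces a toric modification, so $(Y,\Delta_Y)$ carries a compatible torus action and Lemma~\ref{lem:toric-defn-comparison} identifies it as a toric log Calabi--Yau pair. Verifying that the $f$-exceptional divisors are toric with respect to $\Delta$ exactly because they are log canonical places of $(X,\Delta)$ — this equivalence between toric divisors and log canonical places for a toric log Calabi--Yau pair — is the key input and the step requiring the most care.
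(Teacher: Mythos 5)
Your first two paragraphs (well-definedness of $f_*$, the floor/ceiling bookkeeping) and your injectivity-via-mutual-inverses argument match the paper's proof in substance. The genuine problem is in the surjectivity direction, at exactly the step you yourself flag as ``requiring the most care'': you assert that, since every $E\in\mathcal{V}$ is a log canonical place of $(X,\Delta)$ and log canonical places of a toric pair correspond to rays, ``extracting such divisors produces a toric modification,'' so that $(Y,\Delta_Y)$ is toric. This is asserted, not proved, and nothing in the paper supplies it: Lemma~\ref{lem:toric-bir-contr} only goes in the contracting direction (from a toric pair to the image of a birational contraction map), and $f^{-1}\colon X\dashrightarrow Y$ is \emph{not} a birational contraction map when $\mathcal{V}\neq\emptyset$, since the divisors in $\mathcal{V}$ are not images of divisors on $X$ (so the parenthetical in your second paragraph is also incorrect on this point, though harmless there because you can apply the lemma to $f$ itself). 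Note also that the hypothesis allows $f$ to be small, in which case your argument says nothing at all: no toric valuations are extracted, yet one must still show that the small modification $Y$ of the toric variety $X$ is toric. Finally, the general statement ``lc places of $(X,B)$ are toric divisors for some associated $\Delta$'' is part (2) of Theorem~\ref{introthm:lccs}, which the paper deduces \emph{from} this lemma, so leaning on that circle of ideas risks circularity; what you actually use is the classical toric fact, but the extraction claim built on it is the real content.

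The paper closes this gap by a different mechanism, which is the actual point of the lemma: the log pullback $(Y,\Delta_Y)$ of $(X,\Delta)$ is a log Calabi--Yau pair of \emph{index one} (toric log Calabi--Yau pairs have index one) and of \emph{complexity zero}, because $c(Y,\Delta_Y)=c(X,\Delta)=0$ by Corollary~\ref{cor:compl-dlt-mod} --- here the hypothesis $\Delta\in\mathcal{A}_\mathcal{V}(X,B)$, rather than merely $\Delta\in\mathcal{A}(X,B)$, is precisely what guarantees that only log canonical places of $(X,\Delta)$ are extracted, keeping the complexity at zero. Lemma~\ref{lem:compl-zero-ind-one}, which rests on the Brown--McKernan--Svaldi--Zong theorem, then says that index one plus complexity zero forces $(Y,\Delta_Y)$ to be a toric log Calabi--Yau pair; no lifting of torus actions or fan-theoretic construction is needed. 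If you wanted to salvage your more geometric route, you would have to build an intermediate toric model: star-subdivide the fan of $(X,\Delta)$ at the rays corresponding to the valuations in $\mathcal{V}$ to obtain a projective toric birational morphism $Z\rightarrow X$ extracting exactly those divisors; then $Z\dashrightarrow Y$ is surjective in codimension one (every divisor on $Y$ is either a strict transform from $X$ or lies in $\mathcal{V}$, hence appears on $Z$), so Lemma~\ref{lem:toric-bir-contr} applied to the toric pair on $Z$ and the birational contraction map $Z\dashrightarrow Y$ yields that $(Y,\Delta_Y)$ is toric, including in the small case. That additional construction is what your proposal is missing.
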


\begin{proof}
Given any toric log Calabi--Yau pair $(Y, \Gamma),$ it follows from Lemma ~\ref{lem:toric-bir-contr} that the pair $(X,f_*\Gamma)$ is also toric log Calabi--Yau and that $f\colon (Y, \Gamma)\rightarrow (X,f_*\Gamma)$ is a crepant birational morphism between these pairs. It is clear, therefore, that $f_*\mathcal{A}(Y,B_Y)\subset \mathcal{A}_\mathcal{V}(X,B).$ For surjectivity, consider some $\Delta \in \mathcal{A}_\mathcal{V}(X,B).$ Denote by $(Y,\Delta_Y)$ the log pullback of $(X,\Delta)$ to $Y$. Then $(Y,\Delta_Y)$ is a log Calabi--Yau pair of index one since $(X,B)$ is, and 
    $$c(Y,\Delta_Y)=c(X,\Delta)=0$$
since every $f$-exceptional divisor is a log canonical place for $(X,\Delta).$ It follows from Lemma ~\ref{lem:compl-zero-ind-one} that $(Y,\Delta_Y)$ is a toric log Calabi--Yau pair, and it follows from the fact that every $f$-exceptional divisor appears in $B_Y$ with coefficient $1$ that
$$\lfloor B_Y\rfloor\leq \Delta_Y\leq \lceil B_Y \rceil.$$ We see that $\Delta_Y$ is an element of $\mathcal{A}(Y,B_Y)$ satisfying $f_*\Delta_Y=\Delta.$ For injectivity, we note that two divisors $\Gamma_1,\Gamma_2 \in {\rm WDiv}(Y)$ satisfying $f_*\Gamma_1=f_*\Gamma_2$ can differ only at $f$-exceptional divisors. But since every $f$-exceptional divisor appears in $B_Y$ with coefficient $1,$ they must all appear in every element of $\mathcal{A}(Y,B_Y)$ with coefficient $1$ as well.

\end{proof}

\begin{proposition}\label{prop:one-ass-pair}
Let $(X,B)$ be a log Calabi--Yau pair of complexity zero. Then $(X,B)$ is a toric log Calabi--Yau pair if and only if $|\mathcal{A}(X,B)|=1$.

\end{proposition}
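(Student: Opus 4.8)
The plan is to prove both implications separately, dispatching the forward direction by a coefficient computation and the reverse direction by contradiction via the peeling construction furnished by Lemmas~\ref{lem:peeling-inclusion}, \ref{lem:lambda-lcps} and~\ref{lem:extraction-inclusion}. For the forward direction, suppose $(X,B)$ is a toric log Calabi--Yau pair. Then $B$ is reduced, so $\lfloor B\rfloor = B = \lceil B\rceil$; the pair $(X,B)$ satisfies both conditions of Definition~\ref{def:associated-divisors}, whence $B\in\mathcal{A}(X,B)$. Conversely, any $\Delta\in\mathcal{A}(X,B)$ obeys $B=\lfloor B\rfloor\leq\Delta\leq\lceil B\rceil=B$, forcing $\Delta=B$. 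Thus $\mathcal{A}(X,B)=\{B\}$ and $|\mathcal{A}(X,B)|=1$.

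For the reverse direction I would assume $|\mathcal{A}(X,B)|=1$, say $\mathcal{A}(X,B)=\{\Delta\}$, and suppose for contradiction that $(X,B)$ is not toric log Calabi--Yau. By Lemma~\ref{lem:compl-zero-ind-one} this gives $\lambda_1:=\lambda_1(X,B;\Delta)<1$, so $\lambda_2:=\lambda_2(X,B;\Delta)$ is defined; set $B'=\frac{1}{1-\lambda_2}(B-\lambda_2\Delta)$. The first task is to check that $(X,B')$ is again a log Calabi--Yau pair of complexity zero: it is effective since $\lambda_2\leq\lambda_1$ and $\lambda_1\Delta\leq B$; it is log canonical by the definition of $\lambda_2$; one has $K_X+B'\sim_\qq 0$ because $K_X+B$ and $K_X+\Delta$ are both $\qq$-trivial; and $|B'|=\dim X+\rank\WDiv_{\rm alg}(X)$ because $|B|$ and $|\Delta|$ both equal this number, so $c(X,B')=0$. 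By Lemma~\ref{lem:peeling-inclusion} we get $\mathcal{A}(X,B')\subseteq\mathcal{A}(X,B)=\{\Delta\}$, while Corollary~\ref{introcor:BMSZ18} shows this set is nonempty; hence $\mathcal{A}(X,B')=\{\Delta\}$.

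Next I would split into two cases according to the comparison of $\lambda_2$ with $\lambda_1$ (these are exhaustive since $\lambda_2\in[0,\lambda_1]$). If $\lambda_2=\lambda_1$, then Lemma~\ref{lem:peeling-inclusion} asserts the containment $\mathcal{A}(X,B')\subseteq\mathcal{A}(X,B)$ is \emph{strict}, contradicting $\mathcal{A}(X,B')=\{\Delta\}=\mathcal{A}(X,B)$. If instead $\lambda_2<\lambda_1$, then $\lambda_2<\lambda_1<1$ and Lemma~\ref{lem:lambda-lcps}(2) produces a log canonical place $E$ of $(X,B')$ that is not a log canonical place of $(X,\Delta)$. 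I would extract $E$ by a projective birational morphism $f\colon Y\to X$ from a normal projective variety whose exceptional divisors $\mathcal{V}$ are all log canonical places of $(X,B')$ and include $E$; the log pullback $(Y,B'_Y)$ is then a log Calabi--Yau pair of complexity zero, complexity being preserved by Corollary~\ref{cor:compl-dlt-mod}. By Lemma~\ref{lem:extraction-inclusion}, pushforward induces a bijection $\mathcal{A}(Y,B'_Y)\xrightarrow{\cong}\mathcal{A}_{\mathcal{V}}(X,B')$. Since $\Delta$ is the only element of $\mathcal{A}(X,B')$ and $E\in\mathcal{V}$ is not a log canonical place of $(X,\Delta)$, we have $\mathcal{A}_{\mathcal{V}}(X,B')=\emptyset$, so $\mathcal{A}(Y,B'_Y)=\emptyset$, contradicting Corollary~\ref{introcor:BMSZ18}. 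Either way we reach a contradiction, so $(X,B)$ is toric log Calabi--Yau.

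The step I expect to be the main obstacle is the extraction in the case $\lambda_2<\lambda_1$: I must guarantee a projective birational morphism $f\colon Y\to X$ that extracts the prescribed place $E$ while extracting \emph{only} log canonical places of $(X,B')$, so that both Corollary~\ref{cor:compl-dlt-mod} and Lemma~\ref{lem:extraction-inclusion} apply. Existence of such a crepant extraction of a prescribed log canonical place is standard---one takes a log resolution on which $E$ appears and runs a suitably chosen minimal model program, using that $X$ is a Mori dream space and that dlt modifications exist (Lemma~\ref{lem:exist-dlt-mod})---but it is the one point where care is required to ensure no divisor of positive log discrepancy is extracted and that $E$ genuinely survives among the exceptional divisors.
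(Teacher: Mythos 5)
Your proof is correct and follows essentially the same route as the paper's: the forward direction by the same coefficient argument, and the reverse direction by contradiction via Lemma~\ref{lem:compl-zero-ind-one}, the case split $\lambda_2=\lambda_1$ versus $\lambda_2<\lambda_1<1$, Lemmas~\ref{lem:peeling-inclusion}, \ref{lem:lambda-lcps}(2) and~\ref{lem:extraction-inclusion}, and the nonemptiness from Corollary~\ref{introcor:BMSZ18}. The extraction step you flag as the main obstacle is exactly where the paper invokes \cite[Theorem 1]{Mor20}, which supplies a projective birational morphism with divisorial exceptional locus extracting only the prescribed log canonical place $E$, so your slightly weaker requirement (extracting $E$ together with possibly other log canonical places of $(X,B')$) is already more than covered.
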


\begin{proof}
First, suppose that $(X,B)$ is a toric log Calabi--Yau pair. In particular, $B$ has integer coefficients, hence $\lfloor B \rfloor =B = \lceil B \rceil$. It follows that any divisor $\Delta \in \mathcal{A}(X,B)$ must satisfy $B=\lfloor B \rfloor \leq \Delta \leq \lceil B \rceil=B$, hence $\mathcal{A}(X,B)=\{B\}$.\par

Conversely, suppose that $|\mathcal{A}(X,B)|=1.$ Denote by $\Delta$ be the unique divisor associated to $(X,B)$. If $\lambda_1=1$, then we are done by Lemma~\ref{lem:compl-zero-ind-one}. So assume, for a contradiction, that $\lambda_1<1.$
Thus, $\frac{1}{1-\lambda}(B-\lambda \Delta)$ is a nonzero effective divisor for all $\lambda\in [0,\lambda_1]$. If $\lambda_2=\lambda_1<1$, then it would follow from Lemma~\ref{lem:peeling-inclusion} that   $\mathcal{A}\left(X,\frac{1}{1-\lambda_2}(B-\lambda_2 \Delta)\right)$ is empty, contradicting Corollary ~\ref{introcor:BMSZ18}. \par 
From now on we assume that $\lambda_2<\lambda_1<1$. Denote by $B':=\frac{1}{1-\lambda_2}(B-\lambda_2 \Delta)$, and note that we must have $\mathcal{A}(X,B')=\{\Delta\}$ by Lemma~\ref{lem:peeling-inclusion}. It follows from part (2) of Lemma ~\ref{lem:lambda-lcps} that there is a log canonical place $E$ of $(X,B')$ that is not a log canonical place of $(X,\Delta).$ By \cite[Theorem 1]{Mor20}, there is a projective birational morphism $f\colon Y\rightarrow X$ with divisorial exceptional locus which extracts only the divisor $E.$ Denote by $(Y,B'_Y)$ the log pullback  of $(X,B')$ via $f.$ It follows from Lemma ~\ref{lem:extraction-inclusion} that $f_*\mathcal{A}(Y,B_Y')=\mathcal{A}_{\{E\}}(X,B'),$ but this set is empty since $E$ is not a log canonical place of $(X,\Delta).$ It would then have to follow that $\mathcal{A}(Y,B_Y')=\emptyset,$ contradicting Corollary ~\ref{introcor:BMSZ18}.

\end{proof}

\subsection{Toric boundary arrangements}

We start this section with the following definition.

\begin{definition}\label{def:toric-bound-arrang}{\em 
We say that a log pair $(X,B)$ is a \textit{toric boundary arrangement} if we can write $B=\sum_{i=1}^rb_i\Delta_i,$ where:
\begin{enumerate}
\item $\Delta_1,\hdots, \Delta_r\in \mathcal{A}(X,B),$ 
\item $b_1,\hdots, b_r$ are nonnegative and satisfy $\sum_{i=1}^rb_i=1.$
\end{enumerate}
It follows from the definition that a toric boundary arrangement is, in particular, a log Calabi--Yau pair of complexity zero.
}
\end{definition}

\begin{lemma}\label{lem:peeling-blend}
Let $(X,B)$ be a log Calabi--Yau pair of complexity zero, and let $\Delta\in \mathcal{A}(X,B)$ be such that $\lambda_1<1.$ If $\left(X,\frac{1}{1-\lambda_2}(B-\lambda_2 \Delta)\right)$ is a toric boundary arrangement, then so is $(X,B).$
\end{lemma}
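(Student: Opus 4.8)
The plan is to solve for $B$ in terms of the peeled boundary and the associated divisor $\Delta$, and then to expand the resulting expression using the toric boundary arrangement structure assumed on the peeled pair. Writing $B'=\frac{1}{1-\lambda_2}(B-\lambda_2\Delta)$, the defining relation rearranges to $B=(1-\lambda_2)B'+\lambda_2\Delta$; this is legitimate because $\lambda_2<1$, which follows from $\lambda_2\leq\lambda_1<1$. This is the only algebraic identity needed.

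By hypothesis, $(X,B')$ is a toric boundary arrangement, so I may write $B'=\sum_{i=1}^r b_i\Delta_i$ with $\Delta_1,\dots,\Delta_r\in\mathcal{A}(X,B')$, with $b_i\geq 0$ and $\sum_{i=1}^r b_i=1$. Substituting into the identity above gives
$$B=\sum_{i=1}^r(1-\lambda_2)b_i\,\Delta_i+\lambda_2\Delta.$$
First I would check the numerical conditions on the coefficients of this expression: each coefficient $(1-\lambda_2)b_i$ and $\lambda_2$ is nonnegative because $\lambda_2\in[0,1]$ and $b_i\geq 0$, and their total is $(1-\lambda_2)\sum_{i=1}^r b_i+\lambda_2=(1-\lambda_2)+\lambda_2=1$. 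Thus the coefficients form a valid convex combination in the sense of Definition~\ref{def:toric-bound-arrang}.

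It remains to verify that every divisor appearing in the expression lies in $\mathcal{A}(X,B)$. The divisor $\Delta$ does so by hypothesis. For the divisors $\Delta_i$, the key point — and the only place where the real content of the lemma enters — is that membership in $\mathcal{A}(X,B')$ upgrades to membership in $\mathcal{A}(X,B)$. This is exactly the containment $\mathcal{A}(X,B')\subseteq\mathcal{A}(X,B)$ provided by Lemma~\ref{lem:peeling-inclusion}, applied with $\lambda=\lambda_2\in[0,\lambda_2]$. Hence $\Delta_i\in\mathcal{A}(X,B)$ for each $i$, and the displayed expression exhibits $(X,B)$ as a toric boundary arrangement.

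I do not anticipate a genuine obstacle here: the statement reduces to one-line convex-combination bookkeeping once the containment of associated-divisor sets from Lemma~\ref{lem:peeling-inclusion} is invoked. The only subtlety worth flagging is confirming $\lambda_2<1$ so that the rearrangement $B=(1-\lambda_2)B'+\lambda_2\Delta$ is valid, and this is immediate from $\lambda_2\leq\lambda_1<1$.
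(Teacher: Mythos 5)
Your proof is correct and is essentially identical to the paper's own argument: both rearrange $B=(1-\lambda_2)B'+\lambda_2\Delta$, expand $B'$ via its toric boundary arrangement, and invoke Lemma~\ref{lem:peeling-inclusion} to upgrade $\mathcal{A}(X,B')$-membership to $\mathcal{A}(X,B)$-membership before checking the convex-combination bookkeeping. No gaps; nothing further to add.
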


\begin{proof}
Write $B_{\lambda_2}=\frac{1}{1-\lambda_2}(B-\lambda_2 \Delta).$ If $(X,B_{\lambda_2})$ is a toric boundary arrangement, then there are $\Gamma_1,\hdots, \Gamma_r\in \mathcal{A}(X,B_{\lambda_2})$ and nonnegative $b_1,\hdots, b_r$ satisfying $\sum_{i=1}^rb_i=1$ such that $\sum_{i=1}^rb_i\Gamma_i=B_{\lambda_2}.$ But then we have $$B=\lambda_2\Delta+\sum_{i=1}^rb_i(1-\lambda_2)\Gamma_i.$$
Since $\mathcal{A}(X,B_{\lambda_2})\subseteq \mathcal{A}(X,B)$ by Lemma ~\ref{lem:peeling-inclusion}  and since $\lambda_2,b_1(1-\lambda_2),\hdots, b_r(1-\lambda_2)$ are nonnegative and satisfy $\lambda_2+\sum_{i=1}^rb_i(1-\lambda_2)=1$, it follows that $(X,B)$ is a toric boundary arrangement.
\end{proof}

\begin{lemma}\label{lem:extraction-blend}
Let $(X,B)$ be a log Calabi--Yau pair of complexity zero, and let $f\colon Y \rightarrow X$ be a projective birational morphism extracting only log canonical places of $(X,B).$ Write $(Y,B_Y)$ for the log pullback of $(X,B)$ via $f.$ If $(Y,B_Y)$ is a toric boundary arrangement, then so is $(X,B).$
\end{lemma}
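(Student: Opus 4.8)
The plan is to transport the toric boundary arrangement structure from $(Y,B_Y)$ down to $(X,B)$ via the pushforward $f_*$. Suppose $(Y,B_Y)$ is a toric boundary arrangement, so that we can write $B_Y=\sum_{i=1}^r b_i\Gamma_i$ for some $\Gamma_1,\hdots,\Gamma_r\in\mathcal{A}(Y,B_Y)$ and nonnegative $b_1,\hdots,b_r$ with $\sum_{i=1}^r b_i=1$. First I would apply $f_*$ to this equation. Since $f_*B_Y=B$ (recall $f$ is birational, so the log pullback pushes forward to $B$) and $f_*$ is linear, this yields $B=\sum_{i=1}^r b_i\, f_*\Gamma_i$ with the same coefficients $b_i$.

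The key point is then to verify that each $f_*\Gamma_i$ lies in $\mathcal{A}(X,B)$. Here I would invoke Lemma~\ref{lem:extraction-inclusion}, which is the crucial input: since $f$ extracts only log canonical places of $(X,B)$, pushforward along $f$ induces a bijection $\mathcal{A}(Y,B_Y)\xrightarrow{\cong}\mathcal{A}_\mathcal{V}(X,B)$, where $\mathcal{V}$ is the set of $f$-exceptional divisors. In particular, $f_*\Gamma_i\in\mathcal{A}_\mathcal{V}(X,B)\subseteq\mathcal{A}(X,B)$ for each $i$. This immediately gives that $\Delta_i:=f_*\Gamma_i$ are associated divisors of $(X,B)$.

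Having checked both conditions in Definition~\ref{def:toric-bound-arrang}, the proof concludes: we have exhibited $B=\sum_{i=1}^r b_i\Delta_i$ with $\Delta_i\in\mathcal{A}(X,B)$ and nonnegative coefficients summing to one, so $(X,B)$ is a toric boundary arrangement.

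I expect this proof to be almost entirely formal once Lemma~\ref{lem:extraction-inclusion} is in hand; the main conceptual obstacle has already been dispatched in establishing that lemma. The only subtlety worth flagging is confirming that $f_*B_Y=B$, which follows directly from the defining property of the log pullback via a birational morphism (namely $f_*B_Y=B$ is part of the definition). No positivity is lost under pushforward since $f_*$ of an effective divisor is effective, and the linearity of $f_*$ is what makes the decomposition transport cleanly.
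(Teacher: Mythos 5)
Your proposal is correct and follows exactly the paper's own argument: push forward the decomposition $B_Y=\sum_{i=1}^r b_i\Gamma_i$ along $f$, use $f_*B_Y=B$ and linearity of $f_*$, and invoke Lemma~\ref{lem:extraction-inclusion} to conclude that each $f_*\Gamma_i$ lies in $\mathcal{A}(X,B)$. There is nothing to add; the paper's proof is the same three-line computation.
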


\begin{proof}
If $(Y,B_Y)$ is a toric boundary arrangement, then there are $\Gamma_1,\hdots, \Gamma_r\in \mathcal{A}(Y,B_Y)$ and nonnegative $b_1,\hdots, b_r$ satisfying $\sum_{i=1}^rb_i=1$ such that $\sum_{i=1}^rb_i\Gamma_i=B_Y.$ It follows from Lemma ~\ref{lem:extraction-inclusion} that $f_*\Gamma_1,\hdots, f_*\Gamma_r\in \mathcal{A}(X,B).$ Since $B=f_*B_Y,$ we have $$B=\sum_{i=1}^rb_if_*\Gamma_i,$$ hence that $(X,B)$ is a toric boundary arrangement.
\end{proof}

\begin{proof}[Proof of Theorem~\ref{introthm:convex-toric-div}]

We induct on the cardinality of the set $\mathcal{A}(X,B).$ As previously noted, it follows from Corollary ~\ref{introcor:BMSZ18} that this set is nonempty and finite.\par 
By Proposition~\ref{prop:one-ass-pair}, if $|\mathcal{A}(X,B)|=1$, then $(X,B)$ is a toric log Calabi--Yau pair and the desired result holds trivially. From now on we assume that $|\mathcal{A}(X,B)|>1.$ \par
Choose any $\Delta\in \mathcal{A}(X,B)$. It follows from Lemma ~\ref{lem:compl-zero-ind-one} and Proposition ~\ref{prop:one-ass-pair} that $\lambda_1<1$ and hence that $\lambda_2<1$. Set $B':=\frac{1}{1-\lambda_2}(B-\lambda_2 \Delta)$. By definition of $\lambda_2,$ we have that $(X,B')$ is a log Calabi--Yau pair of complexity zero. If $\lambda_2=\lambda_1$, then it follows from Lemma~\ref{lem:peeling-inclusion} that $|\mathcal{A}(X,B')|<|\mathcal{A}(X,B)|.$ Thus, $(X,B')$ is a toric boundary arrangement by the inductive hypothesis, and it then follows from  Lemma ~\ref{lem:peeling-blend} that $(X,B)$ is a toric boundary arrangement.\par
From now on, we assume that $\lambda_2<\lambda_1<1$. It follows from part (2) of Lemma ~\ref{lem:lambda-lcps} that there is a log canonical place $E$ of $(X,B')$ that is not a log canonical place of $(X,\Delta).$ By \cite[Theorem 1]{Mor20}, there is a normal, $\qq$-factorial projective variety $Y$ and a projective birational morphism $f\colon Y\rightarrow X$ with divisorial exceptional locus which extracts only the divisor $E.$ Denote by $(Y,B'_Y)$ the log pullback of $(X,B')$ via $f.$ It follows from Lemma ~\ref{lem:extraction-inclusion} and the fact that $E$ is not a log canonical place for $(X,\Delta)$ that $|\mathcal{A}(Y,B_Y')|<|\mathcal{A}(X,B')|.$ But $|\mathcal{A}(X,B')|\leq|\mathcal{A}(X,B)|$ by Lemma ~\ref{lem:peeling-inclusion}, and so it follows by the inductive hypothesis that $(Y,B_Y')$ is a toric boundary arrangement. Lemma ~\ref{lem:extraction-blend} implies that $(X,B')$ is a toric boundary arrangement, and Lemma ~\ref{lem:peeling-blend} then implies that $(X,B)$ is a toric boundary arrangement. 
\end{proof}

\section{Geometry of log canonical centers}

\begin{proof}[Proof of Theorem~\ref{introthm:lccs}]
    We begin by using Theorem ~\ref{introthm:convex-toric-div} to express $(X,B)$ as a toric boundary arrangement with $B=\sum_{i=1}^rb_i\Delta_i.$\par
    In the case of (1), it follows that $E$ is a log canonical place for each of the pairs $(X,\Delta_i).$ By the linearity of discrepancy with respect to the boundary, it follows that $E$ is a log canonical place for $(X,B).$ In the case of (2), use \cite[Theorem 1]{Mor20} to obtain a projective birational morphism $f\colon Y\rightarrow X$ which extracts only the divisor $E.$ Write $(Y,B_Y)$ for the log pullback of $(X,B)$ via $f.$ By Lemma ~\ref{lem:extraction-inclusion}, the set $\mathcal{A}_{\{E\}}(X,B)$ is in bijection with $\mathcal{A}(Y,B_Y)$. This latter set is nonempty by Theorem ~\ref{introthm:BMSZ18}, since $(Y,B_Y)$ is a log Calabi--Yau pair of complexity zero. \par
    In the case of (3), let $f\colon Y\rightarrow X$ be the normalization of the blow up of $X$ along $Z.$ For each $1\leq i \leq r,$ choose a maximal torus $\mathbb{T}_i\leq {\rm Aut(X,\Delta_i)}$. For each $1\leq i \leq r,$ $Z$ is $\mathbb{T}_i$-invariant and the action of $\mathbb{T}_i$ lifts to an action on $Y$ such that $f$ is $\mathbb{T}_i$-equivariant. Moreover, $f$ is an isomorphism over the big torus $\mathbb{T}_i=X\setminus\Delta_i$ for each $1\leq i \leq r.$ It follows that each of the actions of $\mathbb{T}_1,\hdots,\mathbb{T}_r$ on $Y$ endows $Y$ with the structure of a toric variety; denote by $\Gamma_i$ the toric boundary corresponding to the action of $\mathbb{T}_i.$ Then $f\colon (Y,\Gamma_i)\rightarrow (X,\Delta_i)$ is a crepant projective birational morphism for each $1\leq i \leq r.$ Note that, for each $1\leq i \leq r,$ each $f$-exceptional divisor must be a component of $\Gamma_i=Y\setminus\mathbb{T}_i,$ hence a log canonical place of $(X,\Delta_i).$ It follows that each $f$-exceptional divisor is a log-canonical place of $(X,B),$ hence that $Z$ is a log-canonical center of $(X,B).$\par
    In the case of (4), choose some log-canonical place $E$ of $(X,B)$ whose center on $X$ is $Z.$ By Part (2) of this theorem, there is some $\Delta\in \mathcal{A}(X,B)$ with respect to which $E$ is toric. It follows that the center $Z$ of $E$ on $X$ is a stratum of this $\Delta.$
\end{proof}

\begin{corollary}\label{cor:lcps-from-sing}
    Let $(X,B)$ be a log Calabi--Yau pair of complexity zero and let $Z\subset {\rm Sing}(X)$ be an irreducible component of the singular locus of $X.$ Let $f\colon Y\rightarrow X$ be the normalized blow-up of $X$ along $Z$ and let $(Y,B_Y)$ be the log pullback of $(X,B)$ via $f$. Then every $f$-exceptional divisor is a log canonical place of $(X,B).$
\end{corollary}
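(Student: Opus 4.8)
The plan is to reduce to the situation treated in the proof of part~(3) of Theorem~\ref{introthm:lccs}. First I would apply Theorem~\ref{introthm:convex-toric-div} to write $(X,B)$ as a toric boundary arrangement $B=\sum_{i=1}^r b_i\Delta_i$ with each $\Delta_i\in\mathcal{A}(X,B)$, and for each $i$ fix a maximal torus $\mathbb{T}_i\leq{\rm Aut}(X,\Delta_i)$ exhibiting $X$ as a toric variety with toric boundary $\Delta_i$ (via Lemma~\ref{lem:toric-defn-comparison}). The goal is then to show that $Z$ is a toric stratum of $(X,\Delta_i)$ for every $i$, after which the argument of that proof applies verbatim.

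The key step is to verify that $Z$ is $\mathbb{T}_i$-invariant for each $i$. The singular locus ${\rm Sing}(X)$ is preserved by every automorphism of $X$, so the torus $\mathbb{T}_i$ acts on ${\rm Sing}(X)$; since $\mathbb{T}_i$ is connected, it cannot permute the irreducible components of ${\rm Sing}(X)$ nontrivially, and hence preserves each of them. In particular $Z$ is $\mathbb{T}_i$-invariant. Because an irreducible, torus-invariant closed subvariety of a toric variety is the closure of a single torus orbit (by the orbit--cone correspondence; see \cite[Theorem 3.2.6]{CLS11}), it follows that $Z$ is a toric stratum of $(X,\Delta_i)$ for every $i$.

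With this in hand, I would run the argument of the proof of Theorem~\ref{introthm:lccs}(3): the action of each $\mathbb{T}_i$ lifts to the normalized blow-up $Y$ so that $f$ becomes $\mathbb{T}_i$-equivariant, endowing $Y$ with the structure of a toric variety whose toric boundary $\Gamma_i$ makes $f\colon(Y,\Gamma_i)\rightarrow(X,\Delta_i)$ a crepant birational morphism. Each $f$-exceptional divisor is then a component of $\Gamma_i$, hence a log canonical place of $(X,\Delta_i)$, for every $i$; by the linearity of log discrepancy in the boundary, every $f$-exceptional divisor is a log canonical place of $(X,B)$.

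The main obstacle is the middle step: recognizing that an irreducible component of the singular locus is automatically a toric stratum for each toric structure on $X$. The essential inputs are the invariance of ${\rm Sing}(X)$ under automorphisms, the connectedness of the torus (to rule out a nontrivial permutation of components), and the standard description of irreducible torus-invariant closed subvarieties as orbit closures. Everything else reduces to results already established.
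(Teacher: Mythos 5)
Your proof is correct and takes essentially the same route as the paper's: the paper fixes an arbitrary $\Delta\in\mathcal{A}(X,B)$, observes that $Z$ is a toric stratum of $(X,\Delta)$, lifts the torus action to the normalized blow-up so that $(Y,\Gamma)$ is toric with $f_*\Gamma=\Delta$, concludes that each $f$-exceptional divisor is a component of $\Gamma$, and finishes by citing Theorem~\ref{introthm:lccs}(1) --- which is precisely your toric-boundary-arrangement-plus-linearity argument packaged as a citation. The only real difference is that you supply a (correct) justification for the step the paper asserts without proof, namely that an irreducible component of ${\rm Sing}(X)$, being closed, irreducible and invariant under each connected torus ${\rm Aut}^0(X,\Delta_i)$, is automatically an orbit closure and hence a toric stratum of every associated toric pair.
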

\begin{proof}
    Let $\Delta \in \mathcal{A}(X,B)$ and write $\mathbb{T}={\rm Aut}^0(X,B)$. As an irreducible component of ${\rm Sing}(X),$ $Z$ must be a toric stratum of $(X,\Delta).$ Since $f\colon Y \rightarrow X$ is the normalization of the blowing up of a $\mathbb{T}$-invariant subvariety, it follows that the $\mathbb{T}$-action on $X$ lifts to a $\mathbb{T}$-action on $Y.$ Since $Z\subset \Delta,$ it follows that $f$ is an isomorphism over the open orbit $X\setminus \Delta$ and hence that the $\mathbb{T}$-action on $Y$ has an open orbit with trivial isotropy group. Writing $\Gamma$ for the reduced sum of the divisorial components of the complement of this orbit, we obtain a toric log Calabi--Yau pair $(Y,\Gamma)$ with the property that $f_*\Gamma=\Delta.$ In particular, $(Y,\Gamma)$ is the log pullback of $(X,\Delta)$ via $f.$ It follows from Corollary ~\ref{cor:complexity-zero-and-exceptional-divisors} that each $f$-exceptional divisor is a component of $\Gamma.$ Thus, each $f$-exceptional divisor is toric with respect to $(X,\Delta).$ The desired result now follows from Part (1) of Theorem ~\ref{introthm:lccs}.
\end{proof}

\begin{proof}[Proof of Theorem~\ref{introthm:dlt-log-smth}]
    Since $(Y,B_Y)$ is a log Calabi--Yau pair of complexity zero and $\lfloor B_Y \rfloor\leq \Gamma$ for all $\Gamma\in \mathcal{A}(Y,B_Y),$ it suffices to show that $Y$ is smooth. For this, we note that it follows from Corollary ~\ref{cor:lcps-from-sing} that any irreducible component of ${\rm Sing}(Y)$ would be a log canonical center of $(Y,B_Y).$ But each log canonical center of the dlt pair $(Y,B_Y)$ must intersect the smooth locus of $Y,$ so we must have ${\rm Sing}(Y)=\emptyset.$
\end{proof}

\section{Behavior with respect to contractions}

\begin{proof}[Proof of Theorem~\ref{introthm:exceptional-loci}]
It follows from Corollary ~\ref{cor:complexity-zero-and-exceptional-divisors} that $(Y,B_Y=f_*B)$ is a log Calabi--Yau pair of complexity zero. To show that the exceptional locus of $f$ is a union of log canonical centers of $(X,B),$ it suffices by part (3) of Theorem ~\ref{introthm:lccs} to show that each irreducible component of the exceptional locus is a toric stratum of every $\Delta\in \mathcal{A}(X,B).$ But this follows from Lemma ~\ref{lem:toric-bir-contr}.
\end{proof}

\begin{proof}[Proof of Theorem~\ref{introthm:canonical-bundle-formula}]
Let $D\subset Y$ be a prime divisor. First, we consider the case that $D$ is not a component of $\Gamma_i$ for each $1\leq i \leq r.$ Using Lemma ~\ref{lem:toric-canonical-bundle-formula} to identify $\Gamma_i$ with the discriminant divisor induced by $f$ and $\Delta_i,$ it follows that ${\rm lct}_{D}(X,\Delta_i;f^*D)=1$ for each $1\leq i \leq r.$ Since $$B+tf^*D=\sum_{i=1}^rb_i(\Delta_i+tf^*D)$$
for all $t\in\qq,$ it follows that ${\rm lct}_D(X,B;f^*D)=1$
and hence that ${\rm coeff}_D(B_Y)=0.$\par
Next, we consider the case that $D$ is a component of $\Gamma_i$ for at least one $1\leq i \leq r$ and that $f^{-1}(D)$ contains more than one prime divisor. In this case, each prime divisor contained in $f^{-1}(D)$ is degenerate over $Y.$ By Lemma ~\ref{lem:compl-zero-degn-divs}, each of these divisors is a component of $\lfloor B \rfloor.$ On the one hand, this implies that ${\rm lct}_{D}(X,B;f^*D)=0,$ hence that ${\rm coeff}_D(B_Y)=1.$ On the other hand, this implies that each prime divisor contained in $f^{-1}(D)$ is a component of $\Delta_i$ for each $1\leq i \leq r,$ hence that $D$ is a component of $\Gamma_i$ for each $1\leq i \leq r.$ Thus, we have ${\rm coeff}_D(B_Y)={\rm coeff}_D(\sum_{i=1}^rb_i\Gamma_i)$ in this case.\par
Finally, we consider the case that $D$ is a component of $\Gamma_i$ for at least one $1\leq i \leq r$ and that $f^{-1}(D)$ contains exactly one prime divisor $E$. In this case, it follows that 
\begin{align*}
    1-{\rm coeff}_D(B_Y)&={\rm lct}_{D}(X,B;f^*D)\\
    &=1-{\rm coeff}_E(B)\\
    &=1-\sum_{i=1}^rb_i{\rm coeff}_E(\Delta_i)\\
    &=1-\sum_{i=1}^rb_i{\rm coeff}_D(\Gamma_i),
\end{align*}
hence that ${\rm coeff}_D(B_Y)={\rm coeff}_D(\sum_{i=1}^rb_i\Gamma_i).$\par
We conclude that $B_Y=\sum_{i=1}^rb_i\Gamma_i.$ All other assertions in the statement of the theorem are now clear.
\end{proof}

\section{Generalized Bott towers}

\subsection{Decreasing relative dimension}
\begin{lemma}\label{lem:lin-equiv-tot-space-to-fiber}
Let $f\colon X \rightarrow Y$ be a fibration of $\qq$-factorial projective varieties, and let $(X,B)$ be a log Calabi--Yau pair of complexity zero. Denote by $m$ the number of components of $\lfloor B \rfloor$ which are horizontal over $Y.$ Then the general fiber $F$ of $f$ admits a log Calabi--Yau pair $(F,B_F)$ of complexity zero satisfying $\left\lvert \lfloor B_F \rfloor \right\rvert\leq m.$
\end{lemma}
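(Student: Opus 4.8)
The plan is to express $(X,B)$ as a toric boundary arrangement and restrict the arrangement to a general fibre of $f$. First I would invoke Theorem~\ref{introthm:convex-toric-div} to write $B=\sum_{i=1}^r b_i\Delta_i$ with each $(X,\Delta_i)$ a toric log Calabi--Yau pair, $b_i\in[0,1]$ and $\sum_{i=1}^r b_i=1$; I may assume all $b_i>0$. For each $i$ the variety $X$ carries the toric structure with torus $\mathbb{T}_i={\rm Aut}^0(X,\Delta_i)$ and toric boundary $\Delta_i$, and by Lemma~\ref{lem:toric-contractions} the contraction $f$ becomes $\mathbb{T}_i$-equivariant after equipping $Y$ with a suitable toric structure, with associated quotient torus $\overline{\mathbb{T}}_i$ acting on $Y$. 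I would then fix a general closed point $y\in Y$ and set $F=f^{-1}(y)$ and $B_F=B|_F$; choosing $y$ general guarantees simultaneously, for every $i$, that $F$ is normal and projective, that the restriction to $F$ of each prime divisor appearing in $B$ or in the $\Delta_i$ is reduced, and that restriction to $F$ preserves the coefficient of every such component.

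Next I would analyse the fibre torically. For each $i$, the induced homomorphism $\mathbb{T}_i\to\overline{\mathbb{T}}_i$ is surjective since $f$ is dominant, and its kernel $S_i$ is a subtorus of dimension $\dim X-\dim Y=\dim F$ which stabilises $F$ and acts faithfully on it. Because the dense free $\mathbb{T}_i$-orbit $X\setminus\Delta_i$ meets $F$ in a dense free $S_i$-orbit, and because $(F,\Delta_{i,F})$ is log Calabi--Yau by adjunction along the general fibre (where $\Delta_{i,F}:=\Delta_i|_F$ is reduced, and $K_F+\Delta_{i,F}\equiv 0$), Lemma~\ref{lem:toric-defn-comparison} shows that $(F,\Delta_{i,F})$ is a toric log Calabi--Yau pair with toric boundary $\Delta_{i,F}$. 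In particular each $(F,\Delta_{i,F})$ has complexity zero, so $|\Delta_{i,F}|=\dim F+\rank{\rm WDiv}_{\rm alg}(F)$ and $K_F+\Delta_{i,F}\sim_\qq 0$. As restriction to the general fibre is linear we have $B_F=\sum_{i=1}^r b_i\Delta_{i,F}$, and I would conclude, exactly as in the convex-combination discussion of the introduction, that $(F,B_F)$ is log canonical with $K_F+B_F\sim_\qq 0$ and $|B_F|=\sum_{i=1}^r b_i|\Delta_{i,F}|=\dim F+\rank{\rm WDiv}_{\rm alg}(F)$; that is, $(F,B_F)$ is a log Calabi--Yau pair of complexity zero.

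Finally I would bound $|\lfloor B_F\rfloor|$ using a single toric structure. Fix any $\Delta\in\mathcal{A}(X,B)$, which is nonempty by Corollary~\ref{introcor:BMSZ18}; since $\lfloor B\rfloor\leq\Delta$, every component of $\lfloor B\rfloor$ is torus-invariant for the toric structure $(X,\Delta)$. Because $y$ is general, each component $P$ of $\lfloor B_F\rfloor$ is a component of $D\cap F$ for a unique horizontal prime component $D$ of $B$, and ${\rm coeff}_P(B_F)={\rm coeff}_D(B)=1$, so $D$ is a horizontal component of $\lfloor B\rfloor$; being torus-invariant, such a $D$ meets $F$ in a single irreducible toric boundary divisor of $F$. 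Hence the assignment $P\mapsto D$ is injective from the components of $\lfloor B_F\rfloor$ into the set of horizontal components of $\lfloor B\rfloor$, a set of cardinality $m$, which yields $|\lfloor B_F\rfloor|\leq m$.

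The main obstacle is the toric fibre analysis of the second paragraph: one must verify carefully that restricting the toric boundary of an equivariant toric fibration to a general fibre produces exactly the toric boundary of the fibre, with the fibre inheriting its toric structure from the stabiliser subtorus $S_i$, and that a single general point $y\in Y$ can be chosen so that this statement holds for all $i$ at once, together with the reducedness and coefficient-preservation properties of restriction used in the third paragraph. Once these genericity and equivariance facts are pinned down, the complexity computation and the count of $\lfloor B_F\rfloor$ are formal.
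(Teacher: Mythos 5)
Your proof is correct, but it follows a genuinely different route from the paper's. You take $B_F$ to be the honest restriction $B|_F$ and verify that it works by restricting a toric boundary arrangement decomposition $B=\sum_{i=1}^r b_i\Delta_i$ from Theorem~\ref{introthm:convex-toric-div} to the general fibre: over the open orbit of $Y$ each toric structure trivializes $f$ as a product, so each $\Delta_i|_F$ is the toric boundary of $F$ for the stabilizer subtorus $S_i$, and the bound $\lvert\lfloor B_F\rfloor\rvert\leq m$ follows because a horizontal invariant prime divisor meets the general fibre in a single prime divisor. The paper instead uses only \emph{one} associated toric structure, transported through a dlt modification $(\widetilde{X},\widetilde{B})\to(X,B)$, and does \emph{not} restrict $B$: it takes for $B_F$ a modification of the toric boundary $\Delta_F$ of the fibre, keeping coefficient one on those components $D$ whose horizontal closure $D_X$ lies in $\lfloor B\rfloor$, and replacing each remaining $D$ by $\tfrac12(D+D')$ with $D'\in\lvert D\rvert$ general, invoking ~\cite[Corollary 2.33]{KM98} for log canonicity. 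The key step there is that $\lvert D\rvert$ is positive-dimensional whenever $D_X$ has coefficient less than one in $B$; this is extracted from Lemma~\ref{lem:conjugate-tori} on the smooth dlt model (the strict transform of $D_X$ is linearly equivalent to a \emph{different} component of $\widetilde{B}$, using that the Picard group of a smooth projective toric variety is torsion-free), and then restricted to the fibre. Your approach yields the arguably more natural conclusion that $(F,B|_F)$ itself is a log Calabi--Yau pair of complexity zero (indeed a toric boundary arrangement on $F$), and it avoids the linear-system/Bertini argument entirely; the price is the genericity bookkeeping you flag at the end (simultaneous choice of $y$ for all $i$, reducedness and coefficient preservation of restrictions, irreducibility of $D\cap F$, and connectedness of $S_i$, which follows from surjectivity of $f_*$ on cocharacter lattices). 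All of these checks do go through, and they are verified most cleanly using precisely the product structure $f^{-1}(U)\cong F\times U$ over the open orbit that the paper's own proof sets up; the paper's route trades those checks for the linear-system argument by never restricting the non-invariant components of $B$ at all.
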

\begin{proof}
Let $g\colon (\widetilde{X},\widetilde{B})\rightarrow (X,B)$ be a dlt modification. Choose a toric structure $(\widetilde{X},\Delta_{\widetilde{X}})$ associated to $(\widetilde{X},\widetilde{B}),$ and denote by $(X,\Delta_X)$ and $(Y,\Delta_Y)$ the toric structures induced by $(\widetilde{X},\Delta_{\widetilde{X}})$. Denote by $U=Y\setminus\Delta_Y$ the open orbit in $Y$ and by $\widetilde{F}$ the general fiber of $f\circ g.$ We have a commutative diagram
\[
\xymatrix{ 
\widetilde{F}\times U\ar[d]_-{p\times U} \ar@{^{(}->}[r] & \widetilde{X}\ar[d]^-{g} \\ 
F\times U \ar[d]_-{pr_2} \ar@{^{(}->}[r]& X \ar[d]^-{f}\\
U \ar@{^{(}->}[r]& Y
}
\]
in which:
\begin{enumerate}
    \item all squares are Cartesian,
    \item all morphisms are toric,
    \item $p\colon \widetilde{F}\rightarrow F$ is a projective birational toric morphism.
\end{enumerate}
Write $(\widetilde{F},\Delta_{\widetilde{F}})$ and $(F,\Delta_F)$ for the corresponding toric structures. \par
For each component $D$ of $\Delta_F,$ denote by $D_X$ the closure in $X$ of $D\times U.$ To obtain the desired result, it suffices by ~\cite[Corollary 2.33]{KM98} to show that the linear system $\lvert D \rvert$ on $F$ is positive-dimensional whenever $D_X$ appears in $B$ with coefficient less than one. So suppose $D_X$ appears in $B$ with coefficient less than one. Denote by $\widetilde{D}$ the strict transform on $\widetilde{F}$ of $D,$ and by $\widetilde{D}_{\widetilde{X}}$ the closure in $\widetilde{X}$ of $\widetilde{D}\times U.$ Noting that $\widetilde{D}_{\widetilde{X}}$ is the strict transform on $\widetilde{X}$ of $D_X,$ it follows that $\widetilde{D}_{\widetilde{X}}$ appears in $\widetilde{B}$ with coefficient less than one. By Lemma ~\ref{lem:conjugate-tori}, there is another component $\widetilde{D}'_{\widetilde{X}}$ of $\widetilde{B}$ such that $\widetilde{D}_{\widetilde{X}}\sim_\qq \widetilde{D}'_{\widetilde{X}}.$ Since $\widetilde{X}$ is smooth, it follows that the prime divisors $\widetilde{D}_{\widetilde{X}}$ and $\widetilde{D}'_{\widetilde{X}}$ are Cartier. Since $\widetilde{X}$ is a projective toric variety, it follows from ~\cite[Proposition 4.2.5]{CLS11} that its Picard group is torsion-free. Together, these imply the linear equivalence $\widetilde{D}_{\widetilde{X}}\sim \widetilde{D}'_{\widetilde{X}}.$ For general $y\in U,$ the Cartier divisor $\widetilde{D}'_{\widetilde{X}}|_{\widetilde{F}\times\{y\}}$ is an element of $\lvert\widetilde{D}\rvert$ different from $\widetilde{D}.$ Pushing forward to $F,$ we see that $\lvert D\rvert$ contains divisors different from $D.$
\end{proof}

\begin{lemma}\label{lem:round-down-and-sing-locus}
    Let $X$ be a $\qq$-factorial projective variety of Picard rank one. Suppose there exists an irreducible component $Z$ of ${\rm Sing}(X)$ and a log Calabi--Yau pair $(X,B)$ of complexity zero such that $Z\nsubset {\rm Supp}(\lfloor B \rfloor).$ Let $E$ be any exceptional divisor extracted by the normalized blow-up of $Z,$ and let $g\colon\widetilde{X}\rightarrow X$ be the extraction of $E.$ Then every $(-E)$-MMP terminates with a fibration to a positive-dimensional base.
\end{lemma}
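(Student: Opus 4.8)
The plan is to run the $(-E)$-MMP, show it must terminate in a Mori fiber space, and then rule out the one bad outcome, namely a fibration to a point. First I would record the structure of the situation. By Corollary~\ref{cor:lcps-from-sing}, $E$ is a log canonical place of $(X,B)$; being a log canonical place is intrinsic to the valuation $E$, so $g$ extracts a single log canonical place, and Corollary~\ref{cor:compl-dlt-mod} shows that the log pullback $(\widetilde X,B_{\widetilde X})$ is again a log Calabi--Yau pair of complexity zero. By Theorem~\ref{introthm:BMSZ18} and Proposition~\ref{prop:toric-mds}, $\widetilde X$ is a $\qq$-factorial projective toric variety and a Mori dream space, so by Proposition~\ref{prop:mds-mmp} every $(-E)$-MMP can be run and terminates. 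Since $X$ has Picard rank one and $g$ extracts exactly one divisor, the fan of $\widetilde X$ is that of $X$ with one extra ray and $\rho(\widetilde X)=2$. Finally, I would fix a toric structure $(\widetilde X,\Gamma)$ lying over a chosen $\Delta\in\mathcal A(X,B)$, as in the proof of Corollary~\ref{cor:lcps-from-sing}, so that the torus-invariant prime divisors of $\widetilde X$ are exactly $E$ together with the strict transforms $D_\rho$ of the components of $\Delta$, with $g(E)=Z$ and $\lfloor B_{\widetilde X}\rfloor\le\Gamma$.

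Next I would note that $-E$ is not pseudoeffective, since $E$ is a nonzero effective divisor and the pseudoeffective cone is salient; the same holds on every projective model appearing in the MMP, because the strict transform of $E$ remains nonzero and effective there. Hence the strict transform of $-E$ is nef on no model, so the MMP never halts at a minimal model and must terminate with a fiber-type contraction. Starting from $\widetilde X$, where $\rho=2$, the MMP runs through a (terminating) sequence of flips and then reaches its first non-flipping step at some $\widetilde X_l$ with $\rho(\widetilde X_l)=2$. If that step is of fiber type, it is a Mori fiber space $\widetilde X_l\to T$ with $\rho(T)=1$, hence $\dim T>0$, which is exactly the assertion of the lemma. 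The only remaining possibility is that the first non-flipping step is a divisorial contraction, and the whole point is to exclude this.

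This exclusion is where I expect the real work to be, and it is the only place the hypothesis $Z\not\subset\operatorname{Supp}(\lfloor B\rfloor)$ is used. Suppose the first non-flipping step contracts a divisor. Then the composite $h\colon\widetilde X\dashrightarrow\widetilde X_{l+1}$ of the preceding flips with this contraction is a birational contraction that lowers the Picard rank from $2$ to $1$, so it contracts exactly one prime divisor $F$. By Corollary~\ref{cor:complexity-zero-and-exceptional-divisors} applied to the complexity-zero pair $(\widetilde X,B_{\widetilde X})$, $F$ is a component of $\lfloor B_{\widetilde X}\rfloor$, and since $\lfloor B_{\widetilde X}\rfloor\le\Gamma$ it is one of the torus-invariant prime divisors listed above. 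I would then contradict each possibility in turn. It cannot be $E$, because the contracted extremal ray is $(-E)$-negative, so the strict transform of $E$ meets the contracted curves positively and survives. If $F=D_\rho$ with $Z\not\subset g_*D_\rho$, then $\ord_E(g_*D_\rho)=0$, so $D_\rho=g^*(g_*D_\rho)$ is the pullback of an ample divisor on $X$ (every nonzero effective divisor on the Picard-rank-one variety $X$ is ample), hence nef and big and not $h$-exceptional. Finally, if $F=D_\rho$ with $Z\subset g_*D_\rho$, then $\operatorname{coeff}_F(B_{\widetilde X})=\operatorname{coeff}_{g_*D_\rho}(B)<1$, the inequality being precisely the content of $Z\not\subset\operatorname{Supp}(\lfloor B\rfloor)$, which contradicts $F\le\lfloor B_{\widetilde X}\rfloor$. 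As these exhaust all torus-invariant prime divisors, no such $F$ exists; the first non-flipping step is of fiber type, and the $(-E)$-MMP terminates with a fibration onto a positive-dimensional base. The delicate bookkeeping is confined to the last two cases---seeing that $F$ is torus-invariant and that its coefficient in $B_{\widetilde X}$ equals that of $g_*F$ in $B$---and both are controlled by the inclusion $\lfloor B_{\widetilde X}\rfloor\le\Gamma$ and the crepancy of $g$.
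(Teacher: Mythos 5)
Your proof is correct, and its skeleton is the paper's: by Corollaries~\ref{cor:lcps-from-sing} and~\ref{cor:compl-dlt-mod} the pair $(\widetilde X,B_{\widetilde X})$ is log Calabi--Yau of complexity zero, $\widetilde X$ is a toric Mori dream space of Picard rank two, $-E$ and its strict transforms are never pseudo-effective so every $(-E)$-MMP ends in a Mori fiber space, and the entire issue is to exclude a divisorial contraction, with the contraction of $E$ itself ruled out by $(-E)$-negativity exactly as in the paper. Where you genuinely diverge is in ruling out the remaining candidates. The paper fixes an auxiliary prime divisor $D\supset Z$, writes $g^*D=\widetilde D+bE$ with $b>0$, uses $\rho(X)=1$ to get $B_i\sim_\qq a_iD$ for each component $B_i$ of $\lfloor B\rfloor$, and notes that contracting the strict transform of some $B_i$ would push this relation forward to ``nonzero effective $\sim_\qq$ minus nonzero effective,'' which is impossible on a projective variety. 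You instead observe that $Z\not\subset B_i$ makes the strict transform of $B_i$ equal to $g^*B_i$, the pullback of an ample divisor (again by $\rho(X)=1$), hence nef, and a nonzero effective nef divisor cannot be exceptional for a birational contraction. Both arguments run on the same fuel, namely $\rho(X)=1$ together with $Z\not\subset{\rm Supp}(\lfloor B\rfloor)$; yours is more conceptual and avoids the auxiliary divisor, but the assertion ``nef and big, hence not $h$-exceptional'' is the one nontrivial point you leave unproved --- it follows from the negativity lemma \cite[Lemma 3.39]{KM98} applied on a common resolution (the pullback of the contracted divisor would be a nonzero effective, nef, exceptional divisor for the second projection), so you should cite or prove it. Two minor remarks: the toric structure $(\widetilde X,\Gamma)$ is unnecessary scaffolding, since Corollary~\ref{cor:complexity-zero-and-exceptional-divisors} already confines the contracted divisor to the components of $\lfloor B_{\widetilde X}\rfloor$, which are exactly $E$ and the strict transforms of the components of $\lfloor B\rfloor$ (this is why the paper never introduces $\Gamma$); and your third case $Z\subset g_*D_\rho$ arises only because of that detour, and your coefficient computation correctly shows it is vacuous.
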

\begin{proof}
Denote by $B_1,\hdots, B_k$ the irreducible components of $\lfloor B \rfloor.$ Choose any prime divisor $D\subset X$ containing $Z.$ Since $Z\subset D,$ there is a positive rational number $b$ such that $g^*D=\widetilde{D}+bE,$ where $\widetilde{D}$ dentoes the strict transform on $\widetilde{X}$ of $D.$ In contrast, $g^*B_i$ has irreducible support for each $1\leq i \leq k$ since $Z\nsubset B_i.$ As $X$ is $\qq$-factorial and of Picard rank one, there are positive rational numbers $a_1,\hdots, a_k$ such that $B_i\sim_\qq a_iD$ for each $1\leq i \leq k.$ We then have that $$g^*B_i\sim_\qq a_i(\widetilde{D}+bE)$$
for each $1\leq i \leq k.$\par
Since $Z$ is an irreducible component of ${\rm Sing}(X)$ and $(X,B)$ is a log Calabi--Yau pair of complexity zero, it follows from Corollary ~\ref{cor:lcps-from-sing} that $E$ is a log canonical place of $(X,B).$ Thus, the pair $(\widetilde{X},\widetilde{B})$ induced by log pullback is a log Calabi--Yau pair of complexity zero. Since $\widetilde{X}$ is a Mori dream space and $-E$ is not pseudo-effective, it follows that every $(-E)$-MMP terminates with a fibration. Let 
\[
\xymatrix{ 
\widetilde{X}=\widetilde{X}_0 \ar@{-->}[r]^-{f_1}
 & \widetilde{X}_1 \ar@{-->}[r]
 & \hdots \ar@{-->}[r]^-{f_m}
 & \widetilde{X}_m \ar[r]^-{h}& Y
}
\]
be such an MMP. Here, $f_1,\hdots, f_m$ are birational contractions with $f_1,\hdots, f_{m-1}$ small, and $h$ is an extremal fibration. Since $h$ is an extremal fibration, to show that $Y$ is positive-dimensional it suffices to show that the Picard rank of $\widetilde{X}_m$ is at least two. Since $\widetilde{X}_0$ is of Picard rank two and $f_1,\hdots, f_{m-1}$ are small, it follows that $\widetilde{X}_{m-1}$ is of Picard rank two. Thus, it suffices to show that $f_m$ is not a divisorial contraction.\par
Assume, for a contradiction, that $f_m$ is a divisorial contraction. For each $0\leq i \leq m,$ denote by $\widetilde{B}_{i,1},\hdots, \widetilde{B}_{i,k}, \widetilde{B}_i, \widetilde{D}_i$ and $E_i$ the pushforwards to $\widetilde{X}_i$ of $g^*B_1,\hdots, g^*B_k,\widetilde{B}, \widetilde{D}$ and $E$, respectively. Then $(\widetilde{X}_i,\widetilde{B}_i)$ is a log Calabi--Yau pair of complexity zero. It follows from Lemma \ref{lem:compl-zero-degn-divs} that $f_m$ cannot contract any divisor unequal to one of $\widetilde{B}_{m-1,1},\hdots,\widetilde{B}_{m-1,k}$ or $\widetilde{E}_{m-1}$. In particular, $f_m$ does not contract $\widetilde{D}_{m-1}.$ We also note that $f_m$ cannot contract $\widetilde{E}_{m-1},$ since $f_m$ must be $\widetilde{E}_{m-1}$-positive as a step of a $(-E)$-MMP. Thus, $f_m$ must contract $\widetilde{B}_{m-1,i}$ for some $1\leq i \leq k.$ But $\widetilde{B}_{m-1,i}\sim_\qq a_i(\widetilde{D}_{m-1}+bE_{m-1})$ on $\widetilde{X}_{m-1},$ from which it follows that $\widetilde{D}_m\sim_\qq -bE_m$ on $\widetilde{X}_m.$ But this is nonsense, as $\widetilde{X}_m$ is projective, $\widetilde{D}_m$ and $E_m$ are nonzero effective divisors, and $b>0.$ We conclude that $f_m$ cannot be a divisorial contraction, as desired.
\end{proof}

\begin{lemma}\label{lem:round-down-relative}
    Let $f\colon X\rightarrow Y$ be an extremal fibration of $\qq$-factorial projective toric varieties. Denote by $U\subset Y$ the open orbit and by $F$ the general fiber of $f,$ and fix a toric isomorphism $f^{-1}(U)\cong F\times U$ over $U.$ Assume there is an irreducible component $Z$ of ${\rm Sing}(F)$ satisfying the hypotheses of Lemma ~\ref{lem:round-down-and-sing-locus}. Let $E_F$ be any divisor extracted from $F$ by the normalized blow-up of $Z,$ and let $g \colon \widetilde{X}\rightarrow X$ be the extraction of a divisor $E$ corresponding to the valuation induced by $E_F\times U\subset f^{-1}(U).$ Then we may run a $(-E)$-MMP over $Y$ which terminates with a fibration to a base of positive relative dimension over $Y.$
\end{lemma}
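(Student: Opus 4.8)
The plan is to reduce the statement to the absolute version proved in Lemma~\ref{lem:round-down-and-sing-locus}, applied to the fiber $F$ and its extraction $\widetilde F$, and then to transport that conclusion to the relative setting over $Y$ using the product structure $f^{-1}(U)\cong F\times U$ together with the fiberwise-MMP machinery of Lemmas~\ref{lem:rel-mmp-over-open}, \ref{lem:fiberwise-mmp} and~\ref{lem:fiberwise-mmp-single-step}.

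First I would record the toric picture of the extraction. As $f$ is an extremal toric fibration, the general fiber $F$ is a $\qq$-factorial projective toric variety of Picard rank one, so Lemma~\ref{lem:round-down-and-sing-locus} applies to it; in particular $Z$ is a toric stratum of $F$ and $E_F$ is a toric divisor over $F$, corresponding to a ray in $N_F$. Hence the valuation induced by $E_F\times U$ is a toric (monomial) valuation on $X$, and $g\colon \widetilde X\rightarrow X$ is the associated toric star subdivision. Consequently $\widetilde X$ is again a $\qq$-factorial projective toric variety, hence a Mori dream space by Proposition~\ref{prop:toric-mds}. The product structure persists over $U$: writing $g_F\colon \widetilde F\rightarrow F$ for the extraction of $E_F$, the preimage $g^{-1}(f^{-1}(U))$ is toric-isomorphic to $\widetilde F\times U$ over $U$, with $g$ restricting to $g_F\times\mathrm{id}_U$ and with $E$ restricting to $pr_1^*E_F$.

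Next I would produce the terminal fibration. The general fiber of $\widetilde X\rightarrow Y$ is $\widetilde F$, and $-E$ restricts there to $-E_F$. Since $\widetilde F$ is a Mori dream space on which every $(-E_F)$-MMP terminates with a fibration (Lemma~\ref{lem:round-down-and-sing-locus}) rather than a minimal model, the divisor $-E_F$ is not pseudoeffective; restricting to the general fiber then shows that $-E$ is not pseudoeffective over $Y$. Because $\widetilde X$ is a Mori dream space, any $(-E)$-MMP over $Y$ exists and terminates (Proposition~\ref{prop:mds-mmp}), and non-pseudoeffectivity of $-E$ over $Y$ forces it to terminate with a Mori fiber space $h\colon \widetilde X_m\rightarrow W$ over $Y$. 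This already yields an MMP of the required type, except for the dimension count on $W$.

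Finally I would extract the dimension count by restricting the whole MMP over $U$. By Lemmas~\ref{lem:rel-mmp-over-open} and~\ref{lem:fiberwise-mmp}, the restriction of the birational part of the MMP to the preimage of $U$ is identified with the product by $U$ of a $(-E_F)$-MMP on $\widetilde F$, whose terminal model is $\widetilde F'$, so that the preimage of $U$ in $\widetilde X_m$ is $\widetilde F'\times U$. Applying the morphism case of Lemma~\ref{lem:fiberwise-mmp-single-step} to the terminal contraction $h$ then identifies its restriction over $U$ with $\phi\times\mathrm{id}_U$ for some $(-E_F)$-negative extremal contraction $\phi\colon \widetilde F'\rightarrow W_F$. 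Since $h$ is of fiber type, so is $\phi$, and $\phi$ is therefore the terminal fibration of a $(-E_F)$-MMP on $\widetilde F$; thus $\dim W_F>0$ by Lemma~\ref{lem:round-down-and-sing-locus}. As the part of $W$ lying over $U$ is isomorphic to $W_F\times U$, the general fiber of $W\rightarrow Y$ is the positive-dimensional variety $W_F$, whence $\dim W>\dim Y$, i.e.\ $W$ has positive relative dimension over $Y$, as desired.

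\smallskip

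The main obstacle I anticipate is the bookkeeping at the terminal step: one must check that the product identifications furnished by Lemmas~\ref{lem:rel-mmp-over-open} and~\ref{lem:fiberwise-mmp} persist through the end of the MMP, and in particular that the terminal fiber-type contraction over $U$ genuinely matches the fiber's Mori fiber space---contracting the same curves and having relative Picard number one---since it is precisely this matching that lets one read off $W\cong W_F\times U$ over $U$ and thereby conclude that $\dim W_F>0$.
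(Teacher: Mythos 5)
Your proposal is correct and takes essentially the same route as the paper: identify the restriction of $g$ over $U$ with $g_F\times U$, run a $(-E)$-MMP over $Y$ that must terminate in a fibration since $\widetilde{X}$ is a toric Mori dream space and $-E$ is not pseudo-effective over $Y$, then restrict the MMP over $U$ via Lemmas~\ref{lem:rel-mmp-over-open} and~\ref{lem:fiberwise-mmp} to identify its terminal fibration with $\widetilde{h}\times U$ for a $(-E_F)$-MMP fibration $\widetilde{h}$ on $\widetilde{F}$, whose base is positive-dimensional by Lemma~\ref{lem:round-down-and-sing-locus}. The only cosmetic divergences are your justification of non-pseudo-effectivity of $-E$ over $Y$ (restriction to the general fiber, where the paper invokes Lemma~\ref{lem:horiz-pseudoeff}) and your explicit treatment of the terminal step via the morphism case of Lemma~\ref{lem:fiberwise-mmp-single-step}, which is exactly the bookkeeping the paper handles by noting the fibration cannot restrict to an isomorphism over $U$.
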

\begin{proof}
Denote by $g_F\colon \widetilde{F}\rightarrow F$ the extraction of $E_F.$ Thus, we may identify the restriction of $g$ over $U$ with $g_F\times U\colon \widetilde{F}\times U \rightarrow F\times U.$ By Lemma ~\ref{lem:fiberwise-mmp}, given any $(-E_F\times U)$-MMP over $U$
\[
\xymatrix{ 
\widetilde{F}\times U = \widehat{F}_0 \ar@{-->}[r]^-{\widehat{f}_0}
 & \widehat{F}_1 \ar@{-->}[r]
 & \hdots \ar@{-->}[r]^-{\widehat{f}_{m-1}}
 & \widehat{F}_m \ar[r]^-{\widehat{h}}& \widehat{W},
}
\]
there is a $(-E_F)$-MMP
\[
\xymatrix{ 
\widetilde{F} = \widetilde{F}_0 \ar@{-->}[r]^-{\widetilde{f}_0}
 & \widetilde{F}_1 \ar@{-->}[r]
 & \hdots \ar@{-->}[r]^-{\widetilde{f}_{m-1}}
 & \widetilde{F}_m \ar[r]^-{\widetilde{h}}& \widetilde{W}
}
\]
satisfying
\begin{enumerate}
    \item $\widehat{F}_i\cong\widetilde{F}_i\times U$ for each $0\leq i \leq m,$
    \item $\widehat{W}\cong \widetilde{W}\times U,$
    \item the identifications above identify $\widehat{f}_i$ with $\widetilde{f}_i\times U$ for each $1\leq i \leq m-1$ and identify $\widehat{h}$ with $\widetilde{h}\times U.$ 
\end{enumerate} By Lemma ~\ref{lem:round-down-and-sing-locus}, the codomain $\widetilde{W}$ of the fibration $\widetilde{h}$ is positive-dimensional. It follows that $\widehat{W}$ has positive relative dimension over $U.$\par
Run a $(-E)$-MMP over $Y.$ Since $\widetilde{X}$ is a Mori dream space and since $-E$ is not pseudo-effective over $Y$ by Lemma ~\ref{lem:horiz-pseudoeff}, it follows that this MMP must terminate with a fibration over $Y.$ Denote by
\[
\xymatrix{ 
X = X_0 \ar@{-->}[r]^-{g_0}
 & X_1 \ar@{-->}[r]^-{g_1}
 & \hdots \ar@{-->}[r]^-{g_{m-1}}
 & X_m \ar[r]^-{g_{m}}
 & Z
}
\] the steps of this MMP, with $g_i$ birational for $0\leq i \leq m-1$ and $g_m$ a fibration. Denote by $V_i\subset X_i$ the preimage of $U$ in $X_i$ for each $0\leq i \leq m,$ by $W$ the preimage of $U$ in $Z,$ and by $h_i$ the restriction to $V_i$ of $g_i$ for each $0\leq i \leq m.$ The fibration $g_m$ cannot restrict to an isomorphism between $V_m$ and $W,$ so Lemma ~\ref{lem:rel-mmp-over-open} gives us indices $0\leq i_0 <\hdots < i_r=m$ such that $h_i$ is an isomorphism for $i\notin \{i_0,\hdots, i_{r}\}$ and
\[
\xymatrix{ 
V_0 = V_{i_0} \ar@{-->}[r]^-{h_{i_0}}
 & V_{i_1} \ar@{-->}[r]^-{h_{i_1}}
 & \hdots \ar@{-->}[r]^-{h_{i_{r-1}}}
 & V_{i_r} \ar[r]^-{h_{i_r}}
 & W
}
\]
are the steps of an $(-E)|_{V_0}=(-E_F\times U)$-MMP over $U.$ It follows from the arguments above that $W$ must be of positive relative dimension over $U.$ Thus, $Z$ must be of positive relative dimesnion over $Y.$
\end{proof}

\begin{lemma}\label{lem:fiber-switcharoo}
Let $f\colon X\rightarrow Y$ be an extremal fibration between $\qq$-factorial projective varieties. Let $(X,B)$ be a log Calabi--Yau pair of complexity zero such that $\lfloor B \rfloor$ contains exactly one component horizontal over $Y$. Assume that the general fiber $F$ of $f$ is singular but admits no log Calabi--Yau pair of complexity zero satisfying the hypotheses of Lemma ~\ref{lem:round-down-and-sing-locus}. Then $F\cong \pp(1,c_1,\hdots, c_n)$ with $c_i\geq 2$ for each $1\leq i \leq n.$ Moreover, there is a commutative diagram 
\[
\xymatrix{
(X,B)\ar[rd]_-{f} & 
 & (X',B')\ar[dl]^-{f'} \ar@{-->}[ll]_-{\phi}\\
& Y & 
}
\]
where $\phi$ is birational and extracts only log canonical places of $(X,B)$ such that one of the following holds:
\begin{enumerate}
    \item $X'$ admits a fibration over $Y$ to a base of positive relative dimension over $Y,$
    \item $f'$ is an extremal fibration whose general fiber $F'$ satisfies $F'\cong \pp(1,1,c'_2,\hdots, c'_n)$.
\end{enumerate}
\end{lemma}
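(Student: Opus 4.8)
The plan is to identify the general fibre $F$ as a weighted projective space with a single unit weight, and then to realise the crepant modification by extracting a toric log canonical place supported over a singular stratum of $F$ and running a relative MMP. First, since $(X,B)$ has complexity zero, $X$ and hence its general fibre $F$ are toric, and as $f$ is extremal, $F$ is $\qq$-factorial projective of Picard rank one, i.e. a well-formed weighted projective space $\pp(a_0,\dots,a_n)$. By Lemma~\ref{lem:lin-equiv-tot-space-to-fiber}, $F$ carries a complexity-zero log Calabi--Yau pair $(F,B_F)$ with $\lvert\lfloor B_F\rfloor\rvert\le 1$, the bound coming from the unique horizontal component of $\lfloor B\rfloor$. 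The standing hypothesis forbids any complexity-zero log Calabi--Yau pair on $F$ from having a component of ${\rm Sing}(F)$ off its floor; applying this to $(F,B_F)$ and using that $F$ is singular forces $\lfloor B_F\rfloor$ to be a single torus-invariant prime divisor $D_0\supseteq{\rm Sing}(F)$. Translating ${\rm Sing}(F)\subseteq D_0$ to the fan, every cone avoiding the ray $v_0$ of $D_0$ must be smooth, so the maximal cone opposite $v_0$ is unimodular; this yields $F\cong\pp(1,c_1,\dots,c_n)$ with $a_0=1$ and each $c_i\ge 1$.

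To force $c_i\ge 2$, I would suppose instead that some $c_i=1$, so $F$ has two weight-one coordinates $x_0,x_1$ and ${\rm Sing}(F)$ lies in the base locus $\{x_0=x_1=0\}$ of the pencil $\lvert\mathcal{O}_F(1)\rvert$. Every invariant prime divisor $D_j$ is then movable (via the section $x_0^{a_j}$, or $x_0,x_1$ in weight one), so for a chosen component $Z\subseteq{\rm Sing}(F)$ a generic $g\in\Aut^0(F)$ moves every $D_j$ passing through the generic point $\eta_Z$. The pair $(F,B')$ with $B'=\tfrac12(\Delta+g_*\Delta)$, a convex combination of toric boundaries and hence a complexity-zero log Calabi--Yau pair, then has every coefficient-one component disjoint from $\eta_Z$, so $Z\not\subseteq{\rm Supp}(\lfloor B'\rfloor)$ — contradicting the standing hypothesis. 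Thus all $c_i\ge 2$, giving $F\cong\pp(1,c_1,\dots,c_n)$ with $c_i\ge 2$.

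For the diagram I would fix compatible toric structures making $f$ a toric extremal fibration with fibre $F$ (Lemma~\ref{lem:toric-contractions}) and pick a singular component $Z\subseteq{\rm Sing}(F)\subseteq D_0$. Its normalised blow-up extracts divisors that, by Corollary~\ref{cor:lcps-from-sing}, are log canonical places of $(X,B)$; extracting one such $E$ (fibrewise a star subdivision of the fan of $F$ along $D_0$) gives a crepant $\phi\colon X'\dashrightarrow X$ over $Y$ with $X'$ a Mori dream space. I would then run a $(-E)$-MMP over $Y$; since $-E$ is not pseudoeffective over $Y$ (Lemma~\ref{lem:horiz-pseudoeff}) it terminates with a fibration $X'\to Z'$ over $Y$, and by Corollary~\ref{cor:complexity-zero-and-exceptional-divisors} every contracted divisor is a log canonical place, so $\phi$ extracts only such. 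The dichotomy is then on $\dim Z'$: if $\dim Z'>\dim Y$ we obtain conclusion (1). If $\dim Z'=\dim Y$, then $f'\colon X'\to Y$ is again an extremal fibration of relative dimension $n$; restricting the MMP over the open orbit $U\subseteq Y$ via Lemmas~\ref{lem:rel-mmp-over-open} and~\ref{lem:fiberwise-mmp} identifies the new fibre $F'$ with the output of the fibrewise $(-E_F)$-MMP on $\widetilde F$ (of Picard rank two), which contracts the old ray $v_0$ and drops one coordinate weight to one; a direct fan computation then gives $F'\cong\pp(1,1,c_2',\dots,c_n')$, i.e. conclusion (2).

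The main obstacle is the relative MMP in this last step. Unlike in Lemma~\ref{lem:round-down-and-sing-locus}, here $Z\subseteq\lfloor B\rfloor$, so the final contraction need not be a fibration; the delicate points are to verify that the extracted $E$ is genuinely a log canonical place (which is exactly why one blows up a singular stratum rather than an arbitrary toric centre), and to match the global termination of the $(-E)$-MMP over $Y$ with the forced fibrewise behaviour so that precisely the stated dichotomy emerges, with the fibrewise model computing $F'\cong\pp(1,1,c_2',\dots,c_n')$ in case (2). The classification step also hides a combinatorial subtlety, namely producing the pair $(F,B')$ above whose floor simultaneously avoids an entire singular stratum.
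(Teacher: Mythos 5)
Your first two paragraphs are essentially sound. The identification $F\cong\pp(1,c_1,\hdots,c_n)$ follows the paper's own route (Lemma~\ref{lem:lin-equiv-tot-space-to-fiber}, ${\rm Sing}(F)\subseteq D_0$, unimodularity of the cone opposite $u_0$). Your proof that $c_i\geq 2$ is genuinely different from the paper's: the paper moves only $D_0$ inside the linear system $|\mathcal{O}_F(D_0)|$ (a pencil precisely when some $c_i=1$) and replaces $B_F$ by $B_F+\tfrac{1}{2}(D_0'-D_0)$, whereas you move the whole toric boundary by a generic $g\in\Aut^0(F)$ and take $\tfrac{1}{2}(\Delta+g_*\Delta)$. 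Both produce a complexity-zero log Calabi--Yau pair on $F$ whose round-down misses ${\rm Sing}(F)$, contradicting the standing hypothesis; yours additionally needs the (true, but unproved in your sketch) fact that a generic element of $\Aut^0(F)$ maps no invariant divisor onto an invariant divisor, which again rests on some weight being one.

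The genuine gap is in your third paragraph, and it is exactly the step the lemma is about. You extract $E$ from the normalized blow-up of (the closure over $Y$ of) a singular stratum $Z\subseteq {\rm Sing}(F)$. This does make $E$ a log canonical place (Corollary~\ref{cor:lcps-from-sing}), but it gives no control over the position of the corresponding ray $v_E$ in $N_F$, and conclusion (2) requires such control. Indeed, in the non-fibration outcome of the $(-E)$-MMP over $Y$, the contracted divisor is forced (by Corollary~\ref{cor:complexity-zero-and-exceptional-divisors}, the absence of degenerate divisors over $Y$, and the impossibility of contracting $E$ in its own anti-MMP) to be the strict transform of the closure of $D_0\times U$; fibrewise the output is then the toric variety with rays $u_1,\hdots,u_n,v_E$, which, writing $v_E=-\sum_i a_iu_i$, is $\pp(1,a_1,\hdots,a_n)$ --- and conclusion (2) holds only if some $a_i=1$. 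Nothing about the normalized blow-up of a singular stratum forces a coordinate of $v_E$ to equal $-1$; your parenthetical claim that the MMP \emph{drops one coordinate weight to one} is precisely the unproved assertion. The paper supplies the missing idea: perform a dlt modification $(\widetilde X,\widetilde B)\rightarrow(X,B)$, which is \emph{smooth} by Theorem~\ref{introthm:dlt-log-smth}; then ${\rm Cone}(u_0,u_2,\hdots,u_n)$ is a union of smooth cones of $\Sigma_{\widetilde F}$, and choosing a maximal one of the form ${\rm Cone}(\widetilde u_0,\widetilde u_2,\hdots,\widetilde u_n)$ with $\widetilde u_2,\hdots,\widetilde u_n\in{\rm Cone}(u_2,\hdots,u_n)$, unimodularity forces $\widetilde u_0=-u_1+\sum_{i\geq 2}b_i'u_i$, i.e.\ $u_1$-coordinate exactly $-1$. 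It is the divisor $\widehat D$ corresponding to this specific ray (isolated by first contracting the other exceptional divisors over $X$) that must be extracted; the non-fibration outcome of the $(-\widehat D)$-MMP over $Y$ then replaces $u_0$ by $\widetilde u_0$ and yields $F'\cong\pp(1,1,c_2',\hdots,c_n')$ with $c_i'=-b_i'$. Since you yourself flag this step as the \emph{main obstacle}, the proposal is incomplete at its core: the point of the lemma is not to extract some log canonical place, but to extract the right one.
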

\begin{proof}
Since $\lfloor B \rfloor$ has only one component which is horizontal over $Y,$ we can use Lemma ~\ref{lem:lin-equiv-tot-space-to-fiber} to obtain a log Calabi--Yau pair $(F,B_F)$ of complexity zero such that $\lfloor B_F\rfloor$ has at most one component. Since $F$ is singular and admits no log Calabi--Yau pair of complexity zero satisfying the hypotheses of Lemma ~\ref{lem:round-down-and-sing-locus}, it follows that $\lfloor B_F\rfloor$ must contain exactly one component and that ${\rm Sing}(F)\subset {\rm Supp}\left(\lfloor B_F\rfloor\right)$.\par 
Perform a dlt modification $g\colon (\widetilde{X}, \widetilde{B})\rightarrow (X,B).$ Choose a toric structure $(\widetilde{X},\Delta_{\widetilde{X}})$ associated to $(\widetilde{X},\widetilde{B}),$ and denote by $(X,\Delta_X)$ and $(Y,\Delta_Y)$ the toric structures induced by $(\widetilde{X},\Delta_{\widetilde{X}})$. Denote by $U=Y\setminus\Delta_Y$ the open orbit in $Y$ and by $\widetilde{F}$ the general fiber of $f\circ g.$ We have a commutative diagram
\[
\xymatrix{ 
\widetilde{F}\times U\ar[d]_-{p\times U} \ar@{^{(}->}[r] & \widetilde{X}\ar[d]^-{g} \\ 
F\times U \ar[d]_-{pr_2} \ar@{^{(}->}[r]& X \ar[d]^-{f}\\
U \ar@{^{(}->}[r]& Y
}
\]
in which:
\begin{enumerate}
    \item all squares are Cartesian,
    \item all morphisms are toric,
    \item $p\colon \widetilde{F}\rightarrow F$ is a projective birational toric morphism.
\end{enumerate}
Write $(\widetilde{F},\Delta_{\widetilde{F}})$ and $(F,\Delta_F)$ for the corresponding toric structures. Denote by $D_0,\hdots D_n$ the components of $\Delta_F,$ with $D_0=\lfloor B_F \rfloor,$ and denote by $u_0,\hdots,u_n\in N_F$ the primitive generators of the respective rays in the fan $\Sigma_F$ of $F.$ Write $\overline{D}$ for the closure in $X$ of $D_0\times U.$ \par
Since the torus-invariant point corresponding to ${\rm Cone}(u_1,\hdots,u_n)$ lies in the complement of ${\rm Supp}(D_0),$ it follows that this cone must be smooth. In other words, $u_1,\hdots,u_n$ form a $\zz$-basis for $N_F.$ Thus, we may write $u_0=\sum_{i=1}^nb_iu_i$ for some negative integers $b_1,\hdots, b_n.$ Writing $c_0=1$ and $c_i=-b_i$ for $1\leq i\leq n,$ we have ${\rm gcd}(c_0,\hdots, c_n)=1$ and $\sum_{i=0}^nc_iu_i=0.$ It follows that $F\cong \pp(c_0,\hdots, c_n)=\pp(1,c_1,\hdots, c_n).$ Assume, for a contradiction, that $c_i=1$ for some $1\leq i \leq n.$ By ~\cite[Proposition 4.3.3]{CLS11}, the vector space $\Gamma\left(F,\mathcal{O}_F(D_0)\right)$ can be identified with the vector space spanned by those characters $m\in M_F$ satisfying the inequalities
$$\langle m, u_0\rangle\geq -1$$
and $$\langle m, u_i\rangle\geq 0$$
for $1\leq i \leq n.$ The character $0$ clearly satisfies these inequalities, as does the character $e_i$ defined by $\langle e_i,u_j\rangle=\delta_{i,j}$ since 
    $$\langle e_i, u_0 \rangle = b_i=-1.$$
It follows that $\Gamma\left(F,\mathcal{O}_F(D_0)\right)$ has dimension at least two, hence that the linear system $|D_0|$ is positive-dimensional. Choosing a general element $D_0'\in |D|$ and setting $B'_F=B_F+\frac{1}{2}(D_0'-D_0),$ it follows from ~\cite[Corollary 2.33]{KM98} that we obtain a log Calabi--Yau pair $(F, B'_F)$ of complexity zero with $\lfloor B'_F \rfloor=0.$ This contradicts our assumptions on $F,$ so we conclude that $c_i\geq 2$ for all $1\leq i \leq n.$ \par
Since $\widetilde{X}$ is smooth, it follows that $\widetilde{F}$ must be smooth. It follows that ${\rm Cone}(u_0,u_2,\hdots, u_n),$ viewed as a cone in $(N_{\widetilde{F}})_\rr,$ is a union of smooth cones in $\Sigma_{\widetilde{F}}.$ Choose some cone $\sigma\in \Sigma_{\widetilde{F}}^{(n)}$ contained in ${\rm Cone}(u_0,u_2,\hdots, u_n)$ which is of the form ${\rm Cone}(\widetilde{u}_0,\widetilde{u}_2,\hdots, \widetilde{u}_n)$ for $\widetilde{u}_2,\hdots, \widetilde{u}_n\in {\rm Cone}(u_2,\hdots, u_n).$ Since $\sigma$ is a smooth cone, it follows that $\widetilde{u}_0,\widetilde{u}_2,\hdots, \widetilde{u}_n$ form a $\zz$-basis for $N_{\widetilde{F}}.$ Since $\widetilde{u}_2,\hdots, \widetilde{u}_n\in {\rm Cone}(u_2,\hdots, u_n),$ this implies that $\widetilde{u}_0=-u_1+\sum_{i=2}^nb'_iu_i$ for negative integers $b'_2,\hdots, b'_n.$ Let $\widetilde{D}_0\subset \widetilde{F}$ be the divisor corresponding to $\widetilde{u}_0,$ and let $\widehat{D}\subset \widetilde{X}$ be the closure of $\widetilde{D}_0\times U$ in $\widetilde{X}.$ Note that $\widehat{D}$ is a $g$-exceptional divisor and hence is a log canonical place of $(X,B).$ Let $E$ be the reduced sum of all $g$-exceptional divisors except $\widehat{D},$ and run an $E$-MMP over $X.$ This MMP terminates with a projective birational morphism $\widehat{g}\colon \widehat{X}\rightarrow X$ whose only exceptional divisor is $\widehat{D}.$ Write $(\widehat{X},\widehat{B})$ for the pair induced from $(X,B)$ by log pullback and write $\widehat{F}$ for the general fiber of $f\circ \widehat{g}$. The pair $(\widehat{X},\widehat{B})$ is a log Calabi--Yau pair of complexity zero, and the restriction of $\widehat{g}$ over $U$ can be identified with $\widehat{p}\times U\colon \widehat{F}\times U \rightarrow F\times U$ for a projective birational toric morphism $\widehat{p}\colon \widehat{F}\rightarrow F$ whose only exceptional divisor is $\widetilde{D}_0.$\par
Run a $(-\widehat{D})$-MMP over $Y.$ Since $\widehat{D}$ is horizontal over $Y$, it follows from Lemma ~\ref{lem:horiz-pseudoeff} that this MMP must terminate with an extremal fibration $f'\colon X'\rightarrow W$ over $Y.$ We are done if $W$ is of positive relative dimension over $Y,$ so assume that that $W$ is not of positive relative dimension over $Y.$ Since $\widehat{X}$ does not contain any divisors which are degenerate over $Y,$ it follows that $W=Y$ in this case. Thus, the birational contraction $\widehat{X}\dashrightarrow X'$ must contract a divisor. Since $(\widehat{X},\widehat{B})$ is a log Calabi--Yau pair of complexity zero, it follows from Corollary ~\ref{cor:complexity-zero-and-exceptional-divisors} that the divisor contracted by this map must be a component of $\lfloor \widehat{B}\rfloor.$ Since $\widehat{X}$ does not contain any divisors which are degenerate over $Y,$ the divisor contracted by this map must be horizontal over $Y.$ The only components of $\lfloor \widehat{B}\rfloor$ which are horizontal over $Y$ are $\widehat{D}$ and the strict transform on $\widehat{X}$ of $\overline{D}.$ Since $\widehat{D}$ cannot be contracted by a birational contraction in a $(-\widehat{D})$-MMP, it follows that $\overline{D}$ is contracted. By considering the restriction over $U$ of this relative MMP, we see that the general fiber $F'$ of $f'$ is the target of a birational contraction $\widehat{F}\dashrightarrow F'$ which contracts $D_0.$ It follows that $F'\cong \pp(1,1,c_2',\hdots, c_n',)$ where $c'_i=-b'_i$ for $2\leq i \leq n.$
\end{proof}

\begin{lemma}\label{lem:multiple-horiz-components}
Let $f\colon X \rightarrow Y$ be an extremal fibration between $\qq$-factorial projective varieties. Let $(X,B)$ be a log Calabi--Yau pair of complexity zero. Suppose that $\lfloor B \rfloor$ contains at least two components which are horizontal over $Y.$ Then there exists a commutative diagram 
\[
\xymatrix{ 
(X,B)\ar[d]_-{f} & (X',B')\ar[l]_-{\phi}\ar[d]^-{g} \\ 
Y & W\ar[l]^-{h}
}
\]
satisfying
\begin{enumerate}
    \item $\phi$ is a crepant projective birational morphism extracting only log canonical places of $(X,B),$
    \item both $g$ and $h$ are fibrations.
\end{enumerate}
\end{lemma}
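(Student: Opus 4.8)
The plan is to produce the intermediate fibration combinatorially, realizing $\phi$ as a single toric blow-up coming from the toric structure attached to an associated divisor. First I would fix $\Delta\in\mathcal{A}(X,B)$, which exists by Corollary~\ref{introcor:BMSZ18}, and use it to view $X$ as a toric variety; by Lemma~\ref{lem:toric-contractions} the base $Y$ inherits a toric structure making $f$ an equivariant extremal fibration. Writing $d=\dim X-\dim Y$, the splitting Lemma~\ref{lem:splitting-fans} together with the relative Picard rank count appearing in its proof shows that the general fiber $F$ is a $\qq$-factorial projective toric variety of Picard rank one; its fan $\Sigma_{X,F}$ has exactly $d+1$ rays $u_0,\dots,u_d$, whose primitive generators lie in $\ker(f_*)$ and satisfy a relation $\sum_{i=0}^d c_iu_i=0$ with all $c_i>0$. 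Since $\lfloor B\rfloor\leq\Delta$, every horizontal component of $\lfloor B\rfloor$ is torus-invariant and corresponds to a ray of $\Sigma_{X,F}$; by hypothesis at least two occur, and I would label two of them $u_0,u_1$, with associated prime divisors $D_0,D_1$ of coefficient one in $B$. I would note here that the conclusion presupposes $d\geq 2$: this is the relevant case, while for $d=1$ the fiber is $\pp^1$ and no intermediate $W$ can exist.

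Next I would construct the fibration on lattices. Let $L\subset N_F$ be the saturation of the codimension-one sublattice spanned by $u_2,\dots,u_d$; the quotient $\pi\colon N_F\to N_F/L\cong\zz$ satisfies $\pi(u_i)=0$ for $i\geq 2$ and, by the relation above, $\pi(u_0)>0>\pi(u_1)$. Setting $a_0=\pi(u_0)$ and $a_1=-\pi(u_1)$, the lattice point $w=a_1u_0+a_0u_1$ lies in the relative interior of the two-dimensional cone $\cone(u_0,u_1)\in\Sigma_{X,F}$ and satisfies $\pi(w)=0$. I would define $N_W=N_X/L$, which sits in $0\to\zz\to N_W\to N_Y\to 0$, and let $\phi\colon X'\to X$ be the star subdivision of $\Sigma_X$ adding the single ray $w$. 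The crucial point is that this one subdivision already turns the quotient $N_X\to N_W$ into a map of fans: using that every cone of $\Sigma_X$ is a sum $\tau+\widetilde\sigma$ with $\tau\in\Sigma_{X,F}$ and $\widetilde\sigma\in\Sigma_{X,Y}$ (Lemma~\ref{lem:splitting-fans}), compatibility reduces to the fiber, where the only cones straddling $\{\pi=0\}$ are those containing both $u_0$ and $u_1$, and these are exactly the ones cut by $w$. This produces toric contractions $g\colon X'\to W$ and $h\colon W\to Y$ which are fibrations, since $\dim Y<\dim W=\dim Y+1<\dim X'=\dim X$ as $d\geq 2$, with $f\circ\phi=h\circ g$ because $L\subset\ker f_*$. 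Moreover $\phi$ is a projective birational morphism whose only exceptional divisor $\widehat D$ is the orbit closure of $w$, and $X'$ is $\qq$-factorial because star subdivision preserves simplicial fans.

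It remains to check condition (1), that $\widehat D$ is a log canonical place of $(X,B)$, and I expect this to be the one step that leaves pure toric combinatorics. The center of $\widehat D$ is the stratum $Z=D_0\cap D_1$, which has codimension two; since $D_0$ and $D_1$ already contribute coefficient $1+1=2$ to $B$ along $Z$, log canonicity of $(X,B)$ forces every other component of $B$ to avoid the generic point of $Z$, so near that point $B$ agrees with $D_0+D_1$. Because $w=a_1u_0+a_0u_1$ involves only the rays $u_0,u_1$, the discrepancy $a_{\widehat D}(X,B)$ is then computed by the local toric formula with the piecewise linear discrepancy function vanishing on $u_0$ and $u_1$, giving $a_{\widehat D}(X,B)=0$. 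Taking $B'$ to be the log pullback of $B$ makes $\phi\colon(X',B')\to(X,B)$ crepant and extracting only the log canonical place $\widehat D$, which completes the required diagram.

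The main obstacle I anticipate is the global fan-compatibility verification in the middle paragraph: one must confirm that adding the single ray $w$ suffices over all of $Y$, not merely over the open orbit, and this is precisely where the product-type splitting $\Sigma_X=\Sigma_{X,F}+\Sigma_{X,Y}$ of Lemma~\ref{lem:splitting-fans} does the essential work. By contrast, the discrepancy computation is short once the observation about the codimension-two log canonical center $D_0\cap D_1$ is in place, and the remaining assertions are formal consequences of the toric construction.
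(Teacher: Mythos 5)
Your construction is genuinely different from the paper's. The paper never subdivides a fan: it uses $\rho(X/Y)=1$ to write $D_1\sim D_2+f^*H$ for two effective divisors supported on the horizontal part of $\lfloor B\rfloor$ (both nonzero by Lemma~\ref{lem:horiz-pseudoeff}), passes to a dlt modification (log smooth by Theorem~\ref{introthm:dlt-log-smth}), performs a sequence of blow-ups of strata---with a lexicographic invariant $(M,R,N)$ to prove termination---until the two divisors become disjoint, and then the linear equivalence with disjoint supports gives a morphism $\widetilde{X}\to\pp(\mathcal{O}_Y\oplus\mathcal{O}_Y(H))$ over $Y$ whose Stein factorization produces $g$ and $h$. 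Your route instead fixes one $\Delta\in\mathcal{A}(X,B)$, exploits the splitting $\Sigma_X=\Sigma_{X,F}+\Sigma_{X,Y}$ of Lemma~\ref{lem:splitting-fans}, and manufactures $W$ by a single star subdivision plus a lattice quotient; it buys an explicit $W$ (a toric $\pp^1$-fibration over $Y$, with $g$ having fake weighted projective fibers) and a single divisor extraction, at the cost of committing to a toric structure from the start. The core of your argument is sound for relative dimension $d\geq 2$: $\cone(u_0,u_1)$ is a cone of $\Sigma_{X,F}$ (any proper subset of the $d+1$ rays of a Picard-rank-one complete simplicial fan spans a cone), $w$ lies in its relative interior, and your discrepancy claim is correct---the clean formulation is that log canonicity of $(X,B)$ forces $B$ to coincide with $D_0+D_1=\Delta$ near the generic point of $Z=D_0\cap D_1$, so $a_{\widehat{D}}(X,B)=a_{\widehat{D}}(X,\Delta)=0$. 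Your observation about $d=1$ is also correct and is really a defect of the statement itself: with $X=\pp^1\times\pp^1\to\pp^1$ and the toric boundary the hypotheses hold but no intermediate $W$ can exist (the paper's own proof then yields a birational $g$); this is harmless because the lemma is only invoked when the fibers of $f$ are singular, forcing $d\geq 2$.

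Two verifications are missing, and one of them you do not mention at all. First, the fan compatibility you flag as the main obstacle does need a real argument: one must check that the images $\cone(\bar{u}_0)+\bar{\widetilde{\sigma}}$, $\cone(\bar{u}_1)+\bar{\widetilde{\sigma}}$, $\bar{\widetilde{\sigma}}$ (for $\widetilde{\sigma}\in\Sigma_{X,Y}$) pairwise intersect in common faces in $N_W$; this follows because $f_*$ is injective on each cone of $\Sigma_{X,Y}$ and the faces of $\widetilde{\sigma}_1,\widetilde{\sigma}_2$ lying over $\sigma_1\cap\sigma_2$ coincide, but quotient maps do not send fans to fans in general, so the check cannot be waved through. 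Second, and more seriously, the paper's Definition~\ref{def:contraction} requires a fibration to be a \emph{projective} contraction, and a complete toric variety need not be projective; your construction only shows $W$ is complete and $g,h$ proper. This gap is fixable: the invariant divisor on $W$ corresponding to the ray $\cone(\bar{u}_0)$ has support function equal, over each maximal cone of $\Sigma_Y$, to the minimum of two distinct linear functionals, hence is $h$-ample; thus $h$ is projective, $W$ is projective, and $g$ is then automatically projective. Without some such argument the conclusion that $g$ and $h$ are fibrations in the paper's sense is not established. By contrast, the paper's route gets projectivity for free, since its $W$ arises from Stein factorization of a projective morphism.
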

\begin{proof}
    Write $\lfloor B \rfloor_h$ for the sum of the components of $\lfloor B \rfloor$ which are horizontal over $Y$. Since the morphism $f$ is of relative Picard rank one, it follows from our assumptions that the components of $\lfloor B \rfloor_h$ are linearly dependent modulo $\qq$-linear equivalence over $Y.$ Thus, we may write $D_1\sim D_2+f^*H,$ where $D_1,D_2$ are effective Cartier divisors supported on $\lfloor B \rfloor_h$, not both zero and sharing no common component, and where $H$ is a Cartier divisor on $Y.$ We note that, by Lemma ~\ref{lem:horiz-pseudoeff}, no effective Cartier divisor supported on $\lfloor B \rfloor_h$ can be linearly trivial over $Y.$ It follows that both of the divisors $D_1,D_2$ must be nonzero. \par
    Perform a dlt modification $g \colon (\widetilde{X},\widetilde{B})\rightarrow (X,B).$ It follows from Theorem ~\ref{introthm:dlt-log-smth} that $(\widetilde{X},\lfloor \widetilde{B}\rfloor)$ is log smooth. We may write $g^*D_i=\widetilde{D}_i+\widetilde{D}$ for $i=1,2,$ where $\widetilde{D}_1,\widetilde{D}_2$ are effective Cartier divisors sharing no common component. These divisors $\widetilde{D}_1,\widetilde{D}_2$ are supported on $\lfloor \widetilde{B}\rfloor$, and they satisfy $g_*\widetilde{D}_i=D_i$ for $i=1,2$ and $\widetilde{D}_1\sim \widetilde{D}_2+(fg)^*H.$ For each $i=1,2,$ write $\widetilde{D}_i=\sum_{j=1}^{k_i}m_{ij}\widetilde{D}_{ij},$ where $\widetilde{D}_{i1},\hdots,\widetilde{D}_{ik_i}$ are distinct prime divisors and $m_{i1},\hdots,m_{ik_i}$ are positive integers. Write $$N=\left\lvert\{(j_1,j_2)\in \{1,\hdots, k_1\}\times \{1,\hdots, k_2\}\vert \widetilde{D}_{1j_1}\cap \widetilde{D}_{2j_2}\neq\emptyset\}\right\rvert.$$ 
    Denote by $M$ the largest integer such that there is a nonempty intersection $\widetilde{D}_{1j_1}\cap\widetilde{D}_{2j_2}$ with $M\in \{m_{1j_1},m_{2j_2}\},$ setting $M=0$ in the event that $N=0.$ Denote by $R$ the number of times that $M$ appears as the coefficient of a divisor $\widetilde{D}_{ij_i}$ participating in some nonempty intersection $\widetilde{D}_{1j_1}\cap\widetilde{D}_{2j_2},$ setting $R=0$ in the event that $N=0.$ If $N=0,$ then the divisors $\widetilde{D}_1,\widetilde{D}_2$ have disjoint support. If $N>0,$ choose some nonempty intersection $\widetilde{D}_{1j_1}\cap\widetilde{D}_{2j_2}$ such that $M\in \{m_{1j_1},m_{2j_2}\}$ and write $m=\min\{m_{1j_1},m_{2j_2}\}.$ If possible, make this choice so that $m=M.$ Let $h\colon \widehat{X}\rightarrow \widetilde{X}$ be the blow up of $\widetilde{X}$ along $\widetilde{D}_{1j_1}\cap\widetilde{D}_{2j_2},$ and denote by $(\widehat{X},\widehat{B})$ the log pullback of $(\widetilde{X},\widetilde{B}).$ There is a unique $h$-exceptional divisor $E,$ which is a log canonical place of $(\widetilde{X},\widetilde{B}),$ and the pair $(\widehat{X},\widehat{B})$ is a dlt log Calabi--Yau pair of complexity zero. Write $\widehat{D}_i=h^*\widetilde{D}_i-mE.$ Then $\widehat{D}_1,\widehat{D}_2$ are effective Cartier divisors supported on $\lfloor \widehat{B} \rfloor$, sharing no common component, which satisfy $(gh)_*\widehat{D}_i=D_i$ for $i=1,2$ and $\widehat{D}_1\sim\widehat{D}_2+(fgh)^*H.$ Defining $\widehat{N},\widehat{M},\widehat{R}$ for $\widehat{D}_1,\widehat{D}_2$ in the same way we defined $N,M,R,$ respectively, for $\widetilde{D}_1,\widetilde{D}_2$ above, we note that we have $(\widehat{M},\widehat{R},\widehat{N})<(M,R,N)$ when lexicographically ordered. Replacing $(\widetilde{X},\widetilde{B}), \widetilde{D}_1,\widetilde{D}_2$ with $(\widehat{X},\widehat{B}),\widehat{D}_1,\widehat{D}_2$ and continuing in this manner, we see that we may assume that $\widetilde{D}_1$ and $\widetilde{D}_2$ have disjoint support.\par
    Using the fact that $\widetilde{D}_1$ and $\widetilde{D}_2$ have disjoint support and satisfy $\widetilde{D}_1\sim \widetilde{D}_2+(fg)^*H,$ we obtain a morphism $\widetilde{X}\rightarrow\pp(\mathcal{O}_Y\oplus \mathcal{O}_Y(H))$ over $Y.$ The divisors $\widetilde{D}_1,\widetilde{D}_2$ map to disjoint sections of $\pp(\mathcal{O}_Y\oplus \mathcal{O}_Y(H))\rightarrow Y.$ Since both $\widetilde{D}_1$ and $\widetilde{D}_2$ dominate $Y,$ it follows that the morphism $\widetilde{X}\rightarrow\pp(\mathcal{O}_Y\oplus \mathcal{O}_Y(H))$ must be surjective. Stein factorization then gives us the desired fibrations $g$ and $h.$
\end{proof}

\begin{lemma}\label{lem:decrease-rel-dimn}
Let $f\colon X \rightarrow Y$ be an extremal fibration between $\qq$-factorial projective varieties. Let $(X,B)$ be a log Calabi--Yau pair of complexity zero. Suppose that the fibers of $f$ are singular. Then there exists a commutative diagram 
\[
\xymatrix{ 
(X,B)\ar[d]_-{f} & (X',B')\ar@{-->}[l]_-{\phi}\ar[d]^-{g} \\ 
Y & W\ar[l]^-{h}
}
\]
satisfying
\begin{enumerate}
    \item $\phi$ is a crepant birational map extracting only log canonical places of $(X,B),$
    \item both $g$ and $h$ are fibrations.
\end{enumerate}
\end{lemma}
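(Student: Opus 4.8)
The plan is to reduce the statement to the three structural lemmas already in place---Lemma~\ref{lem:multiple-horiz-components}, Lemma~\ref{lem:round-down-relative}, and Lemma~\ref{lem:fiber-switcharoo}---by running a case analysis first on the horizontal part of $\lfloor B\rfloor$ and then on the geometry of the general fiber. Throughout I would keep in mind that, since $(X,B)$ has complexity zero, Corollary~\ref{introcor:BMSZ18} supplies an associated toric boundary, so $X$ is a toric variety; by Lemma~\ref{lem:toric-contractions} the base $Y$ is then toric and $f$ is a toric extremal fibration. This is what lets me feed $f$ into Lemma~\ref{lem:round-down-relative}, which is stated for toric fibrations. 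I would also record once and for all that every modification performed extracts only log canonical places, so that complexity zero is preserved at each stage by Corollary~\ref{cor:complexity-zero-and-exceptional-divisors}.

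First I would dispose of the case where $\lfloor B\rfloor$ has at least two components horizontal over $Y$: here Lemma~\ref{lem:multiple-horiz-components} directly produces a factorization $X'\to W\to Y$ with both maps fibrations, which is exactly the desired diagram. So I may assume $\lfloor B\rfloor$ has at most one horizontal component, and I pass to the general fiber $F$, which is a $\qq$-factorial projective toric variety of Picard rank one, singular by hypothesis. Using Lemma~\ref{lem:lin-equiv-tot-space-to-fiber} I obtain a complexity-zero pair $(F,B_F)$ whose floor has at most one component. If $F$ admits a complexity-zero pair with a component of $\mathrm{Sing}(F)$ lying outside its floor---in particular whenever the fiber floor is empty---then the hypotheses of Lemma~\ref{lem:round-down-and-sing-locus} are met, and Lemma~\ref{lem:round-down-relative} lets me extract the corresponding divisor $E$ (an lc place of $(X,B)$ by Corollary~\ref{cor:lcps-from-sing}) and run a $(-E)$-MMP over $Y$ terminating in a fibration to a base $W$ of positive relative dimension over $Y$; taking $\phi$ to be the resulting crepant birational map gives the diagram.

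The remaining case is that $\lfloor B\rfloor$ has exactly one horizontal component and $F$ admits no such pair, which is precisely the setting of Lemma~\ref{lem:fiber-switcharoo}. Applying it, I obtain $F\cong\pp(1,c_1,\dots,c_n)$ with all $c_i\ge 2$, together with a crepant birational map $\phi\colon X'\dashrightarrow X$ extracting only lc places, such that either $X'$ fibers over $Y$ through a base of positive relative dimension over $Y$---which is the desired diagram---or $f'\colon X'\to Y$ is again an extremal fibration, now with general fiber $F'\cong\pp(1,1,c_2',\dots,c_n')$. In the latter situation I would run the entire argument again on $(X',B',f')$. The point that should make the recursion terminate is that the fiber has acquired a second coordinate of weight one, so the number of weights exceeding one has strictly dropped; and once the fiber carries two weight-one coordinates it can no longer satisfy the hypotheses of Lemma~\ref{lem:fiber-switcharoo} while remaining singular, since the derivation there forces a unique weight-one coordinate. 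Hence at the next pass one is pushed into the round-down case and concludes via Lemma~\ref{lem:round-down-relative}.

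The main obstacle is exactly this termination step: one must verify that feeding the output $(X',B',f')$ of Lemma~\ref{lem:fiber-switcharoo} back into the argument genuinely lands in a case producing the intermediate fibration $W$, and in particular that the process cannot stall on a fiber that has been resolved to a smooth projective space without the relative dimension having dropped. Controlling this requires careful bookkeeping of the weighted-projective-space structure of the fiber across successive switcharoos, and of the interaction between the singular locus and the available complexity-zero boundaries---which is where the force of Lemmas~\ref{lem:round-down-and-sing-locus}, \ref{lem:round-down-relative}, and \ref{lem:fiber-switcharoo} is really used.
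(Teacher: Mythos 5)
Your proposal reconstructs the paper's proof essentially verbatim: the same three-way case analysis (at least two horizontal components of $\lfloor B\rfloor$ handled by Lemma~\ref{lem:multiple-horiz-components}; the case where the fiber pair satisfies the hypotheses of Lemma~\ref{lem:round-down-and-sing-locus} handled by Lemma~\ref{lem:lin-equiv-tot-space-to-fiber} plus Lemma~\ref{lem:round-down-relative}; the remaining case handled by Lemma~\ref{lem:fiber-switcharoo}), and the same termination mechanism, namely that at most one pass through Lemma~\ref{lem:fiber-switcharoo} is needed, because a fiber of the form $\pp(1,1,c_2',\hdots,c_n')$ cannot satisfy that lemma's conclusion $\pp(1,c_1,\hdots,c_n)$ with all $c_i\geq 2$, so the modified fibration $(X',B',f')$ must fall back into one of the two previously handled cases.

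The obstacle you flag at the end --- that the recursion could stall on a smooth fiber $F'\cong\pp^n$ --- is precisely the point at which the paper's own proof is also silent, so you should not regard it as a defect of your route relative to theirs. The paper asserts that Lemma~\ref{lem:fiber-switcharoo} yields the dichotomy ``$\lfloor B'\rfloor$ has at least two horizontal components, or $F'$ admits a pair satisfying the hypotheses of Lemma~\ref{lem:round-down-and-sing-locus}.'' Unpacked, this is the contrapositive of Lemma~\ref{lem:fiber-switcharoo} applied to $(X',B',f')$, and it is valid only when $F'$ is singular: if all $c_i'=1$ (which the statement of Lemma~\ref{lem:fiber-switcharoo} does not exclude, since its proof only yields $c_i'=-b_i'\geq 1$), then both horns fail, because the hypotheses of Lemma~\ref{lem:round-down-and-sing-locus} require ${\rm Sing}(F')\neq\emptyset$, while the construction inside Lemma~\ref{lem:fiber-switcharoo} leaves $\lfloor B'\rfloor$ with exactly one horizontal component (the image of the divisor there denoted $\widehat{D}$). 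So your hesitation identifies a genuine soft spot, but one that is shared with, not resolved by, the paper; closing it would require either strengthening Lemma~\ref{lem:fiber-switcharoo} to guarantee some $c_i'\geq 2$, or a separate argument in the smooth-fiber case. Apart from making this assumption explicit where the paper leaves it implicit, your proof and the paper's are the same.
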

\begin{proof}
The desired result follows immediately from Lemma ~\ref{lem:multiple-horiz-components} in the case that $\lfloor B\rfloor$ has at least two components which are horizontal over $Y.$ In what follows, we will assume that $\lfloor B\rfloor$ contains at most one component which is horizontal over $Y.$\par
By Lemma ~\ref{lem:lin-equiv-tot-space-to-fiber}, the general fiber $F$ of $f$ admits a log Calabi--Yau pair $(F,B_F)$ of complexity zero with $\lfloor B_F \rfloor \leq 1.$ We begin by considering the case in which this pair $(F,B_F)$ satisfies the hypotheses of Lemma ~\ref{lem:round-down-and-sing-locus}. It follows from Lemma ~\ref{lem:round-down-relative} that there exists a commutative diagram 
\[
\xymatrix{ 
(X,B)\ar[d]_-{f} & (\widetilde{X},\widetilde{B}) \ar[l]_-{\psi} \ar@{-->}[r]^-{\psi'} & (X',B')\ar[d]^-{g} \\ 
Y & & W\ar[ll]^-{h}
}
\]
satisfying
\begin{enumerate}
    \item[(1.1)] $\psi$ is a crepant birational morphism extracting a single divisor $E$, which is a log canonical place of $(X,B)$,
    \item[(1.2)] $\psi'$ is a birational contraction obtained from running a $(-E)$-MMP over $Y,$
    \item[(1.3)] both $g$ and $h$ are fibrations.
\end{enumerate}
Setting $\phi=\psi\circ (\psi')^{-1},$ we obtain the desired result in this case.\par
From now on, we assume that $(F,B_F)$ does not satisfy the hypotheses of Lemma ~\ref{lem:round-down-and-sing-locus}. We may apply Lemma ~\ref{lem:fiber-switcharoo} in this case to obtain a commutative diagram
\[
\xymatrix{
(X,B)\ar[rd]_-{f} & 
 & (X',B')\ar[dl]^-{f'} \ar@{-->}[ll]_-{\psi}\\
& Y & 
}
\]
where $\phi$ is birational and extracts only log canonical places of $(X,B)$ and one of the following holds:
\begin{enumerate}
    \item[(2.1)] $X'$ admits a fibration over $Y$ to a base of positive relative dimension over $Y,$
    \item[(2.2)] $f'$ is an extremal fibration whose fiber $F'$ satisfies $F'\cong \pp(1,1,c'_2,\hdots, c'_n)$ with $1\leq c'_2\leq\hdots\leq c'_n.$
\end{enumerate}
We are done in the event that (1) holds, so assume that (2) holds. In this case, it follows from Lemma ~\ref{lem:fiber-switcharoo} that $\lfloor B' \rfloor$ has at least two components which are horizontal over $Y$ or that $F'$ admits a log Calabi--Yau pair of complexity zero satisfying the hypotheses of Lemma ~\ref{lem:round-down-and-sing-locus}. By the arguments of the previous paragraphs, we obtain a commutative diagram 
\[
\xymatrix{ 
(X',B')\ar[d]_-{f'} & (X'',B'')\ar@{-->}[l]_-{\psi'}\ar[d]^-{f''} \\ 
Y & W\ar[l]^-{h}
}
\]
satisfying
\begin{enumerate}
    \item[(3.1)] $\phi$ is a crepant birational map extracting only log canonical places of $(X',B'),$
    \item[(3.2)] both $g$ and $h$ are fibrations.
\end{enumerate}
Setting $\phi=\psi\circ \psi',$ we obtain the desired result in this case.
\end{proof}

\subsection{Projective space bundles}

\begin{lemma}\label{lem:obtain-bundle}
Let $f\colon X \rightarrow Y$ be an extremal fibration between $\qq$-factorial projective varieties. Let $(X,B)$ be a log Calabi--Yau pair of complexity zero, and suppose that $Y$ is smooth.
Then there exists a commutative diagram 
\[
\xymatrix{
(X,B)\ar[rd]_-{f} & 
 & (X',B')\ar[dl]^-{f'} \ar@{-->}[ll]_-{\phi}\\
& Y & 
}
\]
satisfying
\begin{enumerate}
    \item $\phi$ is a crepant birational map extracting only log canonical places of $(X,B),$
    \item $f'$ is an extremal contraction which is a locally trivial fiber bundle.
\end{enumerate}
\end{lemma}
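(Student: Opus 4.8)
The plan is to translate the problem into toric combinatorics and then recognize the bundle structure from the fan. Since $(X,B)$ has complexity zero, Corollary~\ref{introcor:BMSZ18} provides an associated toric boundary, so $X$ is a projective toric variety; by Lemma~\ref{lem:toric-contractions} the contraction $f$ is equivariant for a suitable toric structure on $Y$, and since $Y$ is smooth its fan $\Sigma_Y$ is unimodular. As $f$ is extremal, Lemma~\ref{lem:splitting-fans} yields a splitting $\Sigma_X=\Sigma_{X,F}+\Sigma_{X,Y}$ in which $\Sigma_{X,F}$ is the fan of the general fiber $F$ (a $\qq$-factorial projective toric variety of Picard rank one) and $\Sigma_{X,Y}$ maps isomorphically, cone by cone, onto $\Sigma_Y$. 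In view of Lemma~\ref{lem:proj-bundle-sum-line-bundles}, it suffices to produce a crepant toric model $f'\colon X'\to Y$, extracting only log canonical places of $(X,B)$, whose fan satisfies the additional primitivity condition $f'_*(\tau\cap N_{X'})=f'_*(\tau)\cap N_Y$ for every $\tau\in\Sigma_{X',Y}$; this is exactly what upgrades the abstract splitting of Lemma~\ref{lem:splitting-fans} to a genuine locally trivial fiber bundle.

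Next I would isolate the obstruction. Because $\Sigma_Y$ is unimodular, the bijection of Lemma~\ref{lem:splitting-fans} already matches the cones of $\Sigma_{X,Y}$ with those of $\Sigma_Y$, so one checks that the primitivity condition over all cones follows once it holds on rays; it can therefore fail only if, for some ray $\sigma\in\Sigma_Y^{(1)}$ with primitive generator $u_\sigma$, the unique horizontal ray $\widetilde\sigma$ above it has primitive generator $w_{\widetilde\sigma}$ with $f_*(w_{\widetilde\sigma})=m_\sigma u_\sigma$ and $m_\sigma>1$, i.e. if $f$ has a non-reduced divisorial fiber over $D_\sigma\subset Y$. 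To remove such a defect I would extract a toric valuation $v$ with $f_*(v)=u_\sigma$ primitive, lying in the appropriate horizontal cone, and argue by means of Theorem~\ref{introthm:lccs} that $v$ is a log canonical place of $(X,B)$. After extracting $v$ I would run a relative MMP over $Y$, which exists and terminates because $X$ is a Mori dream space (Propositions~\ref{prop:toric-mds} and~\ref{prop:mds-mmp}); since $(X,B)$ is log Calabi--Yau of complexity zero, Corollary~\ref{cor:complexity-zero-and-exceptional-divisors} forces every contracted divisor to be a component of the reduced boundary, so the MMP is crepant and can be arranged to contract the old non-reduced divisor $D_{\widetilde\sigma}$, restoring relative Picard rank one. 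Iterating over the finitely many rays of $\Sigma_Y$ should produce the desired $f'\colon X'\to Y$, which Lemma~\ref{lem:proj-bundle-sum-line-bundles} then exhibits as a locally trivial fiber bundle.

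The hard part will be controlling three things simultaneously through these modifications: that each extracted divisor is genuinely a log canonical place of $(X,B)$, so that $\phi$ is crepant and extracts only such places; that the relative MMP terminates with an \emph{extremal} contraction onto the \emph{same} base $Y$, rather than factoring through an intermediate variety as in Lemma~\ref{lem:decrease-rel-dimn}; and that the resulting fan honestly satisfies the primitivity condition, so that local triviality really holds. Here the smoothness of $Y$ is essential, since it is what collapses local triviality to the single codimension-one multiplicity condition, and the complexity-zero hypothesis must be propagated to the base through the canonical bundle formula of Theorem~\ref{introthm:canonical-bundle-formula}, which keeps $(Y,B_Y)$ of complexity zero and thereby constrains the admissible multiplicities $m_\sigma$. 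I expect this fan bookkeeping, rather than the reduction to the toric category, to be the crux of the argument.
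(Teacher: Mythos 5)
Your reduction of the problem is sound and matches the paper's strategy: use Lemma~\ref{lem:splitting-fans} to split the fan, observe that smoothness of $Y$ collapses the local-triviality criterion to the condition that each horizontal ray's primitive generator maps to the primitive generator of its image ray (the paper proves exactly this as its ``special case,'' citing \cite[Theorem 3.3.19]{CLS11} rather than Lemma~\ref{lem:proj-bundle-sum-line-bundles}, which assumes local triviality instead of establishing it), and then repair the bad rays by extracting a divisor mapping to the primitive generator and contracting the old non-reduced one as a degenerate divisor over $Y$. The second MMP step in your sketch is essentially the paper's.

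However, there is a genuine gap at the crucial step: you assert that a toric valuation $v$ with $f_*(v)=u_\sigma$, chosen in the appropriate horizontal cone, is a log canonical place of $(X,B)$ ``by means of Theorem~\ref{introthm:lccs}.'' Part (1) of that theorem requires $v$ to be toric with respect to \emph{every} $\Delta\in\mathcal{A}(X,B)$, not just the one toric structure you used to set up the fan, and this hypothesis is false for an arbitrary such $v$: if $B=\sum_i b_i\Delta_i$ is a genuine convex combination, then $a_v(X,B)=\sum_i b_i\,a_v(X,\Delta_i)$ is strictly positive unless $v$ happens to be toric for \emph{all} the $\Delta_i$ (compare Example~\ref{ex:lcc-not-toric-stratum-for-all}, where toric data of a single associated divisor fail to detect lc centers of the pair). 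Extracting a non-lc-place would increase the complexity by Lemma~\ref{lem:complexity-and-exceptional-divisors}, so $\phi$ would fail to be crepant with effective boundary and $(X',B')$ would no longer be a complexity-zero log Calabi--Yau pair; the whole construction collapses. The paper's way around this is the step your proposal omits: first perform a dlt modification $g\colon(\widetilde{X},\widetilde{B})\to(X,B)$, whose exceptional divisors are lc places \emph{by definition}, and invoke Theorem~\ref{introthm:dlt-log-smth} to conclude $\widetilde{X}$ is smooth; smoothness of the subdivided cones $\tau+\widetilde{\sigma}$ inside $\Sigma_{\widetilde{X}}$ then \emph{produces} a $g$-exceptional ray whose generator maps to the primitive generator $u_\sigma$, so the divisor one extracts is simultaneously of the right combinatorial type and automatically an lc place. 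Your proposal gives no mechanism for selecting such a $v$, and Theorem~\ref{introthm:lccs} cannot substitute for one. (Your closing remark that Theorem~\ref{introthm:canonical-bundle-formula} constrains the multiplicities $m_\sigma$ plays no role and does not repair this.)
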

\begin{proof}
Perform a dlt modification $g\colon (\widetilde{X},\widetilde{B})\rightarrow(X,B).$ Choose a toric structure $(\widetilde{X},\Delta_{\widetilde{X}})$ associated to $(\widetilde{X},\widetilde{B}),$ and let $(X,\Delta)$ and $(Y,\Delta_Y)$ be the toric structures induced on $X$ and $Y,$ respectively. Write $N_{\widetilde{X}}, N_X$ and $N_Y$ for the cocharacter lattices and $\Sigma_{\widetilde{X}}, \Sigma_X$ and $\Sigma_Y$ for the fans corresponding the toric structures $(\widetilde{X},\Delta_{\widetilde{X}}), (X,\Delta)$ and $(Y,\Delta_Y),$ respectively. It follows from Lemma ~\ref{lem:splitting-fans} that the fan $\Sigma_X$ can be expressed as a sum $$\Sigma_X=\Sigma_{X,F}+\Sigma_{X,Y}$$
    of subfans $\Sigma_{X,F},\Sigma_{X,Y}\subset \Sigma_X$, where 
    \begin{enumerate}
        \item[(1.1)] $\Sigma_{X,F}$ has support equal to ${\rm Ker}(f_*),$
        \item[(1.2)] $f_*$ restricts to a bijection $\tau\xrightarrow{\cong}f_*(\tau)$ for each $\tau\in \Sigma_{X,Y},$ 
        \item[(1.3)] the assignment $\tau\mapsto f_*(\tau)$ determines a bijection $\Sigma_{X,Y}\xrightarrow{\cong}\Sigma_Y.$
    \end{enumerate}\par
We begin by assuming that, for each $\widetilde{\sigma}\in \Sigma_{X,Y}^{(1)},$ the primitive generator of $\widetilde{\sigma}$ is mapped by $f_*$ to the primitive generator of $f_*(\widetilde{\sigma}).$ We will show that, in this special case, $f$ is already a locally trivial fiber bundle. To show this, it suffices by ~\cite[Theorem 3.3.19]{CLS11} to show that $f_*(\widetilde{\sigma}\cap N_X)=f_*(\widetilde{\sigma})\cap N_Y$ for each $\widetilde{\sigma}\in \Sigma_{X,Y}.$ Write $\widetilde{v}_1,\hdots, \widetilde{v}_k\in N_X$ for the primitive generators of $1$-dimensional faces of $\widetilde{\sigma},$ and write $v_i=f_*(\widetilde{v}_i)$ for each $1\leq i \leq k.$
It follows from our assumption that $v_1,\hdots, v_k$ are the primitive generators of the $1$-dimensional faces of $\sigma=f_*(\widetilde{\sigma}).$ Given $w\in \sigma\cap N_Y,$ it follows from the smoothness of $Y$ that $w=\sum_{i=1}^ka_iv_i$ for some nonnegative integers $a_1,\hdots, a_k.$ We obtain a lattice vector $\widetilde{w}=\sum_{i=1}^ka_i\widetilde{v}_i\in N_X$ satisfying $f_*(\widetilde{w})=w.$ It follows that $f$ is a locally trivial fiber bundle, as claimed.\par
We now turn to treat the general case. Choose a cone $\tau\in \Sigma_{X,F}$ of dimension $r=\dim {\rm Ker}(f_*)$. Given a ray $\widetilde{\sigma}\in \Sigma_{X,Y}^{(1)},$ consider the cone $\tau+\widetilde{\sigma}$ viewed as a cone in $(N_{\widetilde{X}})_\rr.$ Since $g$ is a projective birational morphism, this cone is a union of cones in $\Sigma_{\widetilde{X}}.$ Since $(\widetilde{X},\widetilde{B})$ is dlt, it follows from Theorem ~\ref{introthm:dlt-log-smth} that $\widetilde{X}$ is smooth. Thus, $\tau+\widetilde{\sigma}$ is a union of smooth cones in $\Sigma_{\widetilde{X}}.$ Choose a cone $\gamma\in \Sigma_{\widetilde{X}}^{(r+1)}$ such that $\gamma\subset \tau+\widetilde{\sigma}$ and such that one of the $r$-dimensional faces of $\gamma$ is contained in ${\rm Ker}(f_*\circ g_*).$ Write $\widehat{\tau}_1,\hdots, \widehat{\tau}_r$ for the $1$-dimensional faces of $\gamma$ contained in ${\rm Ker}(f_*\circ g_*)$ and $\widehat{\sigma}$ for the remaining $1$-dimensional face of $\gamma.$ We claim that the primitive generator of $\widehat{\sigma}$ is mapped by $f_*\circ g_*$ to the primitive generator of $\sigma=f_*(\widetilde{\sigma})=(f_*\circ g_*)(\widehat{\sigma}).$ To see this, write $\widehat{u}_1,\hdots, \widehat{u}_r,\widehat{v}$ for the primitive generators of $\widehat{\tau}_1,\hdots,\widehat{\tau}_r,\widehat{\sigma},$ respectively. Since the cone $\gamma$ generated by the rays $\widehat{\tau}_1,\hdots,\widehat{\tau}_r,\widehat{\sigma}$ is smooth, it follows that we may extend $\widehat{u}_1,\hdots, \widehat{u}_r,\widehat{v}$ to a $\zz$-basis for $N_{\widetilde{X}},$ say $\widehat{u}_1,\hdots, \widehat{u}_r,\widehat{v},\widehat{w}_1,\hdots,\widehat{w}_{m-1}.$ Since $f_*\circ g_*\colon N_{\widetilde{X}}\rightarrow N_Y$ is surjective, $\widehat{u}_1,\hdots, \widehat{u}_r\in {\rm Ker}(f_*\circ g_*)$ and $m=\dim Y,$ it follows that $(f_*\circ g_*)(\widehat{v}),(f_*\circ g_*)(\widehat{w}_1),\hdots,(f_*\circ g_*)(\widehat{w}_{m-1})$ is a $\zz$-basis for $N_Y.$ This implies that $(f_*\circ g_*)(\widehat{v}),$ which is a positive integer multiple of the primitive generator of $\sigma,$ must be equal to the primitive generator of $\sigma.$\par
Denote by $E$ the reduced sum of all $g$-exceptional divisors on $\widetilde{X}.$ Denote by $\widetilde{\sigma}_1,\hdots,\widetilde{\sigma}_k\in \Sigma^{(1)}_{X,Y}$ those rays whose primitive generator is not mapped by $f_*$ to the primitive generator of $f_*(\widetilde{\sigma}).$ For each $1\leq i \leq k,$ use the arguments of the previous paragraph to choose a $g$-exceptional divisor $E_i$ whose primitive generator is mapped by $f_*\circ g_*$ to the primitive generator of $f_*(\widetilde{\sigma}_i).$ Run a $(E-\sum_{i=1}^kE_i)$-MMP over $X.$ This MMP terminates with a projective birational morphism $h\colon \overline{X}\rightarrow X$. The map $\widetilde{X}\dashrightarrow \overline{X}$ contracts each component of $E-\sum_{i=1}^kE_i$ but contracts none of the divisors $E_1,\dots, E_k.$ For each $1\leq i \leq k,$ write $\widetilde{E}_i$ for the divisor on $\overline{X}$ corresponding to the ray $\widetilde{\sigma}_i.$ Note, for each $1\leq i \leq k,$ that $(f\circ h)(E_i)=(f\circ h)(\widetilde{E}_i)$ and hence that the divisor $\widetilde{E}_i$ is degenerate over $Y$. Run a $(\sum_{i=1}^k\widetilde{E}_i)$-MMP over $Y.$ By Lemma ~\ref{lem:degn-divs-neg} and the fact that $\overline{X}$ is a Mori dream space, it follows that this MMP terminates after contracting the divisors $\widetilde{E}_1,\hdots, \widetilde{E}_k$ and no others. We obtain a birational map $\overline{X}\dashrightarrow X'$ and an extremal fibration $f'\colon X'\rightarrow Y$.\par Denote by $\phi\colon X' \dashrightarrow X$ the inverse of the composite 
\[
\xymatrix{ 
X \ar@{-->}[r]^-{g^{-1}}
 & \widetilde{X} \ar@{-->}[r]
 & \overline{X} \ar@{-->}[r]
 & X',
}
\]
and denote by $B'$ the pushforward to $X'$ of $\widetilde{B}.$ Then $(X',B')$ is a log Calabi--Yau pair of complexity zero, and $\phi\colon (X',B')\dashrightarrow (X,B)$ is a crepant birational map extracting only log canonical places of $(X,B)$. We claim that the morphism $f'$ is a locally trivial fiber bundle. Let $(X',\Delta_{X'})$ be the toric structure induced from $(\widetilde{X},\Delta_{\widetilde{X}})$ by the contraction $\widetilde{X}\dashrightarrow X'.$ It follows from Lemma ~\ref{lem:splitting-fans} that the corresponding fan $\Sigma_{X'}$ can be expressed as a sum $$\Sigma_{X'}=\Sigma_{X',F'}+\Sigma_{X',Y}$$
    of subfans $\Sigma_{X',F'},\Sigma_{X',Y}\subset \Sigma_{X'}$, where 
    \begin{enumerate}
        \item[(2.1)] $\Sigma_{X,F}$ has support equal to ${\rm Ker}(f'_*),$
        \item[(2.2)] $f'_*$ restricts to a bijection $\tau\xrightarrow{\cong}f'_*(\tau)$ for each $\tau\in \Sigma_{X',Y},$ 
        \item[(2.3)] the assignment $\tau\mapsto f'_*(\tau)$ determines a bijection $\Sigma_{X',Y}\xrightarrow{\cong}\Sigma_Y.$
    \end{enumerate}\par
It follows from the construction of $X'$ that, for each ray $\widetilde{\sigma}\in \Sigma_{X',Y}^{(1)},$ the primitive generator of $\widetilde{\sigma}$ is mapped by $f'_*$ to the primitive generator of $f'_*(\widetilde{\sigma}).$ It follows from the special case treated above that $f'$ is a locally trivial fiber bundle, as claimed.
\end{proof}

\begin{definition}\label{defn:rel-gen-Bott-tower}{\em
We say that a morphism $f\colon X \rightarrow Y$ is a \textit{relative generalized Bott tower} if it can be factored as $$X=X_0\xrightarrow{f_0}X_1\rightarrow\hdots\rightarrow X_{m}\xrightarrow{f_{m}}X_{m+1}=Y,$$
where each $f_i\colon X_i\rightarrow X_{i+1}$ is the projective space bundle associated to a direct sum of line bundles on $X_{i+1}.$
}
\end{definition}

\begin{theorem}\label{thm:proj-bundles-over-smth-base}
Let $f\colon X \rightarrow Y$ be a fibration between $\qq$-factorial projective varieties. Let $(X,B)$ be a log Calabi--Yau pair of complexity zero, and suppose that $Y$ is smooth.
Then there exists a commutative diagram 
\[
\xymatrix{
(X,B)\ar[rd]_-{f} & 
 & (X',B')\ar[dl]^-{f'} \ar@{-->}[ll]_-{\phi}\\
& Y & 
}
\]
satisfying
\begin{enumerate}
    \item $\phi$ is a crepant birational map extracting only log canonical places of $(X,B),$
    \item $f'$ is a relative generalized Bott tower.
\end{enumerate}
\end{theorem}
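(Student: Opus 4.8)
The plan is to induct on the relative dimension $d=\dim X-\dim Y$, the case $d=0$ being trivial. By Corollary~\ref{introcor:BMSZ18} the variety $X$ carries a toric structure, so being $\mathbb{Q}$-factorial and projective it is a Mori dream space by Proposition~\ref{prop:toric-mds}; this is what allows all of the relative MMPs below to be run and to terminate. I would first reduce to the case that $f$ is extremal: running a suitable relative MMP over $Y$ and using that, by Corollary~\ref{cor:complexity-zero-and-exceptional-divisors}, every divisorial contraction of a log Calabi--Yau pair of complexity zero contracts only log canonical places, one either arrives at an extremal fibration over $Y$ after a crepant birational modification extracting only log canonical places, or else factors $f$ through an intermediate fibration — the latter being absorbed into the inductive mechanism of the singular-fibre case below. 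Once $f$ is extremal, its general fibre $F$ is a $\mathbb{Q}$-factorial projective toric variety of Picard rank one.

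If $F$ is smooth, then $F\cong\mathbb{P}^n$, and I would apply Lemma~\ref{lem:obtain-bundle} (this is where smoothness of $Y$ enters) to produce a crepant birational model $\phi\colon X'\dashrightarrow X$ extracting only log canonical places for which $f'\colon X'\to Y$ is a locally trivial fibre bundle with fibre $\mathbb{P}^n$. Lemma~\ref{lem:proj-bundle-sum-line-bundles} then identifies $f'$ with the projective space bundle of a direct sum of line bundles on $Y$, that is, with a one-stage relative generalized Bott tower in the sense of Definition~\ref{defn:rel-gen-Bott-tower}. This disposes of the smooth-fibre case and serves as the effective base of the induction, since at $d=1$ the fibre is a smooth rational curve.

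If instead $F$ is singular, I would invoke Lemma~\ref{lem:decrease-rel-dimn} to obtain a crepant birational model $X'\dashrightarrow X$ extracting only log canonical places, together with a factorization $X'\xrightarrow{g}W\xrightarrow{h}Y$ in which $g$ and $h$ are fibrations; thus $\dim Y<\dim W<\dim X$ and both $g$ and $h$ have relative dimension strictly less than $d$. The decisive idea is to treat the base before the fibres. Using the canonical bundle formula for $g$, Theorem~\ref{introthm:canonical-bundle-formula} equips $W$ with a log Calabi--Yau pair $(W,B_W)$ of complexity zero and shows that the moduli part satisfies $\mathbf{M}\sim_{\mathbb{Q}}0$. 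Applying the inductive hypothesis to $h\colon W\to Y$ yields a crepant model $W''\dashrightarrow W$ extracting only log canonical places with $W''\to Y$ a relative generalized Bott tower; as $Y$ is smooth and every stage of such a tower is a projective space bundle of a direct sum of line bundles, $W''$ is smooth. I would then lift $g$ to this smooth base: since $\mathbf{M}\sim_{\mathbb{Q}}0$, the generalized log canonical places of $(W,B_W,\mathbf{M})$ are exactly the log canonical places of $(W,B_W)$, so Lemma~\ref{lem:follow-fibr} applies to the modification $W''\dashrightarrow W$ and produces a fibration $g''\colon X''\to W''$ with a crepant birational map $X''\dashrightarrow X'$ extracting only log canonical places and with $X''$ $\mathbb{Q}$-factorial. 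Now $g''$ has the same relative dimension as $g$, hence less than $d$, and its base $W''$ is smooth, so the inductive hypothesis applies once more to turn $X''$ into a relative generalized Bott tower over $W''$. Concatenating this tower with $W''\to Y$ and composing the crepant modifications produces the required model over $Y$.

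The step I expect to be the main obstacle is precisely this smoothness bookkeeping. The lemmas that manufacture projective space bundle structures, Lemmas~\ref{lem:obtain-bundle} and~\ref{lem:proj-bundle-sum-line-bundles}, demand a smooth base, whereas the intermediate base $W$ delivered by Lemma~\ref{lem:decrease-rel-dimn} is a priori only $\mathbb{Q}$-factorial. The resolution is the order of operations above: first resolve the base into a smooth relative generalized Bott tower by induction, and only then transport the fibration onto it. What makes this transport legitimate — and what I would check most carefully — is the vanishing $\mathbf{M}\sim_{\mathbb{Q}}0$ of the moduli part, which is exactly what allows Lemma~\ref{lem:follow-fibr} to be fed the tower-producing modification of the base while keeping the total space crepant and extracting only log canonical places.
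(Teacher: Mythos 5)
Your proposal has the same skeleton as the paper's proof: induction on relative dimension, Lemma~\ref{lem:decrease-rel-dimn} to produce an intermediate fibration in the problematic case, Theorem~\ref{introthm:canonical-bundle-formula} (with $\mathbf{M}\sim_{\qq}0$) to equip the intermediate base with a complexity-zero log Calabi--Yau pair, the inductive hypothesis applied \emph{base-first} so that the base becomes a smooth relative generalized Bott tower, Lemma~\ref{lem:follow-fibr} to transport the fibration onto that base, a second application of induction, and Lemmas~\ref{lem:obtain-bundle} and~\ref{lem:proj-bundle-sum-line-bundles} in the terminal case. The base-first ordering and the role of the vanishing moduli part are exactly the paper's.

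There are, however, two places where your write-up asserts more than the cited statements deliver. The substantive one: you run the smooth/singular dichotomy on the general fiber $F$ \emph{before} applying Lemma~\ref{lem:obtain-bundle}, and then claim the locally trivial bundle $f'\colon X'\to Y$ it produces has fiber $\pp^n$. The statement of Lemma~\ref{lem:obtain-bundle} says nothing about the fiber of $f'$; it is only some $\qq$-factorial projective toric variety of Picard rank one birational to $F$, and being birational to $\pp^n$ with Picard rank one does not force $\pp^n$ (weighted projective spaces are counterexamples to that inference). The claim is in fact true --- in the proof of Lemma~\ref{lem:obtain-bundle} every divisor extracted or contracted lies over the toric boundary of $Y$, so the modification is an isomorphism over the open orbit and the fiber is preserved --- but that requires opening the lemma's proof. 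The paper avoids this by testing smoothness on the \emph{output}: it first treats the special case in which no crepant model of $(X,B)$ factors through an intermediate fibration, and there the fibers of the bundle produced by Lemma~\ref{lem:obtain-bundle} must be smooth, since otherwise Lemma~\ref{lem:decrease-rel-dimn} applied to that bundle would yield a factorization; smooth plus Picard rank one then gives $\pp^n$. Your argument is repaired the same way: apply the dichotomy after Lemma~\ref{lem:obtain-bundle}, not before. The smaller gloss: your reduction to an extremal fibration over $Y$ via a relative $K_X$-MMP can terminate in a Mori fiber space $Z\to W$ whose structure map $h\colon W\to Y$ is a nontrivial birational contraction --- a case that is neither ``extremal over $Y$'' nor ``factors through an intermediate fibration.'' Returning the base to $Y$ requires the canonical bundle formula on $Z\to W$ together with Lemma~\ref{lem:follow-fibr} applied to $h^{-1}\colon Y\dashrightarrow W$ (which extracts no divisors, hence vacuously only generalized log canonical places); this is the same mechanism you deploy later, so here the omission is of detail rather than of an idea.
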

\begin{proof}
We begin by establishing the result in the special case that there exists no commutative diagram 
\[
\xymatrix{ 
(X,B)\ar[d]_-{f} & (Z,B_Z)\ar[d]^-{g} \ar@{-->}[l]_-{p} \\ 
Y & W\ar[l]^-{h}
}
\]
satisfying
\begin{enumerate}
    \item[(1.1)] $p$ is a crepant birational map extracting only log canonical places of $(X,B),$
    \item[(1.2)] $g$ and $h$ are both fibrations.
\end{enumerate}
We run a $K_X$-MMP over $Y,$ obtaining a commutative diagram 
\[
\xymatrix{ 
(X,B)\ar[d]_-{f} \ar@{-->}[r]^-{p} & (Z,B_Z)\ar[d]^-{g} \\ 
Y & W\ar[l]^-{h}
}
\]
where
\begin{enumerate}
    \item[(2.1)] $p$ is a crepant birational contraction map,
    \item[(2.2)] $g$ is a fibration.
\end{enumerate}
Since $$p^{-1}\colon (Z,B_Z)\dashrightarrow (X,B)$$ is a crepant birational map extracting only log canonical places of $(X,B),$ it follows from our assumption in this special case that the morphism $h$ must be birational. Denote by $(W,B_W)$ the log Calabi--Yau pair induced by $(Z,B_Z)$ via the canonical bundle formula, and write $B_Y=h_*B_W.$ By Lemma ~\ref{lem:follow-fibr}, there is a commutative diagram 
\[
\xymatrix{ 
(Z,B_Z)\ar[d]_-{g} & (Z',B_{Z'})\ar@{-->}[l]_-{\phi'}\ar[d]^-{g'} \\ 
(W,B_W) &
(Y,B_{Y})\ar@{-->}[l]^-{h^{-1}}
}
\]
where 
\begin{enumerate}
    \item[(3.1)] $\phi'$ is a crepant birational map extracting only log canonical places of $(Z,B_Z),$
    \item[(3.2)] $g'$ is an extremal fibration.
\end{enumerate}
By Lemma ~\ref{lem:obtain-bundle}, there exists a commutative diagram 
\[
\xymatrix{
(Z',B_{Z'})\ar[rd]_-{g'} & 
 & (Z'',B_{Z''})\ar[dl]^-{g''} \ar@{-->}[ll]_-{\phi''}\\
& Y & 
}
\]
satisfying
\begin{enumerate}
    \item[(4.1)] $\phi''$ is a crepant birational map extracting only log canonical places of $(Z',B_{Z'}),$
    \item[(4.2)] $g''$ is an extremal contraction which is a locally trivial fiber bundle.
\end{enumerate}
It follows from our assumption in this special case that $g''$ must have smooth fibers. Since $g''$ has relative Picard rank one, this implies that the fibers of $g''$ are isomorphic to $\pp^n$ for $n=\dim Z'' - \dim Y.$ It now follows from Lemma ~\ref{lem:proj-bundle-sum-line-bundles} that $g''$ is a relative generalized Bott tower.\par
For the general case, we induct on the relative dimension of $f.$ The base case $\dim X=\dim Y +1$ follows immediately from the special case established above. For the inductive step, we may assume that there exists a commutative diagram 
\[
\xymatrix{ 
(X,B)\ar[d]_-{f} & (Z,B_Z)\ar@{-->}[l]_-{p}\ar[d]^-{g} \\ 
Y & W\ar[l]^-{h}
}
\]
where
\begin{enumerate}
    \item[(5.1)] $p$ is a crepant birational map extracting only log canonical places of $(X,B),$
    \item[(5.2)] $g$ and $h$ are both fibrations.
\end{enumerate}
Let $(W,B_W)$ be the pair induced by $(Z,B_Z)$ via the canonical bundle formula. Then $(W,B_W)$ is a log Calabi--Yau pair of complexity zero by Theorem ~\ref{introthm:canonical-bundle-formula}, and it follows from the inductive hypothesis that there is a commutative diagram 
\[
\xymatrix{
(W,B_W)\ar[rd]_-{h} & 
 & (W',B_{W'})\ar[dl]^-{h'} \ar@{-->}[ll]_-{\psi}\\
& Y & 
}
\]
satisfying
\begin{enumerate}
    \item[(6.1)] $\psi$ is a crepant birational map extracting only log canonical places of $(W,B_W),$
    \item[(6.2)] $h'$ is a relative generalized Bott tower.
\end{enumerate}
In particular, $W'$ is smooth. By Lemma ~\ref{lem:follow-fibr}, there is a commutative diagram 
\[
\xymatrix{ 
(Z,B_Z)\ar[d]_-{g} & (Z',B_{Z'})\ar@{-->}[l]_-{\phi'}\ar[d]^-{g'} \\ 
(W,B_W) &
(W',B_{W'})\ar@{-->}[l]^-{\psi}
}
\]
where 
\begin{enumerate}
    \item[(7.1)] $\phi'$ is a crepant birational map extracting only log canonical places of $(Z,B_Z),$
    \item[(7.2)] $g'$ is a fibration.
\end{enumerate}
By the inductive hypothesis, there is a commutative diagram 
\[
\xymatrix{
(Z',B_{Z'})\ar[rd]_-{g'} & 
 & (Z'',B_{Z''})\ar[dl]^-{g''} \ar@{-->}[ll]_-{\phi''}\\
& W' & 
}
\]
satisfying
\begin{enumerate}
    \item[(8.1)] $\phi''$ is a crepant birational map extracting only log canonical places of $(Z',B_{Z'}),$
    \item[(8.2)] $g''$ is a relative generalized Bott tower.
\end{enumerate}
We obtain a commutative diagram 
\[
\xymatrix{
(X,B)\ar[rd]_-{f} & 
 & (Z'',B_{Z''})\ar[dl]^-{h'\circ g''} \ar@{-->}[ll]_-{p\circ \phi'\circ \phi''}\\
& Y. & 
}
\]
By construction, the composite $p\circ \phi'\circ \phi''$ is a crepant birational map extracting only log canonical places of $(X,B)$ and $h'\circ g''$ is a relative generalized Bott tower.
\end{proof}

\begin{proof}[Proof of Theorem ~\ref{introthm:gen-bott-towers}]
    Applying ~\cite[Lemma 2.31]{MM24} if necessary, we may assume that $(X,B)$ is a log Calabi--Yau pair of complexity zero and that $X$ is $\qq$-factorial. Now apply Theorem ~\ref{thm:proj-bundles-over-smth-base} to the case $Y=\Spec\kk.$
\end{proof}

\section{Examples and questions}




Let $L_1,L_2,L_3, L_4\subset \pp^2$ be distinct lines passing through a common point $p\in \pp^2,$ and let $L_5,L_6\subset \pp^2$ be two general lines. Write $B=\frac{1}{2}\sum_{i=1}^6L_i.$ Then $(\pp^2,B)$ is a log Calabi--Yau pair of complexity zero. We will make use of this pair in Examples ~\ref{ex:lambda_2<lambda_1}-~\ref{ex:lcc-not-toric-stratum-for-all}.

\begin{example}\label{ex:lambda_2<lambda_1}{\em
    In this example, we show that $\lambda_1$ and $\lambda_2$ need not be equal. Consider the log Calabi--Yau pair $(\pp^2,B)$ as above, and consider $\Delta=L_4+L_5+L_6.$ Then $\Delta\in \mathcal{A}(\pp^2,B).$ It is clear that $\lambda_1(\pp^2,B;\Delta)=\frac{1}{2}.$ We claim, however, that $\lambda_2(\pp^2,B;\Delta)=0.$ Indeed, given $\lambda\in \left[0,\frac{1}{2}\right]$ we have that the multiplicity of $\frac{B-\lambda\Delta}{1-\lambda}$ at the point $p$ is $1+\frac{1}{1-\lambda}.$ This multiplicity cannot exceed $2$ if $\left(\pp^2, \frac{B-\lambda\Delta}{1-\lambda}\right)$ is to be log canonical, so it follows that we must have $\lambda=0.$ 
    }
\end{example}

\begin{example}\label{ex:assoc-but-no-blend}{\em
    In this example, we show that not all associated pairs can participate in toric boundary arrangements. Consider the log Calabi--Yau pair $(\pp^2,B)$ as above, and consider $\Delta=L_4+L_5+L_6.$ Then $\Delta\in \mathcal{A}(\pp^2,B).$ Suppose that $B=\sum_{i=1}^rb_i\Delta_i,$ where $b_1,\hdots, b_r\in [0,1]$ are rational numbers with sum $\sum_{i=1}^rb_i=1,$ $\Delta_1,\hdots, \Delta_r\in \mathcal{A}(\pp^2, B)$ are distinct, and $\Delta_1=\Delta.$ Note that we must have $b_1\leq \frac{1}{2}.$ It follows that $$B'=\frac{B-b_1\Delta}{1-b_1}=\sum_{i=2}^r\frac{b_i}{1-b_1}\Delta_i$$
    is such that $(\pp^2,B')$ is log canonical. The computations of Example ~\ref{ex:lambda_2<lambda_1} show that we must have $b_1=0.$
    }
\end{example}

\begin{example}\label{ex:lcc-not-toric-stratum-for-all}
{\em
In this example, we show that a log canonical center of a log Calabi--Yau pair of complexity zero need not be a toric stratum of each associated divisor. Consider the log Calabi--Yau pair $(\pp^2,B)$ as above, and consider $\Delta=L_4+L_5+L_6.$ Then $\Delta\in \mathcal{A}(\pp^2,B).$ On the one hand, the point $p$ is a log canonical center of $(\pp^2, B)$ since the divisor $B$ has multiplicity $2$ at $p.$ On the other hand, the point $p$ is not a log canonical center of $(\pp^2,\Delta)$ since $\Delta$ only has multiplicity $1$ at $p.$ In particular, $p$ cannot be a toric stratum of $(\pp^2,\Delta).$
}
\end{example}

Fix a dimension $n\geq 2$ and an index $m\geq 3.$ Mauri and Moraga show in ~\cite[Theorem 1.6]{MM24} that every $n$-dimensional log Calabi--Yau pair of index one and birational complexity zero is crepant to a log Calabi--Yau of complexity zero supported on $\pp^n.$ In the following example, we show that there is no finite set $X_1,\hdots, X_r$ of $n$-folds with the property that if $(X,B)$ is a log Calabi--Yau $n$-fold of index $m$ and birational complexity zero then there is an index $1\leq i \leq r$ and a log Calabi--Yau pair $(X_i,B_{X_i})$ of complexity zero crepant to $(X,B).$

\begin{example}\label{ex:inf-many-models}{\em 
Fix integers $d\geq 1,n\geq 2$ and $m\geq 3.$ Denote by $X=\pp\left(\mathcal{O}_{\pp^{n-1}}\oplus\mathcal{O}_{\pp^{n-1}}(d)\right)$ and by $\pi\colon X \rightarrow \pp^{n-1}$ the projection. Denote by $S$ the section corresponding to the surjection $\mathcal{O}_{\pp^{n-1}}\oplus \mathcal{O}_{\pp^{n-1}}(d)\rightarrow \mathcal{O}_{\pp^{n-1}}$  and by $T$ the section corresponding to $\mathcal{O}_{\pp^{n-1}}\oplus \mathcal{O}_{\pp^{n-1}}(d)\rightarrow \mathcal{O}_{\pp^{n-1}}(d)$. Then $S$ has normal bundle $\mathcal{O}_{\pp^{n-1}}(-d)$ and $T$ has normal bundle $\mathcal{O}_{\pp^{n-1}}(d).$ Since $T$ is a nef divisor on the smooth toric variety $X$, it follows that the linear system $|T|$ is basepoint-free.\par Let $H$ be the reduced sum of $mn$ general hyperplanes in $\pp^{n-1},$ and let $T_1,\hdots, T_m\in |T|$ be $m$ general sections. Define $B=S+\frac{1}{m}(\sum_{i=1}^mT_i+\pi^*H).$ Then $B$ is a divisor whose support has simple normal crossings, and $(X,B)$ is a log Calabi--Yau pair of index $m$ and complexity zero. It follows from ~\cite[Corollary 2.31]{KM98} that the pair $(X,B)$ is terminal away from $S.$ \par
Suppose $\phi\colon (X,B)\dashrightarrow (Y,B_Y)$ is a crepant birational map, with $(Y,B_Y)$ another log Calabi--Yau pair of complexity zero. There exists a commutative diagram 
\[
\xymatrix{
\widetilde{X}\ar[d]_-{f} \ar@{-->}[r]^-{\psi} & \widetilde{Y}\ar[d]^-{g} \\
X \ar@{-->}[r]^-{\phi}& Y
}
\]
satisfying:
\begin{enumerate}
    \item $\widetilde{X}$ and $\widetilde{Y}$ are $\qq$-factorial,
    \item $\psi$ is an isomorphism in codimension one,
    \item $f$ is a projective birational morphism extracting only $\phi^{-1}$-exceptional divisors,
    \item $g$ is a projective birational morphism extracting only $\phi$-exceptional divisors.
\end{enumerate}
It follows from the crepancy of $\phi$ and the effectivity of $B_Y$ that every divisor extracted by $f$ has nonpositive discrepancy with respect to $(X,B).$ Every such divisor must have its center on $X$ contained in $S.$ The exceptional locus of $f$ is purely divisorial since $X$ is smooth, so it follows that $f$ is an isomorphism above $X\setminus S.$\par
Since $\psi$ is an isomorphism in codimension one, there are closed subsets $Z\subset \widetilde{X}$ and $W\subset \widetilde{Y}$ of codimension at least two such that $\psi$ restricts to an isomorphism $\widetilde{X}\setminus Z \xrightarrow{\cong}\widetilde{Y}\setminus W.$ Choose a divisor $T'\in |T|$ whose preimage on $\widetilde{X}$ contains no irreducible component of $Z$. Denote by $T'_{\widetilde{X}}, T'_{\widetilde{Y}}$ and $T'_Y$ the strict transforms of $T'$ on $\widetilde{X},\widetilde{Y}$ and $Y,$ respectively. Define $B'=B+T'-\frac{1}{m}\sum_{i=1}^mT_i$ and $B_Y'=B_Y+T'_Y-\frac{1}{m}\sum_{i=1}^m\phi_*T_i,$ and note that $(X,B')$ and $(Y,B_Y')$ are log Calabi--Yau pairs of complexity zero which are crepant via $\phi.$\par 
The variety $T'\cong \pp^{n-1}$ is a smooth variety of Picard rank one, and the pair $(T'.B_{T'})$ induced on $T'$ by $(X,B')$ via adjunction is a terminal log Calabi--Yau pair. It follows that every crepant birational equivalence $(T',B_{T'})\simeq_{\rm bir}(V,B_V)$ with $B_V$ effective is induced by an isomorphism $T'\cong V$ of underlying varieties. In particular, the birational maps in the commutative diagram above induce isomorphisms between $T', T'_{\widetilde{X}}, T'_{\widetilde{Y}}$ and $T'_Y.$ Note that it follows from this that $T'_{\widetilde{Y}}\cap W$ has codimension at least two in $T'_{\widetilde{Y}}.$\par
Since $f$ restricts to an isomorphism between neighborhoods of $T'$ and $T'_{\widetilde{X}}$, it follows that the normal bundle to $T'_{\widetilde{X}}$ in $\widetilde{X}$ is isomorphic to $\mathcal{O}_{\pp^{n-1}}(d).$ Since $Z, T'_{\widetilde{X}}\cap Z, W$ and $T'_{\widetilde{Y}}\cap W$ have codimension at least two in $\widetilde{X}, T'_{\widetilde{X}}, \widetilde{Y}$ and $T'_{\widetilde{Y}}$, respectively, it follows that the normal bundle to $T'_{\widetilde{X}}$ in $\widetilde{X}$ is isomorphic to $\mathcal{O}_{\pp^{n-1}}(d)$ as well. Since $T'_{Y}$ is the image of $T'_{\widetilde{Y}}$ under a morphism, it now follows that the normal bundle to $T'_Y$ in $Y$ is isomorphic to $\mathcal{O}_{\pp^{n-1}}(d+e)$ for some $e\geq 0.$ \par
We have just shown, for any choice of integers $d\geq 1,n\geq 2$ and $m\geq 3,$ that if the pair $(X,B)$ is crepant to a log Calabi--Yau pair $(Y,B_Y)$ of complexity zero then any toric log Calabi--Yau pair $(Y,\Delta)$ has a torus-invariant divisor $D\subset \Delta$ satisfying $D\cong \pp^{n-1}$ and $D\cdot C\geq d$ for all curves $C\subset D.$ So suppose that $X_1,\hdots, X_r$ is some finite set of $n$-folds which support log Calabi--Yau pairs of index $m$ and complexity zero. Given a toric log Calabi--Yau pair $(X_i,\Delta)$ supported on one of these $X_i$ and an irreducible component $D\subset\Delta_i,$ write
$$d_D=\inf\{D\cdot C\mid C\subset D\text{ a curve}\}.$$
Then write
$$d_{\rm max}=\max_D\{d_D\}.$$ 
Since we are considering only finitely many varieties $X_1,\hdots, X_r,$ it follows from Lemma ~\ref{lem:conjugate-tori} that this maximum exists and is finite. Choosing $d>d_{\rm max},$ we see that $(X,B)$ cannot be crepant to any log Calabi--Yau pair of complexity zero supported on a variety among $X_1,\hdots, X_r.$
}  
\end{example}
Example ~\ref{ex:inf-many-models} shows that one needs to consider infinitely many distinct generalized Bott towers in each dimension in order to account for all crepant birational equivalence classes of log Calabi--Yau pairs of complexity zero. However, it remains unclear precisely which generalized Bott towers are needed.
\begin{question}\label{que:minimal-sets-of-reps}
    Fix a positive integer $n\geq 2.$ Can one provide a description of a set $\mathcal{S}_n$ of generalized Bott towers with the property that for each log Calabi--Yau $n$-fold $(X,B)$ of birational complexity zero, there is exactly one $Y\in \mathcal{S}_n$ supporting a log Calabi--Yau pair $(Y,B_Y)$ of complexity zero crepant to $X$?
\end{question}

\bibliographystyle{habbvr}
\bibliography{references}

\end{document}